\newtheorem{theorem}{Theorem}[section]
\newtheorem{proposition}[theorem]{Proposition}
\newtheorem{corollary}[theorem]{Corollary}
\newtheorem{lemma}[theorem]{Lemma}
\newtheorem{definition}[theorem]{Definition}
\newtheorem{remark}[theorem]{Remark}
\numberwithin{equation}{section}
\title[Time Decay Estimates and Strichartz Estimates ]
{Decay Estimates and Strichartz Estimates of Fourth-order Schr\"{o}dinger Operator }
\author{Hongliang Feng, \ Avy Soffer \ and Xiaohua Yao}
\address {Hongliang Feng, School of Mathematics and Statistics, Central China
Normal University, Wuhan, 430079, P. R. China}
\email{fenghongliangccnu@163.com}
\address{Avy Soffer, School of Mathematics and Statistics, Central China
Normal University, Wuhan, 430079, P. R. China\\
On leave from Rutgers University}
\email{soffer@math.rutgers.edu}
\address{Xiaohua Yao, Department of Mathematics and  Hubei Province Key Laboratory of Mathematical Physics,
 Central China Normal University, Wuhan, 430079, P.R. China}
\email{yaoxiaohua@mail.ccnu.edu.cn}
\date{\today}
\subjclass[2000]{58J50, 42B15, 35P15, 42B20,   47F05.}
\keywords{Fourth-order Schr\"odinger operator, Resolvent asymptotic expansion, Jensen-Kato dispersion decay estimate, Ginibre argument, Strichartz estimate, LS-pointwise decay estimate.}
\begin{document}
\maketitle
\begin{abstract}
We study time decay estimates of the fourth-order Schr\"{o}dinger operator $H=(-\Delta)^{2}+V(x)$ in $\mathbb{R}^{d}$ for $d=3$ and $d\geq5$.  We analyze the low energy and high energy behaviour of resolvent $R(H; z)$, and then  derive the Jensen-Kato dispersion decay estimate and local decay estimate for $e^{-itH}P_{ac}$ under suitable spectrum assumptions of $H$. Based on Jensen-Kato type decay estimate and local decay estimate, we obtain the $L^1\rightarrow L^{\infty}$ estimate of $e^{-itH}P_{ac}$ in $3$-dimension by Ginibre argument, and also establish the endpoint global Strichartz estimates of $e^{-itH}P_{ac}$ for $d\geq5$. Furthermore, using the local decay estimate and the Georgescu-Larenas-Soffer conjugate operator method, we prove the Jensen-Kato type decay estimates for some functions of $H$.
\end{abstract}

\tableofcontents

\section{Introduction}
\setcounter{equation}{0}

In this paper we consider the time decay estimates of the operator
$$ H=H_{0}+V(x),\,\, H_{0}=(-\Delta)^{2}$$
in $L^{2}(\mathbb{R}^{d})$ for $d=3$ and $d\geq5$, where $V(x)$ is a real valued function as a multiplication operator. In the sequel, we assume that $V(x)=O(|x|^{-\beta})$ for large $|x|$ with some $\beta>0$ ( the specific $\beta$ will be given in conclusions below ).

It is well known that the fourth-order Schr\"odinger equation was introduced by Karpman \cite{Karpman1, Karpman2} and Karpman and Shagalov \cite{Karpman-Shagalov} to take into account the role of small fourth-order dispersion terms in the propagation of intense laser beams in a bulk medium with Kerr nonlinearity.  The nonlinear beam equation, or fourth-order wave equation has been involved in the study of plate and beams, see  e.g. Love \cite{Love}, in  the study of interaction of water waves, see Bretherton \cite{Bre}, and in the study of the motion of a suspension bridge, see Lazer and MacKenna \cite{LM} and MacKenna and Walter \cite{MW1, MW2}. Recently, these fourth-order equations were considered in mathematics by many authors. For example,  Levandosky and Strauss had considered the stability and instability of fourth-order solitary waves \cite{L1}, the time decay estimates for fourth-order wave equations \cite{L2} and  \cite{Le-Strauss}. Moreover, the well-posedness and scattering problems of  nonlinear fourth-order Schr\"odinger equation have been further studied by many authors now, see e.g. Miao, Xu and Zhao \cite{MXZ1, MXZ2}, Pausader \cite{Pau1, Pau2}, C. Hao, L. Hsiao and B. Wang \cite{HHW1, HHW2}, Ruzhansky, B. Wang and H. Zhang \cite{RWZ}, Segata \cite{Segata1, Segata2} and references therein.

In the studies of linear or nonlinear dispersive equations, one is faced with the need to quantitatively estimate the time decay of the solution in different kinds of norms. Indeed, many interesting estimates including {\it local decay estimates, Jensen-Kato type decay estimates, $L^{p}$-decay estimates and Strichartz estimates}, play central roles in these studies. Note that all the papers we mentioned above were concerned with fourth-order linear or nonlinear equations related to the operators $\Delta^{2}+\varepsilon\Delta$, $\varepsilon\in\{-1, 0, 1\}$.  The purpose of this paper is to establish such estimates for the fourth-order Schr\"odinger type operator (homogeneous case) $H=(-\Delta)^{2}+V$ with some decay potential. Our method also can treat the inhomogeneous cases $\Delta^{2}\pm\Delta$ and general $p(-\Delta)$ where $p$ is a polynomial.

Our work is partially motivated by Jensen and Kato's famous work \cite{JK}. They proved the time decay estimates of $e^{-it(-\Delta+V)}P_{ac}$ in the weighted $L^2$-norm. Precisely (with the assumption that zero is regular point of $-\Delta+V$),
  \begin{equation*}
   \|\langle x\rangle^{-\sigma}e^{-it(-\Delta+V)}P_{ac}\langle x\rangle^{-\sigma}\|_{L^{2}(\mathbb{R}^{3})\rightarrow L^{2}(\mathbb{R}^{3})}\leq c\langle t\rangle^{-3/2}.
\end{equation*}
Furthermore, Murata \cite{MM} had generalized Jensen and Kato's work to the operator $P(D)+V$, where $P(D)$ is an $m$-order elliptic differential operator with real constant coefficients, assuming that the all critical points of polynomial $P(\xi)$ are  non-degenerate, i.e.
$$\Big(\nabla P\Big)(\xi_0)=0, \ \ \ \ \ \  \det\Big(\partial_{i}\partial_{j}P(\xi)\Big)\big|_{\xi_0}\neq0.$$
However, the biharmonic  operator $(-\Delta)^2$  does not satisfy this assumption at $\xi=0$, thus  Murata's method does not apply for $H=(-\Delta)^2+V$.  Hence in this paper, we first establish Jensen-Kato type decay estimate and local decay estimate for $H$, which are very important, for example to asymptotic completeness of the perturbed linear fourth-order Schr\"odinger equations. Secondly, based on the Jensen-Kato type decay estimate and  local decay estimate, we prove $L^p$-type decay estimates and endpoint Strichartz estimates for $H$, which can be then applied to the well-posedness problems, scattering theory and soliton asymptotic stability problems of the nonlinear fourth-order Schr\"{o}dinger equation. Finally, we introduce the Georgescu-Larenas-Soffer conjugate operator method to derive the Jensen-Kato type estimate which starts only from the local decay estimate. Our methods differ from Murata, and apply to more general functions of the Laplacian,  including $(-\Delta)^m ( m\ge 2 )$ with the degenerate original point.

We notice that, for the biharmonic operator $(-\Delta)^2$, Ben-Artzi, Koch and Saut \cite{BKS} had proven the following sharp kernel estimate,
\begin{equation}\label{freeL1Linfityepsilon}
  |D^{\alpha}I_{0}(t,x)|\leq C|t|^{-(d+|\alpha|)/4}\ \Big(1+|t|^{-1/4}|x|\Big)^{(|\alpha|-d)/3},\,\, t\neq0,\,x\in\mathbb{R}^{d},
\end{equation}
where $I_{0}(t,x)$ is the kernel of $e^{-it\Delta^2}$, and similar pointwise (in time and space) estimates for $(-\Delta)^2\pm\Delta$. The above estimate implies the $L^{1}\rightarrow L^{\infty}$-estimate of $e^{-it\Delta^2}$, namely
\begin{equation}\label{freeL1Linfty}
  \|e^{-it\Delta^2}\|_{L^{1}(\mathbb{R}^{d})\rightarrow L^{\infty}(\mathbb{R}^{d})}\le C |t|^{-d/4}.
\end{equation}
Hence the endpoint Strichartz estimates for the free operator $(-\Delta)^2$ can be established, by using the $L^{1}\rightarrow L^{\infty}$-estimate \eqref{freeL1Linfty} and  Keel-Tao's arguments.  Besides, the Jensen-Kato type decay estimate, local decay estimate and  other $L^{p}$-decay estimates  of $e^{-itf(-\Delta)}$ for general operator $f(-\Delta)$ can be directly derived from the decay estimate similar to \eqref{freeL1Linfty}.

For the higher order Schr\"odinger operators $H_{f}:=f(-\Delta)+V$, it is much more difficult to establish  similar kernel estimate \eqref{freeL1Linfityepsilon} for $e^{-itH_{f}}$,  and  difficult to prove the $L^{1}\rightarrow L^{\infty}$-estimate similar to \eqref{freeL1Linfty}. In order to prove $L^{p}$-decay estimates and Strichartz estimates of $e^{-itH_{f}}$, we first prove Jensen-Kato type decay estimate and local decay estimate of $H_{f}$ to overcome difficulties due to the addition of potential $V$. A key point to obtain Jensen-Kato type decay estimate, is the asymptotic behaviour of the spectral density $E^{\prime}(\lambda)$ of $H_{f}$ near thresholds and infinity. In this paper, we focus on the case $H=(-\Delta)^2+V$. The only threshold of $H$ is zero. In the first part of the paper, we deduce asymptotic expansion in the weighted Sobolev spaces $\mathcal{H}_{\sigma}^{s}(\mathbb{R}^d)$ for resolvent $R(H; z)=(H-z)^{-1}$ and $E^{\prime}(\lambda)$ around zero for dimensions $d=3$ and $d\geq5$ assuming that {\it zero is a regular point for $H$}( see Definition \ref{resonace3} and \ref{resonace} below ).
Our strategy, is demonstrated by using the following free resolvent identity to get the asymptotic resolvent expansion for $(-\Delta)^{2}+V$:
\begin{equation}\label{freeresolventidentity}
  R(H_0; z)=(H_0-z)^{-1}=\frac{1}{2z^{1/2}}\big[(-\Delta-z^{1/2})^{-1}-(-\Delta+z^{1/2})^{-1}\big],\,z\in\mathbb{C}\setminus[0,\,+\infty).
\end{equation}
Similar formulas hold for general polynomials of $-\Delta$. See Remark \ref{polynomial}. Here and in other places, we denote the resolvent of $T$ by
$R(T; z)=(T-z)^{-1}$.

Since the leading term of the resolvent expansion depends on the dimension $d$, we deal with three cases separately: {\it  $d=3$,\,\, $d\geq5$ and odd,\,\  $d\geq6$ and even}. The following are three typical examples of our results, the expansions of $R(H; z)$ for $d=3,5,6$  as $|z|\rightarrow 0$ (with appropriate choice of weighted function $w$):
\begin{equation}\label{d=3}
 d=3,\,\, w(H-z)^{-1}w=C_0+z^{1/4}C_1+z^{2/4}C_2+\cdots
\end{equation}
\begin{equation}\label{d=5}
 d=5,\,\, R(H,z)=B_0+\frac{1-i}{2}z^{1/4}B_1+\frac{1+i}{2}z^{3/4}B_2+(-1)zB_3+\cdots
\end{equation}
\begin{equation}\label{d=6}
  d=6,\,\, R(H,z)=B^{0}_1+z^{1/2}B_{2}^{1}+z\ln z^{1/2}B_{3}^{1}+zB_{2}^{1,1}+\cdots
\end{equation}
where $z^{1/4}$ is in the first quadrant of complex plane.
The expansions are valid in the operator norm in $\mathcal{B}\big(\mathcal{H}^{-2}_{\sigma}(\mathbb{R}^d),\mathcal{H}^{2}_{\sigma^{\prime}}(\mathbb{R}^d)\big)$, where $\mathcal{H}^{s}_{\sigma}(\mathbb{R}^d)$ is the weighted Sobolev space with the associated norm
\begin{equation*}
  \|u\|_{\mathcal{H}^{s}_{\sigma}(\mathbb{R}^d)}=\|\langle x\rangle^{\sigma}\langle i\nabla\rangle^{s}u\|_{L^{2}(\mathbb{R}^d)}.
\end{equation*}
And $L^{2}_{\sigma}(\mathbb{R}^{d})$ denotes the space $\mathcal{H}^{s}_{\sigma}(\mathbb{R}^{d})$ when $s=0$, i.e. $L^{2}_{\sigma}(\mathbb{R}^{d})=\mathcal{H}^{0}_{\sigma}(\mathbb{R}^{d})$. Here $s,\sigma, \sigma^{\prime}\in\mathbb{R}$ and $\langle x\rangle=(1+|x|^2)^{1/2}$. In general,  the expansions to higher orders require larger $\beta$ and $\sigma, \sigma^{\prime}$.

In order to establish Jensen-Kato type decay estimate for $e^{-itH}$ and local smoothing, we also need to study the high energy decay properties of $R(H; z)$, see Subsection 2.3. In fact, the high energy decay estimate is easier than the low energy decay estimate. For Schr\"{o}dinger operator, for instance, in Kopylova and Komech \cite{KK} one can find the high energy decay of the free and perturbed resolvent in the weighted Sobolev norms in 3-dimension. For the constant coefficients differential operator $P(D)$ of order $m$ and of principal type, Agmon first established high energy decay estimate in the fundamental work \cite{Agmon}.  Moreover, Murata had also established high energy decay estimate for first order pseudo-differential operators \cite{MM1} and higher order elliptic operators \cite{MM2}. For the fourth-order Schr\"odinger operator $H$, our method is using the results of free resolvent $R(-\Delta; \zeta)$ and the resolvent identity \eqref{freeresolventidentity} to get the high energy decay estimates of $R((-\Delta)^2; z)$ directly, and then to get high energy derivative estimate of $R^{(k)}(H; z)$ for any $k\ge0$. Our decay rate of $R^{(k)}(H; z)$ is $-(3+3k)/4$, which is compatible with Agmon's result if $k=0$.

In this paper, we always assume that {\it thresholds are regular points of $H$} (so, in particular no bound states at threshold).
For the Schr\"{o}dinger operators, the fact about the absence of positive eigenvalue was first shown in Kato's work \cite{K} if the potential is continuous and decay $ O(|x|^{-\beta})$ at infinity for some $\beta>1$. Since then, the classical result has been extended to Schr\"odinger operators with rough integrable potentials by several authors ( see e. g. Jerison and Kenig \cite{JeKenig}, Kenig, Ruiz and Sogge \cite{KeRS}, Ionescu and Jerison \cite{IJ}, Koch and Tataru \cite{KoTa} and references therein ). Their basic strategy is proving the new Carleman estimate and unique continuation theorem, and then showing the absence of positive eigenvalues.
The difficulty to follow these ideas for fourth-order Schr\"{o}dinger operator is to establish suitable Carleman type estimate for $(-\Delta)^2$ and suitable form of unique continuation theorem for $H$. At present, a general criterion about absence of positive eigenvalues in higher order cases is not yet available except that $V$ is a small potential, see e.g. \cite{SYY}.  Furthermore, we remark that Froese and Herbst's approach \cite{FH, FHHH} is more general than the works mentioned above, where they use the Mourre estimate of the Schr\"odinger operator and the positive preserving property of $e^{-t(-\Delta+V)}$. However,  for $H=(-\Delta)^2+V$, the positive preserving property of semigroup is an clear obstacle, even in the free case $e^{-t \Delta^2}$, see e.g. Reed and Simon \cite[Theorem XIII. 53]{RS2}.

In particular, it is expected that positive eigenvalues exist even for $C_{0}^{\infty}$-potentials. Some of our results are obtained with the assumption that $H$ has no positive embedded eigenvalues. But we must point out that we can remove the absence of positive embedded eigenvalues assumption by using Mourre theory \cite{Mourre, ABG, FH1, Georg-Gera-Moller} for $\bar{H}:=\bar{P}H\bar{P}$. $\bar{P}=1-P_{eign}$ where $P_{eign}$ denotes the orthogonal projection onto the span of eigenvector related to a positive eigenvalue. Due to the presence of the projection $\bar{P}$, the operator has purely continuous spectrum near the eigenvalue. Therefore, we will use Mourre thoery for energies in the continuous spectrum which contain a positive eigenvalue. See Section \ref{commutator method}. Furthermore, Ben-Artzi and Nemirovsky \cite{Ben-Nem} have established the limiting absorption principle near the threshold for general Schr\"odinger type operators $f(-\Delta)+V$ with short range potential. Here $f$ is a real-valued nonnegative continuous function with $f^{\prime}$ satisfies suitable estimates and H\"older continuity.


Now we state one of our main results: {\it Jensen-Kato type decay estimate} ( see Section \ref{section3} for the local decay estimate of $e^{-it(\Delta^2+V)}$ ).
In the following discussion, all the constants $C$ are allowed to depend on the dimension $d$, and to vary from line to line.

\begin{theorem}\label{timedecay}
Let $H=(-\Delta)^2+V$ with $V(x)=O(|x|^{-\beta})$ for $|x|$ large and for some $\beta>1$ as detailed below. Assume $V$ is a compact operator from $\mathcal{H}^{2}_{0}$ to $\mathcal{H}^{-2}_{\beta}$. Under the assumption that $H$ has no positive embedded eigenvalues and 0 is a regular point for $H$, then the following conclusions hold:

(\text{i}) If $d=3$ and  $\beta>11+3/2$, then for  any $\sigma>2+1/2$ we have
\begin{equation}\label{JK3}
  \|e^{-itH}P_{ac} u\|_{L^{2}_{-\sigma}(\mathbb{R}^{3})}\leq C\langle t\rangle^{-5/4}\|u\|_{L^{2}_{\sigma}(\mathbb{R}^{3})},\,\,  t\in\mathbb{R};
\end{equation}

(\text{ii}) If $d\geq5$, $d$ odd  and  $\beta>d$, then for any $\sigma>d/2$ we have
\begin{equation}\label{JKd}
  \|e^{-itH}P_{ac}u\|_{L^{2}_{-\sigma}(\mathbb{R}^{d})}\leq C \langle t\rangle^{-d/4}\|u\|_{L^{2}_{\sigma}(\mathbb{R}^{d})}, \,\,  t\in\mathbb{R};
\end{equation}

(\text{iii}) If $d\geq6$, $d$ even  and $\beta>d+4$, then the above estimate \eqref{JKd} holds again for any $\sigma>d/2+2$.\\
Here $L^2_{\sigma}(\mathbb{R}^{d})$ is the weighted Sobolev space, and $P_{ac}$ denotes the projection onto the absolutely continuous spectrum space of $H$. The constants $C$ depend on the dimension $d$ only.
\end{theorem}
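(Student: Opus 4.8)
The plan is to follow the Jensen--Kato scheme, converting the spectral-density asymptotics established in Section~2 into time decay through an oscillatory-integral analysis of Stone's formula. Writing $E'(\lambda)=\frac{1}{2\pi i}\bigl[R(H;\lambda+i0)-R(H;\lambda-i0)\bigr]$ for the spectral density of $H$ on $(0,\infty)$, the starting point is
\begin{equation*}
\langle x\rangle^{-\sigma}e^{-itH}P_{ac}\langle x\rangle^{-\sigma}
=\int_{0}^{\infty}e^{-it\lambda}\,\langle x\rangle^{-\sigma}E'(\lambda)\langle x\rangle^{-\sigma}\,d\lambda ,
\end{equation*}
viewed as an identity between operators on $L^2$, so that the claimed estimate in $\mathcal{B}(L^2_{\sigma},L^2_{-\sigma})$ is equivalent to an operator-norm bound on $L^2$. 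I would fix a smooth cutoff $\chi$ equal to $1$ near $\lambda=0$ and supported in a small neighbourhood of the threshold, and split the integral into a low-energy piece $\chi E'$ and a high-energy piece $(1-\chi)E'$. For $|t|\le1$ the bound is immediate, since $e^{-itH}P_{ac}$ is a contraction on $L^2$ and the weights embed boundedly, so only the large-$|t|$ behaviour of the two pieces has to be analysed; negative $t$ is then recovered by time-reversal (replacing $t$ by $-t$ and passing to adjoints).

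For the low-energy piece I would insert the resolvent expansions \eqref{d=3}, \eqref{d=5}, \eqref{d=6} (and their analogues for general odd $d\ge5$ and even $d\ge6$), which upon forming the boundary-value difference give, in the operator norm of $\mathcal{B}(L^2,L^2)$, an expansion of the form
\begin{equation*}
\langle x\rangle^{-\sigma}E'(\lambda)\langle x\rangle^{-\sigma}
=\sum_{n}\lambda^{\alpha_n}A_n\;\;(+\ \text{log terms for even }d)+R_N(\lambda),
\qquad \|R_N^{(k)}(\lambda)\|\lesssim \lambda^{\alpha_{N+1}-k}.
\end{equation*}
The crucial observation is that, since the two boundary values of $z^{1/4}$ differ by the factor $i$, a term carrying the power $z^{n/4}$ contributes $\lambda^{n/4}(1-i^{\,n})$ to the difference, so every integer power of $\lambda$ ($n\equiv0\ (\mathrm{mod}\ 4)$) cancels and the smallest surviving exponent is $\alpha_{\min}=1/4$ when $d=3,5$ and $\alpha_{\min}=(d-4)/4$ for $d\ge5$ (the would-be singular $\lambda^{-1/4}$ term in $d=3$ being absent precisely because $0$ is a regular point). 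I would then invoke the elementary oscillatory-integral lemma that $\int_0^\infty e^{-it\lambda}\lambda^{\alpha}g(\lambda)\,d\lambda=O(|t|^{-1-\alpha})$ for $g\in C_c^\infty$ and $\alpha\notin\mathbb{Z}$, applied term by term in operator norm: the leading term contributes exactly $|t|^{-1-\alpha_{\min}}$, that is $|t|^{-5/4}$ for $d=3$ and $|t|^{-d/4}$ for $d\ge5$, while every higher term and the remainder $R_N$ decay faster once $N$ is large enough. For even $d$ the $z\ln z^{1/2}$-type terms sit at exponent $1>\alpha_{\min}$ and so only produce faster $|t|^{-2}\log|t|$ contributions, leaving the leading rate unaffected.

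For the high-energy piece I would use the high-energy resolvent derivative bounds of Subsection~2.3, namely $\|\langle x\rangle^{-\sigma}R^{(k)}(H;\lambda\pm i0)\langle x\rangle^{-\sigma}\|\lesssim \lambda^{-(3+3k)/4}$, and integrate by parts $N$ times in $\lambda$: each integration gains a factor $|t|^{-1}$ at the cost of one resolvent derivative, the boundary terms vanish by the cutoff at the lower end and the decay at infinity, and the resulting integral converges for $N$ large, making this contribution $O(|t|^{-M})$ for any prescribed $M$. Combining the low- and high-energy estimates and absorbing the trivial $|t|\le1$ regime yields the stated bounds with $\langle t\rangle^{-5/4}$ in $d=3$ and $\langle t\rangle^{-d/4}$ for $d\ge5$.

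The main obstacle is the low-energy, term-by-term integration: one must justify the spectral-density expansion to sufficiently high order with its remainder derivatives controlled as $\|R_N^{(k)}(\lambda)\|\lesssim\lambda^{\alpha_{N+1}-k}$, and it is exactly this requirement that forces the quantitative hypotheses $\beta>11+\tfrac32,\ \sigma>2+\tfrac12$ in $d=3$, $\beta>d,\ \sigma>d/2$ in odd $d$, and $\beta>d+4,\ \sigma>d/2+2$ in even $d$ --- the extra regularity in the even case being needed to separate the genuine $\lambda^{(d-4)/4}$ leading behaviour from the logarithmic corrections. Making the oscillatory-integral lemma quantitative in the operator norm, uniformly in the remainder, is the technical heart of the argument; by contrast the high-energy analysis, resting on the derivative bounds already in hand, is comparatively routine.
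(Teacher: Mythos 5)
Your proposal is correct and follows essentially the same route as the paper's proof: Stone's formula with the spectral density $E'(\lambda)$, a smooth low/high-energy cutoff, repeated integration by parts against the high-energy bounds $\|E^{(k+1)}(\lambda)\|_{\mathcal{B}(L^2_\sigma,L^2_{-\sigma})}=O(\lambda^{-3(k+1)/4})$ of Proposition \ref{spectraldesitylarge}, and the differentiated threshold expansions of $E'(\lambda)$ from Proposition \ref{Elambda} at low energy, with the hypotheses on $\beta,\sigma$ entering exactly where you say they do. The only cosmetic difference is the final low-energy step: where you apply the term-by-term lemma $\int_0^\infty e^{-it\lambda}\lambda^{\alpha}g(\lambda)\,d\lambda=O(|t|^{-1-\alpha})$ (which in fact also holds for integer $\alpha$ via boundary terms in integration by parts --- needed when $d\equiv 0 \pmod 4$, where the leading exponent $d/4-1$ is an integer, so your restriction $\alpha\notin\mathbb{Z}$ should be dropped), the paper instead integrates by parts once and gains the remaining $|t|^{-1/4}$ through the translation trick $\int_0^\infty e^{-it\lambda}g_l'(\lambda)\,d\lambda=\tfrac12\int_0^\infty e^{-it\lambda}\bigl(g_l'(\lambda+\pi/t)-g_l'(\lambda)\bigr)\,d\lambda$, an equivalent implementation of the same Jensen--Kato idea.
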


In the second part of this paper, we apply the $L^{p}$-estimate for the free case and Jensen-Kato type decay estimate above to derive the $L^p$-type estimate ( Ginibre argument ) and Strichartz estimates for $H$. The unpublished argument of Ginibre for Schr\"{o}dinger operator in three or higher dimensions allows passing from the local decay to global decay, in the form of  $L^{1}\cap L^{2}\rightarrow L^{2}+L^{\infty}$.  For Schr\"{o}dinger operator, such result is
\begin{equation*}
  \|e^{-it(-\Delta+V)}P_{ac}u\|_{L^{2}+L^{\infty}(\mathbb{R}^d)}\leq C(d)\langle  t\rangle^{-d/2}\|u\|_{L^{1}\cap L^{2}(\mathbb{R}^d)}.
\end{equation*}
Here $P_{ac}$ is the projection onto the continuous spectrum space of $-\Delta+V$, see e.g. W. Schlag \cite{Schlag} and references therein. The first optimal $L^p$-time decay estimate is due to Journ\'{e}, Soffer and Sogge's work \cite{JS}. They have established the $L^{1}\rightarrow L^{\infty}$-decay estimate using new cancellation lemma, see e.g. \cite[Lemma 2.2]{JS}. Then it was remarked by Ginibre(unpublished) that the above weaker estimate can be derived by simplified argument which requires Jensen-Kato type estimates.

For the biharmonic operator $(-\Delta)^2$, we know that the $L^{1}\rightarrow L^{\infty}$-decay estimate \eqref{freeL1Linfty} holds from Ben-Artzi, Koch and Saut's work \cite{BKS}. For the perturbed fourth-order Schr\"odinger operator $H=(-\Delta)^2+V$, there are few results about the $L^{1}\rightarrow L^{\infty}$ time decay estimate of $e^{-itH}P_{ac}$ in any dimension until now. The method in Journ\'{e}, Soffer and Sogge \cite{JS} can not be simply applied to $H=(-\Delta)^2+V$, as some similar cancellation lemmas involving $e^{-it\Delta^2} V e^{it\Delta^2}$ are much more complicated than Laplacian $-\Delta$. In this paper, based on the free decay estimate \eqref{freeL1Linfty}, we can use Ginibre argument for $e^{-itH}P_{ac}$ with decay $\langle t\rangle^{-d/4}$ for $d\geq5$, and also obtain the $L^{1}\rightarrow L^{\infty}$ time decay estimate of $e^{-itH}P_{ac}$ with decay $|t|^{-1/2}$ for $d=3$, though it is not optimal.
\begin{theorem}\label{Ginibred=3}
Let $d=3$ and  $H$ satisfy the same conditions as given in Theorem \ref{timedecay}. For $V(x)\in L^{\infty}$, then we have
\begin{align}\label{1inftyd=3}
\|e^{-itH}P_{ac}\|_{L^{1}(\mathbb{R}^3)\rightarrow L^{\infty}(\mathbb{R}^3)}\leq
\ \begin{cases}C \ |t|^{-3/4},\,\, \ \ for \,\, 0<|t|<1,\\ C\ |t|^{-1/2},\,\, \ \ for \,\,  |t|\ge1,\end{cases}
\end{align}
where $P_{ac}$ denotes the projection onto the absolutely continuous spectrum space of $H$.
\end{theorem}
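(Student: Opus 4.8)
The plan is to run the \emph{Ginibre argument}: to pass from the local (weighted $L^2$) decay of Theorem \ref{timedecay}(i) to a global $L^1\to L^\infty$ bound by means of a symmetric (double) Duhamel expansion, using the free dispersive estimate \eqref{freeL1Linfty} to supply time decay at the two ends and the spatial localization of $V$ to connect the ends to the weighted middle factor. Writing $H_0=(-\Delta)^2$ and $U(t)=e^{-itH}P_{ac}$, I would first combine the forward Duhamel formula
\[
U(t)=e^{-itH_0}P_{ac}-i\int_0^t e^{-i(t-s)H_0}\,V\,e^{-isH}P_{ac}\,ds
\]
with the backward Duhamel identity
\[
e^{-isH}P_{ac}=P_{ac}e^{-isH_0}-i\int_0^s e^{-i(s-\tau)H}P_{ac}\,V\,e^{-i\tau H_0}\,d\tau,
\]
the latter obtained by differentiating $P_{ac}e^{-isH}e^{isH_0}$ in $s$ and using $[P_{ac},H]=0$. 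Substituting the second into the first produces the decomposition $U(t)=A_0+A_1+A_2$, where $A_0=e^{-itH_0}P_{ac}$, $A_1=-i\int_0^t e^{-i(t-s)H_0}VP_{ac}e^{-isH_0}\,ds$, and
\[
A_2=-\int_0^t\!\!\int_0^s e^{-i(t-s)H_0}\,V\,\big[e^{-i(s-\tau)H}P_{ac}\big]\,V\,e^{-i\tau H_0}\,d\tau\,ds.
\]
The crucial structural point is that the middle propagator in $A_2$ carries the projection $P_{ac}$, so that the Jensen--Kato local decay estimate of Theorem \ref{timedecay}(i) applies directly to it.

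The estimate of $A_2$ rests on three ingredients, chosen with a fixed weight $\sigma$ slightly larger than $5/2$. First, since $e^{-itH_0}$ is bounded neither from $L^1$ to $L^2$ nor from $L^2$ to $L^\infty$, I would route the two free ends through weighted $L^2$ spaces: from \eqref{freeL1Linfty} and $\|\langle x\rangle^{-\sigma}\|_{L^2(\mathbb{R}^3)}<\infty$ (valid for $\sigma>3/2$) one gets $\|\langle x\rangle^{-\sigma}e^{-i\tau H_0}f\|_{L^2}\le\|e^{-i\tau H_0}f\|_{L^\infty}\|\langle x\rangle^{-\sigma}\|_{L^2}\lesssim|\tau|^{-3/4}\|f\|_{L^1}$, and dually (by time reversal) $\|e^{-i(t-s)H_0}\|_{L^2_{\sigma}\to L^\infty}\lesssim|t-s|^{-3/4}$. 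Second, writing $V=\langle x\rangle^{-\sigma}(\langle x\rangle^{2\sigma}V)\langle x\rangle^{-\sigma}$ with $\langle x\rangle^{2\sigma}V\in L^\infty$ (which holds since $\beta>2\sigma$), the two potential factors are bounded as multiplication operators $L^2_{-\sigma}\to L^2_{\sigma}$. Third, $\|\langle x\rangle^{-\sigma}e^{-i(s-\tau)H}P_{ac}\langle x\rangle^{-\sigma}\|_{L^2\to L^2}\lesssim\langle s-\tau\rangle^{-5/4}$ by Theorem \ref{timedecay}(i). Chaining these bounds gives
\[
\big\|e^{-i(t-s)H_0}\,V\,e^{-i(s-\tau)H}P_{ac}\,V\,e^{-i\tau H_0}\big\|_{L^1\to L^\infty}\lesssim|t-s|^{-3/4}\,\langle s-\tau\rangle^{-5/4}\,|\tau|^{-3/4},
\]
and it remains to integrate over the simplex $0\le\tau\le s\le t$. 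Because $5/4>1$, the inner $\tau$-integral converges and behaves like $\langle s\rangle^{-3/4}$, after which the convolution of $|t-s|^{-3/4}$ with $\langle s\rangle^{-3/4}$ yields $\lesssim|t|^{-1/2}$ for $|t|\ge1$ and $\lesssim|t|^{1/2}\lesssim|t|^{-3/4}$ for $0<|t|<1$. The same weighted routing applied to the single potential factor in $A_1$ (after writing $P_{ac}=I-P_p$, where $P_p=I-P_{ac}$ is the point-spectrum projection, the singular continuous part being excluded by the limiting absorption principle) produces the Beta-integral $\int_0^t|t-s|^{-3/4}s^{-3/4}\,ds\simeq|t|^{-1/2}$, so $A_1$ obeys the same bound; and $A_0$ is controlled by \eqref{freeL1Linfty} directly, contributing $|t|^{-3/4}$.

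Finally I would dispatch the point-spectrum pieces. Under the running hypotheses (no positive embedded eigenvalues and $0$ regular), the point spectrum lies in $(-\infty,0)$ and consists of finitely many eigenvalues with rapidly decaying eigenfunctions $\phi_j\in L^1\cap L^\infty$; hence $e^{-itH_0}P_p$ and the $P_p$-part of $A_1$ are finite-rank and are bounded by $|t|^{-3/4}$ and $|t|^{-1/2}$ respectively, using \eqref{freeL1Linfty} together with $|\langle f,e^{isH_0}\phi_j\rangle|\lesssim|s|^{-3/4}\|f\|_{L^1}\|\phi_j\|_{L^1}$. Collecting the contributions of $A_0,A_1,A_2$ then yields \eqref{1inftyd=3}. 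I expect the main obstacle to be the coordinated bookkeeping of the weight $\sigma$: it must simultaneously exceed $5/2$ (so Theorem \ref{timedecay}(i) applies to the middle factor), exceed $3/2$ (so the free ends map into and out of $L^2_{\mp\sigma}$), and satisfy $2\sigma<\beta$ (so $\langle x\rangle^{2\sigma}V$ is bounded); checking these are compatible forces the stated range $\beta>11+3/2$. The other delicate point is purely the time integration: one must verify that the two integrable-but-singular endpoint factors $|t-s|^{-3/4}$ and $|\tau|^{-3/4}$, glued by the summable middle factor $\langle s-\tau\rangle^{-5/4}$, convolve to exactly $|t|^{-1/2}$ and no worse --- which is precisely where the non-optimality of the exponent (relative to the free rate $|t|^{-3/4}$) originates.
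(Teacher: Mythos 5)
Your proposal is correct and follows essentially the same route as the paper's proof: the identical iterated Duhamel decomposition into three terms, the free dispersive bound \eqref{freeL1Linfty} at the two ends, the Jensen--Kato weighted estimate \eqref{JK3} for the middle propagator $\langle x\rangle^{-\sigma}e^{-i(s-\tau)H}P_{ac}\langle x\rangle^{-\sigma}$, the $L^{p}$-boundedness of $P_{ac}$ obtained from eigenfunction decay, and the same Beta-type time integrals producing $|t|^{-1/2}$ for $|t|\ge 1$ and $|t|^{-3/4}$ for $0<|t|<1$. The only (harmless) inaccuracy is your closing attribution: the condition $\beta>11+3/2$ is not forced by the weight bookkeeping of the Ginibre argument itself, which needs only roughly $\sigma>5/2$ and $\beta>\sigma+3/2$; it is inherited wholesale from the hypotheses of Theorem \ref{timedecay}, where it is required for the third-order resolvent expansion near zero.
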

\vskip0.3cm

\begin{theorem}\label{Ginibred5}
Let  $d\geq5$ and   $H$ satisfy the same conditions as given in Theorem \ref{timedecay}. For $V(x)\in L^{\infty}$, then we have
\begin{equation}\label{Ginid}
\|e^{-itH}P_{ac}u\|_{L^{2}+L^{\infty}(\mathbb{R}^d)}\leq C \ \langle t\rangle^{-d/4}\ \|u\|_{L^{2}\cap L^{1}(\mathbb{R}^d)}, \ t\in\mathbb{R},
\end{equation}
where $P_{ac}$ denotes the projection onto the absolutely continuous spectrum space of $H$.
\end{theorem}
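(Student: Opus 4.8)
The plan is to derive the global bound \eqref{Ginid} from two inputs already in hand: the free dispersive decay \eqref{freeL1Linfty}, $\|e^{-itH_0}\|_{L^1\to L^\infty}\le C|t|^{-d/4}$, and the Jensen--Kato weighted decay of Theorem \ref{timedecay}, $\|e^{-isH}P_{ac}\|_{L^2_\sigma\to L^2_{-\sigma}}\le C\langle s\rangle^{-d/4}$. First I would reduce to $t\ge1$: for $|t|\le1$ the estimate is immediate from the unitarity of $e^{-itH}$ on $L^2$ (the $L^2$-component of the target alone suffices), and $t\le-1$ follows from the symmetry $t\mapsto-t$ combined with complex conjugation. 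I would also reduce modulo bound states: writing $P_{ac}=I-P_p$, the finitely many eigenfunctions are spatially localized and lie in $L^1\cap L^2$, so $e^{-itH_0}P_pu$ is controlled directly by \eqref{freeL1Linfty} while $P_p$ maps $L^2\to L^1\cap L^2$; thus it is enough to treat the ``identity part'' of the evolution.

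The core is the Duhamel expansion, but a single application
\[
e^{-itH}P_{ac}u=e^{-itH_0}P_{ac}u-i\int_0^t e^{-i(t-s)H_0}\,V\,e^{-isH}P_{ac}u\,ds
\]
is \emph{not} enough: the factor $e^{-isH}P_{ac}u$ forces Jensen--Kato onto the \emph{raw} data $u$, which demands $\|u\|_{L^2_\sigma}$, a norm unavailable from $u\in L^1\cap L^2$. To circumvent this mismatch I would iterate once, substituting the analogous formula $e^{-isH}P_{ac}u=P_{ac}e^{-isH_0}u-i\int_0^s e^{-i(s-\tau)H}P_{ac}V\,e^{-i\tau H_0}u\,d\tau$ into the integrand. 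This produces, besides the free term and the term $\int_0^t e^{-i(t-s)H_0}V e^{-isH_0}u\,ds$ (modulo the localized $P_p$-correction), the doubly-integrated piece
\[
(-i)^2\int_0^t\!\!\int_0^s e^{-i(t-s)H_0}\,V\,e^{-i(s-\tau)H}P_{ac}\,V\,e^{-i\tau H_0}u\,d\tau\,ds .
\]
The purpose of the iteration is structural: in every term the raw data meets \emph{only} the free group $e^{-i\tau H_0}$, so \eqref{freeL1Linfty} converts $L^1$ data into $L^\infty$ with the gain $\tau^{-d/4}$, while the perturbed group $e^{-i(s-\tau)H}P_{ac}$ appears \emph{flanked by} $V$ on both sides. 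Since $V=O(|x|^{-\beta})$ with $\langle x\rangle^\sigma V\in L^2\cap L^\infty$ (a consequence of the stated ranges of $\beta,\sigma$), multiplication by $V$ carries $L^\infty$ or $L^2$ into $L^2_\sigma$ and carries $L^2_{-\sigma}$ into $L^1$ or $L^2$. Hence Jensen--Kato is only ever applied to the \emph{localized} data $Ve^{-i\tau H_0}u\in L^2_\sigma$, and the weight obstruction disappears.

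It then remains to bound each term in $L^2+L^\infty$ with the rate $t^{-d/4}$. For the free term this is \eqref{freeL1Linfty}. For the integral terms I would split each time integration by whether the relevant time-gap is $\le1$ or $\ge1$: on a short $O(1)$ interval near an endpoint I route the corresponding free propagator through $L^2$, where it is an isometry, so no decay is needed and the short integral is harmless; on the complementary range I use the dispersive gain. In the double integral the three gaps $\tau$, $s-\tau$, $t-s$ sum to $t$, so at least one is $\ge t/3$ and supplies the factor $t^{-d/4}$, while the remaining integrations of $\langle\cdot\rangle^{-d/4}$ converge and close the $L^1\cap L^2$ norms. \textbf{This is exactly where $d\ge5$ is essential}: the condition $d/4>1$ makes $\int^\infty\langle\tau\rangle^{-d/4}\,d\tau<\infty$, so after extracting $t^{-d/4}$ from the largest gap the remaining time integrals produce only constants; this convergence fails at $d=3$, where $d/4=3/4<1$, which is precisely why only the weaker rate of Theorem \ref{Ginibred=3} is available there.

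\textbf{The main obstacle} I expect is not any single estimate but the simultaneous bookkeeping across the whole chain $L^1\cap L^2\to\cdots\to L^2+L^\infty$: in each time-regime and for each factor one must choose the norm pairing ($L^1\to L^\infty$ versus the $L^2$-isometry for the free groups, and $L^2_\sigma\to L^2_{-\sigma}$ for the middle group) so that intermediate target and source spaces match, while never creating a non-integrable singularity at a vanishing time-gap and never forfeiting the full power $t^{-d/4}$. The naive routing---using the $L^2$-isometry over a full half-interval of length $\sim t$---yields only $t^{1-d/4}$; avoiding this loss forces the endpoint splitting above and is the delicate part. Once the routing is fixed, each term is bounded by $C\langle t\rangle^{-d/4}\|u\|_{L^1\cap L^2}$, and summing the three contributions gives \eqref{Ginid}.
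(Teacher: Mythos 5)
Your proposal is correct and follows essentially the same route as the paper: the iterated Duhamel formula \eqref{TDuhamel} with the free dispersive bound \eqref{freeL1Linfty} on the outer propagators, the Jensen--Kato estimate applied only to the $V$-sandwiched perturbed group, the split of each time integral at an $O(1)$ endpoint (routing the free group through $L^2$ there), and convergence from $d/4>1$. Your ``largest gap $\ge t/3$'' bookkeeping is just the standard proof of the paper's convolution lemma \eqref{jjbds}, and your treatment of $P_{ac}=I-P_p$ via eigenfunction localization is the content of Proposition \ref{eigenfunctiondecay} and Theorem \ref{projectionbounded}.
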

The studies of space-time integrability properties of the solutions for Schr\"odinger equations and the corresponding inhomogeneous equation, have been pursued by many authors in the last thirty years. In particular, the Strichartz estimates, which have become fundamental and amazing tools for the studies of PDEs including the well-posedness and scattering theory, see e.g. \cite{Berezin-Shubin, KT, TT1, TT2, YK}. For the nonlinear fourth-order Schr\"odinger equation without potential, see \cite{MXZ1, MXZ2} for some global well-posedness and scattering results in both focusing and defocusing cases.
 Here we  will establish Strichartz estimates for the following fourth-order Schr\"odinger equation with potential $V$ and source term $h(t)$:
\begin{equation}\label{fourthorder equ}
\left\{ \begin{gathered}
   i\partial_t \Psi = (\Delta^2+V)\Psi+h(t), \hfill \\
   \Psi(0,\cdot)=\Psi_0 \in L^{2}(\mathbb{R}^d) . \hfill \\
\end{gathered}  \right.
\end{equation}
In the case of $d=3$, we get local Strichartz type estimate by interpolation. For $d\geq5$, based on Jensen-Kato type decay estimate \eqref{JKd} and local decay estimate \eqref{local decay}, we prove the following global endpoint Strichartz estimate. Recall that {\it the admissible pair $(q,r)$ for the fourth-order Schr\"odinger equation satisfies}
\begin{equation}\label{admissblepair}
  \frac{4}{q}+\frac{d}{r}=\frac{d}{2}, \,\, 2\leq q\leq\infty,\  d\ge5.
\end{equation}
Especially, $r=2d/(d-4)$ when $q=2$.
\begin{theorem}\label{SchtriE}
Consider the equation \eqref{fourthorder equ}. Let $H$ satisfy the same conditions as given in Theorem \ref{Ginibred5}. Then for any admissible pairs $(q,r)$ and $(\tilde{q},\tilde{r})$,  we have the homogeneous Strichartz estimate
\begin{equation}\label{homo}
  \|e^{-itH}P_{ac}\Psi_0\|_{L_{t}^{q}L_{x}^{r}(\mathbb{R}\times\mathbb{R}^{d})}\leq C(d)\, \|\Psi_0\|_{L^{2}(\mathbb{R}^{d})},
\end{equation}
and the dual homogeneous Strichartz estimate
\begin{equation}\label{dualhomo}
  \big\|\int_{\mathbb{R}}e^{isH}P_{ac}h(s,\cdot)ds\big\|_{L^{2}_{x}(\mathbb{R}^{d})}\leq C(d)\, \|h\|_{L_{t}^{\tilde{q}^{\prime}}L_{x}^{\tilde{r}^{\prime}}(\mathbb{R}\times\mathbb{R}^{d})}.
\end{equation}
Furthermore, the solution $\Psi(t,x)$ satisfies that
\begin{equation}\label{retarded}
  \|P_{ac}\Psi(t,x)\|_{L_{t}^{q}L_{x}^{r}(\mathbb{R}\times\mathbb{R}^{d})}\leq C(d)\, \|\Psi_0\|_{L^{2}}+\|h\|_{L_{t}^{\tilde{q}'}L_{x}^{\tilde{r}'}(\mathbb{R}\times\mathbb{R}^{d})},
\end{equation}
where $P_{ac}$ is the projection onto the absolutely continuous spectrum of $H$.
\end{theorem}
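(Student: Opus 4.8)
The plan is to run a perturbative Duhamel argument, treating $V$ as a spatially localized perturbation of $(-\Delta)^2$. The starting point is the free endpoint Strichartz estimates for $e^{-it\Delta^2}$, which follow from the free dispersive bound \eqref{freeL1Linfty} together with the energy identity $\|e^{-it\Delta^2}f\|_{L^2}=\|f\|_{L^2}$ via the Keel--Tao machinery; this yields the homogeneous, dual and retarded (double-endpoint inhomogeneous) Strichartz estimates for the free operator for every admissible pair \eqref{admissblepair}. I emphasize that one cannot simply feed $e^{-itH}P_{ac}$ into Keel--Tao directly, since the full $L^1\to L^\infty$ dispersive bound for $H$ is not available for $d\ge 5$ (Theorem \ref{Ginibred5} gives only the weaker $L^1\cap L^2\to L^2+L^\infty$ statement); the localized information about $H$ enters instead through the Jensen--Kato estimate \eqref{JKd} and the local decay estimate \eqref{local decay}.

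The first genuinely new ingredient is a space--time smoothing (local energy decay) bound
\[
  \|\langle x\rangle^{-\sigma}e^{-itH}P_{ac}\Psi_0\|_{L^2_t L^2_x(\mathbb{R}\times\mathbb{R}^d)}\lesssim \|\Psi_0\|_{L^2}.
\]
I would derive this from \eqref{JKd} by a $TT^{*}$ argument: the operator $T\Psi_0=\langle x\rangle^{-\sigma}e^{-itH}P_{ac}\Psi_0$ has $TT^{*}F=\int \langle x\rangle^{-\sigma}e^{-i(t-s)H}P_{ac}\langle x\rangle^{-\sigma}F(s)\,ds$, and \eqref{JKd} bounds the kernel operator norm by $C\langle t-s\rangle^{-d/4}$. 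Since $d\ge 5$ forces $d/4>1$, i.e. $\langle t\rangle^{-d/4}\in L^1_t$, Young's inequality gives $\|TT^{*}\|_{L^2_{t,x}\to L^2_{t,x}}\lesssim 1$ and hence the claim. The integrability $\langle t\rangle^{-d/4}\in L^1_t$ is precisely what fails at $d=3$, which is why only local results are available there.

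With this in hand I would prove \eqref{homo}. Writing $u(t)=e^{-itH}P_{ac}\Psi_0$ and expanding by Duhamel relative to the free flow,
\[
  u(t)=e^{-it\Delta^2}P_{ac}\Psi_0 - i\int_0^t e^{-i(t-s)\Delta^2}\,V u(s)\,ds,
\]
the free term is controlled by $\|P_{ac}\Psi_0\|_{L^2}\le\|\Psi_0\|_{L^2}$ via free Strichartz. For the Duhamel term I factor $Vu=\langle x\rangle^{-\sigma}h$ with $h=\langle x\rangle^{2\sigma}V\,(\langle x\rangle^{-\sigma}u)$; because $V=O(\langle x\rangle^{-\beta})$ with $\beta\ge 2\sigma$ (which is exactly the size hypothesis on $\beta$ inherited from Theorem \ref{timedecay}, matched to $\sigma>d/2$ for odd $d$ and $\sigma>d/2+2$ for even $d$) the weight $\langle x\rangle^{2\sigma}V$ lies in $L^\infty$, so the smoothing bound above gives $\|h\|_{L^2_{t,x}}\lesssim\|\Psi_0\|_{L^2}$. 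It then remains to invoke the free \emph{hybrid smoothing--Strichartz} estimate
\[
  \Big\|\int_0^t e^{-i(t-s)\Delta^2}\langle x\rangle^{-\sigma}h(s)\,ds\Big\|_{L^q_t L^r_x}\lesssim \|h\|_{L^2_{t,x}},
\]
whose non-retarded version is immediate by composing the free homogeneous Strichartz estimate with the dual of the free Kato smoothing estimate $\|\langle x\rangle^{-\sigma}e^{-it\Delta^2}f\|_{L^2_{t,x}}\lesssim\|f\|_{L^2}$. The dual estimate \eqref{dualhomo} then follows from \eqref{homo} by duality in $L^2_x$, and the retarded estimate \eqref{retarded} by applying the same perturbative scheme to the full inhomogeneous problem \eqref{fourthorder equ} and combining with \eqref{homo} and \eqref{dualhomo}.

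The main obstacle is the endpoint $q=\tilde q=2$. Passing from the non-retarded to the time-ordered (retarded) form of the hybrid estimate and of \eqref{retarded} is routine away from the endpoint by the Christ--Kiselev lemma, which needs the output time exponent to strictly exceed the input one; at $q=2$ both equal $2$ and Christ--Kiselev fails. I expect the bulk of the technical work to be a Keel--Tao type bilinear argument for the time-ordered operator at the double endpoint, now in the hybrid setting where one slot is an admissible Strichartz pair and the other is the weighted $L^2_{t,x}$ smoothing norm. A secondary point is the sharp bookkeeping of weights, ensuring that the decay rate $\beta$ of $V$ and the admissible $\sigma$ are simultaneously compatible so that $\langle x\rangle^{2\sigma}V\in L^\infty$ while \eqref{JKd} still holds; the hypotheses carried over from Theorem \ref{timedecay} are tailored exactly to close this loop.
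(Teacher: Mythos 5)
Your overall skeleton (free Keel--Tao Strichartz for $e^{-it\Delta^2}$, Duhamel relative to the free flow, and a Kato-smoothing bound $\|\langle x\rangle^{-\sigma}e^{-itH}P_{ac}\Psi_0\|_{L^2_{t,x}}\lesssim\|\Psi_0\|_{L^2}$) is the same as the paper's, and your derivation of the smoothing bound from \eqref{JKd} by $TT^{*}$ plus Young's inequality (using $d/4>1$) is a legitimate alternative to the paper's route, which obtains it in Theorem \ref{local decay thm} from uniform resolvent estimates and Kato smoothness for any $\sigma>1/2$. But your argument does not close where it matters: at the double endpoint $q=\tilde q=2$ --- the actual content of the theorem --- you need a \emph{retarded} hybrid estimate taking the weighted $L^2_{t,x}$ smoothing norm to $L^2_tL^{2d/(d-4)}_x$, you correctly note that Christ--Kiselev is unavailable there, and you leave the required ``Keel--Tao type bilinear argument in the hybrid setting'' as an expectation rather than a proof. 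So for non-endpoint pairs your scheme works, but the endpoint claim is left with a genuine gap --- and in fact that extra machinery is unnecessary.

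The idea you are missing is the paper's H\"older step: instead of keeping $Vu(s)$ as a weighted $L^2_x$ object, estimate
\begin{equation*}
\big\|Ve^{-isH}P_{ac}\Psi_0\big\|_{L^2_tL^{\frac{2d}{d+4}}_x}\le\big\|V\langle x\rangle^{\sigma}\big\|_{L^{d/2}_x}\,\big\|\langle x\rangle^{-\sigma}e^{-isH}P_{ac}\Psi_0\big\|_{L^2_tL^2_x}\lesssim\|\Psi_0\|_{L^{2}},
\end{equation*}
which is valid since $\frac{d+4}{2d}=\frac{2}{d}+\frac{1}{2}$ and $V\langle x\rangle^{\sigma}\in L^{d/2}$ for the assumed decay of $V$. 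This places the Duhamel forcing exactly in the dual endpoint space $L^{\tilde q'}_tL^{\tilde r'}_x$ with $(\tilde q,\tilde r)=(2,\frac{2d}{d-4})$, and the free retarded estimate \eqref{freeretarded} --- which Keel--Tao's endpoint theorem supplies for \emph{all} pairs of admissible exponents, double endpoint included, because the dispersive decay rate $d/4$ exceeds $1$ --- applies directly to $\int_0^t e^{-i(t-s)H_0}(\cdot)\,ds$ with no time-ordering obstruction at all. With this substitution \eqref{homo} follows, and \eqref{dualhomo} by duality, just as you say. Finally, your one-line treatment of \eqref{retarded} hides a further real step: closing the estimate requires the weighted space-time bound on the source term, $\big\|\int_0^t\langle x\rangle^{-\sigma}e^{i(t-s)H}P_{ac}h(s)\,ds\big\|_{L^2_{t,x}}\lesssim\|\Psi_0\|_{L^2}+\|h\|_{L^{\tilde q'}_tL^{\tilde r'}_x}$, which the paper proves by an auxiliary-equation device (comparing the Duhamel representations of $i\partial_t\phi=H_0\phi+h$ relative to $H$ and to $H_0$, using the $L^p$-boundedness of $P_{ac}$) together with the $\langle t-s\rangle^{-d/4}$ Jensen--Kato decay and Young's inequality; your sketch supplies none of this.
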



At the end of this paper, we use the Georgescu-Larenas-Soffer conjugate operator method to get Jensen-Kato type decay estimate for $e^{-itH}$ and $e^{-it\sqrt{H+m^2}}$. For the free half-wave operator $e^{-it\sqrt{H_{0}+1}}$, W. Chen, C. Miao and X. Yao \cite{CMY} had proven the $L^p\rightarrow L^{q}$-estimates using the kernel of $e^{-it\sqrt{H_{0}+1}}$. The conjugate operator method here we used reveals that the local decay estimate implies the Jensen-Kato type decay estimate. The idea of this method is to construct the Larenas-Soffer conjugate operator $\tilde{A}=A+B$, where $A=-\frac{i}{2}(x\cdot\nabla+\nabla\cdot x)$ and $B$ is a bounded operator. The commutator of $H$ and $A$ equals $q(H)+K$, where $q(H)$ is a function of $H$ and $K$ is a good operator in some sense. The new conjugate operator $\tilde{A}$  keeps the same good properties of $A$, and kills the tail $K$ when $\tilde{A}$ does commute with $H$. The difficulty of using this approach \cite{GLS} is to prove the $C^{k}$-condition,  i.e. $H\in C^{k}(A)$ and $\sqrt{H+m^2}\in C^{k}(A)$. We remark that this method relies on the local decay estimate to show the existence of operator $B$ and then allows to get stronger decay estimate in an easier way than the Jensen-Kato expansion method, see \cite{ABG, GG, GLS, LS}.

\textbf{Notations.} In what follows, we write $A\lesssim B$ to signify that there exists a constant $C$ such that $A\leq CB$. And $odd~ d\geq5$ means $d\geq5$ and $d$ is odd. similarly, for $even~d\geq6$.

\section{The resolvent $R(H; z)$ of $H=(-\Delta)^2+V$}
\setcounter{equation}{0}
By the spectral theorem, we know
$$e^{-itH}P_{ac}=\int_{0}^{\infty}e^{-it\lambda}E^{\prime}(\lambda)d\lambda.$$
In order to get Jensen-Kato type decay estimate, we need to analyze the property of $E^{\prime}(\lambda)$ and higher order derivatives  $E^{(k)}(\lambda)$ for $\lambda$ small and large. This is related to the asymptotic properties of the resolvent $R(H; z)$ for $z$ small and large since $\pi E^{\prime}(\lambda)=\textrm{Im}~R(H; \lambda+i0)$.

In this section, we aim to obtain the low energy asymptotic expansion and the high energy decay estimate of $R(H; z)$. For $z$ large, we prove the high energy decay of $R(H; z)$ in $\mathcal{B}\big(L^{2}_{\sigma}(\mathbb{R}^{d}),\,\, L^{2}_{-\sigma}(\mathbb{R}^{d})\big)$ directly. For $z$ near zero, we use the second resolvent formula
\begin{equation}\label{Borndecomposition}
R(H; z)=(1+R(H_0; z)V)^{-1}R(H_0; z)
\end{equation}
to derive the asymptotic expansion in the weighted Sobolev space. At the end, we show that for $\lambda>0$, the limit $\lim_{\epsilon\downarrow0}R(H; \lambda\pm i0)$ exists in $\mathcal{B}\big(L^{2}_{\sigma}(\mathbb{R}^{d}),\,\, L^{2}_{-\sigma}(\mathbb{R}^{d})\big)$ to obtain the asymptotic property of $E^{\prime}(\lambda)$ for $\lambda$ small and decay of $E^{\prime}(\lambda)$ for $\lambda$ large.

Recall that, for the Schr\"{o}dinger operator $-\Delta+V$, Kato and Jensen derived resolvent expansion of $(-\Delta+V-\zeta)^{-1}$ around zero in dimensions $d\ge3$ by using the second resolvent formula
\begin{equation}\label{Born}
  R(-\Delta+V; \zeta)=(1+R(-\Delta; \zeta)V)^{-1}R(-\Delta; \zeta).
\end{equation}
For the free resolvent $R(-\Delta; \zeta)$, they use the kernel $k(x,y;\zeta)$ of $(-\Delta-\zeta)^{-1}$ to derive the asymptotic expansion, where
\begin{equation}\label{lakernel}
  k(x,y;\zeta)=\frac{i}{4}\Big(\frac{\zeta^{1/2}}{2\pi|x-y|}\Big)^{d/2-1}H^{(1)}_{d/2-1}(\zeta^{1/2}|x-y|),\,\, \rm{Im}\, \zeta^{1/2}\geq0,
\end{equation}
 and $H^{(1)}_{d/2-1}$ is the first Hankel function. Thus, the key point is to get the expansion of $(1+R(-\Delta; \zeta)V)^{-1}$ near zero, see \cite{JK, J, J1}. For the cases $d\geq3$, since the free resolvent $R(-\Delta; \zeta)$ has no singularity at zero, the expansion of resolvent $(-\Delta+V-\zeta)^{-1}$ has no terms with negative power of $z$ nor $\ln z$ separately. While for $d=1$ there exists $\zeta^{-\frac{1}{2}}G_{-1}$ and for $d=2$ there exists $\ln\zeta G_{0,-1}$, the classical second resolvent formula in \cite{J1} couldn't be useful in dimensions $d=1,2$. So Jensen and Nenciu \cite{JN} developed a unified approach to deal with these cases.

For the fourth-order Schr\"odinger operator $H=(-\Delta)^2+V$, we know that the singularity of $R((-\Delta)^2; z)$ at $z=0$ in $d$-dimensions ( $d\geq3$ ) is the same as $R(-\Delta; \zeta)$ in $(d-2)$-dimensions. Therefore, in the case $d=3$, we can apply the unified approach of Jensen and Nenciu \cite{JN} for Schr\"odinger operators of 1 and 2-dimensions. For $d\geq5$, we will follow the original one \cite{J1}.

\subsection{Asymptotic expansion of free resolvent $R(H_0; z)$ near $z=0$}

For the asymptotic expansion of free resolvent, the direct way is expanding its kernel. Actually, one can calculate the kernel of $R(H_0; z)$ by dividing the integrand into two parts and using the Cauchy integral directly. For instance, the explicit kernel $K(x,y; z)$ of $R(H_0; z)$ in $d=3$ is as follows:
\begin{equation*}
   K(x,y; z)=\frac{1}{8\pi}\frac{e^{iz^{1/4}|x-y|}-e^{-z^{1/4}|x-y|}}{z^{1/2}|x-y|},\,\,\textrm{Re}~z^{1/4}>0,\, \textrm{Im}~z^{1/4}>0.
\end{equation*}
For other dimensions, one can also using the same dividing trick to give the kernel of $R(H_0; z)$ by Hankel function with $z\in\mathbb{C}\setminus [0, +\infty)$. Here, we make use of the asymptotic expansion of free resolvent $R(-\Delta; \zeta)$ of Schr\"odinger operator, and apply the resolvent splitting \eqref{freeresolventidentity} to get the asymptotic expansion of $R(H_0; z)$ with $z$ around zero.

Recall the asymptotic expansion of $R(-\Delta; \zeta)$ around zero, see e.g. \cite{JK, J, J1}, we have:
\begin{lemma}
For $\zeta\in \mathbb{C}\setminus[0, +\infty)$ and $~\textrm{Im}~\zeta^{1/2}>0$, we have the following formal expansion of the resolvent of free Laplacian $R(-\Delta; \zeta)$ as $\zeta\rightarrow 0$:
\begin{equation}\label{lap odd}
 R_{0}(-\Delta; \zeta)=\sum_{j=0}^{\infty}(i\zeta^{1/2})^{j}G_{j}^{odd}, \ \ \ odd\,\,d\geq3;
\end{equation}
\begin{equation}\label{lap even}
 R_{0}(-\Delta; \zeta)=\sum_{j=0}^{\infty}\sum_{k=0}^{1}\zeta^{j}(\ln \zeta)^{k}G_{j}^{k,even}, \ \ \ even\,\,d\geq6.
\end{equation}

For $j=0,1,2,\cdots$, $G_{j}^{odd}$ are operators given by the following integral kernels
\begin{equation}\label{Gjodd}
  G_{j}^{odd}(x,y)=\frac{(-1)^{(d-3)/2}}{2(2\pi)^{(d-1)/2}}d_{j}|x-y|^{j-(d-2)},
\end{equation}
with $d_{j}=\sum_{k=0,k\geq (d-3)/2-j}^{(d-3)/2}\frac{((d-3)/2+k)!}{k!((d-3)/2-k)!}\frac{(-2)^{-k}}{(k+j-(d-3)/2)!}$. Especially, $d_j=1$ when $d=3$.

For $j=0,1,2,\cdots,d/2-2$, $G_{j}^{k,even}$ are operators given by the following integral kernels
\begin{equation}\label{Gjeven0xiao}
  G_{j}^{0,even}(x,y)=\pi^{-d/2}\frac{(d/2-j-2)!}{j!}4^{-j-1}|x-y|^{2j+2-d},
\end{equation}
\begin{equation}\label{Gjeven1xiao}
  G_{j}^{1,even}(x,y)=0.
\end{equation}

For $j\geq d/2-1$, $G_{j}^{k,even}$ are operators given by the following integral kernels
\begin{equation}\label{Gjeven0da}
\begin{split}
  G_{j}^{0,even}(x,y)&=(4\pi)^{-d/2}[\vartheta(j+1)+\vartheta(j+2-d/2)]\frac{(-1/4)^{j+1-d/2}}{j!(d/2-1+j)!}|x-y|^{2j+2-d}\\
                     &-2(4\pi)^{-d/2}(-1/4)^{j+1-d/2}\frac{\ln(|x-y|/2)}{j!(d/2-1+j)!}|x-y|^{2j+2-d}\\
                     &+\frac{i}{4}(4\pi)^{-d/2+1}\frac{1}{j!(d/2-1+j)!}(-1/4)^{j+1-d/2}|x-y|^{2j+2-d}.
\end{split}
\end{equation}
\begin{equation}\label{Gjeven1da}
  G_{j}^{1,even}(x,y)=-(4\pi)^{-m/2}\frac{(-1/4)^{j+1-d/2}}{j!(d/2-1+j)!}|x-y|^{2j+2-d}.
\end{equation}
Here $\vartheta(k)$ is given by $\vartheta(1)=-1$, $\vartheta(k)=\sum_{j=1}^{k-1}1/j-\pounds$, and $\pounds$ is the Euler's constant.
\end{lemma}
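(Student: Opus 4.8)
The plan is to start from the explicit kernel \eqref{lakernel} of the free Laplacian resolvent and insert the small-argument series of the first Hankel function $H^{(1)}_{\nu}$ with $\nu=d/2-1$. Writing $w=\zeta^{1/2}|x-y|$ and $r=|x-y|$, the prefactor $(\zeta^{1/2}/(2\pi r))^{d/2-1}$ should combine with the monomials in $w$ produced by the expansion to yield clean powers of $\zeta^{1/2}$ (odd $d$), respectively powers of $\zeta$ together with $\ln\zeta$ (even $d$), multiplied by the spatial factors $r^{j-(d-2)}$ or $r^{2j+2-d}$; matching these against \eqref{Gjodd}--\eqref{Gjeven1da} then identifies the coefficient operators $G_j$. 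The two cases are governed entirely by whether $\nu=d/2-1$ is a half-integer or an integer, which is precisely why only powers of $\zeta^{1/2}$ appear in the first case while logarithms enter the second.

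For odd $d$, set $n=(d-3)/2$, so $\nu=n+1/2$ and $H^{(1)}_{n+1/2}$ is elementary: I would use
$$H^{(1)}_{n+1/2}(w)=(-i)^{n+1}\sqrt{\tfrac{2}{\pi w}}\,e^{iw}\sum_{k=0}^{n}\frac{i^{k}}{k!\,(2w)^{k}}\frac{(n+k)!}{(n-k)!}.$$
Substituting $w=\zeta^{1/2}r$ into \eqref{lakernel}, the half-integer prefactors cancel $\sqrt{2/(\pi w)}$ against the explicit $(\zeta^{1/2}/(2\pi r))^{n}$, leaving $e^{i\zeta^{1/2}r}$ times a finite Laurent polynomial in $\zeta^{1/2}r$. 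Expanding $e^{i\zeta^{1/2}r}=\sum_{l\ge0}(i\zeta^{1/2}r)^{l}/l!$ and collecting the coefficient of $(i\zeta^{1/2})^{j}$ gives a spatial kernel proportional to $r^{j-(d-2)}$, and the double sum over $k$ and $l$ subject to $l-k=j-n$ collapses to exactly the constant $d_{j}$ in \eqref{Gjodd}. In particular for $d=3$ only the $k=0$ term survives, $H^{(1)}_{1/2}(w)=-i\sqrt{2/(\pi w)}\,e^{iw}$, and one recovers $K=e^{i\zeta^{1/2}r}/(4\pi r)=\sum_j(i\zeta^{1/2})^j r^{j-1}/(4\pi j!)$, consistent with $d_j=1/j!$.

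For even $d$, write $m=d/2-1\in\N$ and use $H^{(1)}_m=J_m+iY_m$ with the standard series
$$J_m(w)=\sum_{k=0}^{\infty}\frac{(-1)^{k}}{k!\,(m+k)!}\Big(\frac{w}{2}\Big)^{2k+m},$$
$$Y_m(w)=\frac{2}{\pi}J_m(w)\ln\frac{w}{2}-\frac{1}{\pi}\sum_{k=0}^{m-1}\frac{(m-k-1)!}{k!}\Big(\frac{w}{2}\Big)^{2k-m}-\frac{1}{\pi}\sum_{k=0}^{\infty}\frac{(-1)^{k}[\psi(k+1)+\psi(m+k+1)]}{k!\,(m+k)!}\Big(\frac{w}{2}\Big)^{2k+m}.$$
The crucial step is to split $\ln(w/2)=\tfrac12\ln\zeta+\ln(r/2)$: the $\tfrac12\ln\zeta$ piece, carried by $J_m$, produces exactly the $\zeta^{j}\ln\zeta$ terms and hence $G_j^{1,even}$ in \eqref{Gjeven1da}, while $\ln(r/2)$ is absorbed into the analytic-in-$\zeta$ coefficient. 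The finite sum of negative powers in $Y_m$ contributes only to indices $j=0,\dots,d/2-2$ and carries no logarithm, which is precisely why $G_j^{1,even}=0$ there as in \eqref{Gjeven1xiao}, and it yields the leading singular kernels \eqref{Gjeven0xiao}. For $j\ge d/2-1$ the $\psi$-values from the third term combine with those from the log term; rewriting $\psi(k+1)+\psi(m+k+1)$ as partial harmonic sums minus $2\pounds$ reorganizes into the bracket $\vartheta(j+1)+\vartheta(j+2-d/2)$ and the Euler-constant contributions of \eqref{Gjeven0da}, while the $iJ_m$ piece supplies the purely imaginary last line of \eqref{Gjeven0da}.

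The main obstacle is the even-dimensional bookkeeping. Unlike the odd case, where $e^{iw}$ expands into a single clean exponential series and everything reduces to the combinatorial identity for $d_j$, here one must simultaneously track three sources of $r^{2j+2-d}$ contributions (the two power series and the negative-power finite sum), separate $\ln\zeta$ from $\ln r$ emerging from the single factor $\ln(w/2)$, and verify that the digamma values $\psi(k+1),\psi(m+k+1)$ reassemble — after reindexing the summation to the fixed output power $j$ — into exactly the function $\vartheta$ with the correct placement of $\pounds$. Pinning down the constant $\frac{i}{4}(4\pi)^{-d/2+1}$ and the sign $(-1/4)^{j+1-d/2}$ in \eqref{Gjeven0da} is the delicate part of the computation.
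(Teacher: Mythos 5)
Your proposal is correct and follows essentially the same route as the paper, which offers no independent proof of this lemma but quotes it from Jensen--Kato \cite{JK} and Jensen \cite{J, J1} (see Remark \ref{rem2}), where it is obtained exactly as you describe: insert the small-argument expansion of $H^{(1)}_{d/2-1}$ into the kernel \eqref{lakernel}, using the elementary closed form $H^{(1)}_{n+1/2}(w)=(-i)^{n+1}\sqrt{2/(\pi w)}\,e^{iw}\sum_{k=0}^{n}\frac{i^{k}(n+k)!}{k!(n-k)!(2w)^{k}}$ together with $e^{i\zeta^{1/2}r}=\sum_{l}(i\zeta^{1/2}r)^{l}/l!$ for odd $d$ (your constraint $l-k=j-n$ does collapse the double sum to $d_j$), and the standard $J_m$, $Y_m$ series with the split $\ln(w/2)=\tfrac12\ln\zeta+\ln(r/2)$ for even $d$. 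Carrying your bookkeeping to the end actually confirms the result up to transcription slips in the printed statement rather than any gap in your argument: for $d=3$ one indeed gets $d_j=1/j!$ as you note, the factor $(4\pi)^{-m/2}$ in \eqref{Gjeven1da} should read $(4\pi)^{-d/2}$, and under the indexing of \eqref{lap even} the factorial denominators in \eqref{Gjeven0da}--\eqref{Gjeven1da} come out as $(j+1-d/2)!\,j!$ (matching the digamma arguments $\vartheta(j+1)+\vartheta(j+2-d/2)$, i.e.\ $\psi(k+1)+\psi(m+k+1)$ with $k=j-d/2+1$), not $j!\,(d/2-1+j)!$.
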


\begin{remark}\label{rem2}
$d_j=0$ for $j$ odd and $0<j<d-2$, see \cite[Lemma 3.3]{J}. Further, all the $G_j^{odd},\, G_{j}^{k, even}\in\mathcal{B}(\mathcal{H}^{-2}_{\sigma},\mathcal{H}^{2}_{\sigma^{\prime}})$ with $\sigma, \sigma^{\prime}$ depend on $j$. The proof details and more properties of $G_j^{odd}$ and  $G_{j}^{k, even}$, please see \cite{JK, J}. One difference is that \cite{JK,J} proved $G_j^{odd},\, G_{j}^{k, even}\in\mathcal{B}(\mathcal{H}^{0}_{\sigma},\mathcal{H}^{0}_{\sigma^{\prime}})$ and then using the identity $$(1-\Delta)R(-\Delta; \zeta)=1+(1+\zeta)R(-\Delta; \zeta)$$ to improve into $\mathcal{B}(\mathcal{H}^{-1}_{\sigma},\mathcal{H}^{1}_{\sigma^{\prime}})$. Here for fourth-order Schr\"odinger operator, we can  improve  into $\mathcal{B}(\mathcal{H}^{-2}_{\sigma},\mathcal{H}^{2}_{\sigma^{\prime}})$ by the same way, since we have $$(1+\Delta^2)R(H_0; z)=1+(1+z)R(H_0; z).$$
\end{remark}

Based on the expansion of $R(-\Delta; \zeta)$ and the resolvent identity \eqref{freeresolventidentity}, we obtain the formal expansions of $R(H_0; z)$ directly. For simplifying the notation, we let $z=\mu^{4}$ and choose $\mu$ in the first quadrant of the complex plane i.e. $\textrm{Re}~\mu>0$ and $\textrm{Im}~\mu>0$. {\it Note that if $\mu$ in the first quadrant,  then $z\in\mathbb{C}\setminus[0, +\infty)$.}
\begin{lemma}
For $\mu$ in the first quadrant of the complex plane, we have the formal expansions of the resolvent of free fourth-order Schr\"odinger operator $R(H_0; z)$:
\begin{equation}\label{sqlap odd}
 R(H_0; \mu^4)=\sum_{j=0}^{\infty}\frac{i^j-(-1)^j}{2}\mu^{j-2}G_{j}^{odd},\,\, odd\,\,d\geq3;
\end{equation}
\begin{equation}\label{sqlap even}
 R(H_0; \mu^4)=\sum_{j=0}^{\infty}\sum_{k=0}^{1}\frac{1}{2}(\ln \mu^2)^k \mu^{2j-2}\tilde{G}_{j}^{k,even},\,\,even\,\,d\geq6.
\end{equation}
Here for all $j\in \mathbb{N}$, $\tilde{G}_{j}^{0,even}=\frac{1-(-1)^j}{2}G_{j}^{0,even}+\frac{(-1)^j i\pi}{2}G_{j}^{1,even}$ and $\tilde{G}_{j}^{1,even}=\frac{1-(-1)^j}{2}G_{j}^{1,even}$.
\end{lemma}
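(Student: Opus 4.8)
The plan is to reduce everything to the already-established expansions \eqref{lap odd} and \eqref{lap even} of the free Laplacian resolvent by means of the splitting identity \eqref{freeresolventidentity}. Writing $z=\mu^4$ with $\mu$ in the first quadrant, we have $z^{1/2}=\mu^2$ and $z\in\mathbb{C}\setminus[0,+\infty)$, so that \eqref{freeresolventidentity} becomes
\begin{equation*}
R(H_0;\mu^4)=\frac{1}{2\mu^2}\big[(-\Delta-\mu^2)^{-1}-(-\Delta+\mu^2)^{-1}\big]=\frac{1}{2\mu^2}\big[R(-\Delta;\mu^2)-R(-\Delta;-\mu^2)\big].
\end{equation*}
Thus it suffices to insert the two Laplacian expansions, one at the spectral parameter $\zeta=\mu^2$ and one at $\zeta=-\mu^2$, and to collect the powers of $\mu$ (and, in the even case, of $\ln\mu^2$).

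The key step, and the point that must be handled with care, is the choice of the branch of $\zeta^{1/2}$ in each of the two resolvents, since \eqref{lap odd} and \eqref{lap even} are valid only for $\operatorname{Im}\zeta^{1/2}>0$. For $\zeta=\mu^2$ the admissible root is $\zeta^{1/2}=\mu$, while for $\zeta=-\mu^2$ it is $\zeta^{1/2}=i\mu$; because $\mu$ lies in the open first quadrant, both $\mu$ and $i\mu$ have strictly positive imaginary part, so both expansions indeed apply. In the odd case this immediately gives $R(-\Delta;\mu^2)=\sum_j(i\mu)^jG_j^{odd}$ and $R(-\Delta;-\mu^2)=\sum_j(i\cdot i\mu)^jG_j^{odd}=\sum_j(-\mu)^jG_j^{odd}$; subtracting, dividing by $2\mu^2$, and using $(i\mu)^j-(-\mu)^j=(i^j-(-1)^j)\mu^j$ yields \eqref{sqlap odd} term by term.

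For even $d$ the same substitution is carried out, but now one must also track the branch of the logarithm appearing in \eqref{lap even}. With the cut along $[0,+\infty)$ that makes the resolvent analytic, $\ln(\mu^2)$ is used in the first resolvent while $\ln(-\mu^2)=\ln\mu^2+i\pi$ is used in the second; separating the constant part, the genuine $\ln\mu^2$ part, and the extra $i\pi$ contribution, and recombining with the factors $(-1)^j$ coming from $(-\mu^2)^j$, regroups the series into the coefficients $\tilde G_j^{0,even}$ and $\tilde G_j^{1,even}$ of \eqref{sqlap even}. I expect this logarithmic branch bookkeeping in the even case to be the main obstacle: one has to verify that $2\ln(i\mu)=\ln\mu^2+i\pi$ is the branch consistent with \eqref{lap even}, and then keep the $i^j$, $(-1)^j$ and $i\pi$ factors straight so that the $i\pi\,G_j^{1,even}$ terms merge correctly into $\tilde G_j^{0,even}$. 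Throughout, the manipulations are understood as identities between formal (asymptotic) series whose coefficients lie in $\mathcal{B}(\mathcal{H}^{-2}_\sigma,\mathcal{H}^2_{\sigma'})$, so the rearrangement is legitimate order by order and no convergence issue beyond that already present in \eqref{lap odd} and \eqref{lap even} arises.
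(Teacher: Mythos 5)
Your route is exactly the paper's: the lemma is stated there with no separate proof precisely because it is the direct substitution of \eqref{lap odd} and \eqref{lap even} into the splitting identity \eqref{freeresolventidentity}, and your branch choices $\zeta^{1/2}=\mu$ for $\zeta=\mu^2$ and $\zeta^{1/2}=i\mu$ for $\zeta=-\mu^2$ (both with strictly positive imaginary part when $\mu$ lies in the open first quadrant) are the correct ones. Your odd-dimensional computation is complete and reproduces \eqref{sqlap odd} verbatim.

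In the even case, however, you assert rather than verify that the bookkeeping ``merges correctly'' into the printed coefficients, and this is precisely where an explicit check is needed: with the branch you correctly identify (namely $\arg\zeta\in(0,2\pi)$, forced by $\mathrm{Im}\,\zeta^{1/2}>0$, so that $\ln(-\mu^2)=2\ln(i\mu)=\ln\mu^2+i\pi$), the substitution gives
\begin{equation*}
R(H_0;\mu^4)=\sum_{j\geq0}\mu^{2j-2}\Big[\tfrac{1-(-1)^j}{2}\,G_j^{0,even}+\tfrac{(-1)^{j+1}i\pi}{2}\,G_j^{1,even}
+(\ln\mu^2)\,\tfrac{1-(-1)^j}{2}\,G_j^{1,even}\Big],
\end{equation*}
which does \emph{not} literally coincide with \eqref{sqlap even} as printed: the $i\pi$ term carries the sign $(-1)^{j+1}$ rather than $(-1)^j$, and there is no additional overall factor $\frac12$ in front of the series. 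The printed version, with both the prefactor $\frac12$ and the internal halves in $\tilde G_j^{k,even}$, would make the constant ($j=1$) term equal to $\frac12 G_1^{0,even}$ (using $G_0^{1,even}=G_1^{1,even}=0$ for $d\geq6$ by \eqref{Gjeven1xiao}), contradicting the paper's own subsequent identification of the first term as $G_1^{0,even}$, the convolution kernel of $(-\Delta)^{-2}$; and the printed sign $+\frac{(-1)^j i\pi}{2}$ would correspond to the other branch $\ln(-\mu^2)=\ln\mu^2-i\pi$, which is incompatible with $\mathrm{Im}\,\zeta^{1/2}>0$ once $\ln\zeta$ is read as $2\ln\zeta^{1/2}$. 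So the discrepancy is a misprint in the statement rather than a flaw in your method, but since you yourself flag the logarithmic branch as the main obstacle, you should carry the regrouping out explicitly and record the corrected coefficients, instead of asserting agreement with the formula as printed.
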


Now we give a strict meaning for the above formal expansions. The formal series \eqref{sqlap odd} and \eqref{sqlap even} are an asymptotic expansions for $\mu\rightarrow0$ in the following sense.
\begin{proposition}\label{freeasymp}
For $z\in\mathbb{C}\setminus [0, +\infty)$, we have in $\mathcal{B}\big(\mathcal{H}^{-2}_{\sigma}(\mathbb{R}^{d}),\mathcal{H}^{2}_{\sigma^{\prime}}(\mathbb{R}^{d})\big)$ the following asymptotic expansions as $z\rightarrow 0$.

(i) For $d\geq3$ and $d$ odd,
\begin{equation}\label{R_0odd}
 R(H_0; z)=\sum_{j=0}^{N}\frac{i^j-(-1)^j}{2}z^{(j-2)/4}G_{j}^{odd}+o(z^{(N-2)/4}),
\end{equation}
with $\sigma$ and $\sigma^{\prime}$ satisfy:

1) for $0\leq N\leq (d-3)/2$: $\sigma, \sigma^{\prime}>1/2$ and $\sigma+\sigma^{\prime}>(d+1)/2$;

2) for $(d-3)/2<N\leq d-3$: $\sigma, \sigma^{\prime}>N+2-d/2$ and $\sigma+\sigma^{\prime}>N+2$;

3) for $N\geq d-2$: $\sigma, \sigma^{\prime}>N+2-d/2$.

(ii) For $d\geq6$ and $d$ even,
\begin{equation}\label{R_0even}
 R(H_0; z)=\sum_{j=0}^{N}\sum_{k=0}^{1}\frac{1}{2}(\ln z^{1/2})^k z^{(j-1)/2}\tilde{G}_{j}^{k,even}+o(z^{\frac{N-1}{2}}\ln z^{1/2}),
\end{equation}
with $\sigma$ and $\sigma^{\prime}$ satisfy:

1) for $0\leq N\leq (d-3)/4$: $\sigma, \sigma^{\prime}>1/2$ and $\sigma+\sigma^{\prime}>(d+1)/2$;

2) for $(d-3)/4<N<d/2-1$: $\sigma, \sigma^{\prime}>2N+2-d/2$ and $\sigma+\sigma^{\prime}>2N+2$;

3) for $N\geq d/2-1$: $\sigma, \sigma^{\prime}>2N+2-d/2$.

Furthermore, the expansion can be differentiated in $z$ any number of times. More precisely, the r-th derivative of the finite series in \eqref{R_0odd} ( Res. \eqref{R_0even} ) up to $j=N$,  is equal to $(d/dz)^{r}R(H_0; z)$ up to an error $o(z^{(N-2)/4-r})$ ( Res. $o( z^{\frac{N-1}{2}-r}\ln z^{1/2} )$ ) in the norm of $\mathcal{B}\big(\mathcal{H}^{-2}_{\sigma}(\mathbb{R}^{d}),\,\, \mathcal{H}^{2}_{\sigma^{\prime}}(\mathbb{R}^{d})\big)$ with $\sigma,\, \sigma^{\prime}$ satisfy the relationships with $N$ as given in the above Lemma \ref{freeasymp}.
\end{proposition}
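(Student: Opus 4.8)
\emph{The plan is to} reduce the whole statement to the already-known Laplacian expansions \eqref{lap odd}--\eqref{lap even} by pushing them through the splitting identity \eqref{freeresolventidentity}. Writing $z=\mu^4$ with $\mu$ in the open first quadrant, one has $z^{1/2}=\mu^2$, so that
\begin{equation*}
 R(H_0;z)=\frac{1}{2\mu^2}\big[R(-\Delta;\mu^2)-R(-\Delta;-\mu^2)\big].
\end{equation*}
The two scalar parameters sit on opposite sides of the cut: for $\zeta=\mu^2$ the admissible root is $\zeta^{1/2}=\mu$, while for $\zeta=-\mu^2$ the root with $\mathrm{Im}\,\zeta^{1/2}>0$ is $\zeta^{1/2}=i\mu$ (and, in the even case, $\ln(-\mu^2)=\ln\mu^2+i\pi$). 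First I would substitute the rigorous forms of \eqref{lap odd} and \eqref{lap even} — the finite expansions with quantitative weighted remainders from \cite{JK,J,J1} — with these two choices of root.

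\emph{Next} I would carry out the elementary algebra of the difference. For odd $d$ the $j$-th term contributes $\big[(i\mu)^j-(-\mu)^j\big]G_j^{odd}=\mu^j\big(i^j-(-1)^j\big)G_j^{odd}$, and division by $2\mu^2$ reproduces exactly the coefficients $\tfrac{i^j-(-1)^j}{2}\mu^{j-2}$ of \eqref{sqlap odd}; for even $d$ the extra $i\pi$ from the logarithmic branch regroups the terms into the operators $\tilde G_j^{k,even}$ of \eqref{sqlap even}. The point that makes the prefactor $\tfrac{1}{2\mu^2}=\tfrac{1}{2z^{1/2}}$ harmless is the exact cancellation of the $j=0$ term: $i^0-(-1)^0=0$ in the odd case, and in the even case the leading constant operator also drops out because $G_0^{1,even}=0$ by \eqref{Gjeven1xiao}, whence $\tilde G_0^{k,even}=0$. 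Thus no spurious $z^{-1/2}$ singularity appears, and the worst surviving term is $z^{-1/4}$ when $d$ is odd, consistent with the $z^{-1/4}$ singularity visible in the explicit three-dimensional kernel displayed above, and $z^{0}$ when $d$ is even. Substituting $\mu=z^{1/4}$ turns $\mu^{j-2}$ into $z^{(j-2)/4}$ and $\mu^{2j-2}$ into $z^{(j-1)/2}$, producing the main terms of \eqref{R_0odd} and \eqref{R_0even}.

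\emph{For the error and the weights}, I would truncate the Laplacian expansion at order $N$: its remainder is $o(\mu^{N})$ in the odd case and $o(\mu^{2N}\ln\mu^2)$ in the even case, in $\mathcal{B}(\mathcal{H}^{-2}_\sigma,\mathcal{H}^2_{\sigma'})$; the two branches carry remainders of the same order (their parameters have equal modulus), so the difference is controlled by that order, and dividing by the scalar $2\mu^2$ yields precisely $o(z^{(N-2)/4})$, resp. $o(z^{(N-1)/2}\ln z^{1/2})$. The admissible weights $\sigma,\sigma'$ are inherited verbatim from the boundedness $G_j^{odd},G_j^{k,even}\in\mathcal{B}(\mathcal{H}^{-2}_\sigma,\mathcal{H}^2_{\sigma'})$ recorded in Remark \ref{rem2} together with the remainder bounds for $R(-\Delta;\zeta)$; the three regimes for $N$ correspond to whether the kernel $|x-y|^{j-(d-2)}$ is locally singular and decaying, borderline, or polynomially growing, which produces in turn the lower bound $\sigma+\sigma'>(d+1)/2$, then one growing linearly in $N$, and finally only the single-weight constraints $\sigma,\sigma'>N+2-d/2$ (odd $d$), resp. $2N+2-d/2$ (even $d$). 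Differentiability in $z$ follows because $R(H_0;z)$ is operator-analytic on $\mathbb{C}\setminus[0,\infty)$ and the Laplacian expansion differentiates term by term; differentiating \eqref{freeresolventidentity} directly (or applying a Cauchy estimate on a circle of radius comparable to $|z|$) lowers each power and each remainder order by one per derivative, giving the asserted $o(z^{(N-2)/4-r})$, resp. $o(z^{(N-1)/2-r}\ln z^{1/2})$.

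\emph{The main obstacle} I anticipate is not the algebra but the uniform weighted-norm bookkeeping: one must verify that the Laplacian remainder survives multiplication by $z^{-1/2}$ with the claimed order in each of the three $N$-regimes, and pin down the exact thresholds on $\sigma,\sigma'$ (together with the correct sign of the $i\pi$ log-branch contribution in the even case). Since the improvement from a potential $z^{-1/2}$ singularity down to $z^{-1/4}$ rests entirely on the cancellation of the leading constant operator, care is needed to confirm that the corresponding remainder enjoys the same one-order gain rather than only the naive bound.
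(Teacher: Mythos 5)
Your proposal is correct and is essentially the paper's own argument: the paper proves this proposition precisely by transferring the Jensen--Kato expansions of $R(-\Delta;\zeta)$ through the splitting identity \eqref{freeresolventidentity} with the branch choices $\zeta^{1/2}=\mu$ and $(-\mu^2)^{1/2}=i\mu$, citing \cite[Lemma 2.3]{JK} and \cite[Lemmas 3.5, 3.9]{J} for the weighted remainder bounds and differentiability, exactly as you do. Your write-up in fact supplies more detail than the paper (the cancellation of the $j=0$ term via $i^0-(-1)^0=0$ and $G_0^{1,even}=0$, the $i\pi$ log-branch bookkeeping, and the division of the remainder by $2\mu^2$), with the only caveat that the parenthetical Cauchy-estimate alternative for differentiation would not be uniform near the cut, whereas your primary route — term-by-term differentiation of the cited Laplacian expansions — is the correct one and matches the paper.
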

\begin{proof}
The proof details please see \cite[Lemma 2.3]{JK} and \cite[Lemma 3.5, Lemma 3.9]{J}, since we derive the expansion around zero of $R(H_0;z)$ by the expansion of $R(-\Delta; \zeta)$.
\end{proof}

Notice that there are factors as $i^j-(-1)^j$ and $1-(-1)^j$ in the expansions of $R(H_0; z)$, so many terms can be cancelled. For $d=3$, the first two terms are $\frac{1}{4\pi}\mu^{-1}+(-G_{2}^{d=3})$ and $G_{2}^{d=3}(x,y)=1/(4\pi)|x-y|^{4-3}$, so $z=0$ is a singularity point of $R(H_0; z)$. For $d\geq5$, the lowest power of $\mu$ is positive since there are many terms equal zero and the zero term depends on the dimension $d$. For $odd \,\, d\geq5$, the first term is $$-G_{2}^{odd}(x,y)=\frac{(-1)^{(d-1)/2}}{2(2\pi)^{(d-1)/2}}d_2|x-y|^{4-d},$$ and for $even \,\, d\geq6$, the first term is $$G_{1}^{0,even}(x,y)=\frac{(d/2-3)!}{16\pi^{d/2}}|x-y|^{4-d}.$$ They are both the convolution kernel of the Riesz potential $(-\Delta)^{-2}$ in $\mathbb{R}^d$ respectively, see Stein \cite{Stein}. Further, \cite[Lemma 2.3]{J} implies that $(-\Delta)^{-2}\in \mathcal{B}(\mathcal{H}^{-2}_{\sigma},\mathcal{H}^{2}_{\sigma^{\prime}})$ with $d\geq9$ and $\sigma+\sigma^{\prime}\geq4$.

\subsection{Asymptotic expansion of $R(H; z)$ near $z=0$}

We deal with the expansion of $R(H; z)$ in $d=3$ and $d\geq5$ separately,  since zero is a singular point of $R(H_0; z)$ in 3-dimensions while $d\geq5$ not. For these two cases, we both use the second resolvent formula but different form. Before getting the expansion, we should analyze the zero threshold point of $H$.

In order to avoid stating some results separately for $d$ even and $d$ odd, we use the following notation:
\begin{equation*}
  G_0(x,y)=\begin{cases}-G_{2}^{odd}(x,y)=\frac{(-1)^{(d-1)/2}}{2(2\pi)^{(d-1)/2}}d_2|x-y|^{4-d},\,\,odd\,\,d\geq3;\\
                   G_{1}^{0,even}(x,y)=\frac{(d/2-3)!}{16\pi^{d/2}}|x-y|^{4-d},\,\,even\,\,d\geq6.
      \end{cases}
\end{equation*}

For the 3-dimensional case, we start from the symmetrized second resolvent formula
\begin{equation}\label{equ:57}
  (H-\mu^{4})^{-1}=(H_{0}-\mu^{4})^{-1}-(H_{0}-\mu^{4})^{-1}vM(\mu)^{-1}v(H_{0}-\mu^{4})^{-1},
\end{equation}
where
\begin{align}\label{equ:60}
  v(x)=|V(x)|^{\frac{1}{2}},\quad U(x)=\begin{cases}1,\, & V(x)\geq0,\\ -1,\, & V(x)<0.\end{cases}, \,\,M(\mu)=U+v(H_{0}-\mu^{4})^{-1}v.
\end{align}
Let $w(x)=U(x)v(x)$, then from the identity
\begin{equation}\label{RH}
  \Big(1-w(H-\mu^{4})^{-1}v\Big)\Big(1+w(H_{0}-\mu^{4})^{-1}v\Big)=1
\end{equation}
we have
\begin{equation}\label{equ:58}
  w(H-\mu^{4})^{-1}w=U-M(\mu)^{-1}.
\end{equation}
Now, the aim is to get the asymptotic expansion of $M(\mu)^{-1}$. Since
$$ M(\mu)=\frac{(1+i)\alpha}{8\pi}P\mu^{-1}+\sum_{j=0}^{p-1}M_{j}\mu^{j}+\mu^{p}\mathfrak{R}_{0}(\mu,|x-y|),$$
where $P=\alpha^{-1}\langle v, \cdot\rangle$ and $\alpha=\|v\|^{2}$, see Appendix A1.

The following definition is motivated by Jensen and Nenciu \cite{JN}, cf. the case of $S=0$ in their Theorem 5.2. See Schlag \cite[Definition 7]{Schlag-CMP}.
\begin{definition}\label{resonace3}
Let $Q=1-P$. We say that zero is a regular point of the spectrum of $H=(-\Delta)^{2}+V$ provided $Q(U+vG_{0}v)Q$ is invertible on $QL^{2}(\mathbb{R}^{3})$. In this  case $[Q(U+vG_{0}v)Q]^{-1}$ as an operator on $QL^{2}(\mathbb{R}^{3})$.
\end{definition}
Notice that, if we take $|v|$ as the weight function instead of the classic one $\langle x\rangle^{\sigma}$, then we
clearly, define the unusual weighted spaces $L^{2}_{v}(\mathbb{R}^{3})$ by
\begin{equation*}
  \|\psi\|_{L^{2}_{v}(\mathbb{R}^{3})}:=\|v(x)\psi(x)\|_{L^{2}(\mathbb{R}^{3})}<\infty.
\end{equation*}
 In order to get the expansion of $(H-\mu^{4})^{-1}$ in the unusual weighted space $L^{2}_{v}(\mathbb{R}^{3})$, it suffices to obtain the expansion of $M(\mu)^{-1}$ in $L^{2}(\mathbb{R}^{3})$.
Note that $M(\mu)$ has known expansion in powers of $\mu$ up to an order depending upon decay rate of $V$ at infinity, hence the problem is to prove that $M(\mu)^{-1}$ also has expansion in powers of $\mu$ up to some order and to compute the coefficients. Applying the unified approach, under our assumptions we have:
\begin{theorem}\label{resolventexpansion3}
 Let $\langle x\rangle^{\kappa} V(x)\in L^{2}(\mathbb{R}^3)$ for $\kappa$ large enough and $p$ be the largest integer satisfying $\kappa>2p+5$. Assume that 0 is a regular point for $H$. Then for $\mu$ in the first quadrant of the complex plane, there exists $\mu_0>0$ such that for $|\mu|\leq \mu_0$, $w(H-\mu^4)^{-1}w$ has the expansion in $\mathcal{B}\big(L^{2}(\mathbb{R}^3), \,L^{2}(\mathbb{R}^3)\big)$,
\begin{equation}\label{equ:63}
  wR(H; \mu^4)w=U-Qm_{0}^{-1}Q+\sum_{j=1}^{p-1}M^{\prime}_{j}\mu^{j}+\mu^{p}\mathfrak{R}(\mu).
\end{equation}
Here $\mathfrak{R}(\mu)$ is uniformly bounded and the coefficients $M^{\prime}_{j}$ can be computed explicitly.
\end{theorem}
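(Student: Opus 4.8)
The plan is to prove the expansion \eqref{equ:63} purely by inverting the operator family $M(\mu)$ that enters the symmetrized resolvent identity \eqref{equ:58}, reducing the whole problem — via the Jensen--Nenciu inversion scheme \cite{JN} — to the invertibility of the single operator $Q(U+vG_0v)Q$ on $QL^2(\R^3)$, which is exactly the regularity hypothesis of Definition \ref{resonace3}. The only obstruction near $\mu=0$ is the rank-one singular term $\frac{(1+i)\alpha}{8\pi}\mu^{-1}P$ in the expansion of $M(\mu)$ recorded in Appendix~A1. To clear the negative power I would first multiply through by $\mu$ and set
\[
\hat M(\mu):=\mu M(\mu)=\frac{(1+i)\alpha}{8\pi}P+\sum_{j=0}^{p-1}M_j\,\mu^{j+1}+\mu^{p+1}\mathfrak R_0(\mu),
\]
which is now regular in $\mu$ with leading term $\hat M_0=\frac{(1+i)\alpha}{8\pi}P$, a nonzero multiple of the rank-one projection $P$. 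Since $M(\mu)^{-1}=\mu\,\hat M(\mu)^{-1}$, it suffices to expand $\hat M(\mu)^{-1}$.

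Because $\hat M_0$ has kernel exactly $QL^2(\R^3)$, I would apply the Jensen--Nenciu lemma with the projection $Q$: if $\hat M(\mu)+Q$ is invertible, then $\hat M(\mu)$ is invertible iff $B(\mu):=Q-Q(\hat M(\mu)+Q)^{-1}Q$ is invertible on $QL^2(\R^3)$, with
\[
\hat M(\mu)^{-1}=(\hat M(\mu)+Q)^{-1}+(\hat M(\mu)+Q)^{-1}Q\,B(\mu)^{-1}\,Q(\hat M(\mu)+Q)^{-1}.
\]
Here $\hat M(\mu)+Q=\frac{(1+i)\alpha}{8\pi}P+Q+O(\mu)$ is a small perturbation of the invertible operator $N_0:=\frac{(1+i)\alpha}{8\pi}P+Q$, whose inverse is $N_0^{-1}=\frac{8\pi}{(1+i)\alpha}P+Q$, so it is invertible for $|\mu|\le\mu_0$ and its inverse is expanded by a Neumann series. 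Using $N_0^{-1}Q=QN_0^{-1}=Q$, a direct computation gives $B(\mu)=\mu\bigl(QM_0Q+O(\mu)\bigr)$ with $M_0=U+vG_0v=:m_0$, so the reduced operator $\mu^{-1}B(\mu)$ has leading term $Qm_0Q$. This is precisely the operator assumed invertible in Definition \ref{resonace3}, whence $B(\mu)^{-1}=\mu^{-1}(Qm_0Q)^{-1}\bigl(1+O(\mu)\bigr)$ on $QL^2(\R^3)$.

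Assembling the formula and multiplying by $\mu$ gives
\[
M(\mu)^{-1}=\mu\,(\hat M(\mu)+Q)^{-1}+(\hat M(\mu)+Q)^{-1}Q\,\bigl(\mu B(\mu)^{-1}\bigr)\,Q(\hat M(\mu)+Q)^{-1},
\]
and since $\mu B(\mu)^{-1}=(Qm_0Q)^{-1}\bigl(1+O(\mu)\bigr)$ while $(\hat M(\mu)+Q)^{-1}Q\to N_0^{-1}Q=Q$ as $\mu\to0$, the leading term is $Q(Qm_0Q)^{-1}Q=Qm_0^{-1}Q$, the inverse taken on $QL^2(\R^3)$. Expanding every analytic factor to order $\mu^{p-1}$ — the Neumann expansion of $(\hat M(\mu)+Q)^{-1}$ and the geometric expansion of $(\mu^{-1}B(\mu))^{-1}$, both fed by the $p$ available terms of the $M(\mu)$-expansion — and collecting powers of $\mu$ produces \eqref{equ:63}, with coefficients $M_j'$ computable from the $M_j$ and $m_0^{-1}$ and a uniformly bounded remainder $\mathfrak R(\mu)$. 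The identity \eqref{equ:58} then yields $wR(H;\mu^4)w=U-M(\mu)^{-1}$, which is the claim.

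The conceptual step — that the single regularity condition on $Q(U+vG_0v)Q$ suffices — is clean once the rank-one singularity is absorbed into $Q$ by the $\mu M(\mu)$ device. The main obstacle I expect is the order bookkeeping: one must check that neither the Neumann series for $(\hat M(\mu)+Q)^{-1}$ nor the inversion of the reduced operator introduces uncontrolled negative powers of $\mu$, and that the error is genuinely $O(\mu^p)$ with $\mathfrak R(\mu)$ uniformly bounded in $\mathcal B\bigl(L^2(\R^3),L^2(\R^3)\bigr)$. This is exactly where the decay hypothesis $\langle x\rangle^{\kappa}V\in L^2(\R^3)$ with $\kappa>2p+5$ is used, since it guarantees that the Appendix~A1 expansion of $M(\mu)$ is valid through order $\mu^{p-1}$ with a uniformly bounded $\mathfrak R_0$ and that the coefficients $M_j$ and $m_0$ act boundedly on $L^2(\R^3)$.
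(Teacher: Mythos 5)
Your proposal is correct and takes essentially the same route as the paper: the paper's Appendix~A1 proof likewise applies the Jensen--Nenciu inversion lemma (Lemma \ref{inverse formula}) with the projection $Q$, reduces everything to the invertibility of $Qm_{0}Q=Q(U+vG_{0}v)Q$ guaranteed by Definition \ref{resonace3}, and obtains \eqref{equ:63} by expanding each factor of the resulting inversion formula in powers of $\mu$ and feeding in \eqref{equ:58}. The only difference is cosmetic normalization: the paper factors $M(\mu)=\frac{(1+i)\alpha}{8\pi\mu}\bigl(P+\mu\widetilde{M}(\mu)\bigr)$ so that $T(\mu)+S=1+\mu\widetilde{M}(\mu)$, whereas you keep the scalar inside the leading term via $\hat{M}(\mu)=\mu M(\mu)$ and invert $N_{0}=\frac{(1+i)\alpha}{8\pi}P+Q$ instead, which is equivalent.
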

For the proof, we refer the readers to see Appendix A1 of this paper. Here we need to point out that we first get the expansion of $U-M(\mu)^{-1}$ in $\mathcal{B}\big(L^{2}(\mathbb{R}^3), \,L^{2}(\mathbb{R}^3)\big)$, and then by identity \eqref{equ:58} get the asymptotic property of $(H-\mu^{4})^{-1}$ in the usual weighted Sobolev space with $v\langle x\rangle^{\sigma}\in L^{\infty}(\mathbb{R}^{3})$ using the following trick :
\begin{equation*}
  w\langle x\rangle^{\sigma}\langle x\rangle^{-\sigma}(H-\mu^{4})^{-1}\langle x\rangle^{-\sigma}\langle x\rangle^{\sigma}w=U-M(\mu)^{-1}.
\end{equation*}
Moreover, if potential $V$ satisfies $\langle x\rangle^{\beta}V(x)\in L^{\infty}(\mathbb{R}^{3})$ with $\beta>\kappa+3/2$, which implies the condition that $\langle x\rangle^{\kappa} V(x)\in L^{2}(\mathbb{R}^3)$.

For the cases of $d\geq5$, we use the Born expansion to finite order of the second resolvent formula \eqref{Borndecomposition} to get the expansion of $R(H; z)$. It's enough to get the expansion of $(1+R(H_0; z)V)^{-1}$.
Proposition \ref{freeasymp} and Remark \ref{rem2} imply that
\begin{equation*}
  1+R(H_0; z)V=1-G^{odd}_{2}V+\sum_{j=3}^{N}\frac{i^{j}-(-1)^{j}}{2}z^{(j-2)/4}G^{odd}_{j}V+o(z^{(N-2)/4}),\ \ \ \text{odd}\,\,d\geq5;
\end{equation*}
\begin{equation*}
 1+R(H_0; z)V=1+G_{1}^{0,even}V+\sum_{j=2}^{N}\sum_{k=0}^{1}\frac{1}{2}(\ln z^{1/2})^k z^{(j-1)/2}\tilde{G}_{j}^{k,even}V+o(z^{(N-1)/2} \ln z^{1/2}),\ \ \text{even}\,\, d\geq6.
\end{equation*}

Let $V(x)=O(|x|^{-\beta})$ for large $|x|$ with some $\beta>3$ and $V$ is compact from $\mathcal{H}^{2}_{0}(\mathbb{R}^{d})$ to $\mathcal{H}^{-2}_{\beta}(\mathbb{R}^{d})$. Note that as a multiplier, $V$ is also compact from $\mathcal{H}^{2}_{s}(\mathbb{R}^{d})$ to $\mathcal{H}^{-2}_{s+\beta}(\mathbb{R}^{d})$ for any $s\in\mathbb{R}$. Thus for $d\geq5$, $G_{0}V$ is compact for $4-d/2<s<\beta+d/2-4$ since $G_{0}\in \mathcal{B}\Big(L^{2}_{s}(\mathbb{R}^{d}); L^{2}_{-s}(\mathbb{R}^{d})\Big)$ with $s\geq4-d/2$. We define
\begin{equation*}
  \mathcal{N}=\Big\{~ f\in L^{2}_{-s}(\mathbb{R}^{d})~\big|~ (1+G_{0}V)f=0 ~\Big\},\,\,
  \mathcal{M}=\Big\{~ f\in L^{2}_{s}(\mathbb{R}^{d})~\big|~ (1+VG_{0})f=0 ~\Big\}.
\end{equation*}
A priori $\mathcal{N}$ and $\mathcal{M}$ may depend on $s$, but $\mathcal{N}$ and $\mathcal{M}$ are obviously monotone in $s$ in opposite directions, and also dual to each other, so $\dim\mathcal{N}=\dim\mathcal{M}<\infty$ shows that $\mathcal{N}$ and $\mathcal{M}$ are independent of $s\in (4-d/2,\,\, \beta+d/2-4)$.
\begin{definition}\label{resonace}
We say that a resonance occurs at zero of $H$, provided there is a distributional solution $u$ of the equation $(\Delta^2+V)u=0$, where for every $s>4-d/2$ one has $u\in L^{2}_{-s}(\mathbb{R}^{d})\setminus L^{2}(\mathbb{R}^{d})$.
\end{definition}
\begin{remark}
Following the same argument as in part of Jensen \cite{J}, one can prove that there is no zero energy resonance of $H=(-\Delta)^2+V$ for $d\geq9$. Actually, in dimensions $d\geq9$, \cite[Lemma 2.4]{J} shows that $G_{0}$ is nice in the sense that $G_{0}$ is bounded from $L^{2}(\mathbb{R}^{d})$ to $L^{2}_{4}(\mathbb{R}^{d})$. By the identity $R(H; z)(1+VR(H_{0}; z))=R(H_{0}; z)$, taking $z=0$ as with $d\geq9$, then $R(H; 0)(1+VG_{0})=G_{0}=(-\Delta)^{-2}$. As a consequence there is no zero resonance for $H$ in dimensions $d\geq9$ since $G_{0}$ is defined on $L^{2}(\mathbb{R}^{d})$.
\end{remark}

{\it The assumption that zero is neither an eigenvalue nor a resonance of $H$ imply that the function $(1+R(H_0; 0)V)u=0$ only admits zero solution}, here $R(H_0; 0)=(-\Delta)^{-2}$. Thus we can expand $(1+R(H_0; z)V)^{-1}$ ( Neumann series ) for small $z$ and then get the expansion of $R(H; z)$ near zero.

\begin{proposition}\label{1+RoV}
Assume that 0 is a regular point for $H$. We have in $\mathcal{B}\big(\mathcal{H}^{2}_{-\sigma}(\mathbb{R}^{d}), \mathcal{H}^{2}_{-\sigma}(\mathbb{R}^{d})\big)$ the expansion as $z\in \mathbb{C}\setminus \mathbb{R}$ and $z\rightarrow 0$.

(i) For $d\geq5$ and $d$ odd,
\begin{equation}\label{asymp1}
  (1+R(H_0; z)V)^{-1}=\sum_{j=0}^{N}\frac{(-1)^j-i^j}{2}z^{\frac{j-2}{4}}C_{j}+o(z^{\frac{N-2}{4}})
\end{equation}
with $\beta$ and $\sigma$ are assumed to satisfy:

1) for $0\leq N\leq (d-3)/2$: \,\, $\beta>(d+1)/2$ and $1/2<\sigma<\beta-1/2$;

2) for $(d-3)/2<N\leq d-3$: \,\, $\beta>N+2$, $N+2-d/2<\sigma<\beta-(N+2-d/2)$;

3) for $N\geq d-2$: \,\, $\beta>2N+4-d$ and $N+2-d/2<\sigma<\beta-(N+2-d/2)$.\par
All the $C_{j}$ can be calculated through the Newmann series. Especially, the first term of expansion \eqref{asymp1} is $(1-G_2^{odd}V)^{-1}$.

(ii) For $d\geq6$ and $d$ even,
\begin{equation}\label{asymp2}
  (1+R(H_0; z)V)^{-1}=\sum_{j=0}^{N}\sum_{k=0}^{\varsigma(j)}z^{\frac{j-1}{2}}(\ln z^{1/2})^{k}C_j^{k}+o\big(z^{\frac{j-1}{2}}(\ln z^{1/2})^{\varsigma(N)}\big)
\end{equation}
with $\beta$ and $\sigma$ are assumed to satisfy:

1) for $0\leq N\leq (d-3)/4$: \,\, $\beta>(d+1)/2$ and $1/2<\sigma<\beta-1/2$;

2) for $(d-3)/4<N< d/2-1$: \,\, $\beta>2N+2$, $2N+2-d/2<\sigma<\beta-(2N+2-d/2)$;

3) for $N\geq d/2-1$: \,\, $\beta>4N+4-d$ and $2N+2-d/2<\sigma<\beta-(2N+2-d/2)$.

Here $\varsigma(j)\in \{0,\, 1\}$ and $C_{j}^{k}$ can be computed by the Neumann series. Especially, the first term of expansion \eqref{asymp2} is $(1+G_{1}^{0,even}V)^{-1}$.
\end{proposition}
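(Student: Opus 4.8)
The plan is to invert $1+R(H_0;z)V$ by isolating its $z$-independent leading operator and expanding the remainder in a Neumann series. Using the two expansions of $1+R(H_0;z)V$ recalled just above the statement, I would write them compactly as $1+R(H_0;z)V = A_0 + \tilde B(z)$, where $A_0 = 1+G_0V$ is the leading term (equal to $1-G_2^{odd}V$ for odd $d$ and to $1+G_1^{0,even}V$ for even $d$) and $\tilde B(z)$ collects all the strictly positive powers of $z^{1/4}$ (resp.\ $z^{1/2}$ together with the $\ln z^{1/2}$ factors) and the remainder. By Proposition \ref{freeasymp} the lowest power in $\tilde B(z)$ is $z^{1/4}$ (resp.\ $z^{1/2}$, possibly times $\ln z^{1/2}$), so $\tilde B(z)\to 0$ in the relevant operator norm as $z\to 0$.

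First I would show that $A_0$ is invertible on $\mathcal{H}^2_{-\sigma}$. By the boundedness of $G_0$ and the compactness of $V$ as a multiplier recalled in the preamble, $G_0V$ is compact on $\mathcal{H}^2_{-\sigma}$ for $\sigma$ in the stated window, so the Fredholm alternative applies to $A_0 = 1+G_0V$. The hypothesis that $0$ is a regular point is exactly injectivity of $A_0$: zero is neither an eigenvalue nor a resonance, i.e.\ $(1+R(H_0;0)V)u=0$ forces $u=0$. Hence $A_0^{-1}\in\mathcal{B}(\mathcal{H}^2_{-\sigma})$ exists and is bounded.

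Factoring $1+R(H_0;z)V = A_0\bigl(1+A_0^{-1}\tilde B(z)\bigr)$, I would then choose $\mu_0>0$ (equivalently a bound on $|z|$) so small that $\|A_0^{-1}\tilde B(z)\|<1$; the Neumann series gives
\begin{equation*}
 (1+R(H_0;z)V)^{-1} = \sum_{n=0}^{\infty}(-1)^n\bigl(A_0^{-1}\tilde B(z)\bigr)^n A_0^{-1}.
\end{equation*}
Substituting the explicit expansion of $\tilde B(z)$ and multiplying out, each $\bigl(A_0^{-1}\tilde B(z)\bigr)^n$ is again a series in $z^{1/4}$ (resp.\ in $z^{1/2}$ and $\ln z^{1/2}$); collecting terms of equal order up to $j=N$ produces the coefficients $C_j$ (resp.\ $C_j^k$), the $n=0$ term giving exactly $A_0^{-1}$ as asserted, while the order-$N$ remainder is inherited from the $o(\cdot)$ in Proposition \ref{freeasymp} after composition with the bounded factors $A_0^{-1}$.

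The main obstacle is the bookkeeping of weighted-space indices, and in the even-dimensional case of the logarithmic powers, rather than the analytic core. Each operator $G_j^{odd}V$ (resp.\ $\tilde G_j^{k,even}V$) maps $\mathcal{H}^2_{-\sigma}$ into itself only within a window of $\sigma$ that narrows as $j$ grows and requires a matching lower bound on $\beta$, since $V$ gains weight $\beta$ while $G_j$ can only be composed between spaces compatible with the growth $|x-y|^{\,j-(d-2)}$ of its kernel. Intersecting these windows over all $j\le N$ occurring in the Neumann products, and checking that the compactness of $G_0V$ needed for the Fredholm step survives on the same space, is what forces the three regimes 1)--3). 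In the even case one must additionally verify that multiplying the $\ln z^{1/2}$-carrying terms does not generate powers $(\ln z^{1/2})^k$ with $k\ge 2$ at the retained orders $z^{(j-1)/2}$, so that indeed $\varsigma(j)\in\{0,1\}$; this is the delicate point tying the admissible range of $N$ to the dimension.
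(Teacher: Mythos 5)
Your proposal is correct and follows essentially the same route as the paper: the paper's (sketched) argument likewise uses the compactness of $G_0V$ and the regularity assumption at zero to invert the leading term $1+G_0V$ via the Fredholm alternative, then expands $(1+R(H_0;z)V)^{-1}$ by a Neumann series built on the free-resolvent expansion of Proposition \ref{freeasymp}, with the weight/decay windows in regimes 1)--3) inherited from the mapping properties of the $G_j^{odd}$ and $\tilde{G}_j^{k,even}$ exactly as you describe. Your closing remarks on the index bookkeeping and on controlling the logarithmic powers in even dimensions correctly identify the only delicate points; no gap.
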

\vskip0.3cm
\begin{theorem}\label{resolventexpansion>5}
Under the same assumptions of Proposition \ref{1+RoV}, for $z\in \mathbb{C}\setminus \mathbb{R}$ and $z\rightarrow0$ we have:

(i) For $d\geq5$ and $d$ odd,
\begin{equation}\label{odd}
  R(H,z)=B_{0}+\sum_{j=3}^{N}\frac{i^j-(-1)^j}{2}z^{(j-2)/4}B_{j}+o(|z|^{(N-2)/4}).
\end{equation}
Here $B_0=(1-G_{2}^{odd}V)^{-1}(-G_{2}^{odd})$ and $B_{j}=0$ for $3\leq j<d-2$ and $d$ odd.

(ii) For $d\geq6$ and $d$ even,
\begin{equation}\label{even}
  R(H,z)=B^{0}_{1}+\sum_{j=1}^{N}\sum_{k=0}^{\rho(j)}(z^{1/2})^{j-1}(\ln z^{1/2})^{k}B^{k}_{j}+o\big(|z|^{(N-1)/2}(\ln z^{1/2})^{\rho(N)} \big),
\end{equation}
and $B^{0}_{1}=(1+G_{1}^{0, even}V)^{-1}G_{1}^{0, even}$.

All the other $B_j, B_{j}^{k}$ and $\rho(N)\in\{0,\, 1\}$ can be calculated explicit by the product of expansion series of $R(H_0; z)$ and $(1+R(H_0; z)V)^{-1}$.
\end{theorem}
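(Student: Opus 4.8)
The plan is to feed the two asymptotic expansions already in hand — Proposition~\ref{freeasymp} for $R(H_0;z)$ and Proposition~\ref{1+RoV} for $(1+R(H_0;z)V)^{-1}$ — into the second resolvent formula \eqref{Borndecomposition}, $R(H;z)=(1+R(H_0;z)V)^{-1}R(H_0;z)$, and to read off the coefficients by forming the operator Cauchy product of the two series. The first thing I would pin down is the functional-analytic setting so that the composition is legitimate: Proposition~\ref{1+RoV} gives $(1+R(H_0;z)V)^{-1}$ acting on $\mathcal{H}^{2}_{-\sigma}$, while Proposition~\ref{freeasymp} gives $R(H_0;z)\in\mathcal{B}(\mathcal{H}^{-2}_{\sigma},\mathcal{H}^{2}_{-\sigma})$, so the product lands in $\mathcal{B}(\mathcal{H}^{-2}_{\sigma},\mathcal{H}^{2}_{-\sigma})$ exactly when $\sigma$ and $\beta$ meet the constraints of both propositions simultaneously; these are precisely the hypotheses imported from Proposition~\ref{1+RoV} in the statement. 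With this fixed, the remaining work is bookkeeping: multiply a partial sum plus remainder against a partial sum plus remainder, and verify that the product of the two leading remainders governs the total error, yielding $o(|z|^{(N-2)/4})$ in the odd case and $o(|z|^{(N-1)/2}(\ln z^{1/2})^{\rho(N)})$ in the even case.

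For odd $d\ge5$ I would substitute \eqref{asymp1} and \eqref{R_0odd} and collect by the powers $z^{(j-2)/4}$. Matching the $z^{0}$ coefficient gives $B_0=(1-G_2^{odd}V)^{-1}(-G_2^{odd})$, since the two factors have leading coefficients $(1-G_2^{odd}V)^{-1}$ and $-G_2^{odd}$. To see that no new \emph{singular} (non-integer-power) term appears before $j=d-2$, note that by Remark~\ref{rem2} the odd-index free coefficients vanish, $d_j=0$ for odd $0<j<d-2$, so both input series contain, below order $z^{(d-4)/4}$, only the integer powers coming from $j\equiv2\pmod4$; their Cauchy product therefore contains no fractional power in that range, forcing $B_j=0$ for the odd indices $3\le j<d-2$ (while the indices $j\equiv0\pmod4$ carry the vanishing prefactor $\tfrac{i^j-(-1)^j}{2}$). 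The integer-power contributions $j\equiv 2\pmod 4$ must then be checked separately to confirm they leave only a regular part, and the first genuinely singular correction surfaces at $j=d-2$, producing the leading fractional term $z^{(d-4)/4}$ that governs the $t^{-d/4}$ decay downstream.

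For even $d\ge6$ the scheme is identical, using \eqref{asymp2} and \eqref{R_0even}, but one must additionally track the logarithmic factors $(\ln z^{1/2})^k$; matching the lowest order gives $B_1^{0}=(1+G_1^{0,even}V)^{-1}G_1^{0,even}$. The step I expect to be the main obstacle is the claim $\rho(j)\in\{0,1\}$, that only first powers of $\ln z^{1/2}$ occur: the Cauchy product of two series each carrying a logarithm naively generates $(\ln z^{1/2})^2$ contributions, so one must show these coefficients actually cancel. The available leverage is the sparsity \eqref{Gjeven1xiao}, $G_j^{1,even}=0$ for $j<d/2-1$, so that $\tilde G_j^{1,even}=\tfrac{1-(-1)^j}{2}G_j^{1,even}$ is supported on odd $j\ge d/2-1$ and enters only at integer powers $z^{(j-1)/2}$, together with the explicit algebraic relations tying $\tilde G_j^{1,even}$ to $\tilde G_j^{0,even}$; verifying order by order that the resulting $(\ln z^{1/2})^2$ coefficients vanish is the crux. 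Once this is settled, the coefficients $B_j^{k}$ and the exponents $\rho(N)$ are read off term by term, and the error is controlled exactly as in the odd case.
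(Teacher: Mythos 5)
Your proposal is correct and is essentially the paper's own argument: Theorem \ref{resolventexpansion>5} is obtained there exactly as you describe, by inserting the expansions of Proposition \ref{freeasymp} and Proposition \ref{1+RoV} into $R(H;z)=(1+R(H_0;z)V)^{-1}R(H_0;z)$ and forming the Cauchy product of the two series (the paper states this without further detail, and the proof of Proposition \ref{daoshu} confirms this is the intended mechanism), with the leading coefficients $B_0=(1-G_{2}^{odd}V)^{-1}(-G_{2}^{odd})$ and $B_1^{0}=(1+G_{1}^{0,even}V)^{-1}G_{1}^{0,even}$, and the absence of fractional powers below $j=d-2$ read off, just as you do, from $d_j=0$ (Remark \ref{rem2}) together with the vanishing prefactors $\tfrac{i^j-(-1)^j}{2}$. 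One adjustment to the step you single out as the crux: the $(\ln z^{1/2})^2$ terms need not cancel — since each logarithmic factor first enters at order $z^{(j_0-1)/2}$ with $j_0\ge d/2-1$, products of two such factors occur only at orders that are $o\big(|z|^{(N-1)/2}\ln z^{1/2}\big)$ for the values $N\le d/2$ the paper actually uses (e.g. in Proposition \ref{Elambda}), so they are absorbed into the remainder rather than cancelled, which is how the claim $\rho(j)\in\{0,1\}$ should be understood.
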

\vskip0.3cm
\begin{remark}\label{r1}
For the 4-dimensions case, Jensen and Nenciu's unified approach can be applied to get the expansion of resolvent of the fourth-order Schr\"odinger operator near zero but it's very complicated even for Schr\"odinger operator. Notice that we deduce the expansion under the assumption that 0 is a regular point of $H$. If zero is a simple eigenvalue or resonance of $H$, one can also get the expansion by the second resolvent formula by the same argument as one for Schr\"odinger operator \cite{JK, J}.  However, it would be hard and interesting to further analyze the full structure of the zero eigenspace of the fourth-order Schr\"odinger operator.
\end{remark}
\vskip0.3cm
\begin{proposition}\label{daoshu}
The asymptotic expansions for $R(H; z)$ above can be differentiated in $z$ any times, in the sense that
\begin{equation}\label{daoshu0}
(\frac{d}{dz})^{r}\Big[wR(H; z)w-(U-Qm_{0}^{-1}Q+\sum_{j=1}^{p-1}M^{\prime}_{j}\mu^{j})\Big]=o(|z|^{p/4-r}), \ \ \ d=3;
\end{equation}
\begin{equation}\label{daoshu1}
(\frac{d}{dz})^{r}\Big[R(H; z)-(B_{0}+\sum_{j=3}^{N}\frac{i^j-(-1)^j}{2}z^{(j-2)/4}B_{j})\Big]=o(|z|^{(N-2)/4-r}), \ \ \ \text{odd}\,\,d\geq5;
\end{equation}
\begin{equation}\label{daoshu2}
(\frac{d}{dz})^{r}\Big[R(H; z)-(B^{0}_{1}+\sum_{j=1}^{N}\sum_{k=0}^{\rho(j)}z^{\frac{j-1}{2}}(\ln z^{1/2})^{k}B^{k}_{j})\Big]=o\big(|z|^{\frac{N-1}{2}-r}(\ln z^{1/2})^{\rho(N)} \big), \ \ \ \text{even}\,\,d\geq6.
\end{equation}
\end{proposition}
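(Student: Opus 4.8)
The plan is to deduce all three differentiated expansions from two inputs already in hand: the term-by-term differentiability of the free resolvent expansion, recorded in the last paragraph of Proposition \ref{freeasymp}, and the fact that each expansion of $R(H;z)$ is assembled from the free resolvent together with an inverse factor via the second resolvent formula. Since differentiation interacts well with finite sums, the product rule, and operator inversion, it suffices to show that the inverse factor --- $(1+R(H_0;z)V)^{-1}$ for $d\geq5$, and $M(\mu)^{-1}$ for $d=3$ --- admits a term-by-term differentiable expansion, and then to combine the two factors by the Leibniz rule.

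For $d\geq5$ I would argue as follows. Write $A(z)=1+R(H_0;z)V$, whose leading coefficient $A_0$ (equal to $1-G_2^{odd}V$ for odd $d$, and $1+G_1^{0,even}V$ for even $d$) is bounded and invertible on the relevant weighted space by the regular-point assumption. From $A(z)B(z)=I$ with $B(z)=A(z)^{-1}$, Leibniz gives the recursion
\begin{equation*}
B^{(r)}(z)=-B(z)\sum_{\ell=1}^{r}\binom{r}{\ell}A^{(\ell)}(z)\,B^{(r-\ell)}(z),\qquad r\geq1,
\end{equation*}
so that each derivative of $B$ is a finite algebraic combination of $B$ and of the derivatives $A^{(\ell)}(z)=R^{(\ell)}(H_0;z)V$. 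Each $A^{(\ell)}(z)$ has the differentiable expansion supplied by Proposition \ref{freeasymp} (with error $o(z^{(N-2)/4-\ell})$ in the odd case and $o(z^{(N-1)/2-\ell}\ln z^{1/2})$ in the even case), so an induction on $r$ propagates the asymptotic expansion of Proposition \ref{1+RoV} to every $z$-derivative, the error improving by one power of $z$ per derivative. Multiplying the resulting differentiable expansion of $(1+R(H_0;z)V)^{-1}$ by that of $R(H_0;z)$ and applying the Leibniz rule yields \eqref{daoshu1} and \eqref{daoshu2}, the error in the product being controlled by the standard estimate for the derivative of a product of two operators each possessing a differentiable expansion with bounded leading term.

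For $d=3$ the same mechanism applies, but the inverse factor $M(\mu)^{-1}$ is governed by the Jensen--Nenciu block inversion recalled in Appendix A1, because the leading term of $M(\mu)$ is singular, namely $\sim P\mu^{-1}$. The expansion of $M(\mu)$ in powers of $\mu$ is differentiable term by term (again inherited from the free resolvent, now in the $L^2_v(\mathbb{R}^3)$ framework), and the Feshbach/Schur inversion expresses $M(\mu)^{-1}$ as a finite composition of the invertible operator $Qm_0Q$ on $QL^2(\mathbb{R}^3)$, the projections $P,Q$, and the entries of the expansion of $M(\mu)$. Since each ingredient has a differentiable $\mu$-expansion and inversion is handled by the same Leibniz recursion as above, $M(\mu)^{-1}$ is differentiable in $\mu$ with the expected error. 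Converting $\mu$-derivatives into $z$-derivatives through $z=\mu^4$, i.e. $\tfrac{d}{dz}=\tfrac14\mu^{-3}\tfrac{d}{d\mu}$, turns an order-$p$ remainder $\mu^{p}\mathfrak{R}(\mu)$ into $o(|z|^{p/4-r})$ after $r$ applications, which is exactly \eqref{daoshu0}.

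I expect the genuine obstacle to lie in the $d=3$ case, where one must check that differentiation does not spoil the error estimates in the presence of the singular $\mu^{-1}$ term and the Schur-complement inverse: each $\mu$-derivative lowers powers by one, and one must confirm that after the $z=\mu^4$ change of variables the accumulated powers and remainders land precisely on the rate $|z|^{p/4-r}$ asserted in \eqref{daoshu0}, uniformly in the operator norm. For $d\geq5$ the analogous but milder point is that the Neumann-series remainder --- not merely the finitely many explicit coefficients $C_j$ and $C_j^k$ --- must remain differentiable with the stated error; this is guaranteed because the remainder in Proposition \ref{freeasymp} is itself differentiable, so it feeds through the inversion recursion and the product rule without degradation.
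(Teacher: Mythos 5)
Your proposal is correct, and for $d\geq5$ it coincides with the paper's own proof: the paper reduces everything to the differentiability of the asymptotic series for $(1+R(H_0;z)V)^{-1}$ via the identity $(d/dz)(1+R(H_0;z)V)^{-1}=-(1+R(H_0;z)V)^{-1}R^{\prime}(H_0;z)V(1+R(H_0;z)V)^{-1}$ together with the fact that a product of differentiable asymptotic series is differentiable; your Leibniz recursion for $B^{(r)}$ is simply the explicit form of the paper's ``higher order derivatives can be done similarly.'' The one genuine divergence is at $d=3$: you propose to differentiate through the Jensen--Nenciu block inversion of $M(\mu)$ ingredient by ingredient, whereas the paper never revisits that inversion --- it differentiates the identity $wR(H;z)w=U-M(\mu)^{-1}$ directly, obtaining $(d/dz)[wR(H;z)w]=M^{-1}(\mu)\,vR^{\prime}(H_0;z)v\,M^{-1}(\mu)$, and then reads off the right-hand side as a product of three series whose expansions are already in hand: $M(\mu)^{-1}$ from Theorem \ref{resolventexpansion3} (no differentiated version of the Feshbach formula is needed) and $vR^{\prime}(H_0;z)v$ from Proposition \ref{freeasymp}; uniqueness of asymptotic expansions then identifies the product with the formal term-by-term derivative. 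Your route buys a self-contained check inside the block-inversion machinery at the cost of re-examining $(T(z)+S)^{-1}$ and $\widetilde{T}(z)^{-1}$, while the paper's buys brevity; both rest on the same two pillars (derivative-of-inverse identity and the product rule for differentiable series), and your $z=\mu^4$ chain-rule accounting is right, since the Fa\`a di Bruno terms $\mu^{-4r+k}f^{(k)}(\mu)$ with $f^{(k)}(\mu)=o(\mu^{p-k})$ all land in $o(|z|^{p/4-r})$. One bookkeeping point you leave implicit and should state: the derivatives are taken in $\mathcal{B}\big(L^{2}_{\sigma},L^{2}_{-\sigma}\big)$, and since each factor $R^{(\ell)}(H_0;z)$ costs weight ($\sigma>\ell+1/2$ in Proposition \ref{freeasymp}), the admissible $\sigma$ must grow with $r$; the paper records this as $\sigma>r+1/2$ and defers the verification to Theorem \ref{continuity}.
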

\begin{proof}
To see this, for $d\geq5$ note that $R(H; z)=(1+R(H_0; z)V)^{-1}R(H_0; z)$, in which $R(H_0; z)$ has differentiable asymptotic series ( Proposition \ref{freeasymp} ). Since the product of two differentiable asymptotic series is differentiable, it suffices to show that the asymptotic series for $(1+R(H_0; z)V)^{-1}$ is differentiable. This is seen from
\begin{equation*}
(d/dz)(1+R(H_0; z)V)^{-1}=-(1+R(H_0; z)V)^{-1}R^{\prime}(H_0; z)V(1+R(H_0; z)V)^{-1},
\end{equation*}
due to the result just mentioned about the product of two asymptotic series. For $d=3$, note that $w(H-\mu^{4})^{-1}w=U-M(\mu)^{-1}$ and
\begin{equation*}
(d/dz)[wR(H; z)w]=M^{-1}(\mu)vR^{\prime}(H_0; z)vM^{-1}(\mu).
\end{equation*}
Higher order derivatives can be done similarly. Note that we do the derivative in the topology of $\mathcal{B}\big(L^{2}_{\sigma}(\mathbb{R}^{d}),\,\, L^{2}_{-\sigma}(\mathbb{R}^{d})\big)$. In order to ensure that $(d/dz)^{r}R(H; z)\in\mathcal{B}\big(L^{2}_{\sigma}(\mathbb{R}^{d}),\,\, L^{2}_{-\sigma}(\mathbb{R}^{d})\big)$, $\sigma$ should be larger than the case $r=0$. Precisely, we need $\sigma>r+1/2$ which will be shown in the  Theorem \ref{continuity} below.
\end{proof}

\subsection{High energy decay estimates of $R(H; z)$}

The following result is the fundamental Agmon-Kato estimate on decay of the Schr\"{o}dinger resolvent operator for complex $z$ goes to infinity in the weighted Sobolev norms. It plays a crucial role in  time-decay estimates of the solution to Schr\"{o}dinger equation.
\begin{lemma}\label{freelap}
(\cite[Theorem 16.1]{KK}) For $\zeta\in\mathbb{C}\setminus[0,+\infty)$, $k=0,1,2,3,\cdots$, any $\sigma>k+\frac{1}{2}$ and any $a>0$, the bound
\begin{equation}\label{22}
  \|R^{(k)}(-\Delta; \zeta)\|_{\mathcal{H}_{\sigma}^{s}(\mathbb{R}^{3})\rightarrow\mathcal{H}_{-\sigma}^{s}(\mathbb{R}^{3})}\leq C(\sigma, a)|\zeta|^{-(1+k)/2},\,\,|\zeta|\geq a,
\end{equation}
holds for $s\in\mathbb{R}$.
\end{lemma}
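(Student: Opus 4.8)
The plan is to pass to the Fourier side and reduce the weighted bound to a one-dimensional singular integral, from which both the rate $|\zeta|^{-(1+k)/2}$ and the weight threshold $\sigma>k+1/2$ emerge transparently. First I would remove the Sobolev index $s$: since $R^{(k)}(-\Delta;\zeta)=k!\,(-\Delta-\zeta)^{-(k+1)}$ is a function of $-\Delta$, it commutes with $\langle i\nabla\rangle^{s}$, so writing
\[
\|R^{(k)}\|_{\mathcal{H}^{s}_{\sigma}\to\mathcal{H}^{s}_{-\sigma}}=\sup_{u}\frac{\|\langle x\rangle^{-\sigma}\langle i\nabla\rangle^{s}R^{(k)}u\|_{2}}{\|\langle x\rangle^{\sigma}\langle i\nabla\rangle^{s}u\|_{2}}
\]
and substituting $v=\langle i\nabla\rangle^{s}u$ shows this equals the norm on $L^{2}_{\sigma}\to L^{2}_{-\sigma}$. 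Hence it suffices, by polarization, to estimate the quadratic form $\langle(-\Delta-\zeta)^{-(k+1)}f,g\rangle$ for $f,g\in\mathcal{S}(\mathbb{R}^{3})$ by $C|\zeta|^{-(1+k)/2}\|f\|_{L^{2}_{\sigma}}\|g\|_{L^{2}_{\sigma}}$.

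Next, by Plancherel and polar coordinates $\xi=\rho\omega$, I would rewrite the form as $k!\int_{0}^{\infty}\tilde\Phi(\rho)(\rho^{2}-\zeta)^{-(k+1)}\,d\rho$, where $\tilde\Phi(\rho)=\int_{\rho S^{2}}\hat f\,\overline{\hat g}\,dS$ is the integral over the sphere of radius $\rho$. The Agmon--H\"ormander trace estimate gives $\|\hat f|_{\rho S^{2}}\|_{L^{2}(\rho S^{2})}\lesssim\|f\|_{L^{2}_{\sigma}}$ uniformly in $\rho$ for $\sigma>1/2$, and, applied to $x^{\alpha}f$ through $\partial_{\xi}^{\alpha}\hat f=\widehat{(-ix)^{\alpha}f}$, it controls the radial derivatives, $\sup_{\rho}|\tilde\Phi^{(m)}(\rho)|\lesssim\|f\|_{L^{2}_{\sigma}}\|g\|_{L^{2}_{\sigma}}$ whenever $\sigma>m+1/2$. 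This is exactly where the hypothesis $\sigma>k+1/2$ is spent: I need $\tilde\Phi\in C^{k}$ with bounded derivatives.

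Finally I would analyze the scalar integral with $\lambda=\zeta^{1/2}$, $\mathrm{Im}\,\lambda>0$, $\mathrm{Re}\,\lambda\ge0$, factoring $(\rho^{2}-\zeta)^{-(k+1)}=(\rho-\lambda)^{-(k+1)}(\rho+\lambda)^{-(k+1)}$, where $|\rho+\lambda|\gtrsim\rho+|\lambda|$. Away from the singularity (the ranges $\rho\lesssim|\lambda|/2$ and $\rho\gtrsim2|\lambda|$, where $|\rho-\lambda|\gtrsim\max(|\lambda|,\rho)$) the integrand is nonsingular and a direct estimate gives a contribution $O(|\lambda|^{-(2k+1)})$, better than needed. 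Near $\rho\sim|\lambda|$ I would integrate by parts $k$ times using $(\rho-\lambda)^{-(k+1)}=\tfrac{(-1)^{k}}{k!}\partial_{\rho}^{k}(\rho-\lambda)^{-1}$, transferring the derivatives onto the smooth factor $\Psi(\rho)=k!\,\tilde\Phi(\rho)(\rho+\lambda)^{-(k+1)}$ and leaving $\int\Psi^{(k)}(\rho)(\rho-\lambda)^{-1}\,d\rho$ plus boundary terms at $\rho=0$ (those at infinity vanish by Schwartz decay). The dominant term is the one in which all $k$ derivatives fall on $\tilde\Phi$, producing $\tilde\Phi^{(k)}(\rho)(\rho+\lambda)^{-(k+1)}\sim\tilde\Phi^{(k)}(\rho)\,|\lambda|^{-(k+1)}$ on this range; every term in which a derivative instead hits $(\rho+\lambda)^{-(k+1)}$ decays faster. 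Together with the uniform boundedness of the Cauchy transform $h\mapsto\int h(\rho)(\rho-\lambda)^{-1}\,d\rho$, this yields $C\,\|\tilde\Phi^{(k)}\|_{\infty}\,|\lambda|^{-(k+1)}=C|\zeta|^{-(1+k)/2}\|f\|_{L^{2}_{\sigma}}\|g\|_{L^{2}_{\sigma}}$.

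The hard part will be this last step: bounding the Cauchy-type integral $\int\Psi^{(k)}(\rho)(\rho-\lambda)^{-1}\,d\rho$ \emph{uniformly} as $\mathrm{Im}\,\lambda\downarrow0$, that is, as $\zeta$ approaches the continuous spectrum $[0,+\infty)$, which is precisely the limiting-absorption regime where termwise absolute-value kernel bounds diverge. The resolution is to exploit the $C^{k}$ (indeed H\"older) regularity of $\tilde\Phi$ furnished by the trace lemma rather than its pointwise size, while checking that the boundary contributions at $\rho=0$ carry extra powers of $\lambda^{-1}$ from the $(\rho+\lambda)^{-(k+1)}$ factor and are of strictly lower order. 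Keeping the weight sharp, $\sigma>k+1/2$ rather than $\sigma>k+1$, hinges on using this derivative and oscillation structure instead of estimating $\bigl|(-\Delta-\zeta)^{-(k+1)}\bigr|$ directly.
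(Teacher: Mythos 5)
Your proposal is correct in outline, but it takes a genuinely different route from the paper: the paper does not prove this lemma at all, quoting it from Kopylova--Komech \cite[Theorem 16.1]{KK} and remarking only that the proof there is dimension-independent. The argument behind that citation --- mirrored inside the paper in the proof of Proposition \ref{freehighenergydecay} for $H_0=(-\Delta)^2$ --- handles $k\geq1$ softly, via the dilation-commutator recursion $\zeta R^{(k)}(-\Delta;\zeta)=-k\,R^{(k-1)}+\tfrac12\,[x\cdot\nabla,\,R^{(k-1)}]$ (the analogue of \eqref{28}) and induction: each commutator costs one unit of weight and one derivative (whence $\sigma>k+\tfrac12$ and the need for the estimate at all Sobolev indices $s$ simultaneously), and each division by $\zeta$, after paying $|\zeta|^{1/2}$ to absorb the extra derivative, produces the additional $|\zeta|^{-1/2}$ per differentiation, so that only the $k=0$ Agmon bound requires a hands-on proof. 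Your Fourier-side argument --- trace lemma for $\tilde\Phi$ and its radial derivatives, $k$-fold integration by parts, Privalov-type control of the resulting Cauchy integral --- proves all $k$ at once in a self-contained way and makes the numerology (the rate $-(1+k)/2$ and the threshold $\sigma>k+\tfrac12$) visible at its source; what the recursion buys is robustness, since it sidesteps the one delicate analytic point of your plan and transfers verbatim to $(-\Delta)^2$, which is exactly how the paper uses it.

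On that delicate point, one concrete caution so your sketch closes: the ``uniform boundedness of the Cauchy transform'' $h\mapsto\int h(\rho)(\rho-\lambda)^{-1}d\rho$ is \emph{false} for merely bounded $h$ (logarithmic divergence as $\mathrm{Im}\,\lambda\downarrow0$), and even a H\"older bound used across the whole window $\rho\sim|\lambda|$ produces a spurious factor $|\lambda|^{\epsilon}$ at high energy, since the H\"older contribution scales like $[h]_{C^{0,\epsilon}}|\lambda|^{\epsilon}$ while $[\tilde\Phi^{(k)}]_{C^{0,\epsilon}}$ has no decay in $\lambda$. The correct bookkeeping, which your closing paragraph points toward but does not pin down, is to subtract $h(\mathrm{Re}\,\lambda)$ and apply the H\"older estimate only on an $O(1)$ neighborhood of the singular point (the subtracted constant contributes a uniformly bounded complex logarithm), controlling the remainder of the window by the integrability $\tilde\Phi^{(k)}\in L^{1}(d\rho)$, which follows from Plancherel and Cauchy--Schwarz; both ingredients are supplied by the trace lemma precisely when $\sigma>k+\tfrac12$. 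Two minor repairs: your standing normalization $\mathrm{Re}\,\lambda\geq0$ together with $\mathrm{Im}\,\lambda>0$ cannot hold for a single branch on all of $\mathbb{C}\setminus[0,+\infty)$ --- choose the square root with $\mathrm{Re}\,\lambda\geq0$ (the symmetry $\lambda\mapsto-\lambda$ of $(\rho^2-\zeta)^{-(k+1)}$ makes this harmless) and note that $|\rho+\lambda|^{2}\geq\rho^{2}+|\lambda|^{2}$ then holds for either sign of $\mathrm{Im}\,\lambda$; and the uniform-in-$\rho$ trace constant should be checked as $\rho\to0$, though since $|\zeta|\geq a>0$ the small-$\rho$ region lies in your nonsingular regime where the crude bound already suffices.
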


We note that the proof of Theorem 16.1 in \cite{KK} does not depends on the dimension $d$, so that for free Schr\"odinger operator in $d$-dimensions, the estimate \eqref{22} also holds. The following is the similar conclusion for $H_0=(-\Delta)^2$. \vskip0.3cm
\begin{proposition}\label{freehighenergydecay}
For $z\in\mathbb{C}\setminus[0,+\infty)$, $k=0,1,2,3,\cdots$,  any $\sigma>k+\frac{1}{2}$ and any $a>0$, the bound
\begin{equation}\label{23}
  \|R^{(k)}(H_0; z)\|_{\mathcal{H}_{\sigma}^{s}(\mathbb{R}^{d})\rightarrow\mathcal{H}_{-\sigma}^{s}(\mathbb{R}^{d})}\leq C(\sigma,  a)|z|^{-(3+3k)/4},\,\,|z|\geq a
\end{equation}
holds for $s\in\mathbb{R}$.
\end{proposition}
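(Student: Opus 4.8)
The plan is to reduce everything to the free Laplacian resolvent through the splitting identity \eqref{freeresolventidentity} and then invoke Lemma \ref{freelap} term by term. First I would fix the branch of $z^{1/2}$ so that, writing $z=|z|e^{i\theta}$ with $\theta\in(0,2\pi)$ for $z\in\mathbb{C}\setminus[0,+\infty)$, one has $\arg z^{1/2}\in(0,\pi)$. This places $z^{1/2}$ in the open upper half plane and $-z^{1/2}$ in the open lower half plane, so both lie in $\mathbb{C}\setminus[0,+\infty)$ and Lemma \ref{freelap} is applicable to each of $R(-\Delta;\pm z^{1/2})$; moreover $|{\pm z^{1/2}}|=|z|^{1/2}\ge a^{1/2}$ whenever $|z|\ge a$. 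With $w=z^{1/2}$, identity \eqref{freeresolventidentity} reads
\begin{equation*}
 R(H_0;z)=\tfrac{1}{2}z^{-1/2}\big[R(-\Delta;z^{1/2})-R(-\Delta;-z^{1/2})\big].
\end{equation*}

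The heart of the argument is to follow $k$ differentiations in $z$. Since $\tfrac{dw}{dz}=\tfrac12 z^{-1/2}$, each $z$-derivative either lowers a power of $z$ by one, or, acting through the chain rule on a factor $R^{(i)}(-\Delta;\pm z^{1/2})$, raises the Laplacian-derivative order by one while contributing a factor $\tfrac12 z^{-1/2}$. I would make this precise by an induction on $k$, proving that
\begin{equation*}
 R^{(k)}(H_0;z)=\sum_{i=0}^{k} z^{-(2k+1-i)/2}\big[a_{k,i}R^{(i)}(-\Delta;z^{1/2})+b_{k,i}R^{(i)}(-\Delta;-z^{1/2})\big]
\end{equation*}
for suitable constants $a_{k,i},b_{k,i}$. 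The base case $k=0$ is the displayed identity, and the inductive step is a one-line product-rule computation: differentiating a summand $z^{-(2k+1-i)/2}R^{(i)}(-\Delta;\pm z^{1/2})$ produces one term carrying $R^{(i)}$ with power $-(2(k+1)+1-i)/2$ and one term carrying $R^{(i+1)}$ with power $-(2(k+1)+1-(i+1))/2$, both of the asserted shape at level $k+1$. These differentiations are performed in the operator-norm topology of $\mathcal{B}(\mathcal{H}^s_\sigma,\mathcal{H}^s_{-\sigma})$, which is legitimate because $R(-\Delta;\cdot)$ is analytic off $[0,+\infty)$ with all derivatives controlled there by Lemma \ref{freelap}.

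With the structural formula in hand, the estimate follows by bounding each summand. Applying Lemma \ref{freelap} with $\zeta=\pm z^{1/2}$ gives $\|R^{(i)}(-\Delta;\pm z^{1/2})\|\le C|z^{1/2}|^{-(1+i)/2}=C|z|^{-(1+i)/4}$, valid as soon as $\sigma>i+\tfrac12$. Hence the $i$-th summand is $O\big(|z|^{-(2k+1-i)/2-(1+i)/4}\big)=O\big(|z|^{-(4k+3-i)/4}\big)$. Since $0\le i\le k$, the slowest-decaying contribution is the one with $i=k$, which is exactly of order $|z|^{-(3+3k)/4}$, while every other term decays strictly faster; summing the finitely many contributions yields \eqref{23}. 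The top index $i=k$ is the only one requiring $\sigma>k+\tfrac12$, precisely the hypothesis of the proposition, and since Lemma \ref{freelap} holds for all $s\in\mathbb{R}$, so does the conclusion.

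The routine part is the chain-rule bookkeeping. The two points deserving care, and the only genuine obstacle, are the branch choice guaranteeing that both $\pm z^{1/2}$ lie off $[0,+\infty)$ so that Lemma \ref{freelap} legitimately applies to each factor, and the observation that the slowest-decaying contribution is realized exactly at $i=k$; this is what pins the decay rate to $-(3+3k)/4$ inherited from the $z^{1/2}$-scaling of the Laplacian resolvent and ties the weight condition to $\sigma>k+\tfrac12$.
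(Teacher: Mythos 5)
Your proof is correct, and for $k=0$ it coincides with the paper's: the same splitting \eqref{freeresolventidentity} (your $\pm z^{1/2}$ are the paper's $\mu^{2}$ and $(i\mu)^{2}$) combined with Lemma \ref{freelap} applied on each half-plane. For $k\geq1$, however, you take a genuinely different route. The paper does not differentiate the splitting identity at all; it instead invokes the recurrence $z\,R^{(k)}(H_0;z)=-k\,R^{(k-1)}(H_0;z)+\tfrac14[x\cdot\nabla,\,R^{(k-1)}(H_0;z)]$ (their \eqref{28}, a consequence of the degree-$4$ homogeneity of $(-\Delta)^2$ under dilations) and runs an induction on $k$. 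That argument is shorter and transfers verbatim to any homogeneous constant-coefficient operator, but it leaves two things implicit: the derivation of the scaling identity itself, and the weighted-space bookkeeping for the commutator term, where $x\cdot\nabla$ costs one power of the weight and is precisely what forces $\sigma>k+\tfrac12$ at each inductive step. Your approach — proving by induction the explicit structural formula $R^{(k)}(H_0;z)=\sum_{i=0}^{k}z^{-(2k+1-i)/2}\bigl[a_{k,i}R^{(i)}(-\Delta;z^{1/2})+b_{k,i}R^{(i)}(-\Delta;-z^{1/2})\bigr]$ and estimating termwise — makes everything transparent: the exponent arithmetic $-(2k+1-i)/2-(1+i)/4=-(4k+3-i)/4$ is correct, the slowest decay is indeed realized at $i=k$ giving exactly $|z|^{-(3+3k)/4}$ (the subcritical terms being absorbed using $|z|\geq a$), and the hypothesis $\sigma>k+\tfrac12$ is visibly tied to the top index via Lemma \ref{freelap}. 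Your branch discussion is also careful where the paper is terse: choosing $\arg z^{1/2}\in(0,\pi)$ puts both $\pm z^{1/2}$ off $[0,+\infty)$ with $|{\pm z^{1/2}}|\geq a^{1/2}$, legitimizing the lemma on both factors. One could quibble that the term-by-term differentiation should be justified in $\mathcal{B}(L^2)$, where the resolvents are analytic, before reading off the weighted bounds for each summand, but you flag this and it is no less rigorous than the paper's one-line "by an induction process." In short: same skeleton at $k=0$, a more explicit and self-contained substitute for the paper's dilation-recurrence induction at $k\geq1$.
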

\begin{proof}
First we prove decay \eqref{23} for $k=0$. We aim to prove that for $\sigma>\frac{1}{2}$,
\begin{equation}\label{24}
  \|R(H_0; z)\|_{\mathcal{H}_{\sigma}^{s}(\mathbb{R}^{d})\rightarrow\mathcal{H}_{-\sigma}^{s}(\mathbb{R}^{d})}\leq C(\sigma,   a)|z|^{-3/4},\,\,|z|>a,\,z\in\mathbb{C}\setminus[0,+\infty).
\end{equation}
By Lemma \ref{freelap}, we have
\begin{align*}
  \|R(H_0; z)\|_{\mathcal{H}_{\sigma}^{s}\rightarrow\mathcal{H}_{-\sigma}^{s}}
  & = \frac{1}{2|\mu^{2}|}\|R(-\Delta; \mu^{2})-R(-\Delta; (i\mu)^{2})\|_{\mathcal{H}_{\sigma}^{s}
         \rightarrow\mathcal{H}_{-\sigma}^{s}}\\
  & \leq \frac{1}{2|\mu^{2}|}\left(\|R(-\Delta; \mu^{2})\|_{\mathcal{H}_{\sigma}^{s}\rightarrow\mathcal{H}_{-\sigma}^{s}}
     +\|R(-\Delta; (i\mu)^{2})\|_{\mathcal{H}_{\sigma}^{s}\rightarrow\mathcal{H}_{-\sigma}^{s}}\right)\\
  & \leq C(\sigma, a)|z|^{-3/4}.
\end{align*}
Now we check decay estimate \eqref{23} for $k\neq0$. For $R(H_0; z)$ we have the recurrent relations
\begin{equation}\label{28}
  z R^{(k)}(H_0; z)=-k R^{(k-1)}(H_0; z)+\frac{1}{4}[x\cdot\nabla,\, R^{(k-1)}(H_0; z)].
\end{equation}
By an induction process we get the estimate \eqref{23}.
\end{proof}

Next, we prove the high energy decay estimate of perturbed resolvent $R^{(k)}(H; z)$. The proof relies on estimate \eqref{23} for the free resolvent $R(H_0; z)$ and on some useful identities for $R(H_0; z)$ and $R(H; z)$. Note that $V$ is $H_{0}$-relative bounded under the compactness assumption on $V$. Precisely, $V$ is a compact operator from $\mathcal{H}_{0}^{2}$ to $\mathcal{H}_{\beta}^{-2}$. Thus there exists a finite constant $V_0\in \mathbb{R}$, such that for any $\lambda\in \mathbb{R}\setminus [V_0, +\infty)$, $H-\lambda=\Delta^{2}+V-\lambda>0$, then for the resolvent set $\rho(H)$ of $H$, we have $\mathbb{C}\setminus [V_0, +\infty)\subseteq\rho(H)$.
\begin{lemma}\label{Fredholm}
Let $V(x)=O(|x|^{-\beta})$ for large $|x|$ with some $\beta>1$. Assume that $V$ is a compact operator from $\mathcal{H}_{0}^{2}$ to $\mathcal{H}_{\beta}^{-2}$. Then for $z\in \mathbb{C}\setminus[0, +\infty)$, $R(H_0; z)V,\,\, VR(H_0; z)\in\mathcal{B}(L^{2}(\mathbb{R}^{d}))$ are compact. And $1+R(H_0; z)V$, $1+V R(H_0; z)$ are invertible for $z\in\mathbb{C}\setminus[V_0, +\infty)$.
\end{lemma}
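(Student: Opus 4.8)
The plan is to treat the two assertions---compactness and invertibility---in turn, using throughout that $z\in\mathbb{C}\setminus[0,+\infty)=\rho(H_0)$, so that $R(H_0;z)=(H_0-z)^{-1}$ is a bounded operator on $L^2(\mathbb{R}^d)$ whose range is $D(H_0)=\mathcal{H}^4_0(\mathbb{R}^d)$; in particular it gains four derivatives and maps $L^2$ continuously into $\mathcal{H}^2_0$.

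For compactness I would first reduce to a single reference point. Fix $z_0=-1\in\rho(H_0)$; the first resolvent identity gives $R(H_0;z)=R(H_0;z_0)\bigl[1+(z-z_0)R(H_0;z)\bigr]$, so that $VR(H_0;z)=VR(H_0;z_0)\bigl[1+(z-z_0)R(H_0;z)\bigr]$ is the product of $VR(H_0;z_0)=V(H_0+1)^{-1}$ with a bounded operator. Hence it suffices to prove that $V(H_0+1)^{-1}$ is compact on $L^2$, i.e.\ that $V$ is $H_0$-relatively compact. I would obtain this from the decay $V=O(|x|^{-\beta})$ by a truncation argument: the bounded, compactly supported potentials $V_n=V\,\mathbf{1}_{\{|x|\le n,\ |V|\le n\}}$ give operators $V_n(H_0+1)^{-1}$ that are compact on $L^2$ (multiplication by a bounded, compactly supported function following the smoothing $(H_0+1)^{-1}\colon L^2\to\mathcal{H}^4$, which embeds compactly into $L^2$ over the support of $V_n$ by Rellich), and $V_n(H_0+1)^{-1}\to V(H_0+1)^{-1}$ in operator norm by the decay of $V$ together with its $H_0$-relative boundedness; a norm limit of compact operators is compact. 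This is consistent with, and can alternatively be read off from, the assumed compactness of $V\colon\mathcal{H}^2_0\to\mathcal{H}^{-2}_\beta$ composed with the smoothing of $R(H_0;z)$. The operator $R(H_0;z)V$ is then compact as the adjoint $\bigl(VR(H_0;\bar z)\bigr)^{*}$, using $R(H_0;z)^{*}=R(H_0;\bar z)$ (here $\bar z\in\rho(H_0)$ since $[0,+\infty)$ is real) and that $V$ is a real, self-adjoint multiplier.

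For invertibility, fix $z\in\mathbb{C}\setminus[V_0,+\infty)$, which by the discussion preceding the lemma lies in $\rho(H)$ (and in $\rho(H_0)$, so that $R(H_0;z)$ is defined). Since $R(H_0;z)V$ is compact, $1+R(H_0;z)V$ is a compact perturbation of the identity, hence Fredholm of index zero by Riesz--Schauder theory, and is therefore invertible precisely when it is injective. To see injectivity, suppose $(1+R(H_0;z)V)f=0$ with $f\in L^2$. Applying $H_0-z$ and using the identity $(H_0-z)(1+R(H_0;z)V)=H-z$ (valid on $D(H_0)$, where the $H_0$-boundedness of $V$ makes the domains match) gives $(H-z)f=0$; since $z\in\rho(H)$ this forces $f=0$. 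Thus $1+R(H_0;z)V$ is invertible. The companion operator $1+VR(H_0;z)$ is handled by the same Fredholm reasoning together with the observation that $g\in\ker(1+VR(H_0;z))$ yields $h:=R(H_0;z)g\in D(H_0)$ with $g=-Vh$, whence $(1+R(H_0;z)V)h=0$, so $h=0$ by the injectivity just established, and then $g=-Vh=0$.

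The main obstacle I anticipate is the bookkeeping in the compactness step: the hypothesis delivers compactness of $V$ only into the negative-order space $\mathcal{H}^{-2}_\beta$, which does not embed into $L^2$, so one must genuinely exploit the four-derivative smoothing of $R(H_0;z)$ and the decay of $V$---e.g.\ via the truncation/Rellich argument above, or an explicit Hilbert--Schmidt bound on the kernel $V(x)G_z(x-y)$ of $V(H_0+1)^{-1}$ in low dimensions---to land the composite operator back in $L^2(\mathbb{R}^d)$. Once the $H_0$-relative compactness of $V$ is secured, both conclusions are routine. A secondary technical point is confirming that a kernel element $f$ of $1+R(H_0;z)V$ actually lies in $D(H_0)$, so that the factored identity applies; this is exactly where the $H_0$-relative boundedness of $V$ (together with a short elliptic-regularity bootstrap, writing $f=-R(H_0;z)Vf$) is used.
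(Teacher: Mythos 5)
Your proof is correct, and its skeleton --- compactness of the perturbation, Fredholm alternative for $1+R(H_0;z)V$, and injectivity by reducing $(1+R(H_0;z)V)f=0$ to $(H-z)f=0$ via the Born/second-resolvent identity --- is exactly the paper's strategy (Appendix A2). The differences are in the supporting steps. For injectivity you invoke the inclusion $\mathbb{C}\setminus[V_0,+\infty)\subseteq\rho(H)$ stated just before the lemma, whereas the paper re-proves precisely the injectivity of $H-z$ from scratch: an elliptic-regularity remark placing $\psi\in\mathcal{H}^4$, then $\mathrm{Im}\,((H-z)\psi,\psi)=-\mathrm{Im}\,z\,\|\psi\|^2\neq0$ for nonreal $z$ and positivity of $H-z$ for real $z<V_0$; the content is the same, and your route is the cleaner citation. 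For the companion operator the paper says ``by duality'' (i.e.\ $1+VR(H_0;z)$ is the adjoint of $1+R(H_0;\bar z)V$), while you instead transfer kernels via $h=R(H_0;z)g$; both work, and your version has the small advantage of not needing to track adjoints of the weighted hypothesis on $V$. The most substantive divergence is the compactness step: the paper disposes of it in one sentence (``boundedness of $R(H_0;z)$ plus the compactness assumption on $V$''), which as stated only lands $VR(H_0;z)$ in $\mathcal{H}^{-2}_\beta$ rather than $L^2$; your truncation--Rellich--norm-limit argument for $V(H_0+1)^{-1}$, reduced to a fixed reference point by the first resolvent identity and dualized for $R(H_0;z)V$, is a genuine proof, though it tacitly uses local boundedness of $V$ (as the paper also does implicitly --- all its applications assume $V\in L^\infty$), a caveat you correctly flag yourself. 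In short: same architecture as the paper, with a more self-contained compactness argument and a citation in place of the paper's direct quadratic-form computation.
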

\begin{proof}
The proof relies on the Hermitian symmetry and the Fredholm theorem. The compactness of $R(H_0; z)V$ and $VR(H_0; z)$ follows from the boundedness of $R(H_0; z)$ and the compactness assumption on $V$. Then we use the Fredholm theorem to get the invertibility of $1+R(H_0; z)V$. The invertibility of $1+VR(H_0; z)$ follows by the duality. The proof details please see the Appendix A2.
\end{proof}
\begin{proposition}\label{Perturbed Resolvent}
Let $k=0,1,2,3,\cdots$, and assume that the potential $V(x)$ satisfies $V(x)=O(|x|^{-\beta})$ for large $|x|$ with $\beta>k+1$. Assume that $V$ is a compact operator from $\mathcal{H}_{0}^{2}$ to $\mathcal{H}_{\beta}^{-2}$. Then for large $z\in\mathbb{C}\setminus[V_0, +\infty)$ and any $\sigma>k+1/2$, the bound
\begin{equation}\label{29}
  \|R^{(k)}(H; z)\|_{\mathcal{H}_{\sigma}^{s}(\mathbb{R}^{d})\rightarrow\mathcal{H}_{-\sigma}^{s}(\mathbb{R}^{d})}
  \leq C(\sigma, k)\big(|z|^{-(3+3k)/4}\big)
\end{equation}
holds for any $s\in\mathbb{R}$.
\end{proposition}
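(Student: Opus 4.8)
The plan is to start from the second resolvent formula \eqref{Borndecomposition}, written as $R(H;z)=W(z)R(H_0;z)$ with $W(z):=(1+R(H_0;z)V)^{-1}$, and to transfer the free decay of Proposition \ref{freehighenergydecay} to $H$ by showing that the correction factor $W(z)$ is uniformly bounded, with uniformly bounded (indeed decaying) derivatives, as $|z|\to\infty$. Once this is known, the decay of $R^{(k)}(H;z)$ is governed entirely by that of $R^{(k)}(H_0;z)$, and I argue by induction on $k$, treating $k=0$ first.

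For $k=0$: by Lemma \ref{Fredholm}, $1+R(H_0;z)V$ is invertible for every $z\in\mathbb{C}\setminus[V_0,+\infty)$, so it remains to bound $W(z)$ uniformly for large $|z|$. To avoid any loss of derivatives I factor $V=v\,(Uv)$ with $v=|V|^{1/2}=O(|x|^{-\beta/2})$ and $U=\mathrm{sgn}\,V$ as in \eqref{equ:60}, so that the relevant object $vR(H_0;z)v$ acts on $L^2$. Splitting the weights as $v\colon L^2\to L^2_{\beta/2}$, then $R(H_0;z)\colon L^2_{\beta/2}\to L^2_{-\beta/2}$ (legitimate since $\beta/2>1/2$), then $v\colon L^2_{-\beta/2}\to L^2$, Proposition \ref{freehighenergydecay} with $k=0$ gives $\|vR(H_0;z)v\|_{L^2\to L^2}\lesssim|z|^{-3/4}\to0$. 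Hence for $|z|\ge a_0$ the Neumann series converges, $W(z)$ (equivalently $M(z)^{-1}$) is uniformly bounded, and \eqref{Borndecomposition} yields $\|R(H;z)\|\lesssim\|R(H_0;z)\|\lesssim|z|^{-3/4}$ in $\mathcal{B}(\mathcal{H}^s_\sigma,\mathcal{H}^s_{-\sigma})$ for $\sigma>1/2$.

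For $k\ge1$: differentiate $R(H;z)=W(z)R(H_0;z)$ by the Leibniz rule,
\begin{equation*}
R^{(k)}(H;z)=\sum_{j=0}^{k}\binom{k}{j}\,W^{(k-j)}(z)\,R^{(j)}(H_0;z).
\end{equation*}
Since $W'=-W\,R'(H_0;z)V\,W$, an induction shows each $W^{(m)}$ is a finite sum of products of copies of $W$ interlaced with factors $R^{(i)}(H_0;z)V$ with $i\ge1$; as $\|R^{(i)}(H_0;z)V\|$ decays like $|z|^{-(3+3i)/4}$ (again via the splitting through $v$), every $W^{(m)}$ with $m\ge1$ not only stays bounded but decays. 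Thus the term $j=k$, namely $W(z)R^{(k)}(H_0;z)$, dominates and is bounded by $\|W(z)\|\,\|R^{(k)}(H_0;z)\|\lesssim|z|^{-(3+3k)/4}$, while the terms $j<k$ carry the extra decay of $W^{(k-j)}$ and are lower order. Applying Proposition \ref{freehighenergydecay} to each $R^{(j)}(H_0;z)$ requires $\sigma>j+1/2$, whence $\sigma>k+1/2$ (the hypothesis), and the $k$ factors of $V$ are absorbed against the weights precisely when $\beta>k+1$. This closes the induction and gives \eqref{29}.

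The main obstacle is the uniform-in-$z$ control of $W(z)=(1+R(H_0;z)V)^{-1}$ at high energy together with the weighted-Sobolev and derivative bookkeeping. One must arrange the compositions so that the decay of $V$ (a gain of weight $\beta$) compensates both the passage from weight $-\sigma$ back to weight $\sigma$ and any loss of derivatives caused by $V$; this is exactly why the symmetric splitting through $v=|V|^{1/2}$ is used, keeping $vR(H_0;z)v$ on the unweighted $L^2$ where Proposition \ref{freehighenergydecay} applies without derivative loss and the smallness $\lesssim|z|^{-3/4}$ is manifest. The second delicate point is that each differentiation forces a larger weight, so that the weighted estimates for $R^{(j)}(H_0;z)$, $j\le k$, close only under $\sigma>k+1/2$ and $\beta>k+1$; verifying that these thresholds are met uniformly, rather than the elementary smallness of $\|vR(H_0;z)v\|$, is the real content of the argument.
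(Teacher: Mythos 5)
Your proposal is correct, and its skeleton coincides with the paper's: both start from the second resolvent formula \eqref{Borndecomposition}, prove uniform high-energy boundedness of $W(z)=(1+R(H_0;z)V)^{-1}$ on weighted spaces from the smallness $\|R(H_0;z)V\|\lesssim|z|^{-3/4}$ supplied by Proposition \ref{freehighenergydecay}, and then close the derivative estimates by distributing intermediate weights, with $\beta>k+1$ and $\sigma>k+1/2$ entering exactly where you say. There are two genuine organizational differences. First, the paper never differentiates $W(z)$ explicitly: it differentiates the identities $[1+R(H_0;z)V]R(H;z)=R(H_0;z)$ and $R(H;z)[1+VR(H_0;z)]=R(H_0;z)$ $k$ times to get the iterated formula \eqref{38}, expressing $R^{(k)}(H;z)$ through lower-order derivatives $R^{(j)}(H;z)$ sandwiched between free resolvents, and runs an induction on $k$ choosing weights such as $\sigma'\in(k_1+\tfrac12,\,\beta-\tfrac32-k_2)$; your Leibniz expansion of $W(z)R(H_0;z)$ with $W'=-W\,R'(H_0;z)V\,W$ is an equivalent reorganization, and your observation that every term with a derivative falling on $W$ carries strictly extra decay is right, since each inner factor $R^{(i)}(H_0;z)V$ contributes $|z|^{-(3+3i)/4}$; the weight budget closes because consecutive derivative factors only require $\beta>i_{l-1}+i_l+1\,(\le k+1)$. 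One bookkeeping point you should make explicit: the outer $W$ in your scheme must be uniformly bounded on $\mathcal{H}^s_{-\rho}$ for $\rho$ slightly above $k+\tfrac12$, which can exceed $\beta/2$; the Neumann argument still works for all $\rho\in(\tfrac12,\beta-\tfrac12)$ (route the intermediate weight through $\min(\rho,\beta-\rho)$), and $\beta>k+1$ guarantees $k+\tfrac12<\beta-\tfrac12$. Second, your symmetrized splitting $V=vUv$ is a detour the paper does not take, and contrary to your stated motivation it is the step most at risk for general $s$: the middle Neumann inverse of $1+UvR(H_0;z)v$ is controlled only on $L^2=\mathcal{H}^0$, and multiplication by $v=|V|^{1/2}$ (which can be rougher than $V$) is not a controlled multiplier on $\mathcal{H}^s$ under the stated hypotheses, so reconstructing $W(z)$ on $\mathcal{H}^s_{-\sigma}$ for $s\neq0$ forces a regularity mismatch. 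Since the free estimate \eqref{23} holds at fixed $s$ for every $s$, the paper's direct bound $\|R(H_0;z)Vg\|_{\mathcal{H}^s_{-\sigma}}\le C|z|^{-3/4}\|g\|_{\mathcal{H}^s_{-\sigma}}$ achieves the same smallness more simply and uniformly in $s$; replacing your $v$-splitting by this direct bound makes your argument complete as stated.
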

\begin{proof}
By the second resolvent formula, we have
\begin{equation}\label{30}
  R(H; z)=[1+R(H_0; z)V]^{-1}R(H_0; z),\,\,
  R(H; z)=[1+VR(H_0; z)]^{-1}R(H_0; z).
\end{equation}
The identities \eqref{30} imply
\begin{equation}\label{31}
  [1+R(H_0; z)V]R(H; z)=R(H_0; z),\,\,
  R(H; z)[1+VR(H_0; z)]=R(H_0; z).
\end{equation}
Differentiating \eqref{31} $k$ times, we obtain
\begin{equation}\label{38}
\begin{split}
R^{(k)}(H; z)& =R(H_0; z)^{(k)}-\sum_{k_{1}+k_{2}=k-1}\frac{(k-1)!}{k_{1}!k_{2}!}R^{(k_{1})}(H; z)VR^{(k_{2}+1)}(H_0; z)\\
                 & -\sum_{k_{1}+k_{2}=k-1}\frac{(k-1)!}{k_{1}!k_{2}!}R^{(k_{1}+1)}(H_0; z)VR^{(k_{2})}(H; z)\\
                 & +\sum_{k_{1}+k_{2}+k_{3}=k-1}\frac{(k-1)!}{k_{1}!k_{2}!k_{3}!}R^{(k_{1})}(H; z)VR^{(k_{2}+1)}(H_0; z)
                 VR^{(k_{3})}(H; z).
\end{split}
\end{equation}

Note that, if $\sigma_1<\sigma_2$, then estimate \eqref{29} holds for $\sigma_1$ implies \eqref{29} holds for $\sigma_2$. This follows by $\|f_1\|_{L^{2}_{\sigma_1}}\leq\|f_1\|_{L^{2}_{\sigma_2}}$ and $\|f_2\|_{L^{2}_{-\sigma_2}}\leq\|f_2\|_{L^{2}_{-\sigma_1}}$.

For $k=0$, it is enough to prove estimate \eqref{29} for $\sigma\in(\frac{1}{2},\,\, \frac{\beta}{2}]$. By the second resolvent formula, \eqref{29} holds if the norm of the inverse operator $[1+R(H_0; z)V]^{-1}:\mathcal{H}^{s}_{-\sigma}\rightarrow\mathcal{H}^{s}_{-\sigma}$ is uniform bounded in $z$ for large $z\in\mathbb{C}\setminus[0, +\infty)$, since $\|R(H_0; z)\|_{\mathcal{H}^{s}_{\sigma}\rightarrow\mathcal{H}^{s}_{-\sigma}}\leq C(|z|^{-3/4})$ by decay estimate \eqref{23} with $k=0$. Now, we aim to show $[1+R(H_0; z)V]^{-1}:\mathcal{H}^{s}_{-\sigma}\rightarrow\mathcal{H}^{s}_{-\sigma}$ is uniformly bounded in $z$ for large $z\in\mathbb{C}\setminus[0, +\infty)$. It is equivalent to prove that, for large $z\in\mathbb{C}\setminus[0, +\infty)$,
\begin{equation}\label{inverse1}
  \|g\|_{\mathcal{H}^{s}_{-\sigma}}\leq C\|(1+R(H_0; z)V)g\|_{\mathcal{H}^{s}_{-\sigma}}.
\end{equation}
In fact, by the triangle inequality we have
\begin{equation}\label{inverse2}
\big|\|g\|_{\mathcal{H}^{s}_{-\sigma}}-\|R(H_0; z)Vg\|_{\mathcal{H}^{s}_{-\sigma}}\big|\leq \|(1+R(H_0; z)V)g\|_{\mathcal{H}^{s}_{-\sigma}}\leq \|g\|_{\mathcal{H}^{s}_{-\sigma}}+\|R(H_0; z)Vg\|_{\mathcal{H}^{s}_{-\sigma}}.
\end{equation}
Then for $\|R(H_0; z)Vg\|_{\mathcal{H}^{s}_{-\sigma}}$, by the decay estimate \eqref{23} we have
\begin{equation*}
  \|R(H_0; z)Vg\|_{\mathcal{H}^{s}_{-\sigma}}\leq C|z|^{-3/4}\|Vg\|_{\mathcal{H}^{s}_{\sigma}}\leq C|z|^{-3/4}\|g\|_{\mathcal{H}^{s}_{-\sigma}},
\end{equation*}
since $\sigma\in(1/2, \beta/2]$ and the compactness of $V$. Thus for $z$ large enough, we have
$$\|R(H_0; z)Vg\|_{\mathcal{H}^{s}_{-\sigma}}\leq \frac{1}{4}\|g\|_{\mathcal{H}^{s}_{-\sigma}},$$
hence \eqref{inverse1} holds by \eqref{inverse2}. Notice that the constant $C$ in \eqref{inverse1} do not depend on $z$.

We prove the estimates \eqref{29} for $k\geq1$ by induction. Namely, assume \eqref{29} holds for $R^{(j)}(H; z)$ with $j=0,1,2,\cdots,k-1$. Consider the second summand on the right hand side of \eqref{38}. Choosing $\sigma^{\prime}\in(k_{1}+\frac{1}{2},\,\, \beta-\frac{3}{2}-k_{2})$ ( it is possible since $\beta>k+1$ ), we obtain
\begin{equation*}
  \begin{split}
  &\quad \|R^{(k_{1})}(H; z)VR^{(k_{2}+1)}(H_0; z)\psi\|_{\mathcal{H}_{-\sigma}^{s}}\\
  & \leq C(|z|^{-(3+3k_{1})/4})\|VR^{(k_{2}+1)}(H_0; z)\psi\|_{\mathcal{H}_{\sigma^{\prime}}^{s}}\\
  & \leq C(|z|^{-(3+3k_{1})/4})\|R^{(k_{2}+1)}(H_0; z)\psi\|_{\mathcal{H}_{\sigma^{\prime}-\beta}^{s}}\\
  & \leq C(|z|^{-(6+3k)/4})\|\psi\|_{\mathcal{H}^{s}_{\sigma}}
  \end{split}
\end{equation*}
since $\beta-\sigma^{\prime}>k_{2}+1+\frac{1}{2}$.

The third summand can be estimated similarly by choosing
$\sigma^{\prime}\in(k_{1}+\frac{3}{2},\,\, \beta-\frac{3}{2}-k_{2})$.

Finally, consider the last summand. Taking $\sigma^{\prime}\in(k_{1}+k_{3}+\frac{1}{2},\,\, \beta-\frac{3}{2}-k_{2})$, we get
\begin{equation*}
 \begin{split}
&\quad \|R^{(k_{1})}(H; z)VR^{(k_{2}+1)}(H_0; z)VR^{(k_{3})}(H; z)\psi\|_{\mathcal{H}^{s}_{-\sigma}}\\
&\leq C(|z|^{-(3+3k_{1})/4})\|VR^{(k_{2}+1)}(H_0; z)VR^{(k_{3})}(H; z)\psi\|_{\mathcal{H}^{s}_{\sigma^{\prime}}}\\
&\leq C(|z|^{-(3+3k_{1})/4})\|R^{(k_{2}+1)}(H_0; z)VR^{(k_{3})}(H; z)\psi\|_{\mathcal{H}^{s}_{\sigma^{\prime}-\beta}}\\
&\leq C(|z|^{-(6+3k_{1}+3k_{2})/4})\|VR^{(k_{3})}(H ;z)\psi\|_{\mathcal{H}^{s}_{\beta-\sigma^{\prime}}}\\
&\leq C(|z|^{-(6+3k_{1}+3k_{2})/4})\|R^{(k_{3})}(H; z)\psi\|_{\mathcal{H}^{s}_{-\sigma^{\prime}}}\\
&\leq C(|z|^{-(6+3k)/4})\|\psi\|_{\mathcal{H}^{s}_{\sigma}}
 \end{split}
\end{equation*}
since $\sigma^{\prime}>k_{1}+\frac{1}{2}$,\, $\beta-\sigma^{\prime}>k_{2}+1+\frac{1}{2}$ and $\sigma^{\prime}>k_{3}+\frac{1}{2}$.
\end{proof}

\subsection{Limiting absorption principle of $R(H; z)$}\label{limitabsorp}
The limiting absorption principle was known in the diffraction theory for wave and Maxwell equations. It means the existence and continuity of the resolvent in the continuous spectrum. The continuity of the resolvent for Schr\"odinger operator in the weighted Sobolev norms was established by Agmon \cite{Agmon}. H\"ormander also considered such problem for the general real coefficient self-adjoint operator $P(D)$. See \cite[Charpter XIV]{H2}.  Here we prove the continuity of $R(H; z)$ up to the positive real line in order to get the behaviour of $E^{\prime}(\lambda)$ for $\lambda>0$.

Denote by $\mathbb{C}^{+}=\{\rm{Im}\,z>0\}$ the open upper half complex plane, and by $\mathbb{C}^{-}=\{\rm{Im}\,z<0\}$ the open lower half complex plane. Define $\Xi$ be the disjoint union of $\overline{\mathbb{C}^{+}}$ and $\overline{\mathbb{C}^{-}}$ with the identified points $z\leq0$. For the resolvent of free Schr\"odinger operator $R(-\Delta; \zeta)$, summarizing the limiting absorption principle results of \cite{JK, J, J1}, or see Ginibre and Moulin \cite{GinMou} and Kuroda \cite{Kuroda}, we have:
\begin{lemma}
( \cite[Theorem 8.1]{JK} ) Let $k=0,1,2,\cdots$. If $\sigma>k+1/2$, then $R^{(k)}(-\Delta; \zeta)\in \mathcal{B}\big(L^{2}_{\sigma}(\mathbb{R}^{d}),$ $ L^{2}_{-\sigma}(\mathbb{R}^{d})\big)$ is continuous in $\zeta \in\Xi\setminus\{0\}$. Further, the boundary value
\begin{equation*}
  R^{(k)}(-\Delta; \lambda\pm i0)=\lim_{\epsilon\downarrow0}R^{(k)}(-\Delta; \lambda\pm i\epsilon)\in \mathcal{B}\big(L^{2}_{\sigma}(\mathbb{R}^{d}),\,\, L^{2}_{-\sigma}(\mathbb{R}^{d})\big)
\end{equation*}
exists for any $\lambda\in(0, +\infty)$. The decay estimate \eqref{22} can be extended from $\zeta\in\mathbb{C}\setminus[0, +\infty)$ to $\zeta\in\Xi\setminus\{0\}$.
\end{lemma}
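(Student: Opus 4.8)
The plan is to treat this as the classical limiting absorption principle for the free Laplacian and to argue Fourier-analytically, since $R(-\Delta;\zeta)$ has the explicit symbol $(|\xi|^2-\zeta)^{-1}$ (equivalently the Hankel kernel \eqref{lakernel}). I would first dispose of the derivatives through the resolvent identity $R^{(k)}(-\Delta;\zeta)=k!\,R(-\Delta;\zeta)^{k+1}$, which reduces everything to the case $k=0$ together with a bookkeeping of how the weights are consumed under composition. The heart of the matter is then to show that $R(-\Delta;\zeta)\colon L^{2}_{\sigma}\to L^{2}_{-\sigma}$ with $\sigma>1/2$ extends continuously from $\mathbb{C}\setminus[0,+\infty)$ to $\Xi\setminus\{0\}$, and that the two boundary branches $R(-\Delta;\lambda\pm i0)$ exist for $\lambda>0$.

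For the $k=0$ case the key analytic input is the trace (restriction) lemma: for $\sigma>1/2$ the map $\tau(\rho)\colon f\mapsto \widehat{f}\big|_{|\xi|=\rho}$ is bounded from $L^{2}_{\sigma}(\mathbb{R}^d)$ into $L^{2}(S^{d-1}_{\rho})$ and depends continuously on $\rho>0$. This holds because $\langle x\rangle^{-\sigma}$ carries $L^{2}$ into functions whose Fourier transforms lie in $H^{\sigma}$, and for $\sigma>1/2$ the Sobolev trace theorem permits restriction to the sphere. Writing $R(-\Delta;\lambda\pm i\epsilon)$ in the spectral representation and applying the coarea formula, the Sokhotski--Plemelj decomposition produces a principal-value part together with a delta part $\pm i\pi\,\tau(\sqrt{\lambda})^{*}\tau(\sqrt{\lambda})$ (up to a Jacobian). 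Both pieces are bounded $L^{2}_{\sigma}\to L^{2}_{-\sigma}$ and continuous in $\lambda$ by the trace lemma, which simultaneously yields the existence of $R(-\Delta;\lambda\pm i0)$ and the continuity on $\Xi\setminus\{0\}$.

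The derivatives and the constraint $\sigma>k+1/2$ then come out of the factorization $R^{(k)}=k!\,R^{k+1}$ read through a chain of intermediate weighted spaces $L^{2}_{\sigma}\to L^{2}_{-1/2-\delta}\to\cdots\to L^{2}_{-\sigma}$: each resolvent factor in the composition costs slightly more than half a unit of weight, so $k+1$ factors force $\sigma>k+1/2$, and continuity together with the boundary values propagates through the product. The high-energy decay \eqref{22} on the boundary follows from the same continuity argument combined with the scaling $\xi\mapsto|\zeta|^{1/2}\xi$, which rescales the sphere of radius $\sqrt{|\zeta|}$ to the unit sphere and extracts exactly the factor $|\zeta|^{-(1+k)/2}$; continuity up to the real axis then lets the interior bound pass to $\Xi\setminus\{0\}$.

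The step I expect to be the main obstacle is establishing genuine continuity---rather than mere uniform boundedness---of the boundary-value operators up to the positive real axis, uniformly down to small but nonzero $\zeta$ and out to large $\zeta$. This hinges entirely on the continuity in $\rho$ of the trace map $\tau(\rho)$ and on controlling the principal-value kernel near the singular sphere; once the trace lemma is available with its continuous dependence on $\rho$, the remaining estimates are routine. In practice, since this is precisely \cite[Theorem~8.1]{JK}, I would ultimately cite Jensen--Kato for the full details and use the sketch above only to record the underlying mechanism.
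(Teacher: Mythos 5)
The paper gives no argument for this lemma at all: it is quoted verbatim from Jensen--Kato \cite[Theorem 8.1]{JK}, so there is no in-paper proof to match, and your $k=0$ mechanism (Agmon trace lemma for $\sigma>1/2$, Sokhotski--Plemelj split into a principal-value part and a delta part $\pm i\pi\,\tau(\sqrt{\lambda})^{*}\tau(\sqrt{\lambda})$, continuity in $\lambda$ from continuity of $\tau(\rho)$) is indeed the standard and correct route behind that citation.

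However, your reduction of the $k\geq 1$ case is a genuine gap. The identity $R^{(k)}(-\Delta;\zeta)=k!\,R(-\Delta;\zeta)^{k+1}$ is fine in the open resolvent set, but it cannot be ``read through a chain of intermediate weighted spaces'' at boundary energies: for $\lambda>0$ the outgoing kernel behaves like $e^{i\sqrt{\lambda}|x|}|x|^{-(d-1)/2}$, so $R(-\Delta;\lambda\pm i0)f$ decays no better than $|x|^{-(d-1)/2}$ no matter how fast $f$ decays, i.e.\ the first factor lands only in $L^{2}_{-1/2-\delta}$ and never in any positively weighted space $L^{2}_{s}$ with $s>1/2$ that the second factor would require. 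Thus the composition $L^{2}_{\sigma}\to L^{2}_{-1/2-\delta}\to\cdots\to L^{2}_{-\sigma}$ breaks down at the second arrow, and your weight count is also inconsistent: $k+1$ factors each ``costing slightly more than $1/2$'' would suggest a total budget of order $(k+1)/2$, not the $\sigma>k+1/2$ on \emph{each} side that the lemma demands. The repair stays entirely within your own framework: treat the $(k+1)$-fold pole directly, writing $\langle R^{(k)}(-\Delta;\lambda\pm i0)f,g\rangle$ as an integral of $F(\rho)/(\rho^{2}-\lambda\mp i0)^{k+1}$ with $F$ built from the spherical traces, and use that for $\sigma>k+1/2$ the trace $\rho\mapsto\tau(\rho)$ on $H^{\sigma}$ data is $C^{k}$ (with H\"older continuity) in $\rho$, which is exactly what is needed to make sense of and control the higher-order pole by Taylor expansion at $\rho=\sqrt{\lambda}$. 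Equivalently, on the kernel side (this is what Jensen--Kato actually do in dimension three), each $\zeta$-derivative of the explicit kernel produces a factor comparable to $|x-y|\,|\zeta|^{-1/2}$, and $|x-y|^{k}\leq\langle x\rangle^{k}\langle y\rangle^{k}$ is absorbed by the extra $k$ units of weight on each side, reducing matters to the $k=0$ estimate; this also yields the high-energy decay $|\zeta|^{-(1+k)/2}$ honestly, whereas your scaling argument should note that the inhomogeneous weight $\langle x\rangle^{-\sigma}$ does not scale cleanly and needs the restriction $|\zeta|\geq a>0$.
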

Notice that the function $1/\mu^{2}$ is analytic for $\mu\in \mathbb{C}\setminus \{0\}$, and the fact that the difference of two analytic functions is also analytic. Here we need to point out that $z\in \mathbb{C}\setminus[0, +\infty)$ implies that $\mu^{2}, (i\mu)^{2}\in \mathbb{C}\setminus[0, +\infty)$.  By the resolvent identity \eqref{freeresolventidentity}, for the resolvent of free fourth order Schr\"odinger operator $R(H_0; z)$,  we have:
\begin{corollary}\label{freehighabsorp}
Let $k=0,1,2,\cdots$. For $\sigma>k+1/2$, then $R^{(k)}(H_0; z)\in \mathcal{B}(L^{2}_{\sigma}(\mathbb{R}^{d}),\,\, L^{2}_{-\sigma}(\mathbb{R}^{d}))$
is continuous in $z\in \Xi\setminus \{0\}$. Further, the boundary value
\begin{equation*}
  R^{(k)}(H_0; \lambda\pm i0)=\lim_{\epsilon\downarrow0}R^{(k)}(H_0; \lambda\pm i\epsilon)\in \mathcal{B}\big(L^{2}_{\sigma}(\mathbb{R}^{d}),\,\, L^{2}_{-\sigma}(\mathbb{R}^{d})\big)
\end{equation*}
exists for any $\lambda\in(0,\, +\infty)$, and  the bound
\begin{equation}\label{freehigh}
   \|R^{(k)}(H_0; z)\|_{L_{\sigma}^{2}(\mathbb{R}^{d})\rightarrow L_{-\sigma}^{2}(\mathbb{R}^{d})}=O(|z|^{-(3+3k)/4})
\end{equation}
holds as $z\rightarrow \infty$ in $\Xi\setminus \{0\}$.
\end{corollary}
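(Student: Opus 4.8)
The plan is to reduce the statement to the limiting absorption principle for the free Laplacian resolvent (the Lemma immediately preceding), via the splitting identity \eqref{freeresolventidentity}, and thereby to extend the high-energy bound of Proposition \ref{freehighenergydecay} from $z\in\mathbb{C}\setminus[0,+\infty)$ to the doubled surface $\Xi\setminus\{0\}$. Writing $z=\mu^4$ with $\mu$ in the first quadrant, \eqref{freeresolventidentity} reads
\[
R(H_0; z)=\frac{1}{2\mu^2}\big[R(-\Delta; \mu^2)-R(-\Delta; (i\mu)^2)\big],\qquad (i\mu)^2=-\mu^2,
\]
exhibiting $R(H_0; z)$ as the scalar $\tfrac12 z^{-1/2}$, analytic and continuous for $z\neq0$, times a difference of Laplacian resolvents evaluated at $\mu^2$ and at $(i\mu)^2$.

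First I would fix the geometry of the substitution on $\Xi$. For $\mu$ in the open first quadrant, $\mu^2$ sweeps the open upper half-plane and $(i\mu)^2=-\mu^2$ the open lower half-plane, both off $[0,+\infty)$; as $z\to\lambda\pm i0$ with $\lambda>0$, the argument $\mu^2$ tends to $\lambda^{1/2}\pm i0$, where $R(-\Delta;\cdot)$ needs the limiting absorption principle, while $(i\mu)^2\to-\lambda^{1/2}$ lands on the negative axis, where $R(-\Delta;\cdot)$ is holomorphic and presents no boundary issue. Hence the only nontrivial boundary behaviour of $R(H_0; z)$ across $(0,+\infty)$ is inherited from the single term $R(-\Delta;\mu^2)$, for which the preceding Lemma supplies continuity on $\Xi\setminus\{0\}$ and the existence of the boundary values. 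Combined with the continuity of $z\mapsto z^{-1/2}$ away from the origin, this gives, for $k=0$, both the continuity of $R(H_0; z)$ on $\Xi\setminus\{0\}$ and the existence of $R(H_0;\lambda\pm i0)$; the lower sheet is reached from the upper one via the self-adjointness relation $R(H_0;\bar z)=R(H_0; z)^{*}$.

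For $k\geq1$ I would differentiate the identity $k$ times in $z$. Since $\tfrac{d}{dz}=\tfrac14\mu^{-3}\tfrac{d}{d\mu}$ and $\tfrac{d}{d\mu}R(-\Delta;\mu^2)=2\mu R'(-\Delta;\mu^2)$, the chain rule expresses $R^{(k)}(H_0; z)$ as a finite linear combination of terms $\mu^{-a}R^{(l)}(-\Delta;\mu^2)$ and $\mu^{-a}R^{(l)}(-\Delta;(i\mu)^2)$ with $0\leq l\leq k$. Each such term is continuous on $\Xi\setminus\{0\}$ and admits boundary values, by the Lemma applied to the $l$-th derivative (valid for $\sigma>l+1/2\leq k+1/2$) together with the analyticity of $\mu^{-a}$; and each obeys the decay bound. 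Indeed, using the extension of \eqref{22} to $\Xi\setminus\{0\}$ provided by the Lemma, $\|R^{(l)}(-\Delta;\mu^2)\|\lesssim|z|^{-(1+l)/4}$ and $|\mu^{-a}|=|z|^{-a/4}$, so the dominant term $\mu^{-(2+2k)}R^{(k)}(-\Delta;\mu^2)$ reproduces exactly $|z|^{-(3+3k)/4}$ while the remaining terms decay faster; this is precisely the power bookkeeping already carried out in Proposition \ref{freehighenergydecay}. Summing the finitely many terms yields \eqref{freehigh} on all of $\Xi\setminus\{0\}$ and completes the proof.

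The step I expect to be most delicate is the bookkeeping on the two sheets of $\Xi$: one must verify that, under $z=\mu^4$, the $\pm i0$ limits of $z$ across $(0,+\infty)$ correspond to the $\pm i0$ limits of $\mu^2$ across $(0,+\infty)$, while the companion argument $(i\mu)^2$ always remains on the negative real axis so that $R(-\Delta;(i\mu)^2)$ contributes a genuinely analytic, hence harmless, term with no boundary ambiguity. Once this correspondence and the conjugation symmetry between the two sheets are pinned down, the remainder is a direct transcription of the arguments already used for Proposition \ref{freehighenergydecay} and for the Laplacian Lemma.
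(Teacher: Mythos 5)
Your proposal is correct and follows essentially the same route as the paper: the splitting identity \eqref{freeresolventidentity} reduces everything to the limiting absorption Lemma for $R^{(k)}(-\Delta;\zeta)$, with the key geometric observation (which the paper states just before the corollary) that for $\mu$ in the first quadrant both $\mu^{2}$ and $(i\mu)^{2}$ stay in $\mathbb{C}\setminus[0,+\infty)$, and your power bookkeeping $\mu^{-(2+2k)}R^{(k)}(-\Delta;\mu^2)\sim|z|^{-(3+3k)/4}$ matches the paper's rate exactly. The only cosmetic difference is that for the derivatives you differentiate the splitting identity directly via the chain rule, whereas the paper's Proposition \ref{freehighenergydecay} obtains the same bounds by induction from the recurrence $zR^{(k)}(H_0;z)=-kR^{(k-1)}(H_0;z)+\frac14[x\cdot\nabla,R^{(k-1)}(H_0;z)]$; both are valid, and your careful treatment of the lower sheet via $R(H_0;\bar z)=R(H_0;z)^{*}$ correctly handles the point that $(i\mu)^{2}$ stays on the negative axis only on the upper sheet.
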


\begin{remark}
For general operator $f(-\Delta)$, M. Ben-Artzi and J. Nemirovsky \cite[Theorem 2A]{Ben-Nem} have established the limiting absorption results for the free resolvent of $f(-\Delta)$ (while without results about higher order derivative of resolvent) under the following assumption on $f$. $f(\theta)$ is a real-valued positive continuously differentiable function for $\theta\in (0, +\infty)$. Its derivative $f^{\prime}(\theta)$ is positive and locally H\"older continuous. For instance, $f(-\Delta)=\sqrt{1-\Delta}$.
\end{remark}
It is well known that, for $H_{0}=(-\Delta)^{2}$, $\sigma_{ac}(H_{0})=\sigma_{c}(H_{0})=[0,+\infty)$. For $H=H_{0}+V$, $\sigma_{c}(H)=[0, +\infty)$ for short range potential. However, positive embedded eigenvalues may occur for $H$ even with $C_{0}$-potential. Denote $\Sigma$ to be the discrete sequence of imbedded eigenvalues of $H$.
Next, we will prove that the boundary value $R(H; \lambda\pm i0)$ exists on $\lambda\in\sigma_{c}(H)\setminus(\Sigma\cup\{0\})$. If $V_0\geq0$, then the segment $[V_0,\, 0]=\{0\}$.

\begin{lemma}\label{inverseabsorp}
Let $V(x)=O(|x|^{-\beta})$ for large $|x|$ with some $\beta>1$. Assume that $V$ is a compact operator from $\mathcal{H}_{0}^{2}$ to $\mathcal{H}_{\beta}^{-2}$. Then for $\sigma>1/2$ and $\lambda>0$, $R(H_0; \lambda\pm i0)V: L^{2}_{-\sigma}(\mathbb{R}^{d})\rightarrow L^{2}_{-\sigma}(\mathbb{R}^{d})$ and $VR(H_0; \lambda\pm i0): L^{2}_{\sigma}(\mathbb{R}^{d})\rightarrow L^{2}_{\sigma}(\mathbb{R}^{d})$ are compact.
\end{lemma}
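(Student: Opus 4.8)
The plan is to regard this lemma as the boundary-value analogue of Lemma \ref{Fredholm}: on the resolvent set we already know that $R(H_0;z)V$ and $VR(H_0;z)$ are compact, and the limiting absorption principle (Corollary \ref{freehighabsorp}) lets us push this down to the edge of the continuous spectrum $\lambda>0$. I would prove compactness of only one of the two operators, say $VR(H_0;\lambda\pm i0)$ on $L^{2}_{\sigma}(\mathbb{R}^{d})$, and deduce the other for free: since $H_0$ is self-adjoint one has $R(H_0;\lambda\pm i0)^{*}=R(H_0;\lambda\mp i0)$, and $V$ is real, so $\big(VR(H_0;\lambda\pm i0)\big)^{*}=R(H_0;\lambda\mp i0)V$. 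Under the $L^{2}$ pairing $(L^{2}_{\sigma})^{*}\cong L^{2}_{-\sigma}$, and compactness is stable under adjoints, so compactness of $VR(H_0;\lambda\pm i0)$ on $L^{2}_{\sigma}$ is equivalent to compactness of $R(H_0;\lambda\mp i0)V$ on $L^{2}_{-\sigma}$. Treating both signs simultaneously then yields both assertions.

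For the direct argument I would \emph{not} try to extract compactness from the decay of $V$ alone --- multiplication by a bounded, merely decaying function is never compact on $L^{2}(\mathbb{R}^{d})$ --- but rather from the smoothing of the free resolvent combined with a weighted Rellich embedding. First, Corollary \ref{freehighabsorp} with $k=0$ gives that $R(H_0;\lambda\pm i0):L^{2}_{\sigma}(\mathbb{R}^{d})\to L^{2}_{-\sigma}(\mathbb{R}^{d})$ is bounded for $\sigma>1/2$. I would upgrade the target to the weighted Sobolev space $\mathcal{H}^{4}_{-\sigma}(\mathbb{R}^{d})$: writing $u=R(H_0;\lambda\pm i0)f$ one has $(-\Delta)^{2}u=\lambda u+f\in L^{2}_{-\sigma}$, so elliptic regularity for $(-\Delta)^{2}$ (the weighted form of the identity $(1+\Delta^{2})R(H_0;z)=1+(1+z)R(H_0;z)$ recorded in Remark \ref{rem2}) yields $u\in\mathcal{H}^{4}_{-\sigma}$ and a bound $R(H_0;\lambda\pm i0):L^{2}_{\sigma}\to\mathcal{H}^{4}_{-\sigma}$. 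Second, multiplication by $V=O(|x|^{-\beta})$ maps $\mathcal{H}^{4}_{-\sigma}$ compactly into $L^{2}_{\sigma}$: on any ball the embedding $\mathcal{H}^{4}_{-\sigma}\hookrightarrow L^{2}_{-\sigma}$ is compact by Rellich--Kondrachov, while the decay of $V$ makes the contribution of the exterior region uniformly small, provided the decay dominates the weight gain (i.e.\ $\beta\geq 2\sigma$, so that $V:L^{2}_{-\sigma}\to L^{2}_{\sigma}$ is at least bounded). Composing,
\[
VR(H_0;\lambda\pm i0):\ L^{2}_{\sigma}(\mathbb{R}^{d})\ \xrightarrow{\ R(H_0;\lambda\pm i0)\ }\ \mathcal{H}^{4}_{-\sigma}(\mathbb{R}^{d})\ \xrightarrow{\ V\ }\ L^{2}_{\sigma}(\mathbb{R}^{d})
\]
exhibits the operator as the composition of a compact map with a bounded one, hence compact.

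An alternative, mirroring Lemma \ref{Fredholm} even more closely, is the approximation route: for $\epsilon>0$ the off-axis operator $VR(H_0;\lambda\pm i\epsilon)$ is compact on $L^{2}_{\sigma}$ by (the weighted version of) Lemma \ref{Fredholm}, and by the continuity of $R(H_0;\cdot)$ up to the positive real axis in $\mathcal{B}(L^{2}_{\sigma},L^{2}_{-\sigma})$ (again Corollary \ref{freehighabsorp}) one has $R(H_0;\lambda\pm i\epsilon)\to R(H_0;\lambda\pm i0)$ in that norm; since $V$ is bounded from $L^{2}_{-\sigma}$ to $L^{2}_{\sigma}$ the products converge in the operator norm of $\mathcal{B}(L^{2}_{\sigma})$, and a norm limit of compact operators is compact. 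This phrasing isolates the only place where continuity up to the spectrum is used and otherwise reduces everything to the already-proved off-axis statement.

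The step I expect to be the main obstacle is the weighted compactness itself, together with the bookkeeping that ties the admissible range of $\sigma$ to the decay rate $\beta$. Two points need genuine care: (a) extending the elliptic-regularity smoothing $R(H_0;\lambda\pm i0):L^{2}_{\sigma}\to\mathcal{H}^{4}_{-\sigma}$ up to the real axis, so that the boundary value (not merely the off-axis resolvent) gains four derivatives in the weighted topology; and (b) the tail estimate showing that the decay $|x|^{-\beta}$ of $V$ really does convert the \emph{local} compactness supplied by Rellich--Kondrachov into global compactness into $L^{2}_{\sigma}$, which is precisely where the relation between $\beta$ and $\sigma$ enters. Everything else --- the adjoint/duality reduction and the norm-limit argument --- is routine once these weighted mapping properties are in hand.
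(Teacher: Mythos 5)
Your proposal is correct, and its core mechanism is exactly the paper's: the paper's entire proof is two sentences, deducing compactness of $VR(H_0;\lambda\pm i0)$ from Corollary \ref{freehighabsorp} (boundedness of the boundary value in the weighted topology) composed with the compactness of $V$, and then obtaining $R(H_0;\lambda\pm i0)V$ ``by the duality argument,'' which is precisely your adjoint reduction $\big(VR(H_0;\lambda\pm i0)\big)^{*}=R(H_0;\lambda\mp i0)V$ under the pairing $(L^{2}_{\sigma})^{*}\cong L^{2}_{-\sigma}$. The one place where your emphasis diverges is instructive: the step you single out as ``the main obstacle''---proving that multiplication by $V$ is compact in the weighted scale via Rellich--Kondrachov plus a tail estimate---is not proved in the paper at all, because it is a standing \emph{hypothesis} of the lemma ($V$ compact from $\mathcal{H}^{2}_{0}$ to $\mathcal{H}^{-2}_{\beta}$, which the paper notes in Section 2.2 makes $V$ compact from $\mathcal{H}^{2}_{s}$ to $\mathcal{H}^{-2}_{s+\beta}$ for every $s$). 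So the only genuine work in the paper's route is the derivative gain for the boundary resolvent, i.e.\ extending the smoothing $L^{2}_{\sigma}\to\mathcal{H}^{2\text{ or }4}_{-\sigma}$ up to the real axis via the identity $(1+\Delta^{2})R(H_0;z)=1+(1+z)R(H_0;z)$ of Remark \ref{rem2}---a step you supply correctly. Your derivation of the compactness of $V$ from the decay $V=O(|x|^{-\beta})$ is thus extra (and valid) content that in effect verifies the hypothesis rather than uses it; your alternative norm-limit argument (off-axis compactness plus convergence in $\mathcal{B}(L^{2}_{\sigma},L^{2}_{-\sigma})$ from the limiting absorption principle, then a norm limit of compacts) is also sound and arguably cleaner. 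Finally, your bookkeeping caveat $\beta\geq 2\sigma$ is a fair catch: as stated ``for $\sigma>1/2$'' the lemma implicitly restricts $\sigma$ relative to $\beta$, consistent with the reduction to $\sigma\in(1/2,\beta/2]$ the paper makes explicitly in the proof of Proposition \ref{Perturbed Resolvent}.
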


\begin{proof}
Indeed, the compactness of $VR(H_0; \lambda\pm i0)$ follows from Corollary \ref{freehighabsorp} and the relative compactness assumption on V. The compactness of $R(H_0; \lambda\pm i0)V$ follows by the duality argument.
\end{proof}
Ben-Artzi and Nemirovsky \cite[Theorem 4A]{Ben-Nem} established the limiting absorption principle for $R(H_{f}; \lambda\pm i0)$ with $H_{f}=f(-\Delta)+V$. The following result is the special case of Ben-Artzi and Nemirovsky \cite[Theorem 4A]{Ben-Nem} for $f(-\Delta)=(-\Delta)^{2}$.
\begin{lemma}\label{limitabsorp large}
{(\cite[Theorem 4A]{Ben-Nem})} Let $V(x)=O(|x|^{-\beta})$ for large $|x|$ with some $\beta>1$. Assume that $V$ is a compact operator from $\mathcal{H}_{0}^{2}$ to $\mathcal{H}_{\beta}^{-2}$.  Then for $\sigma>1/2$,  $R(H; z)\in \mathcal{B}\big( L^{2}_{\sigma}(\mathbb{R}^{d}),\,\,  L^{2}_{-\sigma}(\mathbb{R}^{d})\big)$ is continuous in $z\in \Xi\setminus(\Sigma\cup\{0\})$. Further, the boundary value
\begin{equation*}
  R(H; \lambda\pm i0)=\lim_{\epsilon\downarrow0}R(H; \lambda\pm i\epsilon)\in \mathcal{B}\big(L^{2}_{\sigma}(\mathbb{R}^{d}), L^{2}_{-\sigma}(\mathbb{R}^{d})\big)
\end{equation*}
exists for $\lambda\in\sigma_{c}(H)\setminus(\Sigma\cup\{0\})$.
\end{lemma}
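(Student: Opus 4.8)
The plan is to reduce the whole statement to the invertibility and boundary continuity of the operator $1+R(H_0; z)V$ via the second resolvent formula
\begin{equation*}
  R(H; z)=\big(1+R(H_0; z)V\big)^{-1}R(H_0; z).
\end{equation*}
By Corollary \ref{freehighabsorp} the free resolvent $R(H_0; z)$ already extends to a $\mathcal{B}\big(L^2_\sigma(\mathbb{R}^d), L^2_{-\sigma}(\mathbb{R}^d)\big)$-valued function that is continuous on $\Xi\setminus\{0\}$ for every $\sigma>1/2$, so the task is to prove that $z\mapsto\big(1+R(H_0; z)V\big)^{-1}$, acting on $L^2_{-\sigma}(\mathbb{R}^d)$, extends continuously up to the real axis away from a discrete set that we then identify with $\Sigma\cup\{0\}$. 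Composing the two continuous families gives the claimed continuity and boundary values of $R(H; z)$.

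First I would treat the open half-planes. For $z\in\mathbb{C}^{+}\cup\mathbb{C}^{-}$ the operator $R(H_0; z)V$ is compact on $L^2_{-\sigma}(\mathbb{R}^d)$ and depends analytically on $z$ (Lemma \ref{Fredholm}), so the analytic Fredholm theorem makes $\big(1+R(H_0; z)V\big)^{-1}$ meromorphic in each half-plane; since $H$ is self-adjoint it has no nonreal spectrum and Lemma \ref{Fredholm} already yields invertibility on $\mathbb{C}\setminus[V_0,+\infty)$, so there are no poles and the inverse is analytic on $\mathbb{C}^{+}\cup\mathbb{C}^{-}$. On the boundary, Lemma \ref{inverseabsorp} shows that $R(H_0; \lambda\pm i0)V$ is compact on $L^2_{-\sigma}(\mathbb{R}^d)$, and the norm continuity furnished by the free limiting absorption principle makes $\lambda\mapsto R(H_0; \lambda\pm i0)V$ continuous for $\lambda>0$. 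Consequently $1+R(H_0; \lambda\pm i0)V$ is invertible precisely when its kernel is trivial, and wherever it is invertible the inverse is locally norm continuous; the high-energy decay \eqref{freehigh} even gives invertibility outright for all large $\lambda$, which anchors the argument.

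The remaining and, I expect, hardest point is to show that the kernel of $1+R(H_0; \lambda\pm i0)V$ is nontrivial exactly at the embedded eigenvalues, i.e. on $\Sigma$. If $\lambda>0$ and $\big(1+R(H_0; \lambda+i0)V\big)f=0$ for some $f\neq0$ in $L^2_{-\sigma}(\mathbb{R}^d)$, then $f$ solves $Hf=\lambda f$ in the distributional sense, and one must upgrade the a priori information $f\in L^2_{-\sigma}$ to $f\in L^2$ in order to conclude that $\lambda$ is a genuine eigenvalue. This is a decay bootstrap tailored to the biharmonic resolvent: using $V=O(|x|^{-\beta})$ together with the mapping properties and the Sommerfeld-type radiation condition satisfied by $R(H_0; \lambda+i0)$, one shows that the outgoing part of $f$ must vanish and that $f$ then decays fast enough to lie in $L^2$. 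These are exactly the estimates carried out for the general operator $f(-\Delta)+V$ in Ben-Artzi and Nemirovsky \cite[Theorem 4A]{Ben-Nem}, of which the present lemma is the case $f(\theta)=\theta^2$. Once this identification is in place, the exceptional set coincides with $\Sigma$, which is discrete as a sequence of eigenvalues of the self-adjoint operator $H$, and the continuity of $\big(1+R(H_0; z)V\big)^{-1}$ on $\Xi\setminus(\Sigma\cup\{0\})$ follows.
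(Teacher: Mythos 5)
Your proposal is correct and ultimately rests on the same foundation as the paper: the paper offers no independent proof of this lemma but quotes it directly as the special case $f(-\Delta)=(-\Delta)^{2}$ of Ben-Artzi and Nemirovsky \cite[Theorem 4A]{Ben-Nem}, and your sketch, after setting up the standard second-resolvent/Fredholm scaffolding (which matches Lemma \ref{Fredholm}, Lemma \ref{inverseabsorp} and Corollary \ref{freehighabsorp}), defers the one genuinely hard step---ruling out positive resonances so that the boundary kernel of $1+R(H_0;\lambda\pm i0)V$ occurs exactly on $\Sigma$---to that same theorem. You correctly identified where the real analytic content lies, and the surrounding reductions you supply are the routine ones implicit in the paper's citation.
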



\begin{remark}\label{remove absence}
In case $\lambda>0$ is an embedded eigenvalue of $H$.  We can replace $H$ by $\bar{H}=\bar{P}H\bar{P}$ where $\bar{P}=1-P_{eign}$ and $P_{eign}$-the projection on the eigenspace(in $L^{2}$) related to an eigenvalue. The limiting absorption principle can be deduced using positive commutator estimates, see e.g. \cite{ABG}. One uses that, the Mourre estimate holds at positive energies for $\bar{P}H\bar{P}$ localized around the energy $\lambda$. To conclude that Mourre theory applies, one also requires that $[A, \bar{P}], [A, [A, \bar{P}]]$ are relatively $H_0$-compact. Recall that, due to the presence of the projection $\bar{P}$, the operator $\bar{H}$ has purely continuous spectrum near the eigenvalue $\lambda$. See \cite{Golenia, Mourre-W} and references therein.
\end{remark}

\begin{theorem}\label{continuity}
Let $k=0,1,2,3,\cdots$, and  $V(x)=O(|x|^{-\beta})$ for large $|x|$ with $\beta>k+1$. Assume $V$ is a compact operator from $\mathcal{H}_{0}^{2}$ to $\mathcal{H}_{\beta}^{-2}$. Then for any $\sigma>k+1/2$, $R^{(k)}(H; z)\in \mathcal{B}\big(L_{\sigma}^{2}(\mathbb{R}^{d}), \,\, L_{-\sigma}^{2}(\mathbb{R}^{d}) \big)$ is continuous for $z\in\Xi\setminus(\Sigma\cup\{0\})$. Further, the decay estimate \eqref{29} can be extended from $z\in\Xi\setminus[V_0, +\infty)$ to $z\in\Xi\setminus(\Sigma\cup\{0\})$, i.e. the bound
\begin{equation}\label{extendhighenergy}
  \|R^{(k)}(H; z)\|_{L_{\sigma}^{2}(\mathbb{R}^{d})\rightarrow L_{-\sigma}^{2}(\mathbb{R}^{d})}
  = O(|z|^{-(3+3k)/4})
\end{equation}
holds as $z\rightarrow \infty$ in $\Xi\setminus(\Sigma\cup\{0\})$.
\end{theorem}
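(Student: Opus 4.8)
The plan is to separate the two assertions of the theorem—the norm continuity of $R^{(k)}(H;z)$ up to $\Xi\setminus(\Sigma\cup\{0\})$, and the high-energy decay bound \eqref{extendhighenergy}—and to derive the latter from the former combined with the interior estimate already furnished by Proposition \ref{Perturbed Resolvent}.

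First I would establish the continuity by induction on $k$. The base case $k=0$ is exactly Lemma \ref{limitabsorp large}, the embedded-eigenvalue points $\Sigma$ being excised as described in Remark \ref{remove absence}. For the inductive step I would invoke the Leibniz-type identity \eqref{38}, which expresses $R^{(k)}(H;z)$ as a finite sum of products whose factors are free resolvent derivatives $R^{(j)}(H_0;z)$, the multiplier $V$, and lower-order perturbed resolvent derivatives $R^{(j)}(H;z)$ with $j\le k-1$. By Corollary \ref{freehighabsorp} each $R^{(j)}(H_0;z)$ extends norm-continuously to $\Xi\setminus\{0\}$ in the appropriate weighted spaces, and by the induction hypothesis each $R^{(j)}(H;z)$ extends continuously to $\Xi\setminus(\Sigma\cup\{0\})$; since $V$ maps the relevant weighted spaces boundedly and a composition of norm-continuous operator families is again norm-continuous, every summand, and hence $R^{(k)}(H;z)$, is continuous up to $\Xi\setminus(\Sigma\cup\{0\})$. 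The only care required is the choice of intermediate weights $\sigma'$ so that the compositions are defined, and this bookkeeping is identical to that in the proof of Proposition \ref{Perturbed Resolvent}, forcing $\sigma>k+1/2$ and $\beta>k+1$.

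With continuity in hand, the decay bound \eqref{extendhighenergy} follows almost formally. Fix a large $\lambda>0$; for $\epsilon>0$ the point $z=\lambda\pm i\epsilon$ lies in $\mathbb{C}\setminus[V_0,+\infty)$, so Proposition \ref{Perturbed Resolvent} gives $\|R^{(k)}(H;z)\|_{L^2_\sigma\to L^2_{-\sigma}}\le C|z|^{-(3+3k)/4}$ with $C$ independent of $z$. Because $z\mapsto R^{(k)}(H;z)$ is norm-continuous up to the boundary and the operator norm is continuous under norm convergence, letting $\epsilon\downarrow0$ yields $\|R^{(k)}(H;\lambda\pm i0)\|_{L^2_\sigma\to L^2_{-\sigma}}\le C\lambda^{-(3+3k)/4}$. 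Moreover the Neumann-series argument of Proposition \ref{Perturbed Resolvent} shows that $1+R(H_0;z)V$ is invertible for all large $z$—and, using the boundary version \eqref{freehigh} of the free high-energy bound from Corollary \ref{freehighabsorp}, invertible also at $\lambda\pm i0$ for large $\lambda$—so no embedded eigenvalue obstructs the high-energy region and $\Sigma$ is bounded. Hence the limit is valid for all large $\lambda$, giving \eqref{extendhighenergy} on $\Xi\setminus(\Sigma\cup\{0\})$.

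I expect the main obstacle to be the continuity step rather than the decay. One must verify that the limiting-absorption boundary values of the several factors in \eqref{38} compose correctly in the weighted spaces and that the resulting operator family is genuinely \emph{norm}-continuous, not merely strongly continuous, up to the real axis. The decay extension is then a soft consequence of continuity together with the already-established interior estimate, since norm bounds pass to norm limits; no new quantitative estimate is needed beyond the observation that the free high-energy bound \eqref{freehigh} already holds on the boundary.
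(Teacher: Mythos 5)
Your proposal is correct, and its first half coincides with the paper's own argument: the paper proves continuity for $k=0$ by Lemma \ref{limitabsorp large}, handles $k=1$ explicitly via the iterated identity \eqref{38} (specialized as \eqref{onederivative}), composing the boundary values from Corollary \ref{freehighabsorp} and Lemma \ref{limitabsorp large} in weighted spaces, and then appeals to induction—exactly your scheme, including the weight bookkeeping inherited from Proposition \ref{Perturbed Resolvent}. Where you genuinely diverge is the decay bound \eqref{extendhighenergy}. The paper does not pass interior bounds to the boundary by a limit; instead it re-runs the quantitative argument directly on the real axis: it writes $R^{\prime}(H;z)$ through the Born splitting involving $[1+R(H_0;z)V]^{-1}$ and shows, by the same Neumann-series mechanism as in Proposition \ref{Perturbed Resolvent} but now with the boundary estimate \eqref{freehigh}, that $[1+R(H_0;\lambda\pm i0)V]^{-1}$ is uniformly bounded in $\mathcal{B}\big(L^{2}_{-\sigma},L^{2}_{-\sigma}\big)$ for $\lambda$ large, whence the decay at $\lambda\pm i0$. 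Your softer route—fixing large $\lambda\notin\Sigma$, applying the interior estimate at $z=\lambda\pm i\epsilon$ (legitimate, since the constant in Proposition \ref{Perturbed Resolvent} is uniform for large $|z|$), and letting $\epsilon\downarrow0$ using the norm continuity just established—is equally valid and avoids repeating the quantitative step, but it leans essentially on \emph{norm} (not merely strong) continuity, which you correctly flag as the crux. The paper's version buys, as a by-product, the boundary invertibility of $1+R(H_0;\lambda\pm i0)V$ for large $\lambda$; this is the same fact you invoke in your side remark that $\Sigma$ is bounded, which is more than the theorem asserts (the bound is only claimed on $\Xi\setminus(\Sigma\cup\{0\})$) but is consistent with the paper's reasoning.
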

\begin{proof}
Note that Lemma \ref{limitabsorp large} implies the case $k=0$ holds. Next we aim to show the case $k=1$. Other cases hold by the induction process.

By the iterated identity \eqref{38}, let $k=1$ we have
\begin{equation}\label{onederivative}
 \begin{split}
  R^{\prime}(H; z)&=R^{\prime}(H_0; z)-R(H; z)VR^{\prime}(H_0; z)\\
                  &\quad -R^{\prime}(H_0; z)VR(H; z)+R(H; z)VR^{\prime}(H_0; z)VR(H; z).
 \end{split}
\end{equation}
Thus by Theorem \ref{freehighabsorp} and Theorem \ref{limitabsorp large}, for $\lambda\in \mathbb{R}\setminus(\Sigma\cup\{0\})$, we know the limit
\begin{equation*}
  R^{\prime}(H; \lambda\pm i0)=\lim_{\epsilon\downarrow0}R^{\prime}(H; \lambda\pm i\epsilon)
\end{equation*}
exists in $\mathcal{B}\big(L^{2}_{\sigma}(\mathbb{R}^{d}),\,\, L^{2}_{-\sigma}(\mathbb{R}^{d})\big)$ with $\sigma>3/2$.

Now, for the case $k=1$, it is remaining to show
\begin{equation*}
\|R^{\prime}(H; \lambda\pm i0)\|_{L_{\sigma}^{2}(\mathbb{R}^{d})\rightarrow L_{-\sigma}^{2}(\mathbb{R}^{d})}
  = O(|\lambda|^{-(3+3)/4}), \,\,\text{as}\,\, \lambda\rightarrow +\infty.
\end{equation*}
Indeed, by the Born splitting of $R(H;z)$, we have
\begin{equation}
 \begin{split}
  R^{\prime}(H; z)&=[1+R(H_0; z)V]^{-1}R^{\prime}(H_0; z)\\
                  &\, -[1+R(H_0; z)V]^{-1}R^{\prime}(H_0; z)[1+R(H_0; z)V]^{-1}R(H_0; z).
 \end{split}
\end{equation}
Thus by Theorem \ref{freehighabsorp} and the proof of Theorem \ref{limitabsorp large}, we know that
\begin{equation}
\begin{split}
  R^{\prime}(H; \lambda\pm i0)
  &=[1+R(H_0; \lambda\pm i0)V]^{-1}R^{\prime}(H_0; \lambda\pm i0)\\
  &\, -[1+R(H_0; \lambda\pm i0)V]^{-1}R^{\prime}(H_{0};z)[1+R(H_0; \lambda\pm i0)V]^{-1}R(H_0; \lambda\pm i0).
\end{split}
\end{equation}
Following the same argument as in Proposition \ref{Perturbed Resolvent}, then $[1+R(H_0; \lambda\pm i0)V]^{-1}$ is uniform bounded in the norm of $\mathcal{B}\big(L^{2}_{-\sigma}(\mathbb{R}^{d}),\,\,  L^{2}_{-\sigma}(\mathbb{R}^{d})\big)$ with $\sigma>1/2$ for $\lambda>0$ large enough. To summarize, we have shown that estimate \eqref{extendhighenergy} holds in the case $k=1$.
\end{proof}

\noindent{\bf Behaviours of the spectral density $E^{\prime}(\lambda)$.}\quad
Since $\pi E^{\prime}(\lambda)= \textrm{Im}~ R(H; \lambda+i0)$, under the assumption that $H$ has no positive embedded eigenvalues and $0$ is a regular point, then for any $k=0,1,2,\cdots$,  by Theorem \ref{continuity} we know $E^{(k+1)}(\lambda)\in \mathcal{B}\big(L^{2}_{\sigma}(\mathbb{R}^{d}),\,\, L^{2}_{-\sigma}(\mathbb{R}^{d})\big)$ is continuous in $\lambda\in (0,\, +\infty)$ for suitable large $\sigma$. Furthermore, in order to obtain the Jensen-Kato type decay estimate ( see Theorem \ref{timedecay} ), it is necessary to know the endpoint behaviour of $E^{\prime}(\lambda)$ and high order derivatives $E^{(k+1)}(\lambda)$. Indeed, for small $\lambda$, the asymptotic expansion for $E^{\prime}(\lambda)=\frac{1}{\pi}\rm{Im} R(\lambda+i0)$ can be deduced immediately from those asymptotic expansion for $R(H; z)$ near zero  in the preceding section. For large $\lambda$, we make use of the high energy decay estimate and  continuity of $R^{(k)}(H; z)$.

\begin{proposition}\label{Elambda}
Let $H=(-\Delta)^2+V$ with $V(x)=O(|x|^{-\beta})$ for large $|x|$ with some $\beta>1$. Assume $V$ is a compact operator from $\mathcal{H}_{0}^{2}$ to $\mathcal{H}_{\beta}^{-2}$ and  $0$ is a regular point for $H$. Then the following conclusions hold in $\mathcal{B}\big(L^{2}_{\sigma}(\mathbb{R}^{d}),\, L^{2}_{-\sigma}(\mathbb{R}^{d})\big)$ as $\lambda\rightarrow0$:

(\text{i}) If $d=3$ and  $\beta>11+3/2$, then for any $\sigma>0$ we have
\begin{equation}\label{JK3density}
  E^{\prime}(\lambda)=\lambda^{1/4}+\lambda^{1/2}+o(\lambda^{1/2}),
\end{equation}
and the expansion can be differentiated 2 times.

(\text{ii}) If odd $d\geq5$  and  $\beta>d$, then for any $\sigma>d/2$ we have
\begin{equation}\label{JKddensity}
  E^{\prime}(\lambda)= \lambda^{d/4-1}+o(\lambda^{d/4-1}),
\end{equation}
and the expansion can be differentiated $d-3$ times.

(\text{iii}) If even $d\geq6$ and $\beta>d+4$, then for any $\sigma>d/2+2$ we have
\begin{equation}\label{JKdevendensity}
 E^{\prime}(\lambda)= \lambda^{d/4-1}+o(\lambda^{d/4}\ln \lambda^{\frac{1}{2}}),
\end{equation}
and the expansion can be differentiated $d/2-1$ times.

Here, the above asymptotic expansion of $E^{\prime}(\lambda)$ can be differentiated in $\lambda$ in the same sense as in Proposition \ref{daoshu}.
\end{proposition}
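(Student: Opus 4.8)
The plan is to extract each expansion of $E^{\prime}(\lambda)$ directly from the boundary value of the resolvent via the spectral identity $\pi E^{\prime}(\lambda)=\textrm{Im}\,R(H;\lambda+i0)$, feeding in the low-energy expansions already proved in Theorem \ref{resolventexpansion3} (for $d=3$) and Theorem \ref{resolventexpansion>5} (for $d\geq5$). By Lemma \ref{limitabsorp large} and Theorem \ref{continuity}, $R^{(k)}(H;z)$ extends continuously to $z=\lambda+i0$ for $\lambda>0$ and $\sigma$ large, so the expansions --- valid a priori on $\mathbb{C}\setminus[0,+\infty)$ --- persist up to the boundary. The key observation is a branch computation: with $\mu=z^{1/4}$ chosen in the first quadrant, as $z=\lambda+i\epsilon\to\lambda+i0$ with $\lambda>0$ one has $\arg z\to0^{+}$, whence $z^{1/4}\to\lambda^{1/4}$, $z^{1/2}\to\lambda^{1/2}$ and $\ln z^{1/2}\to\tfrac12\ln\lambda$ all tend to \emph{real} quantities. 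Consequently $\textrm{Im}\,R(H;\lambda+i0)$ is produced entirely by the imaginary parts of the expansion coefficients, and the task reduces to locating the lowest power of $\lambda$ whose coefficient is not real.

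For odd $d\geq5$ I would take $\textrm{Im}$ of \eqref{odd}. The leading coefficient $B_0=(1-G_{2}^{odd}V)^{-1}(-G_{2}^{odd})$ is real, since $G_{2}^{odd}$ and $V$ are real; it therefore drops out. Because $B_j=0$ for $3\leq j<d-2$, the first surviving term sits at $j=d-2$, where the prefactor $\tfrac{i^{j}-(-1)^{j}}{2}$ has nonzero imaginary part (as $d-2$ is odd), producing the leading power $\lambda^{(d-4)/4}=\lambda^{d/4-1}$ of \eqref{JKddensity}. For even $d\geq6$ the coefficient $B_1^0=(1+G_{1}^{0,even}V)^{-1}G_{1}^{0,even}$ is again real and drops out; here the imaginary contributions enter precisely at $j=d/2-1$, both through the term $\tfrac{(-1)^{j}i\pi}{2}G_{j}^{1,even}$ inside $\tilde{G}_{j}^{0,even}$ (recall $G_{j}^{1,even}=0$ for $j\leq d/2-2$) and through the intrinsically complex kernel $G_{j}^{0,even}$ carrying the explicit $\tfrac{i}{4}(4\pi)^{-d/2+1}$ factor for $j\geq d/2-1$. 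Taking $\textrm{Im}$ of \eqref{even} and collecting the lowest power $(z^{1/2})^{j-1}=\lambda^{(d/2-2)/2}=\lambda^{d/4-1}$ yields \eqref{JKdevendensity}, the $\lambda^{d/4}\ln\lambda^{1/2}$ remainder coming from the next order.

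The case $d=3$ is the most delicate and I expect it to be the main obstacle. Here Theorem \ref{resolventexpansion3} gives $wR(H;\mu^{4})w=U-Qm_0^{-1}Q+\sum_{j=1}^{p-1}M_j^{\prime}\mu^{j}+\mu^{p}\mathfrak{R}(\mu)$ in the unusual space $L^{2}_{v}$, so one must first transfer it into $\mathcal{B}(L^{2}_{\sigma},L^{2}_{-\sigma})$ through the identity $w\langle x\rangle^{\sigma}\cdot\langle x\rangle^{-\sigma}R(H;\mu^{4})\langle x\rangle^{-\sigma}\cdot\langle x\rangle^{\sigma}w$ with $v\langle x\rangle^{\sigma}\in L^{\infty}$ --- this is where the strong decay $\beta>11+3/2$ and $\sigma>0$ are consumed. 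The constant term $U-Qm_0^{-1}Q$ is real (its building block $m_0=Q(U+vG_0v)Q$ is real), and the regular-point hypothesis guarantees that the complex singular term $\tfrac{(1+i)\alpha}{8\pi}P\mu^{-1}$ of $M(\mu)$ is inverted without producing a singular term in $M(\mu)^{-1}$, so the first nonreal coefficients appear at orders $\mu^{1}=\lambda^{1/4}$ and $\mu^{2}=\lambda^{1/2}$, giving \eqref{JK3density}. The genuine difficulty is the explicit bookkeeping of $\textrm{Im}\,M_j^{\prime}$ through the Jensen--Nenciu inversion of $M(\mu)$ in Appendix A1, i.e.\ verifying that no imaginary contribution of order below $\mu$ survives.

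Finally, the differentiability assertions follow from Proposition \ref{daoshu}: each resolvent expansion may be differentiated $r$ times in $z$, lowering the order of the remainder by $r$, so differentiating the boundary value termwise and taking $\textrm{Im}$ produces the asymptotics of the higher derivatives $E^{(k+1)}(\lambda)$. The admissible number of derivatives --- $2$ for $d=3$, $d-3$ for odd $d\geq5$, $d/2-1$ for even $d\geq6$ --- is dictated by how many times one may differentiate before the remainder ceases to be $o$ of the differentiated leading term, and this, together with the constraint $\sigma>r+1/2$ from Theorem \ref{continuity}, fixes the quantitative thresholds on $\beta$ and $\sigma$ in each case.
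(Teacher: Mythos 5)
Your proposal is correct and takes essentially the same route as the paper's own (terse) proof: pass to $\pi E'(\lambda)=\mathrm{Im}\,R(H;\lambda+i0)$ via the limiting absorption results, insert the low-energy expansions with the same truncations (effectively $p=3$ for $d=3$, $N=d-2$ for odd $d\geq5$, $N=d/2$ for even $d\geq6$ to secure the error terms), and obtain differentiability from Proposition \ref{daoshu} together with the $\sigma>r+1/2$ constraint of Theorem \ref{continuity}. Your explicit branch computation and tracking of which coefficients are real merely spells out the paper's one-line remark that ``many terms are cancelled'' in the expansion, so the two arguments coincide.
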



\begin{proof}
Since $\pi E^{\prime}(\lambda)= \textrm{Im}~ R(H; \lambda+i0)$, then the continuity of $R^{(k)}(H; z)$ implies that $E^{\prime}(\lambda)$ is differentiable. Recall that there are many terms cancelled in the expansion of $R(H_0; z)$. In order to get an error term, we need to choose $p=3$ in expansion \eqref{equ:63}, thus the result \eqref{JK3density} holds. Similarly, \eqref{JKddensity} follows from Theorem \ref{resolventexpansion>5} with choosing $N=d-2$ and \eqref{JKdevendensity} follows from Theorem \ref{resolventexpansion>5} with choosing $N=d/2$ to get an error term. Note that for the even case, we choose $N=d/2$ not $d/2-1$, the purpose is that to get the error $o(\lambda^{d/4}\ln \lambda^{\frac{1}{2}})$.  The chosen relationship between $\beta$ and $\sigma$ can be seen from the expansion results of Proposition \ref{1+RoV} and Theorem \ref{resolventexpansion3}.

The differentiability is established by the same sense as what we did for the differentiability of $R(H; z)$ in Proposition \ref{daoshu}.
\end{proof}
\begin{proposition}\label{spectraldesitylarge}
For $\lambda\rightarrow +\infty$, let $k=0,1,2,3,\cdots$.  Let $V(x)=O(|x|^{-\beta})$ for large $|x|$ with some $\beta>1$. Assume $V$ is a compact operator from $\mathcal{H}_{0}^{2}$ to $\mathcal{H}_{\beta}^{-2}$. Then for any $\sigma>k+1/2$, we have in the norm of $\mathcal{B}\big(L^{2}_{\sigma}(\mathbb{R}^{d}),\,\, L^{2}_{-\sigma}(\mathbb{R}^{d})\big)$,
\begin{equation}\label{highderivative}
  E^{(k+1)}(\lambda)= O(\lambda^{-3(k+1)/4}), \,\,\lambda\rightarrow +\infty.
\end{equation}
\end{proposition}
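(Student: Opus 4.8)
The plan is to read off the claimed high-energy decay of $E^{(k+1)}(\lambda)$ directly from the decay of $R^{(k)}(H;z)$ already established in Theorem \ref{continuity}, using nothing more than the spectral identity $\pi E'(\lambda)=\textrm{Im}\,R(H;\lambda+i0)$. The first observation is purely arithmetic: the target exponent $-3(k+1)/4$ in \eqref{highderivative} coincides exactly with the exponent $-(3+3k)/4$ in the extended bound \eqref{extendhighenergy}. Hence no decay is lost, and the entire problem reduces to relating the $\lambda$-derivatives of the spectral density to the $z$-derivatives of the boundary values of the resolvent.

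First I would verify that, for $\lambda$ in any compact subset of $(0,+\infty)$ away from $\Sigma$ and for $\sigma>k+1/2$,
\[
\frac{d^{k}}{d\lambda^{k}}\,R(H;\lambda+i0)=R^{(k)}(H;\lambda+i0)
\]
as an identity in $\mathcal{B}\big(L^{2}_{\sigma}(\mathbb{R}^{d}),L^{2}_{-\sigma}(\mathbb{R}^{d})\big)$. For fixed $\epsilon>0$ the map $z\mapsto R(H;z)$ is analytic on $\mathbb{C}^{+}$, so $\frac{d}{d\lambda}R(H;\lambda+i\epsilon)=R'(H;\lambda+i\epsilon)$, and iterating gives $\frac{d^{k}}{d\lambda^{k}}R(H;\lambda+i\epsilon)=R^{(k)}(H;\lambda+i\epsilon)$. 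Theorem \ref{continuity} guarantees that for each $j\le k$ the family $R^{(j)}(H;\cdot+i\epsilon)$ converges, as $\epsilon\downarrow0$, to $R^{(j)}(H;\cdot+i0)$ continuously and locally uniformly on $(0,+\infty)\setminus\Sigma$; this local uniform convergence is precisely what permits passing $\lim_{\epsilon\downarrow0}$ through the successive $\lambda$-derivatives, yielding the displayed identity. Since taking imaginary parts is a real-linear operation that commutes with $d/d\lambda$, I then obtain $\pi E^{(k+1)}(\lambda)=\textrm{Im}\,R^{(k)}(H;\lambda+i0)$.

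The estimate is now immediate. Both boundary values $R^{(k)}(H;\lambda\pm i0)$ lie in $\Xi\setminus(\Sigma\cup\{0\})$, so by the extended high-energy bound \eqref{extendhighenergy} each satisfies $\|R^{(k)}(H;\lambda\pm i0)\|_{L^{2}_{\sigma}\to L^{2}_{-\sigma}}=O(\lambda^{-(3+3k)/4})$ as $\lambda\to+\infty$, valid exactly in the range $\sigma>k+1/2$ required here. Writing $\textrm{Im}\,T=\tfrac{1}{2i}(T-T^{*})$ with the adjoint relation $R^{(k)}(H;\lambda-i0)=\big(R^{(k)}(H;\lambda+i0)\big)^{*}$ for real $\lambda$ (and $\|T^{*}\|=\|T\|$ for the dual pairing of $L^{2}_{\sigma}$ and $L^{2}_{-\sigma}$), I conclude
\[
\big\|E^{(k+1)}(\lambda)\big\|_{L^{2}_{\sigma}\to L^{2}_{-\sigma}}\le \frac{1}{\pi}\big\|R^{(k)}(H;\lambda+i0)\big\|_{L^{2}_{\sigma}\to L^{2}_{-\sigma}}=O\big(\lambda^{-3(k+1)/4}\big).
\]

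The only genuine obstacle is the interchange of $\lim_{\epsilon\downarrow0}$ with the $\lambda$-differentiations in the first step; the rest is formal. This is exactly where I would use the \emph{continuity} content of Theorem \ref{continuity}, not merely the pointwise existence of boundary values: locally uniform convergence of the derivatives $R^{(j)}(H;\cdot+i\epsilon)$ for all $j\le k$ upgrades the pointwise limit of the $k$-th $\lambda$-derivative to the assertion that the limiting boundary function is $k$-times $\lambda$-differentiable with the expected derivative. Once this justification is secured, the decay rate requires no additional work, being inherited verbatim from the resolvent estimate.
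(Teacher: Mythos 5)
Your proof is correct and follows exactly the paper's route: the paper's own proof is the one-line remark that the proposition ``is a consequence of Theorem \ref{continuity}'', i.e.\ the bound is read off from the extended high-energy estimate \eqref{extendhighenergy} via $\pi E^{\prime}(\lambda)=\mathrm{Im}\,R(H;\lambda+i0)$. Your additional justification of the interchange of $\lim_{\epsilon\downarrow 0}$ with the $\lambda$-derivatives (via locally uniform convergence furnished by the continuity statement in Theorem \ref{continuity}) and of the adjoint relation for the boundary values merely fills in routine details the paper leaves implicit.
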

\begin{proof}
For large $\lambda>0$, this proposition is a consequence of Theorem \ref{continuity} actually.
\end{proof}

\section{Local decay estimate and Jensen-Kato type decay estimate for $e^{-itH}$}\label{section3}
\setcounter{equation}{0}

In this section, we will prove the Jensen-Kato type decay estimate which depends on the asymptotic properties of $E^{\prime}(\lambda)$ for $\lambda$ small and large. Before doing this we give the following local decay estimate which is equivalent that $\langle x\rangle^{-\sigma}$ is {\it $H$-smooth}.  $H$-smooth theory has many deep connections with scattering theory and spectral analysis, especially for Schr\"odinger operator with repulsive potential. As for the more backgrounds of $H$-smooth theory, we refer the readers to see Reed and Simon \cite[P. 344, XIII.7]{RS2}. Notice that the local decay estimate shows the time-space integrability of the propagator $e^{-itH}$, which is also the key point of the Georgescu-Larenas-Soffer conjugate operator method.

\begin{theorem}\label{local decay thm}
Let  $H=(-\Delta)^2+V$ and $V(x)=O(|x|^{-\beta})$ for large $|x|$ with $\beta>11+{3\over 2}$ for $d=3$, and $\beta>(d+1)/2$ for $d\geq5$.  Assume $V$ is a compact operator from $\mathcal{H}_{0}^{2}$ to $\mathcal{H}_{\beta}^{-2}$. Then under the assumption that $H$ has no positive embedded eigenvalues and 0 is a regular point,  for any $\sigma>1/2$ and $\phi\in L^{2}(\mathbb{R}^{d})$, we have
\begin{equation}\label{local decay}
  \int_{\mathbb{R}}\|\langle x\rangle^{-\sigma}e^{-itH}P_{ac}\phi\|^{2}_{L^{2}(\mathbb{R}^{d})}dt\leq   C \|\phi\|^{2}_{L^{2}(\mathbb{R}^{d})},
\end{equation}
where $P_{ac}$ is the projection onto the absolutely continuous spectrum of $H$.
\end{theorem}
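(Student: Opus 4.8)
The plan is to read \eqref{local decay} as the statement that the multiplication operator $\langle x\rangle^{-\sigma}$ is $H$-smooth on $\mathrm{Ran}\,P_{ac}$ and to obtain it from Kato's theory of smooth perturbations (Reed--Simon \cite{RS2}). Taking $A=\langle x\rangle^{-\sigma}=A^{*}$, one has $\|A R(H;z)A\|_{L^{2}\to L^{2}}=\|R(H;z)\|_{L^{2}_{\sigma}\to L^{2}_{-\sigma}}$, so Kato's criterion reduces \eqref{local decay} to the uniform bound
\begin{equation*}
  \Gamma:=\sup_{\lambda\in\mathbb{R},\,\epsilon>0}\big\|\mathrm{Im}\,R(H;\lambda+i\epsilon)\big\|_{L^{2}_{\sigma}\to L^{2}_{-\sigma}}<\infty ,\qquad \sigma>\tfrac12 ,
\end{equation*}
after which $\int_{\mathbb{R}}\|\langle x\rangle^{-\sigma}e^{-itH}P_{ac}\phi\|_{L^{2}}^{2}\,dt\le C\,\Gamma\,\|\phi\|_{L^{2}}^{2}$. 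Because $\sigma_{ac}(H)=[0,+\infty)$, the projection $P_{ac}$ annihilates the (negative) bound states of $H$, so $\mathrm{Im}\,R(H;\lambda+i\epsilon)P_{ac}\to0$ as $\epsilon\downarrow0$ for $\lambda\le0$ away from $0$, and it suffices to control $\sup_{\lambda>0}\|\mathrm{Im}\,R(H;\lambda+i0)\|_{L^{2}_{\sigma}\to L^{2}_{-\sigma}}$, i.e. $\sup_{\lambda>0}\|E'(\lambda)\|_{L^{2}_{\sigma}\to L^{2}_{-\sigma}}$.

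I would verify this supremum over three energy ranges. For high energy $\lambda\ge R$, the estimate is immediate from Theorem \ref{continuity} (equivalently Proposition \ref{spectraldesitylarge} with $k=0$): $\|\mathrm{Im}\,R(H;\lambda+i0)\|_{L^{2}_{\sigma}\to L^{2}_{-\sigma}}\le\|R(H;\lambda+i0)\|_{L^{2}_{\sigma}\to L^{2}_{-\sigma}}=O(\lambda^{-3/4})\to0$ for every $\sigma>\tfrac12$. On the intermediate range $\delta\le\lambda\le R$, the limiting absorption principle (Lemma \ref{limitabsorp large}) makes $\lambda\mapsto R(H;\lambda+i0)\in\mathcal{B}(L^{2}_{\sigma},L^{2}_{-\sigma})$ continuous; since $H$ has no positive embedded eigenvalues ($\Sigma=\emptyset$) and $0$ is excluded, continuity holds on all of the compact interval $[\delta,R]$ and the norm is bounded there.

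The main obstacle is the low-energy range $0<\lambda\le\delta$, where the regular-point hypothesis is used decisively and where the weight threshold $\sigma>\tfrac12$ can only be reached by working with $\mathrm{Im}\,R$ rather than the full resolvent (whose real part near $0$ behaves like the Riesz kernel $(-\Delta)^{-2}$ and would force much larger weights). For $d\ge5$ I would start from the stationary identity
\begin{equation*}
  \mathrm{Im}\,R(H;\lambda+i0)=\big(1+R(H_0;\lambda+i0)V\big)^{-1}\,\mathrm{Im}\,R(H_0;\lambda+i0)\,\big(1+R(H_0;\lambda+i0)V\big)^{-1,*},
\end{equation*}
which follows from $R(H;\lambda+i0)=(1+R(H_0;\lambda+i0)V)^{-1}R(H_0;\lambda+i0)$ together with $R(H;\lambda-i0)=R(H;\lambda+i0)^{*}$. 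The outer factors are uniformly bounded on $L^{2}_{-\sigma}$ as $\lambda\downarrow0$ by Proposition \ref{1+RoV} (leading terms $(1-G_{2}^{odd}V)^{-1}$ resp. $(1+G_{1}^{0,even}V)^{-1}$), whose hypotheses for $N=0$ are exactly $\beta>(d+1)/2$ and $1/2<\sigma<\beta-1/2$, matching the standing assumptions; and the free middle factor obeys $\|\mathrm{Im}\,R(H_0;\lambda+i0)\|_{L^{2}_{\sigma}\to L^{2}_{-\sigma}}\lesssim\lambda^{d/4-1}$ for $\sigma>\tfrac12$, obtained from \eqref{freeresolventidentity} together with the classical surface-restriction bound $\|\mathrm{Im}\,R(-\Delta;\mu+i0)\|_{L^{2}_{\sigma}\to L^{2}_{-\sigma}}\lesssim\mu^{d/2-1}$; since $d\ge5$ this stays bounded as $\lambda\downarrow0$. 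For $d=3$ the free quantity instead blows up like $\lambda^{-1/4}$, and it is precisely the assumption that $0$ is a regular point that cancels this singularity: Proposition \ref{Elambda}(i) gives $E'(\lambda)=\lambda^{1/4}+\lambda^{1/2}+o(\lambda^{1/2})$ for any $\sigma>0$, hence a fortiori boundedness as $\lambda\downarrow0$ for $\sigma>\tfrac12$. Combining the three ranges gives $\Gamma<\infty$ and therefore \eqref{local decay}.
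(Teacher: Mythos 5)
Your global architecture is legitimate and, in its outline, matches the paper's own proof, which also runs through Kato's smoothness criterion (the Corollary of \cite[P.148]{RS2}) after checking uniform weighted resolvent bounds at high and low energy; your refinements (restricting to the positive half-axis via $P_{ac}$, using the symmetrized identity $\mathrm{Im}\,R(H;\lambda+i0)=(1+R_0^{+}V)^{-1}\,\mathrm{Im}\,R_0^{+}\,[(1+R_0^{+}V)^{-1}]^{*}$ so that the self-adjoint Riesz-type leading term $B_0$ drops out, and invoking Lemma \ref{limitabsorp large} with $\Sigma=\emptyset$ on the compact middle range) are all correct and in fact more careful than the paper's two-line argument. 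The high-energy step via Theorem \ref{continuity} and the $d=3$ step via Proposition \ref{Elambda}(i) are fine as citations.

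The genuine gap is your free low-energy bound for $d\geq5$: the claim $\|\mathrm{Im}\,R(H_0;\lambda+i0)\|_{L^{2}_{\sigma}\to L^{2}_{-\sigma}}\lesssim\lambda^{d/4-1}$ is \emph{false} for $\sigma$ merely larger than $1/2$. Indeed, up to constants $\langle f,\mathrm{Im}\,R(H_0;\lambda+i0)f\rangle=\tfrac{\pi}{4}\lambda^{(d-4)/4}\int_{S^{d-1}}|\hat f(\lambda^{1/4}\omega)|^{2}\,d\omega$, and for $f\in L^{2}_{\sigma}$ with $1/2<\sigma<d/2$ the spherical averages of $|\hat f|^{2}$ need not stay bounded as the radius shrinks: taking $\hat f(\xi)=|\xi|^{-\alpha}\chi(\xi)$ with $0<\alpha<d/2-\sigma$ gives $f\in L^{2}_{\sigma}$ but a pairing of size $\lambda^{(d-4)/4-\alpha/2}$, which diverges as $\lambda\downarrow0$ whenever $\alpha>(d-4)/2$ --- possible exactly when $\sigma<2$. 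What $\sigma>1/2$ buys (Agmon--H\"ormander trace lemma) is only the uniform bound $O(\lambda^{-3/4})$; the rate $\lambda^{d/4-1}$ (equivalently $\mu^{d/2-1}$ for $-\Delta$, which you invoked as classical) requires roughly $\sigma>d/2$. Consequently your low-energy step closes only for $\sigma>d/2$ --- precisely the range where Proposition \ref{Elambda}(ii)--(iii) already applies --- and cannot be repaired for $1/2<\sigma<2$: even for $V=0$, $d\geq5$, a wave packet at frequency $\epsilon$ travels at the degenerate group speed $\sim\epsilon^{3}$ of $\Delta^{2}$ and lingers near the origin for time $\sim\epsilon^{-4}$, so $\int_{\mathbb{R}}\|\langle x\rangle^{-\sigma}e^{-it\Delta^{2}}\phi_{\epsilon}\|^{2}_{L^{2}}\,dt\gtrsim\epsilon^{2\sigma-4}\to\infty$ for $\sigma<2$, showing the supremum you reduced to is genuinely infinite in that weight range. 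You should be aware that the paper's own proof asserts the low-energy bound $O(1)$ for $\sigma>1/2$ with no more justification, and its quoted expansions (Proposition \ref{1+RoV}, Theorem \ref{resolventexpansion>5}, Proposition \ref{Elambda}) likewise demand stronger weights; so your proposal reproves the theorem in the weight range the paper's low-energy machinery actually supports, but neither your argument nor the paper's establishes it down to $\sigma>1/2$.
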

\begin{proof}
For $z\in \mathbb{C}\setminus [V_0,\, +\infty)$, by Proposition \ref{Perturbed Resolvent} we have as $z\rightarrow \infty$,
\begin{equation*}
  \|\langle x\rangle^{-\sigma}(H-z)^{-1}\langle x\rangle^{-\sigma}\|_{L^{2}(\mathbb{R}^{d})\rightarrow L^{2}(\mathbb{R}^{d})}=O(|z|^{-3/4}).
\end{equation*}
For $z\in \mathbb{C}\setminus [V_0,\, +\infty)$, by the low energy asymptotic of $R(H; z)$, we have as $z\rightarrow 0$
\begin{equation*}
  \|\langle x\rangle^{-\sigma}(H-z)^{-1}\langle x\rangle^{-\sigma}\|_{L^{2}(\mathbb{R}^{3})\rightarrow L^{2}(\mathbb{R}^{3})}=O(1),\,\, d=3;
\end{equation*}
\begin{equation*}
  \|\langle x\rangle^{-\sigma}(H-z)^{-1}\langle x\rangle^{-\sigma}\|_{L^{2}(\mathbb{R}^{d})\rightarrow L^{2}(\mathbb{R}^{d})}=O(1),\,\,  d\geq5.
\end{equation*}
Then the conclusion follows by Corollary of \cite[P.148]{RS2}.
\end{proof}

\begin{remark}
For a suitable class of function $f(\Delta)$ of $\Delta$, Ben-Artzi and Nemirovsky \cite{Ben-Nem} have proved a stronger estimate for $H_{f}:=f(-\Delta)+V$. However, they assume that $f^{\prime}(0)>0$. Furthermore, in case that $\lambda>0$ is a positive embedded eigenvalue of $H$, the Mourre theory \cite{Mourre} is applied to $\bar{H}=\bar{P}H\bar{P}$ to establish the local decay estimate around $\lambda$.
\end{remark}

Now we begin to prove the Jensen-Kato type decay estimate ( see Theorem \ref{timedecay} ).
\begin{proof}[\bf Proof of Theorem \ref{timedecay} ( Jensen-Kato type decay estimate )]

The specific values $\beta$ and $\sigma$ depend on the needed expansion terms of the resolvent $R(H; z)$ around zero as shown in Proposition \ref{1+RoV} and  Theorem \ref{resolventexpansion3}.
Here we give the proof of 3-dimensional case. For $d\geq5$ it follows by the similar argument.

Now, it suffice to prove that for any $u, \tilde{u}\in P_{ac}L^{2}(\mathbb{R}^{3})\cap L^{2}_{\sigma}(\mathbb{R}^{3})$,
\begin{equation*}
  |\langle \tilde{u},\,\, e^{-itH}u\rangle|\leq C\langle t\rangle^{-5/4}\|\tilde{u}\|_{L^{2}_{\sigma}(\mathbb{R}^{3})}\|u\|_{L^{2}_{\sigma}(\mathbb{R}^{3})}.
\end{equation*}
For $u, \tilde{u}\in P_{ac}L^{2}(\mathbb{R}^{3})$, by the spectral theorem we have
\begin{equation*}
  \langle \tilde{u},\,\, e^{-itH}u\rangle=\int_{0}^{\infty}e^{-it\lambda}\langle \tilde{u},\,\,E^{\prime}(\lambda)u\rangle d\lambda
                                         =\int_{0}^{\infty}e^{-it\lambda}g(\lambda) d\lambda,
\end{equation*}
where $g(\lambda)=\langle \tilde{u},\,\,E^{\prime}(\lambda)u\rangle$. Notice that $g(\lambda)$ is smooth since $E^{(k+1)}(\lambda)$ is continuous in $\lambda\in(0,\, +\infty)$ for $k=0,1,2,\cdots$. Further, for any $\mathbb{N}\ni k\ge 0$, \, $g(\lambda)$ satisfies :
\begin{equation}\label{g}
  |g^{(k)}(\lambda)|=\Big|\Big\langle \langle x\rangle^{\sigma} \tilde{u},\,\,\langle x\rangle^{-\sigma}E^{(k+1)}(\lambda)\langle x\rangle^{-\sigma}\langle x\rangle^{\sigma}u\Big\rangle\Big|\leq \|E^{(k+1)}(\lambda)\|_{\mathcal{B}(L^{2}_{\sigma},\,L^{2}_{-\sigma})}\|\tilde{u}\|_{L^{2}_{\sigma}}\|u\|_{L^{2}_{\sigma}}.
\end{equation}

Let $\chi_{l}(\lambda)$ be a smooth cutoff function, i.e.
\begin{equation*}
  \chi_{l}\in C^{\infty}_{0}(\mathbb{R}),\,\,
\chi_{l}(\lambda)=\begin{cases}1,\, & |\lambda|\leq\frac{1}{2}; \\
                               0,\, & |\lambda|\geq1.
                  \end{cases}
\end{equation*}
Then $g(\lambda)=\chi_{l}(\lambda)g(\lambda)+(1-\chi_{l}(\lambda))g(\lambda)=g_{l}(\lambda)+g_{h}(\lambda)$. Thus
\begin{equation}\label{decom}
  \langle \tilde{u},\,\, e^{-itH}u\rangle=\int_{0}^{\infty}e^{-it\lambda}g_{l}(\lambda) d\lambda+\int_{0}^{\infty}e^{-it\lambda}g_{h}(\lambda) d\lambda.
\end{equation}

For the second integral of \eqref{decom}, it is the Fourier transform of $g_{h}(\lambda)$ clearly. Note that Proposition \ref{spectraldesitylarge} and  estimate \eqref{g} imply that for any positive integer $k$, $g_{h}^{(k)}(\lambda)\in L^{1}\big((0,\infty)\big)$. Then the Riemann-Lebesgue's lemma tells that:  for large $t$ we have,
\begin{equation*}
 \begin{split}
       & \big|\int_{0}^{\infty}e^{-it\lambda}g_{h}(\lambda) d\lambda \big|\leq |t|^{-k}\|g_{h}^{(k)}(\lambda)\|_{L^{1}}\\
  \leq & \,\, |t|^{-k}\int_{1/2}^{\infty}\|E^{(k+1)}(\lambda)\|_{\mathcal{B}(L^{2}_{\sigma},\,L^{2}_{-\sigma})}d\lambda\,\, \|\tilde{u}\|_{L^{2}_{\sigma}}\|u\|_{L^{2}_{\sigma}}\\
  \leq & \,\, C|t|^{-k}\|\tilde{u}\|_{L^{2}_{\sigma}}\|u\|_{L^{2}_{\sigma}}.
 \end{split}
\end{equation*}

For the first integral of \eqref{decom}, integration by parts, we obtain
\begin{equation*}
  \int_{0}^{\infty}e^{-it\lambda}g_{l}(\lambda) d\lambda=\int_{0}^{\infty}\frac{e^{-it\lambda}}{it}g^{\prime}_{l}(\lambda) d\lambda.
\end{equation*}
So that it remains to prove that
\begin{equation*}
  \int_{0}^{\infty}e^{-it\lambda}g^{\prime}_{l}(\lambda) d\lambda=O(t^{-1/4}),\,\, t\rightarrow \infty.
\end{equation*}
In fact, we have for large $t$
\begin{equation*}
\begin{split}
& \quad \big| \int_{0}^{\infty}e^{-it\lambda}g^{\prime}_{l}(\lambda) d\lambda \big|
 =\frac{1}{2} \big|\int_{0}^{\infty}e^{-it\lambda}\big(g^{\prime}_{l}(\lambda+\pi/t)-g^{\prime}_{l}(\lambda)\big) d\lambda\big|\\
&\leq \int_{0}^{\pi/t}\big|\big(g^{\prime}_{l}(\lambda+\pi/t)-g^{\prime}_{l}(\lambda)\big)\big| d\lambda+\int_{\pi/t}^{\infty}\big|\big(g^{\prime}_{l}(\lambda+\pi/t)-g^{\prime}_{l}(\lambda)\big)\big| d\lambda\\
&\leq 2\int_{0}^{\pi/t}\big|g^{\prime}_{l}(\lambda)\big| d\lambda+\int_{\pi/t}^{\infty}d\lambda\int_{\lambda}^{\lambda+\pi/t}|g^{\prime\prime}_{l}(\tilde{\lambda})|d\tilde{\lambda}=O(|t|^{-1/4})
\end{split}
\end{equation*}
by the asymptotic properties of $E^{\prime}(\lambda)$ as in Proposition \ref{Elambda}.
\end{proof}
\begin{corollary}
Let $\lambda_{j}$ be the negative eigenvalues of $H$ and $P_{j}$ be the associated eigen-projection. Then under the same spectral assumption as in Theorem \ref{timedecay}, we have in $\mathcal{B}\big(L^{2}_{\sigma}(\mathbb{R}^d), L^{2}_{-\sigma}(\mathbb{R}^d)\big)$ with $\sigma$ sufficient large:
\begin{equation*}
  e^{-itH}-\sum_{j=1}^{\sharp}e^{-it\lambda_{j}}-P_{0}=t^{-5/4}C_1+t^{-3/2}C_2+\cdots,\,\,d=3.
\end{equation*}
\begin{equation*}
  e^{-itH}-\sum_{j=1}^{\sharp}e^{-it\lambda_{j}}-P_{0}=t^{-d/4}B+\cdots,\,\,d\geq5.
\end{equation*}
Here $\sharp=\sharp\, \{~ \text{negative eigenvalues of}\,\, H  ~\}$. And $C_1, C_2$, $B$ can be calculate precisely from the expansion of $R(H_0; z)$ and $(1+R(H_0; z)V)^{-1}$.
\end{corollary}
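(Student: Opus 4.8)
The plan is to read the expansion straight off the spectral representation, refining the proof of Theorem~\ref{timedecay} by keeping every term of the low-energy expansion of $E'(\lambda)$ instead of merely bounding them. First I would apply the spectral theorem to split
\begin{equation*}
  e^{-itH}=\sum_{j=1}^{\sharp}e^{-it\lambda_j}P_j+e^{-itH}P_{ac},
\end{equation*}
so that the left-hand side of the claimed identity is exactly $e^{-itH}P_{ac}$; the zero-energy projection $P_0$ vanishes because $0$ is assumed to be a regular point, so no constant ($t^0$) term survives. Hence the whole corollary is an asymptotic expansion as $t\to+\infty$ of $e^{-itH}P_{ac}$ in $\mathcal{B}\big(L^2_\sigma,L^2_{-\sigma}\big)$.

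Next, for $u,\tilde u\in P_{ac}L^2\cap L^2_\sigma$ I would begin, as in Theorem~\ref{timedecay}, from
\begin{equation*}
  \langle\tilde u,\,e^{-itH}P_{ac}u\rangle=\int_0^\infty e^{-it\lambda}g(\lambda)\,d\lambda,\qquad g(\lambda)=\langle\tilde u,\,E'(\lambda)u\rangle,
\end{equation*}
and split $g=\chi_l g+(1-\chi_l)g=g_l+g_h$ with the same cutoff $\chi_l$. The high-energy piece produces only a negligible remainder: Proposition~\ref{spectraldesitylarge} gives $g_h^{(k)}\in L^1((0,\infty))$ for every $k\geq1$, so repeated integration by parts yields $\int e^{-it\lambda}g_h\,d\lambda=O(|t|^{-k})$ for all $k$, faster than any term in the expansion. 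The entire expansion therefore comes from $\int e^{-it\lambda}g_l(\lambda)\,d\lambda$.

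Into this low-energy integral I would feed the expansions of $E'(\lambda)$ from Proposition~\ref{Elambda}: near $\lambda=0$ these read $\lambda^{1/4}+\lambda^{1/2}+o(\lambda^{1/2})$ for $d=3$ and begin at $\lambda^{d/4-1}$ for $d\geq5$ (with a $\ln\lambda^{1/2}$ in the remainder when $d$ is even), all coefficients being operators in $\mathcal{B}(L^2_\sigma,L^2_{-\sigma})$. The mechanism is the elementary Fourier-asymptotic formula, valid for $\alpha>-1$ and $k\in\{0,1\}$,
\begin{equation*}
  \int_0^\infty e^{-it\lambda}\lambda^{\alpha}(\ln\lambda)^{k}\chi_l(\lambda)\,d\lambda
  = c_{\alpha,k}\,t^{-(\alpha+1)}(\ln t)^{k}+o\big(t^{-(\alpha+1)}(\ln t)^{k}\big),
\end{equation*}
which turns each monomial of $E'$ into one term of the propagator expansion: $\alpha=\tfrac14\mapsto t^{-5/4}$ and $\alpha=\tfrac12\mapsto t^{-3/2}$ for $d=3$, and $\alpha=\tfrac d4-1\mapsto t^{-d/4}$ for $d\geq5$. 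The operator coefficients $C_1,C_2,B$ are then precisely the coefficients of $E'(\lambda)$, which by $\pi E'(\lambda)=\mathrm{Im}\,R(H;\lambda+i0)$ are assembled from the expansions of $R(H_0;z)$ and $(1+R(H_0;z)V)^{-1}$ established in Section~2.

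The main obstacle is controlling the remainder in the operator norm rather than the cleanness of the leading terms. I would handle it exactly by the translation trick used in Theorem~\ref{timedecay}: writing $R_N(\lambda)$ for the remainder of the $E'$-expansion and integrating by parts, the identity $\int e^{-it\lambda}R_N'=\tfrac12\int e^{-it\lambda}\big(R_N'(\lambda+\pi/t)-R_N'(\lambda)\big)$ converts the pointwise bounds $R_N^{(m)}(\lambda)=o(\lambda^{\alpha_N-m})$ into the sharp remainder $o\big(t^{-(\alpha_N+1)}\big)$. Two caveats deserve attention. First, Proposition~\ref{Elambda} guarantees only finitely many differentiations of the expansion ($2$ for $d=3$, $d-3$ for odd $d$, $d/2-1$ for even $d$), and this is exactly what limits how many terms of the propagator expansion can be extracted. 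Second, all of the scalar asymptotics above must be made uniform over $u,\tilde u$ in the unit ball of $L^2_\sigma$ so as to pass to the norm of $\mathcal{B}(L^2_\sigma,L^2_{-\sigma})$; this is legitimate because both the coefficients and the remainder estimates of Proposition~\ref{Elambda} are already stated in that norm, and the differentiated-expansion statement of Proposition~\ref{daoshu} supplies the derivative bounds the integration by parts requires.
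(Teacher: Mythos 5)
Your proposal is correct and follows exactly the route the paper intends: the paper states this corollary without a separate proof, as an immediate refinement of the proof of Theorem \ref{timedecay} in which the low-energy expansion of $E'(\lambda)$ from Proposition \ref{Elambda} (with coefficients coming from the resolvent expansions and differentiability from Proposition \ref{daoshu}) is integrated term by term against $e^{-it\lambda}$, the high-energy piece being $O(|t|^{-k})$ by Proposition \ref{spectraldesitylarge}. Your handling of the remainder via the $\pi/t$-translation trick, the uniformity in the unit ball of $L^2_\sigma$, and the observation that $P_0=0$ under the regular-point assumption are precisely the points the paper's argument relies on.
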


\begin{remark}\label{polynomial}
Although Murata had considered a general class of elliptic operator $P(D)+V$, but for a degenerate operator $P(D)$ ( i.e. $P(\xi)$ has degenerate critical points ), like $(-\Delta)^{m}$ with $2\leq m\in\mathbb{Z}^{+}$, his approach does not work. In fact, our method could be applied to some higher order operators $p(-\Delta)$ where $p(x)$ is a real polynomial on $\mathbb{R}$,  such as $(-\Delta)^{2}\pm(-\Delta)$ and $(-\Delta)^{m}$. First of all, we need to get the asymptotic expansion of the resolvent $(p(-\Delta)-z)^{-1}$  at thresholds ( i.e. the  critical values of $p(|\xi|^2)$ ). Recall that the set of critical values of a function $f(\xi)$ is defined by,
\begin{equation*}
  \Lambda=\big\{\,f(\xi_{0})\,\mid \,|\nabla f(\xi)|_{\xi=\xi_0}=0,\,\, \xi_0\in\mathbb{R}^{n}\big\}.
\end{equation*}
It is well-known that the number $\sharp\Lambda$ of $\Lambda$ is finite for any elliptic polynomial $P(\xi)$ on $\mathbb{R}^{n}$, see e.g. Agmon \cite{Agmon}.

For the special case $p_{\pm}(-\Delta)=(-\Delta)^{2}\pm(-\Delta)$, the critical-values set $\Lambda_{p_{\pm}}$ of $p_{\pm}(-\Delta)$,  $\Lambda_{p_{-}}=\{-1/4, 0\}$ and $\Lambda_{p_{+}}=\{0\}$. Note that we can express the resolvent $(p_{\pm}(-\Delta)-z)^{-1}$ as
\begin{equation*}
 \begin{split}
  \big(p_{\pm}(-\Delta)-z\big)^{-1}=&\Big((-\Delta\pm1/2)^2-(1/4+z)\Big)^{-1}\\
                                   =&\frac{1}{2\omega}\Big[\big(-\Delta\pm\frac{1}{2}-\omega\big)^{-1}                  -\big(-\Delta\pm\frac{1}{2}+\omega\big)^{-1}\Big],
 \end{split}
\end{equation*}
where $\omega^2=1/4+z$.
Then for example, by the formula above, we can  deduce the asymptotic behaviour of $(p_{-}(-\Delta)-z)^{-1}$ around the degenerate threshold $z=-1/4$,  by using the asymptotic behaviour of $(-\Delta-w)^{-1}$ around $w=1/2$. Indeed, the Taylor expansion of the kernel of $(-\Delta-w)^{-1}$ at the point $w_{0}=1/2$ tells us the desired asymptotic behaviour around the threshold  $z=-1/4$ in suitable weighted Sobole space.

For $p(-\Delta)=(-\Delta)^{m}$, the critical-value is only $z=0$. The resolvent $\big((-\Delta)^m-z\big)^{-1}$ can be expressed as
\begin{equation*}
\big((-\Delta)^m-z\big)^{-1}=\frac{1}{mz}\sum_{k=0}^{m-1}z_k(-\Delta-z_k)^{-1}
\end{equation*}
where $z_k=|z|^{\frac{1}{m}}e^{i\frac{2k\pi}{m}}(k=0,1,2,\cdots,m-1)$ are the $k$-th roots of $z$, see e.g. \cite{HYZ}. Similar arguments also can be concluded for $(-\Delta)^{m}$.
According to these expressions and under some suitable assumptions, one would obtain Jensen-Kato type decay estimate of $p(-\Delta)+V$ by using the same strategy as the fourth-order operator $(-\Delta)^2 +V$.
\end{remark}

\section{$L^p$-type decay estimates------Ginibre argument}
\setcounter{equation}{0}
\subsection{$L^p$-boundedness of projection $P_{ac}$}
In this section, we apply the iterated Duhamel formula  to prove the $L^{1}\rightarrow L^{\infty}$ in 3-dimension and $L^{1}\cap L^{2}\rightarrow L^{2}+L^{\infty}$ in  $d\geq5$ for $e^{-itH}P_{ac}$.
Let us first recall that Duhamel formula
\begin{equation}\label{duhamel}
e^{-itH}P_{ac}=e^{-itH_{0}}P_{ac}+i\int_{0}^{t}e^{-i(t-s)H_{0}}VP_{ac}e^{-isH}ds,\ \  H=(-\Delta)^2 +V.
\end{equation}
From the above formula, we need to give the $L^p$-boundedness of $P_{ac}$. For this end, let us summarize the spectrum of $H$. Firstly, we have assumed the absence of embedding positive eigenvalues of $H$, and zero is not an eigenvalue nor a resonance. So we have that $\sigma_{c}(H)=\sigma_{ac}(H)=[0,\,\,+\infty)$. Secondly, Birman and Solomyak's results \cite{BS} implies that there are only finite many discrete negative eigenvalues of $H$.

Denote the number of eigenvalues of $H$ lying to the left of $\gamma$ ( counted according to their multiplicities) by $N(\gamma; H)$. The estimate of $N(\gamma; H)$ depends on the potential function. For the Schr\"odinger operator in $\mathbb{R}^3$, Birman and Schwinger bound in \cite{RS2} is the earliest estimate of $N(0; -\Delta+V)$ by
\begin{equation*}
   N(0; -\Delta+V)\leq (\frac{1}{4\pi})^2\int_{\mathbb{R}^3}\frac{|V(x)||V(y)|}{|x-y|}dxdy.
\end{equation*}
Later, Birman and Solomyak \cite{BS} discussed the operator $A_{l}(\alpha V)=(-\Delta)^{l}-\alpha V,\, \,l,\,\alpha>0$. They had given the estimate of $N(\gamma; A_{l}(\alpha V))$ and discussed the asymptotic property of $N(\gamma; A_{l}(\alpha V))$ as $\alpha$ goes to infinite. For the fourth-order Schr\"odinger operator $H=(-\Delta)^2+V$ in $\mathbb{R}^d$, It is known that
\begin{equation}\label{negative eigenvalue d}
 N(0; H)\leq C(d)\int_{\mathbb{R}^{d}} V_{-}^{d/4}dx, \,\,d\geq5.
\end{equation}
For $d=3$, according to Theorem 5.1 and Remarks in \cite{BS}, we have for any $\gamma$ positive,
\begin{equation}\label{negative eigenvalue 3}
  N(-\gamma; H)\leq C\int_{\mathbb{R}^{3}} V_{-}dx, \,\,d=3.
\end{equation}
Here $V_{-}$ denotes the negative part of $V$. So that $H$ has finitely many negative eigenvalues if the potential function decays fast enough.

On the decay of the eigenfunction, Deng, Ding and Yao \cite{DDY} have established the pointwise kernel estimates for $e^{-t(P(D)+V)}$ with $V$ belongs to the Kato potential class. One can prove that the eigenfunctions decay polynomialy using the heat kernel estimate. Under some suitable assumtions on $V$ and the fact that $P_{ac}=I-P_{disc}$, we have $P_{ac}$ is $L^{p}\rightarrow L^{p}$ bounded for $1\leq p\leq \infty$. Indeed, $P_{disc}=\sum_{j=0}^{\sharp}\langle\cdot,\, e_{j}\rangle e_{j}$ and the decay of eigenfunction $e_{j}$ implies that $P_{ac}$ is $L^{p}\rightarrow L^{p}$ bounded.   Furthermore, we can prove that $\langle x\rangle^{-s}P_{ac}\langle x\rangle^{s}$ is also $L^{p}\rightarrow L^{p}$ bounded for any $s$ positive by the same discussion.

\begin{proposition}\label{eigenfunctiondecay}
Let $V(x)\in L^{\infty}(\mathbb{R}^{d})$, and denote $e_{j}$ to be the eigenfunction of $H$ corresponding to eigenvalue $\lambda_{j}$. Then $e_{j}\in L_{\sigma}^{p}(\mathbb{R}^d)$  for $1\leq p\leq\infty$ and any $\sigma\in\mathbb{R}^{+}$.
\end{proposition}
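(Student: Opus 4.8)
The plan is to reduce the statement to rapid (faster than any polynomial) spatial decay of each $e_j$: if $|e_j(x)|\lesssim\langle x\rangle^{-N}$ for every $N$ and $e_j\in L^\infty$, then $\|\langle x\rangle^{\sigma}e_j\|_{L^p}<\infty$ for all $\sigma>0$ and $1\le p\le\infty$. The eigenvalues at issue are the negative ones (the paper has just written $P_{disc}=\sum_j\langle\cdot,e_j\rangle e_j$ over the negative eigenvalues), so $\lambda_j<0=\inf\sigma_{ess}(H)$, and the spectral gap to the essential spectrum is exactly what produces rapid, in fact exponential, decay. I would encode the eigenvalue equation in whichever of the two equivalent forms is handier: through the heat semigroup, $e_j=e^{t\lambda_j}e^{-tH}e_j$, or through the resolvent, $e_j=-R(H_0;\lambda_j)Ve_j$ coming from $(H_0-\lambda_j)e_j=-Ve_j$. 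The $L^\infty$ bound is immediate from the first form: fixing $t=1$ and using the Deng--Ding--Yao kernel estimate $|K_t(x,y)|\lesssim t^{-d/4}\exp\!\big(-c(|x-y|\,t^{-1/4})^{4/3}\big)$ for Kato-class $V$ (an $L^\infty$ short-range potential qualifies), Cauchy--Schwarz with $\sup_x\|K_1(x,\cdot)\|_{L^2}<\infty$ gives $\|e_j\|_{L^\infty}\le e^{\lambda_j}\sup_x\|K_1(x,\cdot)\|_{L^2}\,\|e_j\|_{L^2}<\infty$.

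For the decay I would run a Combes--Thomas argument. First note that the free resolvent at the negative energy $\lambda_j$ already has an exponentially decaying kernel: by the splitting \eqref{freeresolventidentity}, $\lambda_j^{1/2}=i|\lambda_j|^{1/2}$, so $R(H_0;\lambda_j)$ is built from $(-\Delta\mp i|\lambda_j|^{1/2})^{-1}$, whose kernels decay like $e^{-c|x-y|}$ with $c\sim|\lambda_j|^{1/4}$. I would then conjugate $H$ by bounded smooth approximations $F_n$ of $\langle x\rangle$ with $\|\nabla F_n\|_{L^\infty}\le1$ and $F_n\uparrow\langle\cdot\rangle$. Since $e^{\alpha F_n}$ is bounded and invertible, $H(\alpha):=e^{\alpha F_n}He^{-\alpha F_n}$ is similar to $H$, and a Combes--Thomas resolvent bound gives $\sup_n\|e^{\alpha F_n}R(H;z)e^{-\alpha F_n}\|_{L^2\to L^2}\le C$ for $z$ on a small circle $\gamma$ enclosing only $\lambda_j$ and for $\alpha$ small. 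Integrating over $\gamma$ yields $\sup_n\|e^{\alpha F_n}P_je^{-\alpha F_n}\|\le C'$. Because $P_j$ has finite rank, this uniform bound forces $\sup_n\|e^{\alpha F_n}e_j\|_{L^2}<\infty$ (for a simple eigenvalue, apply the conjugated projection to the weight-free $e_j$ and divide by $\langle e_j,e^{-\alpha F_n}e_j\rangle\ge c_\infty>0$; in general use that the Gram matrix of the $e^{-\frac{\alpha}{2}F_n}e_{j,i}$ stays uniformly positive definite). Letting $n\to\infty$ by monotone convergence gives $e^{\alpha\langle\cdot\rangle}e_j\in L^2$, i.e. $|e_j(x)|\lesssim e^{-\alpha|x|}$, which is far stronger than the required rapid decay; a softer variant sufficient for the stated claim replaces the exponential weight by $\langle x\rangle^N$ and uses Peetre's inequality $\langle x\rangle^N\langle y\rangle^{-N}\le C_N\langle x-y\rangle^N$ to absorb the weight into the stretched-exponential heat kernel, making $\langle x\rangle^Ne^{-H}\langle x\rangle^{-N}$ bounded for every $N$.

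The hard part will be the Combes--Thomas resolvent bound for the fourth-order operator. Conjugation produces $H(\alpha)-H$ as a sum of terms of order up to three in $\nabla$ whose coefficients are polynomial in $\alpha\nabla F_n,\dots$ and hence bounded uniformly in $n$; one must show these are relatively form-bounded with respect to $H$ with small bound for small $\alpha$, so that $z-H(\alpha)$ stays boundedly invertible on $\gamma$ uniformly in $n$ and $\alpha$. This is also where the apparent circularity of the weighted identity is resolved honestly: a naive contraction is unavailable, since $e_j=-R(H_0;\lambda_j)Ve_j$ shows that $-R(H_0;\lambda_j)V$ has $1$ in its spectrum, so the gain must come from the spectral gap packaged through the finite-rank projection, not from an operator norm being $<1$. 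The only remaining bookkeeping is to confirm that the hypotheses on $V$ (short range, $V\in L^\infty$, hence Kato class) are precisely those under which both the Deng--Ding--Yao kernel bounds and the exponential decay of $R(H_0;\lambda_j)$ hold.
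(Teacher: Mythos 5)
Your proposal is correct, but your main route is genuinely different from the paper's. The paper's proof is, almost verbatim, your closing ``softer variant'': from $e^{-H}e_j=e^{-\lambda_j}e_j$ it invokes the Deng--Ding--Yao heat kernel bound \eqref{heatkernelestimate} at $t=1$, dominates the kernel by $e^{-c|x-y|^{4/3}}$, and absorbs the polynomial weight via Peetre's inequality $\langle y\rangle^{\sigma}\lesssim\langle x\rangle^{\sigma}(1+|x-y|)^{\sigma}$, reducing the claim to a weighted convolution estimate --- no resolvents, no conjugation, no spectral gap. Your primary Combes--Thomas argument is a legitimate and stronger alternative: the conjugated-resolvent bound on the contour $\gamma$, uniform in the approximating weights $F_n$, the finite-rank projection step with the Gram-matrix lower bound, and the monotone-convergence passage to $e^{\alpha\langle\cdot\rangle}e_j\in L^2$ are all standard and correctly assembled, and your observation that $e_j=-R(H_0;\lambda_j)Ve_j$ forbids any naive contraction (so the gain must come from the gap, not an operator norm $<1$) is exactly right; what it buys is genuine exponential decay, far more than the polynomial weights in the statement. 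One caveat you should note: Combes--Thomas requires $\lambda_j$ to be isolated from the essential spectrum, hence covers only the negative eigenvalues, whereas Theorem \ref{projectionbounded} explicitly allows $H$ to have finitely many \emph{embedded} positive eigenvalues, and $P_{disc}$ there includes their eigenprojections; for such $\lambda_j$ there is no gap and your main route fails, while the semigroup identity $e^{-H}e_j=e^{-\lambda_j}e_j$ --- and hence the paper's heat-kernel argument, i.e.\ your softer variant --- works for every eigenvalue regardless of its position in the spectrum. Since you included that variant, the proposal as a whole does cover the full statement; just be aware that it, not Combes--Thomas, is the load-bearing argument in the embedded case.
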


\begin{proof}
We making use of the heat kernel $K(t, x, y)$ of $e^{-tH}$ to prove the decay of eigenfunction. By the work of Deng, Ding and Yao \cite[Theorem 1.1]{DDY}, we know that $K(t,x,y)$ satisfies
\begin{equation}\label{heatkernelestimate}
  |K(t,x,y)|\leq Ct^{-d/4}\exp\big\{-c|x-y|^{4/3}t^{-1/3}+\theta t\big\},\,\, t>0,
\end{equation}
with some positive constants $C, c, \theta$.

Since $e^{-H}e_{j}=e^{-\lambda_{j}}e_{j}$, then
\begin{equation*}
  \begin{split}
   &\quad \|\langle x\rangle^{\sigma} e_{j}\|_{L^{p}(\mathbb{R}^d)}
       \lesssim \|\langle x\rangle^{\sigma}(e^{-|\cdot|^{4/3}}*e_{j})\|_{L^{p}(\mathbb{R}^d)}=\Big\|\int_{\mathbb{R}^d}\langle x\rangle^{\sigma}e^{-|x-y|^{4/3}}e_{j}(y)dy\Big\|_{L^{p}(\mathbb{R}^d)}\\
      & \lesssim \int_{\mathbb{R}^d}\|\langle x\rangle^{\sigma}e^{-|x-y|^{4/3}}\|_{L^{p}(\mathbb{R}^d)}|e_{j}(y)|dy
      =\int_{\mathbb{R}^d}\|\langle x\rangle^{\sigma}e^{-|x-y|^{4/3}}\langle y\rangle^{\sigma}\|_{L^{p}(\mathbb{R}^d)}|\langle y\rangle^{-\sigma}e_{j}(y)|dy\\
      & \lesssim \int_{\mathbb{R}^d}\|\langle x\rangle^{2\sigma}e^{-|x-y|^{4/3}}(1+|x-y|)^{\sigma}\|_{L^{p}(\mathbb{R}^d)}|\langle y\rangle^{-\sigma}e_{j}(y)|dy\\
      & \lesssim \int_{\mathbb{R}^d}\|\langle x\rangle^{2\sigma}(1+|x-y|)^{-\Pi}\|_{L^{p}(\mathbb{R}^d)}|\langle y\rangle^{-\sigma}e_{j}(y)|dy.
  \end{split}
\end{equation*}
Here we choose $\Pi$ large enough such as $\Pi>2\sigma+d+1$. The last integral is finite by a simple discussion of the distance between $x$ and $y$.
\end{proof}

\begin{theorem}\label{projectionbounded}
Let $V(x)\in L^{\infty}(\mathbb{R}^{d})$ and such that integrals in \eqref{negative eigenvalue 3} and \eqref{negative eigenvalue d} are convergent. Assume that zero is a regular point of $H$ and with only finite many embedded eigenvalues. Then for $\sigma\in \mathbb{R}$ and $1\leq p\leq\infty$, we have
\begin{equation}
  \|\langle x\rangle^{-\sigma}P_{ac}\langle x\rangle^{\sigma}f\|_{L^{p}(\mathbb{R}^d)}\leq c\|f\|_{L^{p}(\mathbb{R}^d)},\,\, f\in L^{p}(\mathbb{R}^d).
\end{equation}
\end{theorem}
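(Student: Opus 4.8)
The plan is to exploit the decomposition $P_{ac}=I-P_{disc}$ together with the finite-rank structure of $P_{disc}$ and the eigenfunction decay from Proposition \ref{eigenfunctiondecay}. First I would record that under the stated hypotheses the discrete spectrum of $H$ consists of only finitely many eigenvalues: the negative eigenvalues are finite in number because the integrals in \eqref{negative eigenvalue 3} and \eqref{negative eigenvalue d} converge, the point $0$ is excluded since it is a regular point, and the positive embedded eigenvalues are finite by assumption. Hence $P_{disc}=\sum_{j=0}^{\sharp}\langle\cdot,e_{j}\rangle e_{j}$ is a finite-rank projection, and since $\langle x\rangle^{-\sigma}I\langle x\rangle^{\sigma}=I$ is trivially $L^{p}$-bounded, it suffices to bound the conjugated operator $\langle x\rangle^{-\sigma}P_{disc}\langle x\rangle^{\sigma}$ on $L^{p}(\mathbb{R}^{d})$.

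Next I would write the conjugated projection out explicitly. For $f\in L^{p}(\mathbb{R}^{d})$, using that $\langle x\rangle^{\sigma}$ is a real multiplier one has
\begin{equation*}
\langle x\rangle^{-\sigma}P_{disc}\langle x\rangle^{\sigma}f=\sum_{j=0}^{\sharp}\big\langle f,\langle x\rangle^{\sigma}e_{j}\big\rangle\,\langle x\rangle^{-\sigma}e_{j},
\end{equation*}
so that each summand factors into a scalar pairing and a fixed $L^{p}$ function. By H\"older's inequality the pairing is controlled by $|\langle f,\langle x\rangle^{\sigma}e_{j}\rangle|\leq\|f\|_{L^{p}}\,\|\langle x\rangle^{\sigma}e_{j}\|_{L^{p'}}$, where $p'$ is the conjugate exponent, and the factor $\|\langle x\rangle^{\sigma}e_{j}\|_{L^{p'}}$ is finite by Proposition \ref{eigenfunctiondecay}, which places each $e_{j}$ in every weighted space $L^{p'}_{\sigma}(\mathbb{R}^{d})$. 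Multiplying by the fixed function $\langle x\rangle^{-\sigma}e_{j}$, whose $L^{p}$-norm is again finite by the same proposition (now applied with weight $-\sigma$), gives the bound $\|\langle x\rangle^{-\sigma}P_{disc}\langle x\rangle^{\sigma}f\|_{L^{p}}\leq\big(\sum_{j=0}^{\sharp}\|\langle x\rangle^{\sigma}e_{j}\|_{L^{p'}}\|\langle x\rangle^{-\sigma}e_{j}\|_{L^{p}}\big)\|f\|_{L^{p}}$, a finite constant since there are only finitely many terms. This argument covers the full range $1\leq p\leq\infty$, including the endpoints, precisely because the eigenfunction bounds hold for all such $p$ and for both conjugate exponents.

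The analytic content is entirely absorbed into these two inputs, so the main obstacle lies not in the present statement but in the facts it invokes, and above all in the rapid decay of the eigenfunctions. That is the genuinely nontrivial step, and it is settled beforehand in Proposition \ref{eigenfunctiondecay} by feeding the Gaussian-type heat kernel bound \eqref{heatkernelestimate} for $e^{-tH}$ into the eigenvalue relation $e^{-H}e_{j}=e^{-\lambda_{j}}e_{j}$ and estimating the resulting convolution. Granting that input, the theorem is a soft finite-rank consequence. The only point demanding slight care is the bookkeeping of the sign of the weight exponent: for $\sigma\geq0$ the factor $\langle x\rangle^{-\sigma}$ is bounded while $\langle x\rangle^{\sigma}$ grows, and for $\sigma<0$ the roles reverse, but in either case Proposition \ref{eigenfunctiondecay} supplies membership of $e_{j}$ in the relevant weighted space, so no separate treatment for the sign of $\sigma$ is required.
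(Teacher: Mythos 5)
Your proposal is correct and follows essentially the same route as the paper: decompose $P_{ac}=I-P_{disc}$, expand the finite-rank projection $P_{disc}=\sum_{j=0}^{\sharp}\langle\cdot,e_{j}\rangle e_{j}$, and bound each term by H\"older's inequality using the weighted $L^{p}$-membership of the eigenfunctions from Proposition \ref{eigenfunctiondecay}. Your added bookkeeping (why the discrete spectrum is finite, and the sign of $\sigma$) is consistent with the paper's surrounding discussion and does not change the argument.
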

\begin{proof}
Since $P_{ac}=I-P_{disc}$, if the above inequality holds for $P_{disc}$ then \eqref{projectionbounded} holds.
In fact $P_{disc}=\sum_{j=0}^{\sharp}\langle\cdot,\,\,e_{j}\rangle e_{j}$, and then
\begin{equation*}
  \begin{split}
 &\quad \|\langle \cdot\rangle^{-\sigma}P_{ac}\langle \cdot\rangle^{\sigma}f\|_{L^{p}(\mathbb{R}^d)}
       =\big\|\sum_{j=0}^{\sharp}\langle\langle x\rangle^{\sigma} f,\,\,e_{j}\rangle \langle y\rangle^{-\sigma}e_{j}\big\|_{L^{p}(\mathbb{R}^d)} \\
       & \lesssim \sum_{j=0}^{\sharp} \big|\langle f,\,\,\langle x\rangle^{\sigma} e_{j}\rangle\big|\|\langle y\rangle^{-\sigma}e_{j}\|_{L^{p}(\mathbb{R}^d)}
       \lesssim \sum_{j=0}^{\sharp} \|f\|_{L^{p}(\mathbb{R}^d)}\|\langle x\rangle^{\sigma} e_{j}\|_{L^{p^{\prime}}(\mathbb{R}^d)}\|\langle y\rangle^{-\sigma}e_{j}\|_{L^{p}(\mathbb{R}^d)}.
  \end{split}
\end{equation*}
Thus Proposition \ref{eigenfunctiondecay} implies that the last sum is finite.
\end{proof}

\subsection{$L^1\rightarrow L^{\infty}$ decay estimate for $d=3$}
Now, we give the proof of Theorem \ref{Ginibred=3}. Our strategy is applying the iterated Duhamel formula
\begin{equation}\label{TDuhamel}
  \begin{split}
   e^{-itH}P_{ac}&=e^{-itH_{0}}P_{ac}+i\int_{0}^{t}e^{-i(t-s)H_{0}}VP_{ac}e^{-isH_{0}}ds\\
                &-\int_{0}^{t}\int_{0}^{s}e^{-i(t-s)H_{0}}Ve^{-i(s-\tau)H}P_{ac}Ve^{-i\tau H_{0}}d\tau ds\\
                &:=I+II+III,
  \end{split}
\end{equation}
and then estimate each term of \eqref{TDuhamel}. Notice that \eqref{freeL1Linfty} and the $L^p$-boundedness of $P_{ac}$ implies the $L^{1}\rightarrow L^{\infty}$ estimate for the free term $e^{-itH_{0}}P_{ac}$.
\begin{equation}\label{H0P_ac}
\|I\|:= \|e^{-itH_{0}}P_{ac}f\|_{L^{\infty}(\mathbb{R}^d)}\lesssim|t|^{-d/4}\|P_{ac}f\|_{L^{1}(\mathbb{R}^d)}\lesssim |t|^{-d/4}\|f\|_{L^{1}(\mathbb{R}^d)}.
\end{equation}
For the second term $II$ of \eqref{TDuhamel}, we have
\begin{equation*}
  \begin{split}
  &\quad \|II\|:= \Big\|\int_{0}^{t}e^{-i(t-s)H_0}VP_{ac}e^{-isH_0}uds\Big\|_{L^{\infty}(\mathbb{R}^3)}\\
  &      \lesssim\int_{0}^{t}(t-s)^{-3/4}\|VP_{ac}e^{-isH_0}u\|_{L^{1}(\mathbb{R}^3)}ds\\
  &\lesssim\int_{0}^{t}(t-s)^{-3/4}\|V\|_{L^{1}(\mathbb{R}^3)}\|P_{ac}\|_{L^{\infty}\rightarrow L^{\infty}(\mathbb{R}^3)}\|e^{-isH_0}u\|_{L^{\infty}(\mathbb{R}^3)}ds\\
  &\lesssim\int_{0}^{t}(t-s)^{-3/4}s^{-3/4}ds\|u\|_{L^{1}(\mathbb{R}^3)}\lesssim |t|^{-1/2}\|u\|_{L^{1}(\mathbb{R}^3)}.
  \end{split}
\end{equation*}
For the third term $III$ of \eqref{TDuhamel}, we have
\begin{equation*}
  \begin{split}
  &\quad \|III\|:= \Big\|\int_{0}^{t}\int_{0}^{s}e^{-i(t-s)H_0}Ve^{-i(s-\tau)H}P_{ac}Ve^{-i\tau H_0}u d\tau ds\Big\|_{L^{\infty}(\mathbb{R}^3)}\\
  &\lesssim\int_{0}^{t}\int_{0}^{s}(t-s)^{-3/4}\|V\langle x\rangle^{\sigma}\langle x\rangle^{-\sigma}e^{-i(s-\tau)H}P_{ac}\langle   x\rangle^{-\sigma}\langle x\rangle^{\sigma}Ve^{-i\tau H_0} u\|_{L^{1}(\mathbb{R}^3)}d\tau ds\\
  &\lesssim\int_{0}^{t}\int_{0}^{s}(t-s)^{-3/4}\|V\langle x\rangle^{\sigma}\|_{L^{2}(\mathbb{R}^3)}\|\langle   x\rangle^{-\sigma}e^{-i(s-\tau)H}P_{ac}\langle x\rangle^{-\sigma}\langle x\rangle^{\sigma}V
  e^{-i\tau H_0} u\|_{L^{2}(\mathbb{R}^3)}d\tau ds\\
  &\lesssim\int_{0}^{t}\int_{0}^{s}(t-s)^{-3/4}\|\langle   x\rangle^{-\sigma}e^{-i(s-\tau)H}P_{ac}\langle x\rangle^{-\sigma}\|_{L^{2}\rightarrow L^{2}(\mathbb{R}^3)}\|e^{-i\tau H_0} u\|_{L^{\infty}(\mathbb{R}^3)}d\tau ds\\
  &\lesssim\int_{0}^{t}\int_{0}^{s}(t-s)^{-3/4}\langle s-\tau \rangle^{-5/4}\tau^{-3/4}d\tau ds\|u\|_{L^{1}(\mathbb{R}^3)}
  \lesssim\langle t\rangle^{-1/2}\|u\|_{L^{1}(\mathbb{R}^3)}.
  \end{split}
\end{equation*}
Therefore, we can combine the steps above to conclude the proof of Theorem \ref{Ginibred=3}.
 \vskip0.5cm
\subsection{$L^{1}\cap L^{2}\rightarrow L^{2}+L^{\infty}$ decay estimate for $d\geq5$}
It is hard to obtain the $L^{1}\rightarrow L^{\infty}$ estimate of the propagator $e^{-itH}$ for operator $H$ with potential  in the dimension $d\ge 5$. Note that  the $L^{1}\rightarrow L^{\infty}$-estimate time decay rate for $e^{-it\Delta^2}$ is $-d/4$, and the decay rate of Jensen-Kato type estimate also is $-d/4$. For $e^{-itH}P_{ac}$ with $d\ge 5$, if we follow the same argument as $d=3$, then the last two integrals of \eqref{TDuhamel} are not convergent. Hence we will establish another type $L^p$ decay estimate, i.e. the $L^{1}\cap L^{2}\rightarrow L^{2}+L^{\infty}$ estimate,  which is weaker than the $L^{1}\rightarrow L^{\infty}$ estimate. Under such norm $\|\cdot\|_{L^{1}\cap L^{2}\rightarrow L^{2}+L^{\infty}}$, it can cancel the singularity at zero of the two integrals of \eqref{TDuhamel}.
\begin{definition}
For any measurable function $f$, if $f=f_1+f_2$ with $f_1\in L^{2}(\mathbb{R}^{d}),\,\, f_2\in L^{\infty}(\mathbb{R}^{d})$ and satisfies
\begin{equation*}
  \inf\left\{\,\, \|f_1\|_{L^{2}(\mathbb{R}^{d})}+\|f_2\|_{L^{\infty}(\mathbb{R}^{d})}\,\, \right\}<\infty.
\end{equation*}
Here the infimum takes from all the splitting of $f$. Then we denote $f\in L^{2}+L^{\infty}(\mathbb{R}^{d})$, and $L^{2}+L^{\infty}(\mathbb{R}^{d})$ is a Banach space with the norm
$$\|f\|_{L^{2}+L^{\infty}(\mathbb{R}^{d})}= \inf\left\{\,\, \|f_1\|_{L^{2}(\mathbb{R}^{d})}+\|f_2\|_{L^{\infty}(\mathbb{R}^{d})}\,\, \right\}.$$
\end{definition}
Note that for $f\in L^{2}\cap L^{\infty}(\mathbb{R}^{d})$, since $f$ can be divided as $f=f+0=0+f$, then $\|f\|_{L^{2}+L^{\infty}(\mathbb{R}^{d})}\leq \|f\|_{L^{2}(\mathbb{R}^{d})}$ and $\|f\|_{L^{2}+L^{\infty}(\mathbb{R}^{d})}\leq \|f\|_{L^{\infty}(\mathbb{R}^{d})}$.

We start from the following lemma since we will face such kind of integral in the proof.
\begin{lemma}
For any $a>0$ and $b>0$, we have
\begin{align}\label{jjbds}
\int_{0}^{t}\frac{ds}{\langle t-s\rangle^{a}\langle s\rangle^{b}}\leq
\begin{cases}
    C\langle t\rangle^{-a-b+1}, & 0<a,\, b<1,\\
    C\langle t\rangle^{-\min\{a,\,\, b\}}, & \text{otherwise}.
\end{cases}
\end{align}
\end{lemma}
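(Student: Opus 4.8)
The plan is to exploit the quasi-symmetry of the two weights by splitting the interval of integration at its midpoint and freezing, on each half, whichever weight stays comparable to $\langle t\rangle$. Throughout I would use $\langle r\rangle\simeq 1+|r|$, the monotonicity of $r\mapsto\langle r\rangle$ on $[0,\infty)$, and the elementary one-variable bound
\begin{equation*}
\int_{0}^{R}\frac{dr}{\langle r\rangle^{c}}\lesssim
\begin{cases}
1, & c>1,\\
\ln\langle R\rangle, & c=1,\\
\langle R\rangle^{1-c}, & 0<c<1,
\end{cases}
\end{equation*}
valid for $R\geq 0$. Since each right-hand side in \eqref{jjbds} is $\simeq 1$ for $0\leq t\leq 1$ and the integral is then manifestly $O(1)$, it suffices to treat $t\geq 1$.

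First I would write $\int_{0}^{t}=\int_{0}^{t/2}+\int_{t/2}^{t}$. On $[0,t/2]$ one has $t-s\geq t/2$, hence $\langle t-s\rangle^{-a}\lesssim\langle t\rangle^{-a}$; pulling this factor out leaves $\int_{0}^{t/2}\langle s\rangle^{-b}\,ds$, which the display above controls with $R=t/2\simeq t$ and $c=b$. On $[t/2,t]$ I would substitute $u=t-s$; now $s\geq t/2$ gives $\langle s\rangle^{-b}\lesssim\langle t\rangle^{-b}$, and the leftover integral becomes $\int_{0}^{t/2}\langle u\rangle^{-a}\,du$, handled by the same display with $c=a$. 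Writing $\Phi_{c}(t)$ for the quantity equal to $1$, $\ln\langle t\rangle$, or $\langle t\rangle^{1-c}$ according as $c>1$, $c=1$, or $c<1$, this produces
\begin{equation*}
\int_{0}^{t}\frac{ds}{\langle t-s\rangle^{a}\langle s\rangle^{b}}\lesssim \langle t\rangle^{-a}\,\Phi_{b}(t)+\langle t\rangle^{-b}\,\Phi_{a}(t).
\end{equation*}

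Then I would read off the two cases by comparing exponents. If $0<a,b<1$, both summands equal $\langle t\rangle^{1-a-b}$, yielding the first bound. If $\max\{a,b\}>1$, say $a>1\geq b$ (the reverse configuration being symmetric), the first summand is $\lesssim\langle t\rangle^{-a}\langle t\rangle^{1-b}=\langle t\rangle^{1-a-b}$ while the second is $\lesssim\langle t\rangle^{-b}$; since $a>1$ forces $1-a-b<-b=-\min\{a,b\}$, the second summand dominates and gives $\langle t\rangle^{-\min\{a,b\}}$. When both $a,b>1$ the two summands are $\lesssim\langle t\rangle^{-a}$ and $\lesssim\langle t\rangle^{-b}$, again summing to $\langle t\rangle^{-\min\{a,b\}}$.

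I expect the only delicate point to be the exponent bookkeeping at a critical index. When exactly one of $a,b$ equals $1$ the corresponding $\Phi$ contributes a factor $\ln\langle t\rangle$, but this sits on the strictly faster-decaying summand and is absorbed by the surplus polynomial decay (e.g. $\langle t\rangle^{-b}\ln\langle t\rangle\lesssim\langle t\rangle^{-\min\{a,b\}}$ whenever $\min\{a,b\}<b$), so the stated polynomial bound survives. The genuinely doubly-critical corner $a=b=1$ carries an unavoidable $\ln\langle t\rangle$; it is harmless for the applications and can be noted as an exceptional case. The main thing to get right is therefore not any analytic estimate but the careful comparison of these exponents across the regimes.
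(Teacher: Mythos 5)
Your decomposition is exactly the paper's: split at $t/2$, substitute $\tau=t-s$ on the upper half, freeze whichever weight is comparable to $\langle t\rangle$, and integrate the remaining power. If anything your case analysis is more explicit than the printed proof, which computes the two one-variable integrals via the antiderivatives $\frac{1}{1-b}\bigl[(1+\frac{t}{2})^{1-b}-1\bigr]$ and $\frac{1}{1-a}\bigl[(1+\frac{t}{2})^{1-a}-1\bigr]$ and then ends with ``a simple discussion of the relationship between $a,b$ and $1$'' --- note that these formulas tacitly assume $a\neq1\neq b$, so the paper never treats the critical exponents at all.

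The one genuine slip is in your final paragraph. When exactly one exponent equals $1$, the logarithm is absorbed only if the \emph{other} exponent exceeds $1$. If $a=1$ and $b<1$, your two summands are $\langle t\rangle^{-1}\langle t\rangle^{1-b}=\langle t\rangle^{-b}$ and $\langle t\rangle^{-b}\ln\langle t\rangle$: the log sits on a summand with the \emph{same} polynomial rate as the claimed bound $\langle t\rangle^{-\min\{a,b\}}=\langle t\rangle^{-b}$, and your own absorption criterion $\min\{a,b\}<b$ fails there. The loss is real, not an artifact of the splitting: restricting to $s\in[t/2,t]$ gives
\[
\int_{0}^{t}\frac{ds}{\langle t-s\rangle\,\langle s\rangle^{b}}\;\geq\;(1+t)^{-b}\int_{t/2}^{t}\frac{ds}{1+t-s}\;\gtrsim\;\langle t\rangle^{-b}\ln\langle t\rangle .
\]
So the corner $\{a=1,\,b<1\}$ (and symmetrically $\{b=1,\,a<1\}$) is exceptional in the same way as $a=b=1$, which you did flag correctly; the lemma as printed is literally false precisely on the set $\max\{a,b\}=1$, where a factor $\ln\langle t\rangle$ is unavoidable. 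You should amend ``so the stated polynomial bound survives'' to require the non-critical exponent to be $>1$. This defect is shared by (indeed inherited from) the paper's own proof and is harmless for the applications, since the exponents actually used in Section 4 are $3/4$, $5/4$, and $d/4$ with $d\geq5$, never equal to $1$.
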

\begin{proof}
Since $c_{1}(1+|x|)\leq\langle x\rangle\leq c_2(1+|x|)$, $c_1$ and $c_2$ are some positive constants, then
\begin{equation*}
 \begin{split}
  &\int_{0}^{t}\frac{ds}{\langle t-s\rangle^{a}\langle s\rangle^{b}}\simeq
     \int_{0}^{t}\frac{ds}{(1+(t-s))^{a}(1+s)^{b}}\\
  =&\int_{0}^{t/2}\frac{ds}{(1+(t-s))^{a}(1+s)^{b}}+\int_{t/2}^{t}\frac{ds}{(1+(t-s))^{a}(1+s)^{b}}\\
  =&\int_{0}^{t/2}\frac{ds}{(1+(t-s))^{a}(1+s)^{b}}+\int_{0}^{t/2}\frac{d\tau}{(1+\tau)^{a}(1+(t-\tau))^{b}}\\
  \leq &(1+\frac{t}{2})^{-a}\int_{0}^{t/2}\frac{ds}{(1+s)^{b}}+(1+\frac{t}{2})^{-b}\int_{0}^{t/2}\frac{d\tau}{(1+\tau)^{a}}\\
  =&\frac{1}{1-b}(1+\frac{t}{2})^{-a}[(1+\frac{t}{2})^{-b+1}-1]+\frac{1}{1-a}(1+\frac{t}{2})^{-b}[(1+\frac{t}{2})^{-a+1}-1]
 \end{split}
\end{equation*}
Now, the inequality \eqref{jjbds} holds by a simple discussion of the relationship between $a, b$ and $1$. The constant $C$ only depends on $a$ and $b$.
\end{proof}
Now, we give the proof of $L^{2}\cap L^{1}(\mathbb{R}^d)\rightarrow L^{2}+L^{\infty}(\mathbb{R}^d)$ decay estimate for $d\geq5$.
\begin{proof}[\bf Proof of Theorem \ref{Ginibred5}]
Similarly,  applying the iterated Duhamel formula  \eqref{TDuhamel} again.  By the definition of $L^{2}+L^{\infty}$, we have
\begin{equation*}
  \|e^{-itH_{0}}P_{ac}u\|_{L^{2}+L^{\infty}(\mathbb{R}^d)}\leq \min\left\{\, \|e^{-itH_{0}}P_{ac}u\|_{L^{2}(\mathbb{R}^d)},\,\, \|e^{-itH_{0}}P_{ac}u\|_{L^{\infty}(\mathbb{R}^d)}  \,\right\}.
\end{equation*}
Then for $0<|t|\leq1$, we have
$\|e^{-itH_{0}}P_{ac}u\|_{L^{2}+L^{\infty}(\mathbb{R}^d)}\leq \|u\|_{L^{2}(\mathbb{R}^{d})}$.
And for $|t|>1$, by the estimate \eqref{freeL1Linfty}, we have
$\|e^{-itH_{0}}P_{ac}u\|_{L^{2}+L^{\infty}(\mathbb{R}^d)}\leq |t|^{-d/4}\|u\|_{L^{1}(\mathbb{R}^{d})}$ .
Thus for the first free term $I$ we have
\begin{equation}
  \|e^{-itH_{0}}P_{ac}u\|_{L^{2}+L^{\infty}(\mathbb{R}^d)}\lesssim\langle t\rangle^{-d/4}\|u\|_{L^{1}\cap L^{2}(\mathbb{R}^{d})}.
\end{equation}
For the second term $II$ of \eqref{TDuhamel}, we have
\begin{equation*}
  \begin{split}
  &\quad \int_{0}^{t}\|e^{-i(t-s)H_0}VP_{ac}e^{-isH_0}u\|_{L^{2}+L^{\infty}(\mathbb{R}^d)}ds\\
  &\lesssim\int_{0}^{t}\langle t-s\rangle^{-d/4}\|VP_{ac}e^{-isH_0}u\|_{L^{1}\cap L^{2}(\mathbb{R}^d)}ds\\
  &\lesssim\int_{0}^{t}\langle t-s\rangle^{-d/4}\|V\langle x\rangle^{\sigma}\|_{L^{\infty}\cap L^{2}(\mathbb{R}^d)} \|\langle x\rangle^{-\sigma}P_{ac}e^{-isH_0}u\|_{L^{2}(\mathbb{R}^d)}ds\\
  &\lesssim\int_{1}^{t}\langle t-s\rangle^{-d/4}\|\langle x\rangle^{-\sigma}\|_{L^{2}}\|P_{ac}e^{-isH_0}u\|_{L^{\infty}(\mathbb{R}^d)}ds\\
  &\quad +\int_{0}^{1}\langle t-s\rangle^{-d/4}\|\langle x\rangle^{-\sigma}\|_{L^{\infty}}\|P_{ac}e^{-isH_0}u\|_{L^{2}(\mathbb{R}^d)}ds\\
  &\lesssim\int_{1}^{t}\langle t-s\rangle^{-d/4} s^{-d/4}ds\|u\|_{L^{1}\cap L^{2}(\mathbb{R}^d)}+\int_{0}^{1}\langle t-s\rangle^{-d/4}2^{d/4}\langle s\rangle^{-d/4} ds\|u\|_{L^{1}\cap L^{2}(\mathbb{R}^d)}\\
  &\lesssim \langle t\rangle^{-d/4}\|u\|_{L^{2}\cap L^{1}(\mathbb{R}^d)}.
  \end{split}
\end{equation*}
For the third term $III$ of \eqref{TDuhamel}, we have
\begin{equation*}
  \begin{split}
  &\quad \int_{0}^{t}\int_{0}^{s}\Big\|e^{-i(t-s)H_0}Ve^{-i(s-\tau)H}P_{ac}Ve^{-i\tau H_0}u \Big\|_{L^{2}+L^{\infty}(\mathbb{R}^d)}d\tau ds\\
  &\lesssim\int_{0}^{t}\int_{0}^{s}\langle t-s\rangle^{-d/4}\|Ve^{-i(s-\tau)H}P_{ac}Ve^{-i\tau H_0}   u\|_{L^{1}\cap L^{2}(\mathbb{R}^d)}d\tau ds\\
  &\lesssim\int_{0}^{t}\int_{0}^{s}\langle t-s\rangle^{-d/4}\|V\langle x\rangle^{\sigma}\|_{L^{\infty}\cap L^{2}(\mathbb{R}^d)}\|\langle x\rangle^{-\sigma}e^{-i(s-\tau)H}P_{ac}Ve^{-i\tau H_0} u\|_{L^{2}(\mathbb{R}^d)}d\tau ds\\
  &\lesssim\int_{0}^{t}\int_{0}^{s}\langle t-s\rangle^{-d/4}\|\langle x\rangle^{-\sigma}e^{-i(s-\tau)H}P_{ac}\langle x\rangle^{-\sigma}\|_{L^{2}\rightarrow L^{2}(\mathbb{R}^{d})}\|\langle x\rangle^{\sigma}Ve^{-i\tau H_0}u\|_{L^{2}(\mathbb{R}^d)}d\tau ds\\
  &\lesssim\int_{0}^{t}\int_{0}^{1}\langle t-s\rangle^{-d/4}\langle s-\tau\rangle^{-d/4}\|\langle x\rangle^{\sigma}V\|_{L^{\infty}(\mathbb{R}^d)}\|e^{-i\tau H_0} u\|_{L^{2}(\mathbb{R}^d)}d\tau ds\\
  &\quad +\int_{0}^{t}\int_{1}^{s}\langle t-s\rangle^{-d/4}\langle s-\tau\rangle^{-d/4}\|\langle x\rangle^{\sigma}V\|_{L^{2}(\mathbb{R}^d)}\|e^{-i\tau H_0} u\|_{L^{\infty}(\mathbb{R}^d)}d\tau ds\\
  &\lesssim\int_{0}^{t}\int_{0}^{1}\langle t-s\rangle^{-d/4}\langle s-\tau\rangle^{-d/4} \|u\|_{L^{2}(\mathbb{R}^d)}d\tau ds\\
  &\quad +\int_{0}^{t}\int_{1}^{s}\langle t-s\rangle^{-d/4}\langle s-\tau\rangle^{-d/4} \tau^{-d/4} \|u\|_{L^{1}(\mathbb{R}^d)}d\tau ds\\
  &\lesssim\int_{0}^{t}\int_{0}^{1}\langle t-s\rangle^{-d/4}\langle s-\tau\rangle^{-d/4}2^{d/4}\langle \tau\rangle^{-d/4} \|u\|_{L^{1}\cap L^{2}(\mathbb{R}^d)}d\tau ds\\
  &\quad +\int_{0}^{t}\int_{1}^{s}\langle t-s\rangle^{-d/4}\langle s-\tau\rangle^{-d/4} \tau^{-d/4} \|u\|_{L^{1}\cap L^{2}(\mathbb{R}^d)}d\tau ds\\
  &\lesssim\langle t\rangle^{-d/4}\|u\|_{L^{1}\cap L^{2}(\mathbb{R}^d)}.
  \end{split}
\end{equation*}
Thus again, we can combine the steps above to conclude the proof of Theorem \ref{Ginibred5}.
\end{proof}

\section{Endpoint Strichartz estimates for $d\geq5$}
\setcounter{equation}{0}
According to Keel-Tao's method \cite{KT}, Strichartz estimate can be obtained from the $L^{1}\rightarrow L^{\infty}$ decay estimate of $e^{-itH}$. For free operator $H_{0}$, we can  get the $L^{1}\rightarrow L^{\infty}$ decay estimate by Fourier transform. But for the perturbed operator $H=H_0+V$  it is much harder. Here, we  obtain the $L^{1}\rightarrow L^{\infty}$ estimate with $t$ small and large separately in 3-dimensions,  and then give the Strichartz estimate local in time. For $d\geq5$, we apply the local decay estimate to derive the endpoint Strichartz estimate. Note that the endpoint pair are $q=2$ and $r^{\prime}=2d/(d-4)$ for $d\geq5$, the same argument of $d\ge 5$ does not work for 3-dimensional case. In this section, we first give the result of $d=3$ and then give the proof of Theorem \ref{SchtriE} for $d\geq5$.
\begin{proposition}
Let $d=3$ and $H$ satisfies the same conditions as in Theorem \ref{Ginibred=3}, and $1\leq q<\infty, 2\leq r\leq\infty$ satisfies $4/q+6/r>3$. Then for finite positive number $T$, we have
\begin{equation}\label{strichartzd=3}
  \|e^{-itH}P_{ac}u\|_{L_{t}^{q}L^{r}_{x}([0,T]\times\mathbb{R}^{3})}\leq C T^{\Im}\|u\|_{L^{r^{\prime}}(\mathbb{R}^{3})},
\end{equation}
where $\Im=\frac{1}{q}+\frac{3}{2r}-\frac{3}{4}$.
\end{proposition}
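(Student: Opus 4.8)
The plan is to reduce everything to a pointwise-in-time operator bound at the dual exponents $(r',r)$, obtained by interpolating the dispersive estimate of Theorem~\ref{Ginibred=3} against the trivial $L^{2}$ bound, and then to integrate that bound in time; the hypothesis $4/q+6/r>3$ will turn out to be \emph{exactly} the condition guaranteeing that the resulting time integral converges at $t=0$, and the exponent $\Im$ will emerge automatically from the integration.

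First I would record the two fixed-time mapping properties of $e^{-itH}P_{ac}$. On one hand, Theorem~\ref{Ginibred=3} gives the dispersive bound $\|e^{-itH}P_{ac}\|_{L^{1}\to L^{\infty}}\lesssim |t|^{-3/4}$ for $0<|t|<1$ (and $\lesssim |t|^{-1/2}$ for $|t|\ge1$). On the other hand, since $H$ is self-adjoint, $e^{-itH}$ is unitary on $L^{2}$ and $P_{ac}$ is an orthogonal projection, so $\|e^{-itH}P_{ac}\|_{L^{2}\to L^{2}}\le1$. Interpolating these by the Riesz--Thorin theorem with parameter $\theta=1-2/r$ (which lands precisely on the pair $(L^{r'},L^{r})$ for $2\le r\le\infty$) yields, for $0<|t|<1$,
\[
\|e^{-itH}P_{ac}\|_{L^{r'}\to L^{r}}\ \lesssim\ |t|^{-\gamma},\qquad \gamma:=\tfrac34\bigl(1-\tfrac2r\bigr)=\tfrac34-\tfrac{3}{2r}.
\]

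Next I would pass to the space-time norm. For $u\in L^{r'}(\mathbb{R}^{3})$ the bound above gives $\|e^{-itH}P_{ac}u\|_{L^{r}_{x}}\le C\,t^{-\gamma}\|u\|_{L^{r'}}$, so that
\[
\|e^{-itH}P_{ac}u\|_{L^{q}_{t}L^{r}_{x}([0,T]\times\mathbb{R}^{3})}\ \le\ C\Bigl(\int_{0}^{T}t^{-\gamma q}\,dt\Bigr)^{1/q}\|u\|_{L^{r'}}.
\]
The crux is integrability at the origin: $\int_{0}^{T}t^{-\gamma q}\,dt$ is finite precisely when $\gamma q<1$, and a one-line manipulation shows $\gamma q<1\iff \tfrac{3q}{4}-\tfrac{3q}{2r}<1\iff \tfrac4q+\tfrac6r>3$, which is exactly the admissibility hypothesis. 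Under it the integral is a constant times $T^{\,1-\gamma q}$, and raising to the power $1/q$ produces the exponent $\tfrac1q-\gamma=\tfrac1q+\tfrac{3}{2r}-\tfrac34=\Im$, giving the claimed bound $C\,T^{\Im}\|u\|_{L^{r'}}$.

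The only step needing genuine care is the role of $T$, and this is where I expect the bookkeeping—rather than any analytic difficulty—to lie. For $T\le1$ the argument is complete and sharp: the small-time branch $|t|^{-3/4}$ controls the whole interval and yields exactly $T^{\Im}$, which is the substantive case since a gain $T^{\Im}$ with $\Im>0$ is useful precisely in the \emph{local}-in-time theory. For larger finite $T$ I would split $[0,T]=[0,1]\cup[1,T]$ and invoke the large-time branch $|t|^{-1/2}$ on $[1,T]$; the corresponding tail $\int_{1}^{T}t^{-(1/2-1/r)q}\,dt$ is finite for every finite $T$, so the estimate persists, and reconciling this tail with the single power $T^{\Im}$ (keeping $C$ independent of $T$ on the short-time scale that matters) is the one delicate bit of accounting. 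All the hard analytic input is already supplied by Theorem~\ref{Ginibred=3} and $L^{2}$-unitarity, so no new resolvent or operator estimates are required here.
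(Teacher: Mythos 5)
Your core argument is the same as the paper's: interpolate the perturbed dispersive bound of Theorem~\ref{Ginibred=3} against $L^{2}$-unitarity via Riesz--Thorin to get $\|e^{-itH}P_{ac}\|_{L^{r'}\to L^{r}}\lesssim |t|^{-\gamma}$ with $\gamma=\frac34-\frac{3}{2r}$, then take the $L^{q}_{t}$ norm over $[0,T]$. Your verification that $\gamma q<1$ is equivalent to $4/q+6/r>3$, and that the time integration produces exactly the power $T^{\Im}$, is correct; for $T\le 1$ your proof is complete and in fact more explicit than the paper's two-line sketch of this step.

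The genuine gap is the case $T>1$, and it is not the mere bookkeeping you hope for: it cannot be closed by the splitting you propose. On $[1,T]$ the only decay Theorem~\ref{Ginibred=3} supplies is $|t|^{-1/2}$, which after interpolation gives $\|e^{-itH}P_{ac}\|_{L^{r'}\to L^{r}}\lesssim t^{-(\frac12-\frac1r)}$, and (when $(\frac12-\frac1r)q<1$) the tail contributes $\bigl(\int_{1}^{T}t^{-(\frac12-\frac1r)q}\,dt\bigr)^{1/q}\sim T^{\frac1q+\frac1r-\frac12}$. Since $\bigl(\frac1q+\frac1r-\frac12\bigr)-\Im=\frac14-\frac{1}{2r}>0$ for every $r>2$, this tail strictly dominates $T^{\Im}$ as $T\to\infty$, so no $T$-independent constant can reconcile your two branches. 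The paper avoids this precisely by \emph{not} using Theorem~\ref{Ginibred=3} as a black box: it re-runs the Duhamel/Ginibre expansion with time rescaled by $T$ (writing $e^{-itH_{0}}=e^{-i(t/T)(TH_{0})}$ in the first two terms of \eqref{TDuhamel}) so as to claim the stronger input $\|e^{-itH}P_{ac}\|_{L^{1}\to L^{\infty}}\lesssim |t|^{-3/4}$ on the whole range $0<|t|<T$, and only then interpolates. If you want a self-contained completion, you must likewise upgrade the dispersive estimate on $[1,T]$ --- e.g., by tracking how the bounds for the terms $II$ and $III$ of \eqref{TDuhamel} depend on the time scale (note $\|III\|\lesssim\langle t\rangle^{-1/2}\le T^{1/4}|t|^{-3/4}$ there, so even the paper's constants carry $T$-dependence that its sketch does not display) --- rather than juxtapose the two branches of \eqref{1inftyd=3}.
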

\begin{proof}
First, by the iterated Duhamel formula \eqref{TDuhamel}, scaling on time partially, we have
\begin{equation*}
  \begin{split}
   e^{-itH}P_{ac}&=e^{-i(t/T)(TH_{0})}P_{ac}+i\int_{0}^{t}e^{-i((t-s)/T)(TH_{0})}VP_{ac}e^{-i(s/T)(TH_{0})}ds\\
                &-\int_{0}^{t}\int_{0}^{s}e^{-i(t-s)H_{0}}Ve^{-i(s-\tau)H}P_{ac}Ve^{-i\tau H_{0}}d\tau ds\\
                &:=I^{\prime}+II^{\prime}+III.
  \end{split}
\end{equation*}
Following the same argument, we have
\begin{equation*}
  \|I^{\prime}\|\lesssim |t/T|^{-3/4}, ~~\|II^{\prime}\|\lesssim |t/T|^{-1/2}, ~~\|III\|\lesssim \langle t\rangle^{-1/2}.
\end{equation*}
Thus we have $$\|e^{-itH}P_{ac}\|_{L^{1}(\mathbb{R}^{3})\rightarrow L^{\infty}(\mathbb{R}^{3})}\lesssim |t|^{-3/4}, ~0<|t|<T.$$ Finally, we can obtain the estimate by taking interpolation of the $L^{1}\rightarrow L^{\infty}$ estimate and $L^{2}\rightarrow L^{2}$ estimate of $e^{-itH}P_{ac}$.
\end{proof}

Now we will show how to apply local decay estimate to derive Strichartz estimate with $d\geq5$. We start from the free case. For the free operator $H_0=(-\Delta)^2$, by making use of the $L^{1}(\mathbb{R}^d)\rightarrow L^{\infty}(\mathbb{R}^d)$ decay estimate \eqref{freeL1Linfty} and Keel-Tao's method \cite{KT},  we have the following estimate.
\begin{lemma}\label{free Strichartz}
For the free fourth-order Schr\"odinger equation in $\mathbb{R}^{d}$ with $d\geq5$,  we have
\begin{equation}\label{freestri}
  \|e^{-itH_0}u\|_{L_{t}^{q}L_{x}^{r}(\mathbb{R}\times\mathbb{R}^d)}\lesssim\|u\|_{L^{2}(\mathbb{R}^d)},
\end{equation}
\begin{equation}\label{freeretarded}
  \Big\|\int_{s<t}e^{i(t-s)H_0}f(s)ds\Big\|_{L_{t}^{q}L_{x}^{r}(\mathbb{R}\times\mathbb{R}^d)}
  \lesssim\|f\|_{L_{t}^{\tilde{q}^{\prime}}L_{x}^{\tilde{r}^{\prime}}(\mathbb{R}\times\mathbb{R}^d)}.
\end{equation}
where $(q, r), \,(\tilde{q}^{\prime},\,\tilde{r}^{\prime})$ satisfy \eqref{admissblepair}.
\end{lemma}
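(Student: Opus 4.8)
The plan is to deduce both estimates \eqref{freestri} and \eqref{freeretarded} from the abstract Strichartz machinery of Keel and Tao \cite{KT}, whose only two required inputs are an energy estimate and an untruncated dispersive decay estimate. Set $U(t)=e^{-itH_0}$, regarded as a family of operators on $L^2(\mathbb{R}^d)$, and take the decay exponent to be $\sigma=d/4$. With this choice, the Keel--Tao sharp $\sigma$-admissibility condition $\frac{1}{q}+\frac{\sigma}{r}=\frac{\sigma}{2}$ becomes, after multiplying by $4$, exactly the admissibility relation \eqref{admissblepair}. Moreover the endpoint $q=2$ (i.e. $r=2d/(d-4)$) is permitted, because for $d\geq5$ one has $\sigma=d/4\geq5/4>1$, so the single excluded Keel--Tao endpoint triple $(q,r,\sigma)=(2,\infty,1)$ never occurs.

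First I would verify the energy estimate. Since $H_0=(-\Delta)^2$ is self-adjoint on $L^2(\mathbb{R}^d)$, the propagator $U(t)$ is unitary, whence $\|U(t)f\|_{L^2}=\|f\|_{L^2}$ for every $t$, which is the hypothesis $\|U(t)\|_{L^2\to L^2}\lesssim1$. Next I would verify the dispersive decay estimate. Because $U(t)U(s)^{*}=e^{-i(t-s)H_0}$, the sharp free kernel bound \eqref{freeL1Linfty} gives
\begin{equation*}
  \|U(t)U(s)^{*}g\|_{L^{\infty}(\mathbb{R}^d)}\leq C\,|t-s|^{-d/4}\,\|g\|_{L^{1}(\mathbb{R}^d)},
\end{equation*}
which is precisely the required untruncated decay estimate with exponent $\sigma=d/4$. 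With both hypotheses in hand, the Keel--Tao theorem immediately yields the homogeneous estimate \eqref{freestri} for every admissible $(q,r)$, its dual, and the retarded inhomogeneous estimate \eqref{freeretarded} for any two admissible pairs $(q,r)$ and $(\tilde q,\tilde r)$. The sign and time-reversal discrepancy between $e^{i(t-s)H_0}$ in \eqref{freeretarded} and $U(t)U(s)^{*}$ is immaterial, since the decay bound depends only on $|t-s|$.

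The one genuinely delicate point is the endpoint $q=2$. Unlike the non-endpoint cases, which follow from the $TT^{*}$ argument together with interpolation between the energy and decay estimates and Young's inequality in the time variable, the endpoint requires the full bilinear real-interpolation argument of Keel--Tao applied to the dyadic pieces of the retarded operator; in particular one should \emph{not} route the off-diagonal retarded estimate through the Christ--Kiselev lemma, which would forfeit the endpoint. I expect this to be the main obstacle to spell out in detail, but it is entirely contained in \cite{KT} once $\sigma=d/4>1$ is confirmed, so no new analysis beyond the free bound \eqref{freeL1Linfty} and the unitarity of $e^{-itH_0}$ is needed.
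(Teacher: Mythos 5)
Your proposal is correct and coincides with the paper's own (one-line) argument: the paper deduces Lemma \ref{free Strichartz} precisely by feeding the unitarity of $e^{-itH_0}$ and the free decay bound \eqref{freeL1Linfty} into Keel--Tao's abstract theorem with $\sigma=d/4$, where $d\geq5$ guarantees $\sigma>1$ so the endpoint $(2,\,2d/(d-4))$ is admissible. Your additional checks (the identification of \eqref{admissblepair} with sharp $\sigma$-admissibility, the $U(t)U(s)^{*}$ dispersive bound, and the caution about the endpoint requiring the bilinear interpolation rather than Christ--Kiselev) are all accurate elaborations of the same route.
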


\vskip0.5cm
\noindent {\bf Proof of Theorem \ref{SchtriE} ( i.e. the global endpoint Strichartz estimates of $e^{-itH}$ )}.
\begin{proof} We divide the proof into the following several steps.

Step 1:\,\,We aim to show the homogeneous Strichartz estimate \eqref{homo} and the dual homogeneous Strichartz estimate \eqref{dualhomo}.

Let us consider the following equation
\begin{equation}\label{freefourthorder equ}
\left\{ \begin{gathered}
   i\partial_t \psi = H_0\psi+V\psi, \hfill \\
   \psi(0,\cdot)=\psi_0 \in L^{2}(\mathbb{R}^d) . \hfill \\
\end{gathered}  \right.
\end{equation}
For the homogeneous Strichartz estimate \eqref{homo}, using Duhamel formula, it is enough to show
\begin{equation*}
 \Big\|\int_{0}^{t} e^{-i(t-s)H_{0}}Ve^{-isH}P_{ac}\psi ds\Big\|_{L_{t}^{q}L_{x}^{r}(\mathbb{R}\times\mathbb{R}^d)}\lesssim\|\psi\|_{L^{2}(\mathbb{R}^d)}.
\end{equation*}
In fact, by \eqref{freeretarded} and the local decay estimate \eqref{local decay}, we have
\begin{equation*}
 \begin{split}
  &\quad\Big\|\int_{0}^{t} e^{-i(t-s)H_{0}}Ve^{-isH}P_{ac}\psi ds\Big\|_{L_{t}^{q}L_{x}^{r}(\mathbb{R}\times\mathbb{R}^d)}
  \lesssim\|Ve^{-itH}P_{ac}\psi \|_{L_{t}^{2}L_{x}^{\frac{2d}{d+4}}(\mathbb{R}\times\mathbb{R}^d)}\\
  &\lesssim\|V\langle x\rangle^{\sigma}\|_{L^{d/2}(\mathbb{R}^d)}\|\langle   x\rangle^{-\sigma}e^{-itH}P_{ac}\psi\|_{L_{t}^{2}L_{x}^{2}(\mathbb{R}\times\mathbb{R}^d)}\lesssim\|\psi\|_{L^{2}(\mathbb{R}^d)}.
 \end{split}
\end{equation*}
Further, the dual homogeneous Strichartz estimate \eqref{dualhomo} follows by the $T^*T$-method.

Step 2:\,\, We aim to show the retarded Strichartz estimate \eqref{retarded}. The solution $\Psi(t,x)$ of equation \eqref{fourthorder equ} satisfies
\begin{equation*}
 P_{ac}\Psi(t,x)=e^{itH_0}P_{ac}\Psi_0-i\int_{0}^{t}e^{i(t-s)H_0}VP_{ac}\Psi(s)ds-i\int_{0}^{t}e^{i(t-s)H_0}P_{ac}h(s)ds.
\end{equation*}
Then by H\"older inequality and step 1, we have
\begin{equation*}
  \begin{split}
  &\|P_{ac}\Psi(t,x)\|_{L_{t}^{q}L_{x}^{r}(\mathbb{R}\times\mathbb{R}^d)}\\
  \lesssim&~\|e^{itH_0}P_{ac}\Psi_0\|_{L_{t}^{q}L_{x}^{r}(\mathbb{R}\times\mathbb{R}^d)}
                +\Big\|\int_{0}^{t}e^{i(t-s)H_0}P_{ac}h(s,\cdot)ds\Big\|_{L_{t}^{q}L_{x}^{r}(\mathbb{R}\times\mathbb{R}^d)}\\
  &  +\Big\|\int_{0}^{t}e^{i(t-s)H_0}VP_{ac}\Psi(s)ds\Big\|_{L_{t}^{q}L_{x}^{r}(\mathbb{R}\times\mathbb{R}^d)}\\
  \lesssim &~\|\Psi_0\|_{L^2}+\|h(t)\|_{L_{t}^{\tilde{q}^{\prime}}L_{x}^{\tilde{r}^{\prime}}(\mathbb{R}\times\mathbb{R}^d)}
              +\|VP_{ac}\Psi(t)\|_{L_{t}^{2}L_{x}^{\frac{2d}{d+4}}(\mathbb{R}\times\mathbb{R}^d)}\\
  \lesssim &~\|\Psi_0\|_{L^2}+\|h(t)\|_{L_{t}^{\tilde{q}^{\prime}}L_{x}^{\tilde{r}^{\prime}}(\mathbb{R}\times\mathbb{R}^d)}
              +\|V\langle x\rangle^{\sigma}\|_{L^{d/2}(\mathbb{R}^d)}\|\langle x\rangle^{-\sigma}P_{ac}\Psi(t)\|_{L_{t}^{2}L_{x}^{2}(\mathbb{R}\times\mathbb{R}^d)}.\\
  \end{split}
\end{equation*}

Now we  show that
\begin{equation}
  \|\langle x\rangle^{-\sigma}P_{ac}\Psi(t)\|_{L_{t}^{2}L_{x}^{2}(\mathbb{R}\times\mathbb{R}^d)}\lesssim\|\Psi_0\|_{L^2(\mathbb{R}^d)}
  +\|h\|_{L_{t}^{\tilde{q}^{\prime}}L_{x}^{\tilde{r}^{\prime}}(\mathbb{R}\times\mathbb{R}^d)}.
\end{equation}
First, by Duhamel formula for $\Psi$, we have
\begin{equation*}
 \begin{split}
  \|\langle x\rangle^{-\sigma}P_{ac}\Psi(t)\|_{L_{t}^{2}L_{x}^{2}(\mathbb{R}\times\mathbb{R}^d)}
\lesssim &~\|\langle x\rangle^{-\sigma}e^{itH}P_{ac}\Psi_{0}\|_{L_{t}^{2}L_{x}^{2}(\mathbb{R}\times\mathbb{R}^d)}\\
         & +\Big\|\int_{0}^{t}\langle x\rangle^{-\sigma}e^{i(t-s)H}P_{ac}h(s)ds\Big\|_{L_{t}^{2}L_{x}^{2}(\mathbb{R}\times\mathbb{R}^d)}.
 \end{split}
\end{equation*}
For the first term on the right hand side we use local decay estimates. Then, we will finish the proof which only needs to show
\begin{equation}\label{source local decay}
  \Big\|\int_{0}^{t}\langle x\rangle^{-\sigma}e^{i(t-s)H}P_{ac}h(s)ds\Big\|_{L_{t}^{2}L_{x}^{2}(\mathbb{R}\times\mathbb{R}^d)}
  \lesssim\|\Psi_0\|_{L^2(\mathbb{R})}
  +\|h\|_{L_{t}^{\tilde{q}^{\prime}}L_{x}^{\tilde{r}^{\prime}}(\mathbb{R}\times\mathbb{R}^d)}.
\end{equation}

\vskip0.5cm
Step 3:\,\, We show the local decay estimate of the source term \eqref{source local decay}.

Consider the Cauchy problem
\begin{equation}
\left\{ \begin{gathered}
   i\partial_{t}\phi= H_0\phi+h(t)= H\phi-V\phi+h(t), \hfill \\
   \phi(0,\cdot)=\Psi_{0}. \hfill \\
\end{gathered}  \right.
\end{equation}
Then Duhamel formula for the solution $\phi(t,x)$ reads
\begin{equation}\label{01}
  P_{ac}\phi(t)=e^{itH}P_{ac}\Psi_{0}+i\int_{0}^{t}e^{i(t-s)H}P_{ac}V\phi(s)ds-i\int_{0}^{t}e^{i(t-s)H}P_{ac}h(s)ds.
\end{equation}
For the left hand side of \eqref{01}, since $\phi(t)$ is also a solution of $i\partial_{t}\phi=H_0\phi+h(t)$,
by \eqref{freestri}, \eqref{freeretarded} and Duhamel formula again, we have
\begin{equation*}
  \begin{split}
  &\|\langle x\rangle^{-\sigma}P_{ac}\phi(t)\|_{L^{2}_{t}L_{x}^{2}(\mathbb{R}\times\mathbb{R}^d)}\\
  \lesssim&~\|\langle x\rangle^{-\sigma}P_{ac}e^{itH_{0}}\Psi_0\|_{L^{2}_{t}L_{x}^{2}(\mathbb{R}\times\mathbb{R}^d)}
            +\Big\|\langle x\rangle^{-\sigma}P_{ac}\int_{0}^{t}e^{i(t-s)H_0}h(s)ds\Big\|_{L^{2}_{t}L_{x}^{2}(\mathbb{R}\times\mathbb{R}^d)}\\
  \lesssim&~\|\langle x\rangle^{-\sigma}P_{ac}\langle x\rangle^{\sigma}\langle x\rangle^{-\sigma}e^{itH_{0}}\Psi_0\|_{L^{2}_{t}L_{x}^{2}(\mathbb{R}\times\mathbb{R}^d)}\\
   & +\Big\|\langle x\rangle^{-\sigma}P_{ac}\langle x\rangle^{\sigma}\langle x\rangle^{-\sigma}\int_{0}^{t}e^{i(t-s)H_0}h(s)ds\Big\|_{L^{2}_{t}L_{x}^{2}(\mathbb{R}\times\mathbb{R}^d)}\\
  \lesssim&~\|\Psi_0\|_{L^2(\mathbb{R}^d)}+\Big\|\int_{0}^{t}e^{i(t-s)H_0}h(s)ds\Big\|_{L^{2}_{t}L_{x}^{\frac{2d}{d-4}}(\mathbb{R}\times\mathbb{R}^d)}\\
  \lesssim&~\|\Psi_0\|_{L^2(\mathbb{R}^d)}+\|h\|_{L_{t}^{\tilde{q}^{\prime}}L_{x}^{\tilde{r}^{\prime}}(\mathbb{R}\times\mathbb{R}^d)}.
  \end{split}
\end{equation*}
Here, we apply Theorem \ref{projectionbounded}, the boundedness of $P_{ac}$. The local decay estimate for the first term on the right hand side of \eqref{01} follows from \eqref{local decay}.

For the second term of the right hand side of \eqref{01}, notice that
\begin{equation*}
  \begin{split}
  &\Big\|\int_{0}^{t}\langle x\rangle^{-\sigma}e^{i(t-s)H}P_{ac}V\phi(s)ds\Big\|_{L^{2}_{t}L_{x}^{2}(\mathbb{R}\times\mathbb{R}^d)}\\
  \lesssim&~\Big\|\int_{0}^{t}\|\langle x\rangle^{-\sigma}e^{i(t-s)H}P_{ac}V\phi(s)\|_{L_{x}^{2}(\mathbb{R}^d)}ds\Big\|_{L^{2}_{t}(\mathbb{R})}\\
  \lesssim&~\Big\|\int_{0}^{t}\langle t-s\rangle^{-d/4}\|\langle x\rangle^{\sigma}V\phi(s)\|_{L_{x}^{2}(\mathbb{R}^d)}ds\Big\|_{L^{2}_{t}(\mathbb{R})}\\
  \lesssim&~\|\Psi_0\|_{L^2}+\|h\|_{L_{t}^{\tilde{q}^{\prime}}L_{x}^{\tilde{r}^{\prime}}(\mathbb{R}\times\mathbb{R}^d)}.
  \end{split}
\end{equation*}
Thus the whole proof can be concluded.
\end{proof}

\section{Jensen-Kato type decay estimates------the conjugate operator method}\label{commutator method}
\setcounter{equation}{0}

In this section we  apply the abstract theory of decay estimates to the fourth-order Schr\"odinger operator. The abstract theory of decay estimates was developed by Geoegescu, Larenas and Soffer \cite{LS, GG}. This is a completely independent method of getting pointwise estimates in time depending on positive commutator techniques. For dispersive equations, linear or nonlinear, quantitative estimates of the decay rate of the solution is always needed. Here, we establish the pointwise decay estimate of Jensen-Kato type for $H$ by this method. For the history of commutator method,  we refer the readers to  Amrein,  Boutet de Monvel and Georgescu \cite{ABG}.

The conjugate operator $A:=-\frac{i}{2}(x\cdot\nabla+\nabla\cdot x)$ and $P_{ac}$ is the projection onto the space of absolutely continuous spectrum of $H$. In order to apply the abstract theory, we need to verify the following conditions:
\begin{description}
  \item[(a)] $H$ is of class $C^{1}(A)$;
  \item[(b)] $P_{ac}[H,iA]P_{ac}=P_{ac}(q(H)+K)P_{ac}$ for $K\equiv F^{*}E$ in the sense that
  \begin{equation*}
  (\phi, P_{ac}[H,iA]P_{ac}\psi)=(P_{ac}\phi, q(H)P_{ac}\psi)+(FP_{ac}\phi, EP_{ac}\psi)
  \end{equation*}
for $\phi, \psi\in \mathcal{D}(H)$. $q(H)$ is a function of $H$, and $E, F$ are Kato $H$-smooth on the range of $P_{ac}$;
  \item[(c)] $K$ is symmetric on $\mathcal{D}(H)$ ;
  \item[(d)] $K$ is bounded on $L^{2}$  ;
  \item[(e)] $(\phi,P_{ac}[A, K]P_{ac} \psi)=(F^{\prime}P_{ac}\phi, E^{\prime}P_{ac}\psi)$, $F^{\prime}, E^{\prime}$ are Kato $H$-smooth on the range of $P_{ac}.$
\end{description}
Here $\mathcal{D}(H)$ be the form domain of $H$, and by the KLMN theorem we have
$$\mathcal{D}(H)=\mathcal{D}(H_{0})=\mathcal{H}^{2}.$$
For (b), since
$$[H, iA]=4H_{0}-x\cdot\nabla V=4H-(4V(x)+x\cdot\nabla V)=q(H)-K$$
So, $q(H)=4H$ and $K=4V(x)+x\cdot\nabla V$. Further, $4V(x)+x\cdot\nabla V$ is a real valued function, so (c) holds. Rewrite $K={\rm sgn}(K)|K|^{\frac{1}{2}}|K|^{\frac{1}{2}}$, then $(|K|^{\frac{1}{2}})^{*}=|K|^{\frac{1}{2}}$. Then take $E={\rm sgn}(K)|K|^{\frac{1}{2}}$
and $F=|K|^{\frac{1}{2}}$, so $E$ and $F$ are bounded operators on $L^{2}$, and then (d) holds.
For (e), since $[A, K]=-i(x\cdot\nabla V)(4+x\cdot\nabla V)$, so $E^{\prime}$ and $F^{\prime}$ are bounded operators on $L^{2}$ by the assumptions on the potential $V$. From the assumptions on $V$, $x\cdot\nabla V$, Theorem \ref{local decay thm} implies that $E, F, E^{\prime}$ and $F^{\prime}$ are Kato $H$-smooth. Now, we aim to show the first one.
\begin{proposition}\label{EEE}
For measurable function $f$ and operator $\Lambda=-i\nabla_x$, then
\begin{equation}\label{TE}
  e^{-itA}f(\Lambda)e^{itA}=f(e^{-t}\Lambda).
\end{equation}
\end{proposition}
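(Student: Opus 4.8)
The plan is to realize $e^{itA}$ explicitly as the $L^{2}$-dilation group and then pass \eqref{TE} through the Fourier transform, where $\Lambda$ becomes multiplication by $\xi$ and $A$ becomes the generator of a pure change of variables. First I would record the action of the group: using $\nabla\cdot x = d + x\cdot\nabla$ one has $iA = x\cdot\nabla + \tfrac{d}{2}$, and one checks directly that $\big(e^{itA}u\big)(x) = e^{td/2}u(e^{t}x)$ is a strongly continuous unitary group whose generator is $iA$, so this is indeed $e^{itA}$. Conjugating by the Fourier transform $\mathcal{F}$, a change of variables shows that $\mathcal{F}e^{itA}\mathcal{F}^{-1}$ is again a dilation on the momentum side, namely $\widehat{u}(\xi)\mapsto e^{-td/2}\widehat{u}(e^{-t}\xi)$.

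On the Fourier side $f(\Lambda)$ is exactly the multiplication operator $M_{f}$ by $f(\xi)$, so the whole identity reduces to conjugating a multiplication operator by a dilation. If $D_{s}$ denotes the unitary dilation $\widehat{u}(\xi)\mapsto e^{sd/2}\widehat{u}(e^{s}\xi)$, then an elementary substitution $\xi\mapsto e^{s}\xi$ inside the pointwise product gives $D_{s}^{-1}M_{f}D_{s} = M_{f(e^{s}\,\cdot\,)}$, which after reading off the correct scaling factor yields \eqref{TE}. Equivalently, I could avoid the Fourier transform altogether and compute the conjugation of the generator from the commutator $[A,\Lambda]=i\Lambda$: setting $\Lambda(t)=e^{-itA}\Lambda e^{itA}$ one gets the first-order linear differential equation $\tfrac{d}{dt}\Lambda(t) = e^{-itA}[\Lambda,iA]e^{itA}$, whose integration produces an exponential scaling of $\Lambda$ that then transports to $f(\Lambda)$ through the functional calculus.

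The one point that genuinely needs care — and which I regard as the main (mild) obstacle — is justifying the covariance of the functional calculus for a merely measurable $f$ and an unbounded, vector-valued operator $\Lambda$: one must know that for a unitary $U$ one has $U^{*}f(\Lambda)U = f(U^{*}\Lambda U)$, \emph{with matching domains}. This is cleanest precisely in the Fourier picture, where $\Lambda$ is a commuting family of multiplication operators, $U$ is a measure-preserving change of variables, and $U^{*}\Lambda U$ is simply a rescaling of $\xi$, so the desired identity is a transparent a.e. statement about Fourier multipliers. Accordingly I would first establish everything on the Schwartz class $\mathcal{S}(\mathbb{R}^{d})$, which is a core invariant under both the dilation group and $f(\Lambda)$ for nice $f$, and then extend to general measurable $f$ and to the full domain by density together with the spectral theorem. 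The only remaining bookkeeping is to track carefully whether position contracts while momentum dilates, since this fixes the sign of the exponent appearing in \eqref{TE}.
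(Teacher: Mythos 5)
Your proposal is correct and is essentially the paper's own argument: realize $e^{itA}$ as the explicit $L^{2}$-dilation group $(e^{itA}u)(x)=e^{td/2}u(e^{t}x)$ and conjugate the Fourier-side multiplication operator $M_{f}$ by the induced momentum dilation, which is precisely the computation the paper carries out on $\widehat{\varphi}$ (your extra remarks on functional-calculus covariance for measurable $f$ and density on $\mathcal{S}(\mathbb{R}^{d})$ are sound but routine, and your commutator route via $[\Lambda,iA]=\Lambda$ is a valid alternative the paper does not use). One substantive point your careful bookkeeping exposes: your correct momentum-side formula $\mathcal{F}e^{itA}\mathcal{F}^{-1}\colon\widehat{u}(\xi)\mapsto e^{-td/2}\,\widehat{u}(e^{-t}\xi)$, carried through the conjugation, gives $e^{-itA}f(\Lambda)e^{itA}=f(e^{t}\Lambda)$, so \eqref{TE} as printed has the sign of $t$ reversed --- the paper's proof tacitly lets $e^{\pm itA}$ act on the Fourier side by the position-space formula, whereas the Fourier transform reverses the dilation direction --- and the form actually used later in the paper, $e^{itA}f(\Lambda)e^{-itA}=f(e^{-t}\Lambda)$ (e.g.\ in computing $G_{0}(t)=(e^{-4t}\Delta^{2}-i)^{-1}$), is exactly what your computation delivers.
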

\begin{proof}
Since $A=\frac{1}{2}(x\cdot \Lambda+\Lambda\cdot x)$, and for any $\varphi(x)\in L^{2}(\mathbb{R}^{d})$, we have
\begin{equation*}
  (e^{-itA}\varphi)(x)=e^{-dt/2}\varphi(e^{-t}x),
\end{equation*}
and then
\begin{equation*}
  \begin{split}
    \big[e^{-itA}f(\Lambda)e^{itA}\varphi\big]^{\wedge}(\xi)
       & = e^{-itA} [f(\xi)e^{dt/2}\widehat{\varphi}(e^{t}\xi)] \\
       & = e^{-dt/2}f(e^{-t}\xi)e^{dt/2}\widehat{\varphi}(e^{-t}\cdot e^{t}\xi)\\
       & = f(e^{-t}\xi)\widehat{\varphi}(\xi),
  \end{split}
\end{equation*}
where $\widehat{\varphi}(\xi)$ is the Fourier transform of $\varphi(x)$.
\end{proof}
\begin{proposition}
$H_{0}$ is of class $C^{1}(A)$.
\end{proposition}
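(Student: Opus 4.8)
The plan is to exploit the fact that $H_0=(-\Delta)^2$ is a homogeneous Fourier multiplier while $A$ generates the dilation group, so that conjugation by $e^{\pm itA}$ rescales $H_0$ by an \emph{exact} scalar factor. Writing $H_0=f(\Lambda)$ with $f(\xi)=|\xi|^4$ and $\Lambda=-i\nabla$, I would first apply Proposition \ref{EEE} with this $f$ to obtain the scaling identity
\begin{equation*}
  e^{-itA}H_0\,e^{itA}=e^{4t}H_0,\qquad t\in\mathbb{R},
\end{equation*}
and, replacing $t$ by $-t$, the companion relation $e^{itA}H_0\,e^{-itA}=e^{-4t}H_0$. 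This is the whole analytic content of the statement; the rest is bookkeeping.

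To conclude $H_0\in C^1(A)$ I would invoke the resolvent characterization of the class: it suffices that for some (equivalently every) $z$ in the resolvent set $\rho(H_0)=\mathbb{C}\setminus[0,+\infty)$ the map $t\mapsto e^{itA}(H_0-z)^{-1}e^{-itA}$ is strongly continuously differentiable. Since $e^{\pm itA}$ are bounded, invertible, and intertwine $H_0$ with $e^{\mp4t}H_0$, I may pass the conjugation through the resolvent,
\begin{equation*}
  e^{itA}(H_0-z)^{-1}e^{-itA}=\bigl(e^{itA}H_0\,e^{-itA}-z\bigr)^{-1}=\bigl(e^{-4t}H_0-z\bigr)^{-1}.
\end{equation*}
For every real $t$ the operator $e^{-4t}H_0$ is nonnegative with spectrum $[0,+\infty)$, so any $z\notin[0,+\infty)$ lies in its resolvent set and $(e^{-4t}H_0-z)^{-1}$ is bounded and depends real-analytically on $t$ in operator norm.

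Differentiating gives
\begin{equation*}
  \frac{d}{dt}\bigl(e^{-4t}H_0-z\bigr)^{-1}
  =4e^{-4t}\bigl(e^{-4t}H_0-z\bigr)^{-1}H_0\bigl(e^{-4t}H_0-z\bigr)^{-1},
\end{equation*}
which is bounded because $H_0(e^{-4t}H_0-z)^{-1}$ is bounded; at $t=0$ it equals $4(H_0-z)^{-1}H_0(H_0-z)^{-1}=(H_0-z)^{-1}[H_0,iA](H_0-z)^{-1}$, recovering $[H_0,iA]=4H_0$ as a form bounded relative to $H_0$. Thus the conjugated resolvent is not merely $C^1$ but $C^\infty$ in $t$, and therefore $H_0\in C^1(A)$ (indeed $H_0\in C^\infty(A)$).

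The computation is essentially free, so I expect no genuine obstacle; the only points deserving a word of care are (i) that the dilation group preserves the form domain $\mathcal{D}(H_0)=\mathcal{H}^2$, which holds since dilations act boundedly on every Sobolev space and hence the conjugations are legitimate on $\mathcal{D}(H_0)$, and (ii) that $z$ stays in the resolvent set of $e^{-4t}H_0$ for all $t$, which is immediate from the dilation invariance of $[0,+\infty)$. All the substance is contained in the exact scaling furnished by Proposition \ref{EEE}, and the $C^1(A)$ property drops out once that identity is in hand.
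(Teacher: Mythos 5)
Your proposal is correct and follows essentially the same route as the paper: both apply Proposition \ref{EEE} to identify $e^{itA}(H_0-z)^{-1}e^{-itA}$ with the rescaled resolvent $(e^{\mp 4t}H_0-z)^{-1}$ and then differentiate in $t$, the only cosmetic difference being that you bound the derivative via the resolvent identity $H_0(e^{-4t}H_0-z)^{-1}\in\mathcal{B}(L^2)$ while the paper checks the explicit Fourier multiplier $4e^{-4t}|\xi|^4(e^{-4t}|\xi|^4-i)^{-2}$. (The sign of the exponent $e^{\pm 4t}$ depends on the dilation convention and is immaterial since $t$ ranges over all of $\mathbb{R}$.)
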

\begin{proof}
By the definition of $C^{1}(A)$, we need to show the operator valued function
$$G_{0}(t)=e^{itA}(H_{0}-i)^{-1}e^{-itA}\in C^{1}\big(\mathcal{B}(L^{2}(\mathbb{R}^{d}))\big)$$
for $t\in \mathbb{R}$ . By Proposition \ref{EEE}, we know
$$G_{0}(t)=(e^{-4t}\Delta^{2}-i)^{-1}, G_{0}^{\prime}(t)=(4e^{-4t}\Delta^{2})(e^{-4t}\Delta^{2}-i)^{-2}.$$
We need to check that
\begin{equation*}
\begin{split}
  \|G_{0}^{\prime}(t)\|_{L^{2}\rightarrow L^{2}}
    & = \Big\|\frac{4e^{-4t}|\xi|^{4}}{(e^{-4t}|\xi|^{4}-i)^{2}}\Big\|_{L^{2}\rightarrow L^{2}} \\
    & \leq 4\Big\|\frac{1}{e^{-4t}|\xi|^{4}-i}\Big\|_{L^{2}\rightarrow L^{2}}
           +4\Big\|\frac{1}{(e^{-4t}|\xi|^{4}-i)^{2}}\Big\|_{L^{2}\rightarrow L^{2}} \leq C.
\end{split}
\end{equation*}
Similarly, we have $\|G_{0}^{\prime\prime}(t)\|_{L^{2}\rightarrow L^{2}}\leq C^{\prime}$. Here $C,\, C^{\prime}$ are positive constants.
\end{proof}

\begin{proposition}
Suppose the potential function $V(x)$ satisfies $x\cdot\nabla V\in L^{\infty}(\mathbb{R}^{d})$, then
\begin{center}
  $H=H_{0}+V\in C^{1}(A)$.
\end{center}
\end{proposition}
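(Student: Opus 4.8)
The plan is to leverage the already-established fact that $H_0\in C^1(A)$ and to treat $V$ as a commutator-bounded perturbation. I would use the standard characterization (see \cite{ABG}): a self-adjoint operator $H$ with form domain $\mathcal{D}(H)=\mathcal{H}^2$ is of class $C^1(A)$ provided the commutator form $[H,iA]$, initially defined on the core $\mathcal{D}(H)\cap\mathcal{D}(A)$, extends to a bounded operator from $\mathcal{H}^2$ into its dual $\mathcal{H}^{-2}$. Thus the entire task reduces to identifying $[H,iA]$ as a form and checking that it lands in $\mathcal{B}(\mathcal{H}^2,\mathcal{H}^{-2})$.

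First I would fix $\mathcal{S}(\mathbb{R}^d)$ as a common core for $H$, $H_0$ and $A$, dense in $\mathcal{H}^2$, and compute the commutator there. On $\mathcal{S}(\mathbb{R}^d)$ one has $[x\cdot\nabla+\nabla\cdot x,\,V]=2\,(x\cdot\nabla V)$ as a multiplication operator, so that $[V,iA]=-(x\cdot\nabla V)$. Combining this with the identity $[H_0,iA]=4H_0$ coming from the dilation action established in the previous proposition, I obtain $[H,iA]=4H_0-x\cdot\nabla V$ on $\mathcal{S}(\mathbb{R}^d)$. Next I would verify the two summands separately: since $H_0=\Delta^2$ is a differential operator of order $4$ it maps $\mathcal{H}^2$ boundedly into $\mathcal{H}^{-2}$, so $4H_0\in\mathcal{B}(\mathcal{H}^2,\mathcal{H}^{-2})$; and by the hypothesis $x\cdot\nabla V\in L^\infty(\mathbb{R}^d)$ the operator $x\cdot\nabla V$ is bounded on $L^2$, hence via the embeddings $\mathcal{H}^2\hookrightarrow L^2\hookrightarrow\mathcal{H}^{-2}$ it too defines an element of $\mathcal{B}(\mathcal{H}^2,\mathcal{H}^{-2})$. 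Consequently the form $[H,iA]=4H_0-x\cdot\nabla V$ extends continuously from $\mathcal{S}(\mathbb{R}^d)$ to all of $\mathcal{H}^2$ with values in $\mathcal{H}^{-2}$, which is exactly the $C^1(A)$ criterion, and the proposition follows.

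Alternatively, staying closer to the group-theoretic proof used for $H_0$, I could conjugate directly: $H(t):=e^{itA}He^{-itA}=e^{-4t}H_0+V(e^t\,\cdot)$, so $G(t)=e^{itA}(H-i)^{-1}e^{-itA}=(H(t)-i)^{-1}$ and $G'(t)=-G(t)H'(t)G(t)$ with $H'(t)=-4e^{-4t}H_0+(x\cdot\nabla V)(e^t\,\cdot)$. Since $(H(t)-i)^{-1}$ maps $L^2$ into $\mathcal{H}^2$ and, by self-adjointness and duality, $\mathcal{H}^{-2}$ into $L^2$, uniformly on compact $t$-intervals, while $H'(t)\in\mathcal{B}(\mathcal{H}^2,\mathcal{H}^{-2})$ uniformly there, the product $G'(t)$ is bounded on $L^2$ and norm-continuous, giving $G\in C^1$.

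The main obstacle in either route is not the algebra but the functional-analytic bookkeeping: making sure that $\mathcal{S}(\mathbb{R}^d)$ (equivalently $\mathcal{D}(H)\cap\mathcal{D}(A)$) is genuinely a core, so that the formally computed commutator really does extend, and that both summands are controlled in the single space $\mathcal{B}(\mathcal{H}^2,\mathcal{H}^{-2})$. The term $x\cdot\nabla V$ is handled precisely by the standing assumption $x\cdot\nabla V\in L^\infty$, while the unbounded piece $4H_0$ is tamed by the order count that places it in $\mathcal{B}(\mathcal{H}^2,\mathcal{H}^{-2})$; once these two observations are in place, stability of the $C^1(A)$ class under the perturbation $V$ is immediate.
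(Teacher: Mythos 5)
Your proposal is correct in substance, but it takes a genuinely different route from the paper. The paper never verifies a commutator-form criterion: it sets $G(t)=e^{itA}(H-i)^{-1}e^{-itA}$, invokes the second resolvent formula together with the invertibility of $1+G_{0}(t)\tilde{V}(t)$ (its Lemma \ref{Fredholm}) to obtain the algebraic identity $G(t)=G_{0}(t)[1+G_{0}(t)\tilde{V}(t)]^{-1}$ with $\tilde{V}(t)=e^{itA}Ve^{-itA}$, and then differentiates this identity, reducing $C^{1}$-regularity of $G$ to the already established continuity of $G_{0}, G_{0}^{\prime}$ and of $\tilde{V}(t)$, $\tilde{V}^{\prime}(t)=e^{itA}(x\cdot\nabla V)e^{-itA}$, the latter bounded precisely by the hypothesis $x\cdot\nabla V\in L^{\infty}$. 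Your second (conjugation) argument is the closest cousin of this, but is more direct: it differentiates $(H(t)-i)^{-1}$ via $G^{\prime}(t)=-G(t)H^{\prime}(t)G(t)$ with $H^{\prime}(t)\in\mathcal{B}(\mathcal{H}^{2},\mathcal{H}^{-2})$, bypassing the free resolvent and the Fredholm inversion entirely. Your first argument, via the bounded extension of $[H,iA]=4H_{0}-x\cdot\nabla V$ from $\mathcal{H}^{2}$ to $\mathcal{H}^{-2}$, is genuinely different and isolates the hypotheses most cleanly; your commutator algebra ($[V,iA]=-(x\cdot\nabla V)$) agrees with the paper's computation in Section \ref{commutator method}. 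Two caveats you should make explicit. First, form-boundedness of $[H,iA]$ on $\mathcal{D}(H)\cap\mathcal{D}(A)$ alone does \emph{not} characterize $C^{1}(A)$; the criterion from \cite{ABG} additionally requires that $e^{itA}$ leave the form domain invariant (or the resolvent-core condition of \cite{ABG}), and omitting this is the well-known trap with that criterion — here it is harmless because dilations preserve $\mathcal{H}^{2}$, but it must be said. Second, in your alternative route you claim norm-continuity of $G^{\prime}(t)$: with only $V,\ x\cdot\nabla V\in L^{\infty}$, the map $t\mapsto V(e^{t}\,\cdot)$ need not be continuous in operator norm (that would require uniform continuity of $V$); it is, however, strongly continuous, since $e^{itA}$ is a strongly continuous unitary group and $V$ is bounded, and strong $C^{1}$ of $G$ is exactly what the definition of $C^{1}(A)$ demands — the same reading is needed to make the paper's own assertion that ``$V\in L^{\infty}$ implies $\tilde{V}(t)$ is continuous'' precise. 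Note finally that both of your routes, like the paper's, use the standing assumption $V\in L^{\infty}$ in addition to the stated hypothesis $x\cdot\nabla V\in L^{\infty}$.
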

\begin{proof}
By the definition of $C^{1}(A)$, we need to check the function
$$G(t)=e^{itA}(H-i)^{-1}e^{-itA}\in C^{1}\big(\mathcal{B}(L^{2}(\mathbb{R}^{d}))\big)$$
for $t\in \mathbb{R}$. By the second resolvent formula, we have
\begin{equation*}
\begin{split}
  G(t) & =e^{itA}\big[(H_{0}-i)^{-1}-(H-i)^{-1}V(H_{0}-i)^{-1}\big]e^{-itA} \\
       & =e^{itA}(H_{0}-i)^{-1}e^{-itA}-e^{itA}(H-i)^{-1}e^{-itA}e^{itA}Ve^{-itA}e^{itA}(H_{0}-i)^{-1}e^{-itA}\\
       & =G_{0}(t)-G(t)\tilde{V}(t)G_{0}(t),
\end{split}
\end{equation*}
Here $\tilde{V}(t)$ denotes $e^{itA}Ve^{-itA}$. So we have $G(t)[1+G_{0}(t)\tilde{V}(t)]=G_{0}(t)$. While by Proposition \ref{Fredholm}, the operator
\begin{equation*}
  1+G_{0}(t)\tilde{V}(t)=e^{itA}[1+(H_{0}-i)^{-1}V]e^{-itA}
\end{equation*}
is invertible, thus we get the relationship between $G(t)$ and $G_{0}(t)$,
\begin{equation}\label{G and G0}
  G(t)=G_{0}(t)[1+G_{0}(t)\tilde{V}(t)]^{-1}.
\end{equation}
Further,
\begin{equation}\label{G'}
  G^{\prime}(t)=G^{\prime}_{0}(t)[1+G_{0}(t)\tilde{V}(t)]^{-1}
-G_{0}(t)[1+G_{0}(t)\tilde{V}(t)]^{-2}[G_{0}^{\prime}(t)\tilde{V}(t)+G_{0}(t)\tilde{V}^{\prime}(t)].
\end{equation}

Since $H_{0}\in C^{1}(A)$ equals $G_{0}(t), G^{\prime}_{0}(t)$ are continuous, so if $\tilde{V}(t)$ and $\tilde{V}^{\prime}(t)$
are continuous in $\mathcal{B}(L^{2}(\mathbb{R}^{d}))$, then the proof done. For $\tilde{V}(t)$, $V(x)\in L^{\infty}$ implies that $\tilde{V}(t)$ is continuous. For the second one, since
\begin{equation*}
  \tilde{V}^{\prime}(t)=e^{itA}[iA, V]e^{-itA}=e^{itA}(x\cdot\nabla V)e^{-itA},
\end{equation*}
so by the assumption $\tilde{V}^{\prime}(t)$ is continuous.
\end{proof}
Denote $\mathcal{E}$  be the collection
\begin{equation*}
 \mathcal{E}=\left\{~u\in \mathcal{D}(H)~\big|~\psi_{u}(t):=\langle u, e^{itH}u\rangle\in L_{t}^{2}(\mathbb{R})~\right\}.
\end{equation*}
It was shown that $\mathcal{E}$ is a dense linear subspace of the absolute continuity subspace of $H$ and $[u]_{H}=\|\psi_{u}\|_{L^{2}_{t}}^{\frac{1}{2}}$ is a complete norm on it, see \cite{ABG, GLS}. Note that
\begin{equation*}
  [u]_{H}^{2}=\int_{\mathbb{R}}|\langle u, e^{itH}u\rangle|^{2}dt=2\pi\int_{\mathbb{R}} E_{u}^{\prime}(\lambda)^{2} d\lambda.
\end{equation*}
Through the theory of \cite{LS} and under our assumptions of $V$, one can construct a new conjugate operator $\tilde{A}=A+B$ ( Larenas-Soffer conjugate operator ), where $B$ is the limit in $\mathcal{B}(L^{2}, L^{2})$ as follows,
\begin{equation*}
  B=s-\lim_{t\rightarrow\infty}\int_{0}^{t}e^{-isH}P_{ac}KP_{ac}e^{isH}ds.
\end{equation*}
It's easy to check the bounded operator $B$ exists and be well defined by local decay estimate ( Theorem \ref{local decay thm} ) with the potential $V(x)$ satisfies the same condition as in Theorem \ref{local decay thm}. Further, the Larenas-Soffer conjugate operator $\tilde{A}$ satisfy
$[\tilde{A}, H]=4H$ and $H\in C^{1}(\tilde{A})$.


\begin{theorem}\label{-1/2decay}
Under the same assumptions as given in Theorem \ref{local decay thm}, and $V$ satisfies $x\cdot\nabla V\in L^{\infty}(\mathbb{R}^{d})$. Then for $u\in\mathcal{D}(H)\cap\mathcal{D}(A)$ satisfying $\langle x\rangle^{\sigma}u\in L^{2}(\mathbb{R}^{d})$ with $\sigma>1/2$, we have
\begin{equation}\label{-1/2}
|\psi_{P_{ac}u}(t)|=O( t^{-1/2}), \,\, t\rightarrow \infty.
\end{equation}
\end{theorem}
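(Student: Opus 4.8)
The plan is to deduce the pointwise decay from the exact commutation relation $[H,i\tilde{A}]=4H$ on $\operatorname{Ran}P_{ac}$ together with the local decay estimate, following the Georgescu--Larenas--Soffer scheme. Write $P=P_{ac}$, $v=Pu$ and $\psi_v(t)=\langle v,e^{itH}v\rangle=\int_0^\infty e^{it\lambda}\rho(\lambda)\,d\lambda$, where $\rho(\lambda)=\langle v,E'(\lambda)v\rangle$ is the spectral density. The first and easiest input is that $\rho\in L^2(0,\infty)$: writing $\psi_v(t)=\langle\langle x\rangle^{\sigma}u,\langle x\rangle^{-\sigma}e^{itH}Pu\rangle$ and using $\langle x\rangle^{\sigma}u\in L^2$ together with the $H$-smoothness of $\langle x\rangle^{-\sigma}$ from Theorem \ref{local decay thm}, one gets $\psi_v\in L^2_t(\mathbb{R})$, i.e. $v\in\mathcal{E}$ and $\|\rho\|_{L^2}^2=[v]_H^2/(2\pi)<\infty$.

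The decisive step is to upgrade this to \emph{scale-invariant} regularity of $\rho$, namely $\lambda\,\partial_\lambda\rho\in L^2(0,\infty)$, and this is exactly where the conjugate operator enters. Since $e^{is\tilde{A}}He^{-is\tilde{A}}=e^{-4s}H$, the operator $\tilde{A}$ acts in the spectral representation of $H$ as the generator of the dilation $\lambda\mapsto e^{-4s}\lambda$, so that on the diagonal $\tilde{A}$ is $-4\lambda\partial_\lambda$ up to lower order terms. Differentiating the resolution of the identity against $[H,i\tilde{A}]=4H$ yields an identity of the form $4\lambda\,\partial_\lambda\rho(\lambda)=-\langle\tilde{A}v,E'(\lambda)v\rangle-\langle v,E'(\lambda)\tilde{A}v\rangle+\cdots$, whose right-hand side is bilinear in $v$ and $\tilde{A}v$. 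By hypothesis $u\in\mathcal{D}(A)$ and $B$ is bounded, so $\tilde{A}v\in L^2$; applying the Kato $H$-smoothness packaged in conditions (b)--(e) (the factorizations $K=F^{*}E$ and $[A,K]=(F')^{*}E'$ with $E,F,E',F'$ $H$-smooth, valid by Theorem \ref{local decay thm}) to both $v$ and $\tilde{A}v$ and using Cauchy--Schwarz in $\lambda$ gives $\int_0^\infty|\lambda\,\partial_\lambda\rho(\lambda)|^2\,d\lambda\lesssim [\tilde{A}v]_H\,[v]_H<\infty$.

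With $\rho$ and $\lambda\partial_\lambda\rho$ both in $L^2$, the decay is an elementary oscillatory-integral estimate: split $\psi_v(t)=\int_0^{1/t}e^{it\lambda}\rho\,d\lambda+\int_{1/t}^\infty e^{it\lambda}\rho\,d\lambda$. On $(0,1/t)$, Cauchy--Schwarz gives $|\int_0^{1/t}e^{it\lambda}\rho\,d\lambda|\le t^{-1/2}\|\rho\|_{L^2}$. On $(1/t,\infty)$ one integrates by parts once: the boundary term at $\lambda=1/t$ is controlled by $t^{-1}|\rho(1/t)|\lesssim t^{-1/2}(\|\rho\|_2\|\lambda\partial_\lambda\rho\|_2)^{1/2}$ (using $|\rho(1/t)|^2\le 2t\|\rho\|_2\|\lambda\partial_\lambda\rho\|_2$), while the remaining integral equals $t^{-1}\int_{1/t}^\infty|\partial_\lambda\rho|\,d\lambda\le t^{-1/2}\|\lambda\partial_\lambda\rho\|_{L^2}$ by Cauchy--Schwarz with the weight $\lambda^{-1}$. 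Summing the three contributions yields $|\psi_{Pu}(t)|\lesssim t^{-1/2}$, which is \eqref{-1/2}.

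The main obstacle is the middle step: converting the operator identity $[H,i\tilde{A}]=4H$ into the quantitative bound $\lambda\partial_\lambda\rho\in L^2$. This is precisely the content of the abstract theory of \cite{LS,GLS} and rests on two nontrivial facts already secured above — the existence and boundedness of $B$ (hence $\tilde{A}v\in L^2$ and $H\in C^1(\tilde{A})$), which itself depends on the local decay estimate, and the Kato $H$-smoothness of the commutator factors $E,F,E',F'$. Care is also needed to check that $\tilde{A}v$ lies in the space to which the local decay estimate applies, i.e. that $\langle x\rangle^{\sigma}\tilde{A}v\in L^2$, using $u\in\mathcal{D}(A)\cap\mathcal{D}(H)$ and $\langle x\rangle^{\sigma}u\in L^2$; once this is verified the oscillatory-integral step is routine, and the same scheme with higher $C^k(\tilde{A})$ regularity would produce faster rates.
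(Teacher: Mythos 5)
Your architecture is, at bottom, the paper's own proof transplanted to the spectral variable. The paper bounds $\|\psi_{P_{ac}u}\|_{L^{2}_{t}}$ and $\|t\psi^{\prime}_{P_{ac}u}\|_{L^{2}_{t}}$ --- by Plancherel these are exactly your two conditions $\rho\in L^{2}(0,\infty)$ and $\partial_{\lambda}(\lambda\rho)\in L^{2}(0,\infty)$ --- and then invokes \cite[Corollary 8.2]{GLS}; your splitting at $\lambda=1/t$ with one integration by parts is a correct, self-contained re-proof of that corollary on the spectral side (the boundary estimate $|\rho(1/t)|^{2}\le 2t\|\rho\|_{2}\|\lambda\rho^{\prime}\|_{2}$ and the weighted Cauchy--Schwarz on $(1/t,\infty)$ both check out, and $\rho(\lambda)\to0$ at infinity follows from $\rho,\lambda\rho^{\prime}\in L^{2}$). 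Your first input, $\psi_{P_{ac}u}\in L^{2}_{t}$ via the $H$-smoothness of $\langle x\rangle^{-\sigma}$ from Theorem \ref{local decay thm}, matches the paper verbatim.

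The gap is in your middle step. To bound $\int_{0}^{\infty}|\langle v,E^{\prime}(\lambda)\tilde{A}v\rangle|^{2}\,d\lambda$ you use the symmetric Cauchy--Schwarz inequality $|\langle v,E^{\prime}(\lambda)\tilde{A}v\rangle|\le\rho_{v}(\lambda)^{1/2}\rho_{\tilde{A}v}(\lambda)^{1/2}$, which requires $[\tilde{A}v]_{H}<\infty$, and you propose to secure this by checking $\langle x\rangle^{\sigma}\tilde{A}v\in L^{2}$. That condition does \emph{not} follow from the hypotheses: $A=-\tfrac{i}{2}(x\cdot\nabla+\nabla\cdot x)$ does not map $L^{2}_{\sigma}\cap\mathcal{D}(A)$ into $L^{2}_{\sigma}$, and $B$ is only known to be bounded on unweighted $L^{2}$ (it is a strong limit of $\int_{0}^{t}e^{-isH}P_{ac}KP_{ac}e^{isH}\,ds$, for which no weighted mapping property is established); the theorem deliberately puts the weight on $u$ alone, not on $Au$. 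The repair is precisely the asymmetric pairing the paper uses: write $\langle v,E^{\prime}(\lambda)\tilde{A}v\rangle=\langle\langle x\rangle^{\sigma}v,\,\langle x\rangle^{-\sigma}E^{\prime}(\lambda)\tilde{A}v\rangle$, so that vector-valued Plancherel together with the $H$-smoothness of $\langle x\rangle^{-\sigma}$ applied to the \emph{unweighted} vector $\tilde{A}v\in L^{2}$ (which is all that $u\in\mathcal{D}(A)$ plus boundedness of $B$ provides) yields $\|\partial_{\lambda}(\lambda\rho)\|_{L^{2}}\lesssim\|\langle x\rangle^{\sigma}u\|_{L^{2}}\,\|\tilde{A}P_{ac}u\|_{L^{2}}$; on the time side this is exactly the paper's estimate $\|t\psi^{\prime}_{P_{ac}u}\|_{L^{2}_{t}}\le\tfrac{c}{2}\|\tilde{A}P_{ac}u\|_{L^{2}}\|\langle x\rangle^{\sigma}u\|_{L^{2}}$. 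Two smaller points: since only $\sigma>1/2$ is assumed, pointwise differentiability of $\rho$ (which your identity and integration by parts presuppose) is not directly available --- Theorem \ref{continuity} requires $\sigma>3/2$ for $E^{\prime\prime}$ --- so the identity should be read distributionally via Plancherel from the time-side bound, after which $\lambda\rho^{\prime}\in L^{2}$ gives absolute continuity of $\rho$ on $(0,\infty)$ and legitimizes the integration by parts; and your normalizations are off (with the paper's convention $[v]_{H}^{2}=\|\psi_{v}\|_{L^{2}_{t}}$ one has $\|\rho\|_{2}^{2}=[v]_{H}^{4}/2\pi$, and after the fix the quadratic bound reads $\|\lambda\partial_{\lambda}\rho\|_{2}^{2}\lesssim\|\langle x\rangle^{\sigma}v\|^{2}\|\tilde{A}v\|^{2}$ rather than $[\tilde{A}v]_{H}[v]_{H}$). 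With these corrections your argument is complete and coincides in substance with the paper's.
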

\begin{proof}
By \cite[Corollary 8.2]{GLS}, we have
\begin{equation*}
  |\psi_{P_{ac}u}(t)|\leq c\langle t\rangle^{-\frac{1}{2}}\|\psi_{P_{ac}u}(t)\|_{L^{2}(\mathbb{R})}^{\frac{1}{2}}
                              \|t\psi^{\prime}_{P_{ac}u}(t)\|_{L^{2}(\mathbb{R})}^{\frac{1}{2}}.
\end{equation*}
Now, the aim is to check that $\psi_{P_{ac}u}(t)$ and $t\psi^{\prime}_{P_{ac}u}(t)$ are in $L^{2}(\mathbb{R})$.
\begin{equation*}
 \begin{split}
  \|\psi_{P_{ac}u}(t)\|_{L^{2}(\mathbb{R})}
   & = \|\langle P_{ac}u, e^{itH}P_{ac}u \rangle\|_{L^{2}(\mathbb{R})} \\
   & = \Big\|\big\langle\langle x\rangle^{\sigma} P_{ac}u, \langle x\rangle^{-\sigma}e^{itH}P_{ac}u \big\rangle\Big\|_{L^{2}(\mathbb{R})} \\
   & \leq \Big\|\|\langle x\rangle^{\sigma}P_{ac}u\|_{L^{2}_{x}}\|\langle    x\rangle^{-\sigma}e^{itH}P_{ac}u\|_{L^{2}_{x}}\Big\|_{L^{2}(\mathbb{R})}\\
   & \leq c\|\langle x\rangle^{\sigma}P_{ac}u\|_{L^{2}_{x}}\|u\|_{L^{2}_{x}}.
 \end{split}
\end{equation*}
For $t\psi^{\prime}_{P_{ac}u}(t)$, since $H\in C^{1}(A)$, through the theory of \cite{LS}, one can construct a new conjugate operator $\tilde{A} $(Larenas-Soffer conjugate operator) such that $H\in C^{1}(\tilde{A})$ and $[H, i\tilde{A}]=4H$. Further, the domain of $\tilde{A}$ satisfies $\mathcal{D}(\tilde{A})=\mathcal{D}(A)$. Since
\begin{equation*}
  \begin{split}
     4it\psi^{\prime}_{P_{ac}u}(t)
       & = 4i\langle P_{ac}u, itHe^{itH}P_{ac}u\rangle=\langle P_{ac}u, 4tHe^{itH}P_{ac}u\rangle  \\
       & =\langle P_{ac}u, [e^{itH},\tilde{A}]P_{ac}u\rangle\\
       & =\langle  P_{ac}u,  e^{itH}\tilde{A}P_{ac}u \rangle-\langle e^{-itH}\tilde{A}P_{ac}u, P_{ac}u \rangle\\
       & =\big\langle \langle        x\rangle^{\sigma}P_{ac}u, \langle x\rangle^{-\sigma}e^{itH}\tilde{A}P_{ac}u \big\rangle-\big\langle  \langle x\rangle^{-\sigma}e^{-itH}\tilde{A}P_{ac}u,  \langle x\rangle^{\sigma}P_{ac}u \big\rangle
  \end{split}
\end{equation*}
And then
\begin{equation*}
  \begin{split}
    \|t\psi^{\prime}_{P_{ac}u}(t)\|_{L^{2}(\mathbb{R})}
      & \leq \frac{1}{2}\Big\|\big\langle  \langle x\rangle^{-\sigma}e^{-itH}\tilde{A}P_{ac}u,  \langle x\rangle^{\sigma}P_{ac}u       \big\rangle\Big\|_{L^{2}(\mathbb{R})}\\
      & \leq\frac{1}{2}\Big\|\|\langle x\rangle^{\sigma}P_{ac}u\|_{L^{2}_{x}}\|\langle       x\rangle^{-\sigma}e^{itH}\tilde{A}P_{ac}u\|_{L^{2}_{x}}\Big\|_{L^{2}(\mathbb{R})}\\
      & \leq \frac{c}{2}\|\tilde{A}P_{ac}u\|_{L^{2}_{x}}\|\langle x\rangle^{\sigma}u\|_{L^{2}_{x}}.
  \end{split}
\end{equation*}
Here we use the local decay estimates of $H$. Furthermore,
\begin{equation*}
  |\psi_{P_{ac}u}(t)|\leq c\langle t\rangle^{-\frac{1}{2}}\|\langle   x\rangle^{\sigma}u\|_{L^{2}_{x}}\|Au\|^{\frac{1}{2}}_{L^{2}_{x}}\|u\|^{\frac{1}{2}}_{L^{2}_{x}}.
\end{equation*}
\end{proof}
\begin{remark}\label{highenergy LS}
For high energy of $H$, we can get faster decay for $\psi_{P_{ac}u}(t)$, and this coincide with the perturbation method. In fact, we have
\begin{equation*}
  |t\langle P_{ac}u, \, \chi_{\geq1}(H)e^{-itH}P_{ac}u\rangle|=|\langle\chi_{\geq1}(H)[H, \tilde{A}]^{-1}P_{ac}u, \,  [e^{-itH}, \tilde{A}]P_{ac}u\rangle|.
\end{equation*}
It is easy to check $$\chi_{\geq1}(H)[H, \tilde{A}]^{-1}=\chi_{\geq1}(H)(4H)^{-1}\in C^{1}(\tilde{A})$$ and bounded, so that operator $\chi_{\geq1}(H)[H, \tilde{A}]^{-1}$ keeps $\mathcal{D}(\tilde{A})$ the domain of $\tilde{A}$.

Let $\tilde{u}=\chi_{\geq1}(H)[H, \tilde{A}]^{-1}P_{ac}u$, and then we have
\begin{equation*}
 \begin{split}
  |\langle\tilde{u}, [e^{-itH}, \tilde{A}]P_{ac}u\rangle|
&\lesssim |\big\langle\langle x\rangle^{\sigma}\tilde{u}, \, \langle x\rangle^{-\sigma}e^{-itH}\tilde{A}P_{ac}u\big\rangle|
         +|\big\langle\langle x\rangle^{\sigma}u, \langle x\rangle^{-\sigma}e^{itH}P_{ac}\tilde{A}\tilde{u}\big\rangle|\\
&\lesssim \|\tilde{A}P_{ac}u\|_{L^{2}(\mathbb{R}^{d})}\|\langle x\rangle^{\sigma}u\|_{L^{2}(\mathbb{R}^{d})}<\infty.
 \end{split}
\end{equation*}
We can apply the same argument to get much higher decay of high energy part by an iterated process.
\end{remark}

From the expansion of the perturbed resolvent, and through the classical Jensen-Kato's work \cite{J1}, we expect the time decay rate should be $-5/4$ in the 3-dimension. Next, we improve our results by iteration of the previous argument. For higher dimensions $d\geq5$, we also can improve the decay rate through the same way as what we did in the 3-dimensional case.
\begin{proposition}
Suppose $V(x)$ satisfies $x\cdot\nabla (x\cdot\nabla V)\in L^{\infty}(\mathbb{R}^{d})$, then $H\in C^{2}(A)$.
\end{proposition}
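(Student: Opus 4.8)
The plan is to mirror the proof that $H\in C^{1}(A)$ and simply differentiate the representation \eqref{G and G0} one more time. Recall that $H\in C^{2}(A)$ means that the operator-valued function $G(t)=e^{itA}(H-i)^{-1}e^{-itA}$ belongs to $C^{2}\big(\mathcal{B}(L^{2}(\mathbb{R}^{d}))\big)$, so it suffices to show that $G''(t)$ exists and is continuous in $t$. First I would record that $H_{0}\in C^{2}(A)$: by Proposition \ref{EEE}, $G_{0}(t)=(e^{-4t}\Delta^{2}-i)^{-1}$ is a Fourier multiplier whose symbol can be differentiated twice in $t$ to produce bounded multipliers, and the bound $\|G_{0}''(t)\|_{L^{2}\to L^{2}}\le C'$ already noted in the $C^{1}$ argument, together with dominated convergence on the multiplier side, gives norm-continuity of $G_{0},G_{0}',G_{0}''$.

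Next I would track the new input coming from the potential. Using the scaling action $(e^{-itA}\varphi)(x)=e^{-dt/2}\varphi(e^{-t}x)$, the operator $\tilde V(t)=e^{itA}Ve^{-itA}$ is multiplication by $V(e^{t}x)$, so that $\tilde V'(t)=e^{itA}(x\cdot\nabla V)e^{-itA}$ exactly as in the $C^{1}$ proof, and then
\begin{equation*}
\tilde V''(t)=e^{itA}[iA,\,x\cdot\nabla V]e^{-itA}=e^{itA}\big(x\cdot\nabla(x\cdot\nabla V)\big)e^{-itA}.
\end{equation*}
This is a bounded multiplication operator, continuous in $t$, precisely because of the hypothesis $x\cdot\nabla(x\cdot\nabla V)\in L^{\infty}(\mathbb{R}^{d})$; this is the only ingredient genuinely beyond the $C^{1}$ argument.

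With these continuous families in hand I would differentiate \eqref{G'} once more by the product rule. Since $1+G_{0}(t)\tilde V(t)=e^{itA}[1+(H_{0}-i)^{-1}V]e^{-itA}$ is invertible by Lemma \ref{Fredholm}, the inverse $[1+G_{0}(t)\tilde V(t)]^{-1}$ is norm-continuous and differentiable, with
\begin{equation*}
\frac{d}{dt}[1+G_{0}\tilde V]^{-1}=-[1+G_{0}\tilde V]^{-1}\big(G_{0}'\tilde V+G_{0}\tilde V'\big)[1+G_{0}\tilde V]^{-1}.
\end{equation*}
Consequently $G''(t)$ is a finite sum of products of the continuous families $G_{0},G_{0}',G_{0}''$, $\tilde V,\tilde V',\tilde V''$ and $[1+G_{0}\tilde V]^{-1}$. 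Since products and inverses of norm-continuous $\mathcal{B}(L^{2})$-valued functions are again norm-continuous, $G''(t)$ is continuous, and hence $H\in C^{2}(A)$.

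The main obstacle is essentially the bookkeeping of this second application of the product rule and verifying that every factor is bounded and norm-continuous; no new analytic difficulty arises in differentiating the inverse, as the formula above is standard and each of its factors has already been shown continuous. The one substantive point is the boundedness and continuity of $\tilde V''(t)$, which is exactly what the hypothesis $x\cdot\nabla(x\cdot\nabla V)\in L^{\infty}$ supplies, and which makes clear how the $C^{k}(A)$ condition is driven by iterated dilation derivatives $(x\cdot\nabla)^{k}V$ of the potential.
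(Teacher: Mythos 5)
Your proposal is correct and takes essentially the same route as the paper: differentiating the representation $G(t)=G_{0}(t)[1+G_{0}(t)\tilde{V}(t)]^{-1}$ once more via the product rule and the standard formula for the derivative of the inverse, reducing everything to the continuity of $G_{0}''(t)$ (from $H_{0}\in C^{2}(A)$) and of $\tilde{V}''(t)=e^{itA}\big(x\cdot\nabla(x\cdot\nabla V)\big)e^{-itA}$, which is exactly what the hypothesis $x\cdot\nabla(x\cdot\nabla V)\in L^{\infty}(\mathbb{R}^{d})$ supplies. This matches the paper's argument, so no changes are needed.
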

\begin{proof}
By the definition of $C^{2}(A)$, we need to prove
\begin{equation*}
G(t)=e^{itA}(H-i)^{-1}e^{-itA}\in C^{2}(\mathcal{B}(L^{2}(\mathbb{R}^{3}))).
\end{equation*}
Since $H\in C^{1}(A)$, so we only need to check $G^{\prime\prime}(t)$ is continuous. By \eqref{G'} we have
\begin{equation*}
  \begin{split}
   G^{\prime\prime}(t)
   & = G_{0}^{\prime\prime}(t)[1+G_{0}(t)\tilde{V}(t)]^{-1}-
          2G_{0}^{\prime}(t)[1+G_{0}(t)\tilde{V}t)]^{-2}[G_{0}^{\prime}(t)\tilde{V}(t)+G_{0}(t)\tilde{V}^{\prime}(t)] \\
   & + 2G_{0}(t)[1+G_{0}(t)\tilde{V}(t)]^{-3}[G_{0}^{\prime}(t)\tilde{V}(t)+G_{0}(t)\tilde{V}^{\prime}(t)]^{2}\\
   & -G_{0}(t)[1+G_{0}(t)\tilde{V}(t)]^{-2}[G_{0}^{\prime\prime}(t)\tilde{V}(t)+2G^{\prime}_{0}(t)\tilde{V}^{\prime}(t)+G(t)\tilde{V}^{\prime\prime}(t)].
  \end{split}
\end{equation*}
Thus, if $G_{0}^{\prime\prime}(t)$ and $\tilde{V}^{\prime\prime}(t)$ are continuous then the proof done.
For $G_{0}^{\prime\prime}(t)$, it's automatically continuous since we have already proved that $H_{0}\in C^{k}(A)$. For $\tilde{V}^{\prime\prime}(t)$, since
\begin{equation*}
  \tilde{V}^{\prime\prime}(t)=e^{itA}[iA,x\cdot\nabla V]e^{-itA}=e^{itA}[x\cdot\nabla(x\cdot\nabla V)]e^{-itA},
\end{equation*}
thus $x\cdot\nabla (x\cdot\nabla V)\in L^{\infty}$ implies that $\tilde{V}^{\prime\prime}(t)\in C\big(\mathcal{B}(L^{2}(\mathbb{R}^{3}))\big)$.
\end{proof}
\begin{remark}
The condition $H\in C^{1}(A)$ can be replaced by $x\cdot\nabla V\in L^{\infty}$. Further, since for $k\in \mathbb{N}^{+}$ we have $H_{0}\in C^{k}(A)$, and then we can prove that $H\in C^{k}(A)$ by the same process with the assumption that $(x\cdot\nabla)^{k-1} (x\cdot\nabla V)\in L^{\infty}$. And under this condition and some suitable assumption of the vector $u$ ,  one can get the time decay rate of the so-called pointwise decay estimates to any order by the commutator method \cite{GLS,LS} and the following argument.
\end{remark}
\begin{proposition}\label{-3}
Under the same assumptions as given in Theorem \ref{local decay thm}, and $V$ satisfies $x\cdot\nabla (x\cdot\nabla V)\in L^{\infty}$. Then for $u\in\mathcal{D}(H)$ with the form of $u=|H|^{\frac{1}{2}}\tilde{u}$, $\tilde{u}\in\mathcal{D}(A^{2})\cap\mathcal{D}(H)$, and $\langle x \rangle^{\sigma}\tilde{u}, \langle x \rangle^{\sigma}A\tilde{u} \in L^{2}(\mathbb{R}^{3})$ with $\sigma>\frac{1}{2}$, we have
\begin{equation}\label{-3/2}
  |\psi_{P_{ac}u}|=O(t^{-3/2}),\,\,t\rightarrow \infty.
\end{equation}
\end{proposition}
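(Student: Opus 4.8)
The plan is to upgrade the one-power gain behind Theorem \ref{-1/2decay} into a two-power gain by combining the factorization $u=|H|^{1/2}\tilde u$ with the second-order regularity $H\in C^{2}(A)$ just established. Since $|H|^{1/2}$ is self-adjoint and commutes with $e^{itH}$, and since $\sigma_{ac}(H)=[0,+\infty)$ forces $|H|=H$ on the range of $P_{ac}$, one has
$$\psi_{P_{ac}u}(t)=\big\langle |H|^{1/2}\tilde u,\, e^{itH}P_{ac}|H|^{1/2}\tilde u\big\rangle=\big\langle \tilde u,\, He^{itH}P_{ac}\tilde u\big\rangle.$$
Using the Larenas--Soffer conjugate operator $\tilde A$ with $[H,i\tilde A]=4H$ and $H\in C^{1}(\tilde A)$, the commutation identity $[e^{itH},\tilde A]=4tHe^{itH}$ produces the crucial extra factor $t^{-1}$,
$$\psi_{P_{ac}u}(t)=\frac{1}{4t}\,\Phi(t),\qquad \Phi(t):=\big\langle \tilde u,\,[e^{itH},\tilde A]P_{ac}\tilde u\big\rangle=-4it\,\psi'_{P_{ac}\tilde u}(t),$$
so it suffices to prove $\Phi(t)=O(t^{-1/2})$ as $t\to\infty$.

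To estimate $\Phi$ I would invoke the same Agmon--type inequality used for Theorem \ref{-1/2decay} (namely \cite[Corollary 8.2]{GLS}), applied to $\Phi$ itself after reducing to diagonal correlations $\psi_{P_{ac}w}$ by polarization if necessary,
$$|\Phi(t)|\le c\langle t\rangle^{-1/2}\,\|\Phi\|_{L^{2}(\mathbb{R})}^{1/2}\,\|t\Phi'\|_{L^{2}(\mathbb{R})}^{1/2},$$
which reduces the whole problem to the two memberships $\Phi\in L^{2}(\mathbb{R})$ and $t\Phi'\in L^{2}(\mathbb{R})$. Because $|\Phi|=4t\,|\psi'_{P_{ac}\tilde u}|$, the first is exactly the bound $\|t\psi'_{P_{ac}\tilde u}\|_{L^{2}}<\infty$ already obtained in the proof of Theorem \ref{-1/2decay} (applicable since $\tilde u\in\mathcal{D}(H)\cap\mathcal{D}(A)$ and $\langle x\rangle^{\sigma}\tilde u\in L^{2}$). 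Differentiating gives $t\Phi'=-4it\,\psi'_{P_{ac}\tilde u}-4it^{2}\,\psi''_{P_{ac}\tilde u}$, so the only genuinely new ingredient is $t^{2}\psi''_{P_{ac}\tilde u}\in L^{2}(\mathbb{R})$.

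This last fact is where $H\in C^{2}(A)$ and $\tilde u\in\mathcal{D}(A^{2})=\mathcal{D}(\tilde A^{2})$ enter. Iterating the commutation relation yields the second-order identity $16t^{2}H^{2}e^{itH}=[[e^{itH},\tilde A],\tilde A]+4i[e^{itH},\tilde A]$, whence, pairing with $\tilde u$ and using $\psi''_{P_{ac}\tilde u}=-\langle\tilde u,H^{2}e^{itH}P_{ac}\tilde u\rangle$,
$$16t^{2}\psi''_{P_{ac}\tilde u}(t)=-\big\langle\tilde u,[[e^{itH},\tilde A],\tilde A]P_{ac}\tilde u\big\rangle-16t\,\psi'_{P_{ac}\tilde u}(t),$$
where the last term already lies in $L^{2}$. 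Expanding $[[e^{itH},\tilde A],\tilde A]=e^{itH}\tilde A^{2}-2\tilde A e^{itH}\tilde A+\tilde A^{2}e^{itH}$ and moving the $\tilde A$'s onto the vectors by self-adjointness leaves correlations of the type $\langle \tilde A^{a}\tilde u,\,e^{itH}P_{ac}\tilde A^{b}\tilde u\rangle$ with $a+b\le2$. Each is placed in $L^{2}_{t}$ by inserting $\langle x\rangle^{\pm\sigma}$ and applying the local decay estimate (Theorem \ref{local decay thm}), which controls the $L^{2}_{t}$-norm by $\|\langle x\rangle^{\sigma}\tilde A^{a}\tilde u\|_{L^{2}_{x}}\|\tilde A^{b}\tilde u\|_{L^{2}_{x}}$, the weight being assigned to whichever factor carries at most one $\tilde A$.

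The main obstacle, and the step deserving the most care, is the control of these weighted norms through the bounded correction $B$ in $\tilde A=A+B$: the cross term $\langle\tilde A\tilde u,\,e^{itH}P_{ac}\tilde A\tilde u\rangle$ (the case $a=b=1$) forces a bound on $\langle x\rangle^{\sigma}\tilde A\tilde u=\langle x\rangle^{\sigma}A\tilde u+\langle x\rangle^{\sigma}B\tilde u$, and although $\langle x\rangle^{\sigma}A\tilde u\in L^{2}$ is assumed, one must verify that $B$, defined by the strongly convergent integral $\int_{0}^{\infty}e^{-isH}P_{ac}KP_{ac}e^{isH}\,ds$, preserves the weighted space $L^{2}_{\sigma}$; this is where the strong decay of $K=4V+x\cdot\nabla V$ must be combined with the boundedness of $\langle x\rangle^{-\sigma}P_{ac}\langle x\rangle^{\sigma}$ (Theorem \ref{projectionbounded}) and the local decay estimate. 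The remaining terms ($a+b=2$ with one factor free of $\tilde A$) are unproblematic, needing only $\tilde A^{2}\tilde u\in L^{2}$ and $\langle x\rangle^{\sigma}\tilde u\in L^{2}$, both furnished by the hypotheses. Once $t^{2}\psi''_{P_{ac}\tilde u}\in L^{2}$ is secured, the Agmon/GLS inequality gives $\Phi(t)=O(t^{-1/2})$, and hence $|\psi_{P_{ac}u}(t)|=\tfrac{1}{4t}|\Phi(t)|=O(t^{-3/2})$, as claimed.
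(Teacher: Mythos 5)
Your proposal is correct and follows essentially the same route as the paper: since your $\Phi(t)=-4it\,\psi'_{P_{ac}\tilde u}(t)$ coincides (up to the constant $4$) with the paper's $t\psi_{P_{ac}u}(t)$, your application of \cite[Corollary 8.2]{GLS}, the reduction to $t\psi'_{P_{ac}\tilde u},\,t^{2}\psi''_{P_{ac}\tilde u}\in L^{2}_{t}$, the double-commutator identity $16t^{2}H^{2}e^{itH}=[[e^{itH},\tilde A],\tilde A]+4i[e^{itH},\tilde A]$ for the Larenas--Soffer operator $\tilde A$, and the placement of the correlations $\langle \tilde A^{a}\tilde u, e^{itH}P_{ac}\tilde A^{b}\tilde u\rangle$ ($a+b\le 2$) in $L^{2}_{t}$ via the local decay estimate are exactly the steps of the paper's proof of Proposition \ref{-3}. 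If anything, you are more explicit than the paper about the one genuinely delicate point, namely replacing $\tilde A=A+B$ by $A$ in the weighted norms (boundedness of $\langle x\rangle^{\sigma}B\langle x\rangle^{-\sigma}$ via the decay of $K$, Theorem \ref{projectionbounded} and local decay), which the paper performs silently in its final chain of inequalities.
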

\begin{proof}
By \cite[Corollary 8.2]{GLS}, we have
\begin{equation*}
  |t\psi_{P_{ac}u}(t)|\leq c\langle t\rangle^{-\frac{1}{2}}\|t\psi_{P_{ac}u}(t)\|_{L^{2}(\mathbb{R})}^{\frac{1}{2}}
                              \|t^{2}\psi^{\prime}_{P_{ac}u}(t)\|_{L^{2}(\mathbb{R})}^{\frac{1}{2}}.
\end{equation*}
Now, our target is to prove that $t\psi_{P_{ac}u}(t)$ and $t^{2}\psi^{\prime}_{P_{ac}u}(t)$ belong to $L_{t}^{2}(\mathbb{R})$. For this we use the LS conjugate operator $\tilde{A}$. For $t\psi_{P_{ac}u}(t)$, since
\begin{equation*}
  \begin{split}
    i4t\psi_{P_{ac}u}(t)
      & =i\langle P_{ac}u, 4te^{itH}P_{ac}u\rangle=i\langle P_{ac}|H|^{\frac{1}{2}}\tilde{u}, 4te^{itH}P_{ac}|H|^{\frac{1}{2}}\tilde{u}  \rangle \\
      & =i\langle P_{ac}\tilde{u}, 4tHe^{itH}{\rm sgn}(H)P_{ac}\tilde{u} \rangle=\langle P_{ac}\tilde{u}, [e^{itH},\tilde{A}]{\rm sgn}(H)P_{ac}\tilde{u}       \rangle\\
      & =\langle P_{ac}\tilde{u}, e^{itH}\tilde{A}{\rm sgn}(H)P_{ac}\tilde{u}\rangle- \langle e^{-itH}\tilde{A}P_{ac}\tilde{u},{\rm sgn}(H)P_{ac}\tilde{u}\rangle
  \end{split}
\end{equation*}
And then
\begin{equation*}
  \begin{split}
    \|t\psi_{P_{ac}u}(t)\|_{L_{t}^{2}}
    & \leq \frac{1}{2}\Big\|\langle e^{-itH}\tilde{A}P_{ac}\tilde{u},{\rm sgn}(H)P_{ac}\tilde{u}\rangle \Big\|_{L_{t}^{2}}\\
    & =\frac{1}{2}\Big\|\big\langle \langle x\rangle^{-\sigma}e^{-itH}\tilde{A}P_{ac}\tilde{u},\langle x\rangle^{\sigma}{\rm sgn}(H)P_{ac}\tilde{u}\big\rangle     \Big\|_{L_{t}^{2}}\\
    & \leq \frac{1}{2}\Big\|\|\langle x\rangle^{-\sigma}e^{-itH}\tilde{A}P_{ac}\tilde{u}\|_{L_{x}^{2}}\|\langle     x\rangle^{\sigma}{\rm sgn}(H)P_{ac}\tilde{u}\|_{L_{x}^{2}}\Big\|_{L_{t}^{2}}\\
    & \leq c\frac{1}{2}\|\langle x\rangle^{\sigma}\tilde{u}\|_{L_{x}^{2}}\|\tilde{A}P_{ac}\tilde{u}\|_{L_{x}^{2}}\leq c\frac{1}{2}\|\langle x\rangle^{\sigma}\tilde{u}\|_{L_{x}^{2}}\|A\tilde{u}\|_{L_{x}^{2}}.
  \end{split}
\end{equation*}

For $t^{2}\psi^{\prime}_{P_{ac}u}(t)$, since
\begin{equation*}
  \begin{split}
    i16t^{2}\psi^{\prime}_{P_{ac}u}(t)
      & = \langle P_{ac}\tilde{u}, 4tH[e^{itH},\tilde{A}]{\rm sgn}(H)P_{ac}\tilde{u} \rangle\\
      & = \langle P_{ac}\tilde{u}, \tilde{\mathcal{A}}^{2}(e^{itH}){\rm sgn}(H)P_{ac}\tilde{u}\rangle-
          \langle P_{ac}\tilde{u}, 4\tilde{\mathcal{A}}(e^{itH}){\rm sgn}(H)P_{ac}\tilde{u} \rangle.
  \end{split}
\end{equation*}
Here $\tilde{\mathcal{A}}(e^{itH})=i[e^{itH},\tilde{A}]$ and
\begin{equation*}
  \begin{split}
  \langle P_{ac}\tilde{u}, \tilde{\mathcal{A}}^{2}(e^{itH})P_{ac}\tilde{u}\rangle
  &=\langle e^{-itH}P_{ac}\tilde{u}, \tilde{A}^{2}P_{ac}\tilde{u} \rangle\\
  &-2\langle \tilde{A}P_{ac}\tilde{u}, e^{itH}\tilde{A}P_{ac}\tilde{u}\rangle
   +\langle \tilde{A}^{2}P_{ac}\tilde{u}, e^{itH}P_{ac}\tilde{u} \rangle.
  \end{split}
\end{equation*}
So the second term just the same as $4t\psi_{P_{ac}u}(t)$. Further, for the first term
\begin{equation*}
  \begin{split}
  \|\langle P_{ac}\tilde{u}, \tilde{\mathcal{A}}^{2}(e^{itH})P_{ac}\tilde{u}\rangle\|_{L_{t}^{2}}
     & \leq 2\|\langle \tilde{A}^{2}P_{ac}\tilde{u}, e^{itH}P_{ac}\tilde{u} \rangle\|_{L_{t}^{2}}
            +2\|\langle \tilde{A}P_{ac}\tilde{u}, e^{itH}\tilde{A}P_{ac}\tilde{u}\rangle\|_{L_{t}^{2}} \\
     & \leq 2\Big\|\big\langle \langle x\rangle^{-\sigma}e^{-itH}\tilde{A}^{2}P_{ac}\tilde{u}, \langle x\rangle^{\sigma}P_{ac}\tilde{u}      \big\rangle\Big\|_{L_{t}^{2}}\\
     & \quad +2\Big\|\big\langle  \langle x\rangle^{-\sigma}e^{-itH}\tilde{A}P_{ac}\tilde{u}, \langle      x\rangle^{\sigma}\tilde{A}P_{ac}\tilde{u}\big\rangle\Big\|_{L_{t}^{2}}\\
     & \leq 2\Big\|\| \langle x\rangle^{-\sigma}e^{-itH}\tilde{A}^{2}P_{ac}\tilde{u}\|_{L_{x}^{2}}\|\langle x\rangle^{\sigma}P_{ac}\tilde{u}\|_{L_{x}^{2}}\Big\|_{L_{t}^{2}}\\
     & \quad +2\Big\|\|\langle x\rangle^{-\sigma}e^{-itH}\tilde{A}P_{ac}\tilde{u}\|_{L_{x}^{2}}\|\langle      x\rangle^{\sigma}\tilde{A}P_{ac}\tilde{u}\|_{L_{x}^{2}}\Big\|_{L_{t}^{2}}\\
     & \leq 2c\|\tilde{A}^{2}P_{ac}\tilde{u}\|_{L_{x}^{2}}\|\langle x\rangle^{\sigma}P_{ac}\tilde{u}\|_{L_{x}^{2}}
     +2c\|\tilde{A}P_{ac}\tilde{u}\|_{L_{x}^{2}}\|\langle x\rangle^{\sigma}\tilde{A}P_{ac}\tilde{u}\|_{L_{x}^{2}}\\
    & \leq 2c\|A^{2}\tilde{u}\|_{L_{x}^{2}}\|\langle x\rangle^{\sigma}\tilde{u}\|_{L_{x}^{2}}
     +2c\|A\tilde{u}\|_{L_{x}^{2}}\|\langle x\rangle^{\sigma}A\tilde{u}\|_{L_{x}^{2}}.
  \end{split}
\end{equation*}
Finally,
\begin{equation*}
  |\psi_{P_{ac}u}(t)|\leq c\langle t \rangle^{-\frac{3}{2}}\Big\{\|\langle x\rangle^{\sigma}\tilde{u}\|^{\frac{1}{2}}_{L_{x}^{2}}
   \big(\|A^{2}\tilde{u}\|^{\frac{1}{2}}_{L_{x}^{2}}+\|A\tilde{u}\|^{\frac{1}{2}}_{L_{x}^{2}}\big)
   +\|A\tilde{u}\|^{\frac{1}{2}}_{L_{x}^{2}}\|\langle x\rangle^{\sigma}A\tilde{u}\|^{\frac{1}{2}}_{L_{x}^{2}}\Big\}.
\end{equation*}
\end{proof}
\begin{theorem}\label{-5}
Under the same conditions of Proposition \ref{-3}, let $u=|H|^{\frac{3}{8}}\tilde{u}$, then
\begin{equation}\label{-5/4}
  |\psi_{P_{ac}u}|=O(t^{-5/4}),\,\,t\rightarrow \infty.
\end{equation}
\end{theorem}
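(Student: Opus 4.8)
The plan is to obtain the intermediate rate $t^{-5/4}$ by complex interpolation between the two pointwise bounds already established: the rate $t^{-1/2}$ for $u=\tilde u$ in Theorem \ref{-1/2decay} and the rate $t^{-3/2}$ for $u=|H|^{1/2}\tilde u$ in Proposition \ref{-3}. The choice $u=|H|^{3/8}\tilde u$ is precisely the interpolation point: writing the power of $|H|$ as $\alpha$, the two endpoints are $\alpha=0$ and $\alpha=1/2$, the value $3/8$ corresponds to $\theta=3/4$, and the expected rate $\tfrac12(1-\theta)+\tfrac32\theta=\tfrac54$ matches the claim. First I would reduce the statement: since $|H|^{3/8}$ is a function of $H$ it commutes with $P_{ac}$ and with $e^{itH}$ and is self-adjoint, so $\psi_{P_{ac}u}(t)=\langle P_{ac}\tilde u,\,|H|^{3/4}e^{itH}P_{ac}\tilde u\rangle$, and it suffices to bound this quantity by $C t^{-5/4}$.

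The interpolation is carried out on the analytic family $\Phi(\zeta)=\langle P_{ac}\tilde u,\,|H|^{2\zeta}e^{itH}P_{ac}\tilde u\rangle$, which is analytic in the strip $0\le\mathrm{Re}\,\zeta\le 1/2$ and equals the quantity of interest at $\zeta=3/8$. On the line $\mathrm{Re}\,\zeta=0$ the factor $|H|^{2\zeta}$ is unitary, and moving it to the left exhibits $\Phi(is)$ as the off-diagonal form $\langle P_{ac}|H|^{-2is}\tilde u,\,e^{itH}P_{ac}\tilde u\rangle$; the polarized (bilinear) version of the estimate behind Theorem \ref{-1/2decay} then gives $|\Phi(is)|\le C(s)\langle t\rangle^{-1/2}$. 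On the line $\mathrm{Re}\,\zeta=1/2$, writing $|H|^{1+2is}=|H|^{1/2}|H|^{2is}|H|^{1/2}$ recasts $\Phi(\tfrac12+is)$ as an off-diagonal form built from $v=|H|^{1/2}\tilde u$, and the bilinear version of Proposition \ref{-3} gives $|\Phi(\tfrac12+is)|\le C'(s)\langle t\rangle^{-3/2}$. Provided $C(s),C'(s)$ grow at most sub-exponentially in $s$, Stein's interpolation theorem yields $|\Phi(3/8)|\le C''\,t^{-5/4}$, which is the assertion.

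The main obstacle, and the technical heart of the argument, is the uniform-in-$s$ control required on the two boundary lines. The twisting unitary $|H|^{2is}$ does not commute with the conjugate operator $A$, nor with the Larenas--Soffer operator $\tilde A$ or with the weights $\langle x\rangle^\sigma$, so to run the bilinear forms of Theorem \ref{-1/2decay} and Proposition \ref{-3} one must bound $A(|H|^{is}\tilde u)$, $A^2(|H|^{is}\tilde u)$ and $\langle x\rangle^\sigma(|H|^{is}\tilde u)$ in $L^2$ with admissible growth in $s$. I would obtain these from commutator expansions such as $A|H|^{is}\tilde u=|H|^{is}A\tilde u+[A,|H|^{is}]\tilde u$, estimating $\|[A,|H|^{is}]\|\lesssim\langle s\rangle$ (and the iterated commutators by higher powers of $\langle s\rangle$) through the $C^{k}(A)$ regularity of $H$ proved above together with the functional calculus; the resulting polynomial-in-$s$ constants are harmless for Stein interpolation. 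A slightly more hands-on route avoids the complex family: setting $g(t)=t^{3/4}\psi_{P_{ac}u}(t)$ and invoking the inequality of \cite[Corollary 8.2]{GLS} in the form $|g(t)|\le c\langle t\rangle^{-1/2}\|g\|_{L^2}^{1/2}\|tg'\|_{L^2}^{1/2}$, the claim follows once $\int t^{3/2}|\psi|^2\,dt$ and $\int t^{7/2}|\psi'|^2\,dt$ are shown finite, i.e.\ once the fractional weighted $L^2_t$ bounds on $\psi$ are obtained by interpolating the integer-order bounds furnished by Theorem \ref{-1/2decay} and Proposition \ref{-3}; the same commutator estimates remain the crux in this formulation as well.
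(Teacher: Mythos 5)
Your proposal is essentially the paper's own proof: the paper defines $\phi(z)=\big|\langle |H|^{z/2}P_{ac}\tilde{u},\, e^{itH}|H|^{z/2}P_{ac}\tilde{u}\rangle\big|$ on the strip $0\leq \mathrm{Re}\,z\leq 1$, uses \eqref{-1/2} and \eqref{-3/2} to get $\phi(0)\leq c_{u}\langle t\rangle^{-1/2}$ and $\phi(1)\leq C_{u}\langle t\rangle^{-3/2}$, and applies Hadamard's three-line lemma at $z=3/4$ — exactly your complex-interpolation family up to reparametrizing the strip. If anything you are more careful than the paper, which records bounds only at the real endpoints and silently assumes uniform control along the boundary lines $\mathrm{Re}\,z=0,1$; your commutator analysis supplies precisely this, and it closes cleanly because $[H,i\tilde{A}]=4H$ gives $[\tilde{A},|H|^{is}]=4s\,|H|^{is}$, i.e.\ only linear growth in $s$, which Stein interpolation tolerates.
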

\begin{proof}
For $0\leq Re(z)\leq1$ and $z\in\mathbb{C}$, define function
\begin{equation*}
\phi(z)=\big|\langle |H|^{z/2}P_{ac}u, e^{itH}|H|^{z/2}P_{ac}u\rangle\big|.
\end{equation*}
Here we use that $HP_{ac}\geq0$. While through \eqref{-1/2} and \eqref{-3/2} we know
\begin{equation*}
  \phi(0)\leq c_{u}\langle t\rangle^{-1/2},\quad  \phi(1)\leq C_{u}\langle t\rangle^{-3/2},
\end{equation*}
and then by Hadamard's three line lemma, we have
\begin{equation*}
  |\psi_{P_{ac}u}(t)|=\phi(\frac{3}{4})\leq C^{\prime}_{u}(\langle t\rangle^{-\frac{1}{2}})^{1-\frac{3}{4}}(\langle   t\rangle^{-\frac{3}{2}})^{\frac{3}{4}}=C^{\prime}_{u}\langle t \rangle^{-\frac{5}{4}}.
\end{equation*}
\end{proof}
\begin{remark}
Notice that we use Jensen-Kato's strategy to obtain the Jensen-Kato type decay estimate \eqref{JK3}, \eqref{JKd} and the local decay estimate \eqref{local decay} for $H=(-\Delta)^2+V$ under the absence of positive embedded eigenvalues assumption. In fact, in case $\lambda>0$ is an embedded eigenvalue,  the same estimates hold replacing $H$ by $\bar{H}$. $\bar{H}:=\bar{P}H\bar{P}$ and $\bar{P}=1-P_{eign}$ where $P_{eign}$ denotes the projection onto the eigenspace. See Remark \ref{remove absence}. We remind the reader of the Fermi Golden Rule (see e.g. \cite{RS2, Golenia}). The Fermi Golden Rule states that, for simplicity isolated and simple, an embedded eigenvalue is unstable under a perturbation $V$ provided
\begin{equation*}
  \rm{Im}\Big(\lim_{\epsilon\downarrow 0}\langle V\psi, \bar{P}(\bar{H}-\lambda-i\epsilon)^{-1}\bar{P}V\psi\rangle\Big)\neq0.
\end{equation*}
The existence of the limit can be inferred from the limiting absorption principle. The limiting absorption principle can be deduced by positive commutator estimates, see e.g. \cite{ABG, Mourre, Georg-Gera-Moller}, provided there exists an operator $A$ such that $H$ and $A$ satisfy a Mourre estimate near $\lambda>0$ and $(\bar{H}-i)^{-1}$ admits two bounded commutators with $A$. Precisely, one needs the revised condition that $\bar{P}H\bar{P}$ is of class $C^{2}(A)$, see e.g. \cite{ABG, Georg-Gera-Moller} or in the section above. Since we localize energy around $\lambda$, Mourre estimate implies the local decay estimate around the positive embedded eigenvalue $\lambda$. Here, we remark that Ben-Artzi and Devinatz \cite{Ben-Devi} have established the limiting absorption principle for general Schr\"odinger type operator $H_{0}+V$ with short range potential.
\end{remark}

The abstract theory also can deal with functions of $H$. We will apply this theory to the operator $\sqrt{H+m^2}$ ($m>0$ large enough such that $H+m^2\geq0$) to get the Jensen-Kato type decay estimates. The difficulty is to prove $\sqrt{H+m^2}\in C^{1}(\tilde{A})$.
\begin{lemma}
$\sqrt{H+m^2}\in C^{1}(\tilde{A}).$
\end{lemma}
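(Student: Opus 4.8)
The plan is to reduce the membership $\sqrt{H+m^2}\in C^1(\tilde A)$ to the boundedness of a single commutator, and then to control that commutator through the integral representation of the square root, using only the resolvent-level facts already in hand, namely $H\in C^1(\tilde A)$ and the exact relation $[H,i\tilde A]=4H$.

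First I would set $L=H+m^2$ and fix $m$ large enough that $L\ge 1$, so that $W:=\sqrt L$ is a well-defined self-adjoint operator through the functional calculus. By the definition used in the paper, $W\in C^1(\tilde A)$ means that $e^{it\tilde A}(W-i)^{-1}e^{-it\tilde A}$ is of class $C^1\big(\mathcal{B}(L^2(\mathbb{R}^d))\big)$, which holds once $[\tilde A,(W-i)^{-1}]$ extends to a bounded operator. Using $(\sqrt L-i)(\sqrt L+i)=L+1$ I decompose
\begin{equation*}
(W-i)^{-1}=(\sqrt L+i)(L+1)^{-1}=\sqrt L\,(L+1)^{-1}+i(L+1)^{-1}.
\end{equation*}
The second summand is immediately of class $C^1(\tilde A)$: since $H\in C^1(\tilde A)$ and $[H,i\tilde A]=4H$ give $[\tilde A,L]=4iH$, the resolvent identity yields $[\tilde A,(L+1)^{-1}]=-4i\,(L+1)^{-1}H(L+1)^{-1}$, which is bounded because $H(L+1)^{-1}$ is bounded. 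Thus everything reduces to the single term $\sqrt L\,(L+1)^{-1}$.

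For this term I would invoke the norm-convergent Bochner representation
\begin{equation*}
\sqrt L\,(L+1)^{-1}=\frac1\pi\int_0^\infty L(L+\lambda)^{-1}(L+1)^{-1}\,\lambda^{-1/2}\,d\lambda,
\end{equation*}
valid because the scalar identity $\sqrt x=\tfrac1\pi\int_0^\infty \tfrac{x}{x+\lambda}\lambda^{-1/2}\,d\lambda$ holds for $x\ge1$ and the surplus factor $(L+1)^{-1}$ makes the integral converge in operator norm. Each integrand is a product of resolvents of $L$, hence lies in $C^1(\tilde A)$, and I would commute $\tilde A$ inside the integral, expanding by the Leibniz rule using
\begin{equation*}
[\tilde A,L]=4iH,\qquad [\tilde A,(L+\lambda)^{-1}]=-4i\,(L+\lambda)^{-1}H(L+\lambda)^{-1}.
\end{equation*}
This produces finitely many terms, each a product of the resolvents $(L+\lambda)^{-1},(L+1)^{-1}$ and factors $H=L-m^2$, integrated against $\lambda^{-1/2}\,d\lambda$.

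Finally I would estimate each term in operator norm by the spectral theorem for $L$: writing the spectral variable $x\ge1$, every resulting multiplier is dominated by $C(1+\lambda)^{-1}$ uniformly in $x$ (the large-$x$ decay coming from the surplus resolvent factors, the $\lambda$-decay from the factors $(L+\lambda)^{-1}$), so that $\int_0^\infty (1+\lambda)^{-1}\lambda^{-1/2}\,d\lambda<\infty$ controls the whole expression. Hence $[\tilde A,\sqrt L\,(L+1)^{-1}]$ extends to a bounded operator, and combining with the previous step gives the boundedness of $[\tilde A,(W-i)^{-1}]$, i.e. $\sqrt{H+m^2}\in C^1(\tilde A)$. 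The \emph{main obstacle} is the rigorous justification of differentiating (equivalently, commuting $\tilde A$) under the Bochner integral uniformly in the conjugation parameter, which rests precisely on the uniform spectral estimates above. A slicker but more delicate alternative would instead integrate the Heisenberg relation $[H,i\tilde A]=4H$ to the exact dilation $e^{it\tilde A}He^{-it\tilde A}=e^{-4t}H$ and reduce to the norm-$C^1$ character of $(\sqrt{e^{-4t}H+m^2}-i)^{-1}=\varphi_t(H)$, whose $t$-derivative is the bounded multiplier $\partial_t\varphi_t(x)=2e^{-4t}x\big(\sqrt{e^{-4t}x+m^2}\big)^{-1}(\sqrt{e^{-4t}x+m^2}-i)^{-2}$; there the obstacle migrates to establishing that exact scaling rigorously from the $C^1(\tilde A)$ property.
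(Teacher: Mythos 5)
Your proposal is correct and takes essentially the same route as the paper: both reduce $\sqrt{H+m^2}\in C^{1}(\tilde{A})$ to the boundedness (equivalently, the norm-$C^1$ character of the conjugated family) of the commutator of $\tilde{A}$ with the resolvent of $\sqrt{H+m^2}$, and both control that commutator via the Reed--Simon square-root integral representation, commuting $\tilde{A}$ under the $\lambda$-integral of resolvents $(\lambda+H+m^2)^{-1}$ and using $H\in C^{1}(\tilde{A})$ together with $[H,i\tilde{A}]=4H$. Your only deviation is cosmetic but slightly tidier: writing $L:=H+m^2$ and splitting $(\sqrt{L}-i)^{-1}=\sqrt{L}\,(L+1)^{-1}+i(L+1)^{-1}$ folds a regularizing resolvent into the integrand before commuting, which makes the norm convergence of the Bochner integral immediate, whereas the paper applies the formula to $[\sqrt{H+m^2},i\tilde{A}]$ directly and sandwiches with $(\sqrt{H+m^2}-i)^{-1}$ afterwards.
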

\begin{proof}
By the definition of $C^{1}(\tilde{A})$, we need to show the operator valued function
$$g(t)=e^{it\tilde{A}}(\sqrt{H+m^2}-i)^{-1}e^{-it\tilde{A}}\in C^{1}\big(\mathcal{B}(L^{2}(\mathbb{R}^{d}))\big),\,\, t\in \mathbb{R}.$$
First, since $\tilde{A}$ is self-adjoint then
\begin{equation*}
  \|g(t)\|_{L^{2}\rightarrow L^{2}}
  =\|(\sqrt{H+m^2}-i)^{-1}\|_{L^{2}\rightarrow L^{2}}
  \leq 1.
\end{equation*}
Further, since we have proven that $H\in C^{1}(\tilde{A})$ and,
\begin{equation*}
 \begin{split}
  g^{\prime}(t)&=-e^{it\tilde{A}}[(\sqrt{H+m^2}-i)^{-1},\,\, i\tilde{A}]e^{-it\tilde{A}}\\
               &=e^{it\tilde{A}}(\sqrt{H+m^2}-i)^{-1}[\sqrt{H+m^2},\,\, i\tilde{A}](\sqrt{H+m^2}-i)^{-1}e^{-it\tilde{A}}.
 \end{split}
\end{equation*}
We claim that $$e^{it\tilde{A}}[\sqrt{H+m^2},\,\, i\tilde{A}]e^{-it\tilde{A}}\in C(\mathcal{B}(L^{2}(\mathbb{R}^{d}))).$$
In fact, by the square root formula \cite[P. 316]{RS1}
\begin{equation*}
  \sqrt{H+m^2}=\frac{1}{\pi}\int_{0}^{\infty}\frac{\lambda^{-1/2}}{\lambda+H+m^2}(H+1)d\lambda,
\end{equation*}
and then
\begin{equation*}
 \begin{split}
  [\sqrt{H+m^2},i\tilde{A}]&=\frac{1}{\pi}\int_{0}^{\infty}\lambda^{-1/2}[\frac{1}{\lambda+H+m^2}(H+m^2),\,\, i\tilde{A}]d\lambda\\
                         &=\frac{1}{\pi}\int_{0}^{\infty}-\lambda^{-1/2}[\frac{1}{\lambda+H+m^2},\,\, i\tilde{A}]d\lambda\\
                         &=\frac{1}{\pi}\int_{0}^{\infty}\lambda^{-1/2}\frac{1}{\lambda+H+m^2}[H,\,\, i\tilde{A}]\frac{1}{\lambda+H+m^2}d\lambda
 \end{split}
\end{equation*}
Since $H\in C^{1}(\tilde{A})$, thus $g^{\prime}(t)$ is bounded.
\end{proof}
Following the same argument as in Remark \ref{highenergy LS} for $\sqrt{H+m^2}$, we have
\begin{theorem}
Under the same assumptions as given in Theorem \ref{local decay thm}, and $V$ satisfies $x\cdot\nabla V\in L^{\infty}(\mathbb{R}^{d})$. Then for $u\in\mathcal{D}(\sqrt{H+m^2})\cap\mathcal{D}(A)$ and $\langle x\rangle^{\sigma}u\in L^{2}(\mathbb{R}^{d})$ with $\sigma>1/2$, we have
\begin{equation}
|\tilde{\psi}_{P_{ac}u}(t)|=O(t^{-1/2}),\,\,t\rightarrow \infty,
\end{equation}
with $\tilde{\psi}_{u}(t)=\langle u, e^{-it\sqrt{H+m^2}}u\rangle$.
\end{theorem}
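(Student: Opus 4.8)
The plan is to run the proof of Theorem~\ref{-1/2decay} with $H$ replaced by the half-wave generator $\rho:=\sqrt{H+m^2}$ and with the \emph{same} Larenas--Soffer conjugate operator $\tilde A$. The structural input is supplied by the preceding Lemma, which gives $\rho\in C^1(\tilde A)$; together with a local decay estimate this is all the abstract machinery of \cite{GLS} requires. Writing $\tilde\psi_{P_{ac}u}(t)=\langle P_{ac}u,e^{-it\rho}P_{ac}u\rangle$, I would first quote the interpolation inequality \cite[Corollary~8.2]{GLS}, now for $\rho$,
\begin{equation*}
|\tilde\psi_{P_{ac}u}(t)|\le c\,\langle t\rangle^{-1/2}\,\|\tilde\psi_{P_{ac}u}\|_{L^2(\mathbb{R})}^{1/2}\,\|t\,\tilde\psi_{P_{ac}u}'\|_{L^2(\mathbb{R})}^{1/2},
\end{equation*}
so that the theorem reduces to the two statements $\tilde\psi_{P_{ac}u}\in L^2(\mathbb{R}_t)$ and $t\,\tilde\psi_{P_{ac}u}'\in L^2(\mathbb{R}_t)$.

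The first statement is a local decay estimate for the half-wave propagator, i.e. Kato $\rho$-smoothness of $\langle x\rangle^{-\sigma}$ on $\mathrm{Ran}\,P_{ac}$. This does \emph{not} come for free from Theorem~\ref{local decay thm}: under the spectral substitution $\mu\mapsto\lambda=\mu^2-m^2$ one has $E_\rho'(\mu)=2\mu\,E_H'(\mu^2-m^2)$, so the Jacobian $2\mu\sim\sqrt\lambda$ must be controlled at high energy. Here the sharp high-energy bound $\|\langle x\rangle^{-\sigma}E_H'(\lambda)\langle x\rangle^{-\sigma}\|=O(\lambda^{-3/4})$ of Proposition~\ref{spectraldesitylarge} is exactly what is needed, since $\sqrt\lambda\cdot\lambda^{-3/4}=\lambda^{-1/4}\to0$; near the threshold the density is bounded by Proposition~\ref{Elambda}. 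Hence $\sup_\mu\|\langle x\rangle^{-\sigma}E_\rho'(\mu)\langle x\rangle^{-\sigma}\|<\infty$, and inserting the weights $\langle x\rangle^{\pm\sigma}$ and using Cauchy--Schwarz gives $\tilde\psi_{P_{ac}u}\in L^2_t$ just as in Theorem~\ref{-1/2decay}.

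For the second statement I would exploit the commutator, computed through the square-root formula exactly as in the preceding Lemma,
\begin{equation*}
[\rho,i\tilde A]=\frac{1}{\pi}\int_0^\infty\lambda^{-1/2}\,\frac{1}{\lambda+H+m^2}\,[H,i\tilde A]\,\frac{1}{\lambda+H+m^2}\,d\lambda=2H(H+m^2)^{-3/2}.
\end{equation*}
From the resulting identity $[e^{-it\rho},\tilde A]=t\,[\rho,i\tilde A]\,e^{-it\rho}$ and the high-energy device of Remark~\ref{highenergy LS}, I would insert $[\rho,i\tilde A]^{-1}$ to convert $t\,\rho\,e^{-it\rho}$ into objects to which local decay applies, distributing $\tilde A$ onto the two factors $P_{ac}u$ (legitimate since $u\in\mathcal{D}(A)$ and $\langle x\rangle^\sigma u\in L^2$). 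Unlike the clean relation $[H,i\tilde A]^{-1}=(4H)^{-1}$ used in Remark~\ref{highenergy LS}, the multiplier $\rho\,[\rho,i\tilde A]^{-1}=(H+m^2)^2/(2H)$ is not bounded; its growth $\sim H/2$ at infinity is absorbed by the hypothesis $u\in\mathcal{D}(\rho)$ (so that $\rho\,P_{ac}u\in L^2$), which is precisely why the domain condition $u\in\mathcal{D}(\sqrt{H+m^2})$ appears.

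The hard part will be the threshold $H=0$. For $H$ the factor $\lambda$ in $\psi_{P_{ac}u}'$ vanishes there and produces the extra decay that puts $t\psi_{P_{ac}u}'$ in $L^2$; for the half-wave the generator $\rho$ equals $m\neq0$ at the threshold (nonzero group velocity), so $[\rho,i\tilde A]=2H(H+m^2)^{-3/2}$ degenerates while $\rho$ does not, and the multiplier $\rho\,[\rho,i\tilde A]^{-1}$ carries a $1/H$ singularity at $\mu=m$. Taming this singularity is the crux: I would localize the energy near the threshold and replace the commutator argument there by the explicit Jensen--Kato expansion of $E_\rho'$ inherited from Proposition~\ref{Elambda}, for which the regularity of $0$ forces $E_H'(\lambda)\sim\lambda^{d/4-1}\to0$; the resulting low-energy contribution decays like $t^{-d/4}$ (like $t^{-5/4}$ when $d=3$), comfortably faster than $t^{-1/2}$. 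Combining the bounded-energy commutator estimate, the high-energy estimate, and the threshold expansion in the interpolation inequality then yields the asserted $|\tilde\psi_{P_{ac}u}(t)|=O(t^{-1/2})$.
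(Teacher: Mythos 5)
Your skeleton (reduce via \cite[Corollary 8.2]{GLS} to $\tilde\psi_{P_{ac}u}\in L^2_t$ and $t\,\tilde\psi'_{P_{ac}u}\in L^2_t$, then use the conjugate operator $\tilde A$) is the paper's, and your first step is sound: uniform boundedness of the weighted half-wave density, $\sup_\mu 2\mu\,\|\langle x\rangle^{-\sigma}E_H'(\mu^2-m^2)\langle x\rangle^{-\sigma}\|<\infty$, follows with $\sigma>1/2$ from Proposition \ref{spectraldesitylarge} at high energy and mere \emph{boundedness} (not the expansion) of the weighted density near zero. But your commutator is miscomputed. Writing $\frac{H+m^2}{\lambda+H+m^2}=1-\frac{\lambda}{\lambda+H+m^2}$ in the square-root formula, the commutator picks up a factor $\lambda$, so the integrand carries $\lambda^{+1/2}$, not $\lambda^{-1/2}$, and
$[\rho,i\tilde A]=\frac{4H}{\pi}\int_0^\infty\lambda^{1/2}(\lambda+H+m^2)^{-2}\,d\lambda=2H(H+m^2)^{-1/2}$,
i.e. (up to a constant) the paper's $[\tilde H,\tilde A]=\tilde H-m^2\tilde H^{-1}$, not $2H(H+m^2)^{-3/2}$. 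Hence $\rho\,[\rho,i\tilde A]^{-1}=(H+m^2)/(2H)$ is \emph{bounded} at high energy: the growth $\sim H/2$ you set out to absorb is an artifact, and your proposed absorption via $u\in\mathcal{D}(\rho)$ would not have worked anyway, since $\mathcal{D}(\sqrt{H+m^2})=\mathcal{H}^2$ controls only $H^{1/2}u$, one full power of $H$ short of what a multiplier of size $H$ requires. With the corrected commutator the high-energy piece runs exactly as in Remark \ref{highenergy LS} and gives $O(t^{-1})$, as in the paper.

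The genuine gap is your treatment of the threshold. You correctly identify the $1/H$ singularity of $\rho\,[\rho,i\tilde A]^{-1}$ at $H=0$, but your repair — localize near the threshold and invoke the Jensen--Kato expansion $E_H'(\lambda)\sim\lambda^{d/4-1}$ from Proposition \ref{Elambda} — is not available under the stated hypotheses: for $d\geq5$ that proposition requires $\beta>d$ (resp. $\beta>d+4$ for even $d$) and weights $\sigma>d/2$ (resp. $d/2+2$) on $u$, whereas the theorem assumes only the hypotheses of Theorem \ref{local decay thm} (for $d\geq5$ merely $\beta>(d+1)/2$) and $\langle x\rangle^\sigma u\in L^2$ for \emph{some} $\sigma>1/2$; so as written your argument proves the statement only under substantially stronger assumptions. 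The paper closes this point within $\sigma>1/2$ by a phase shift: since $|\tilde\psi_{P_{ac}u}(t)|$ is unchanged upon replacing $e^{-it\tilde H}$ by $e^{-it(\tilde H-m)}$, differentiation brings down the factor $\tilde H-m$, which vanishes at the threshold at exactly the same linear rate in $H$ as the commutator, so that $\chi_{<1}(H)\,(\tilde H-m)\,[\tilde H,\tilde A]^{-1}=\chi_{<1}(H)\,\tilde H(\tilde H+m)^{-1}$ is bounded and no expansion of $E'$ is needed; the local decay estimate for the shifted, low-energy-localized propagator then follows from the uniform weighted resolvent bound obtained via $(\tilde H-m-z)^{-1}=(\tilde H+m+z)\bigl(H+m^2-(m+z)^2\bigr)^{-1}$ and the limiting absorption principle for $H$, after which the argument of Theorem \ref{-1/2decay} applies verbatim. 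Your route would in principle yield faster low-energy decay ($t^{-d/4}$), but only after strengthening the hypotheses on $\beta$ and on the weight carried by $u$ — which is precisely what the paper's cancellation trick avoids.
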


\begin{proof}
Denote $\tilde{H}=\sqrt{H+m^2}$, and then we divided $|\tilde{\psi}_{P_{ac}u}(t)|$ into high energy part and low energy part:
\begin{equation*}
  |\tilde{\psi}_{P_{ac}u}(t)|=|\langle P_{ac}u,\, \chi_{\geq1}(H)e^{-it\tilde{H}}P_{ac}u\rangle+\langle P_{ac}u,\, \chi_{<1}(H)e^{-it\tilde{H}}P_{ac}u\rangle|.
\end{equation*}

For the high energy part $\langle P_{ac}u,\, \chi_{\geq1}(H)e^{-it\tilde{H}}P_{ac}u\rangle$, we have
\begin{equation*}
  \begin{split}
  &\big|~t~\langle P_{ac}u,\, \chi_{\geq1}(H)e^{-it\tilde{H}}P_{ac}u\rangle\big|
  =\big|\langle P_{ac}u,\, \frac{\chi_{\geq1}(H)}{[\tilde{H}, \tilde{A}]}~t[\tilde{H}, \tilde{A}]e^{-it\tilde{H}}P_{ac}u\rangle\big|\\
  = &\big|\langle P_{ac}u,\, \frac{\chi_{\geq1}(H)}{[\tilde{H}, \tilde{A}]}[e^{-it\tilde{H}}, \tilde{A}]P_{ac}u\rangle\big|
  = \big|\langle P_{ac}u,\, \frac{\chi_{\geq1}(H)\tilde{H}}{4H}[e^{-it\tilde{H}}, \tilde{A}]P_{ac}u\rangle\big|\\
  \leq &\big|\langle \frac{\chi_{\geq1}(H)\tilde{H}}{4H}P_{ac}u,\, e^{-it\tilde{H}}\tilde{A}P_{ac}u\rangle\big|+\big|\langle \tilde{A}\frac{\chi_{\geq1}(H)\tilde{H}}{4H}P_{ac}u,\, e^{-it\tilde{H}}P_{ac}u\rangle\big|\\
  \lesssim & \|u\|_{L^{2}(\mathbb{R}^{d})}\Big(\|\tilde{A}u\|_{L^{2}(\mathbb{R}^{d})}+\big\|\tilde{A}\frac{\chi_{\geq1}(H)\tilde{H}}{4H}P_{ac}u\big\|_{L^{2}(\mathbb{R}^{d})}\Big)
  <\infty.
  \end{split}
\end{equation*}
Here we used the fact that $\frac{\chi_{\geq1}(H)\tilde{H}}{4H}$ is bounded in $L^{2}(\mathbb{R}^{d})$. So that for the high energy part we have
\begin{equation}
  \big|\langle P_{ac}u,\, \chi_{\geq1}(H)e^{-it\tilde{H}}P_{ac}u\rangle\big|=O(|t|^{-1}),\,\,t\rightarrow\infty.
\end{equation}

For the low energy part, we claim that
\begin{equation}
 \big|\langle P_{ac}u,\, \chi_{<1}(H)e^{-it\tilde{H}}P_{ac}u\rangle\big|=O(|t|^{-1/2}),\,\,t\rightarrow\infty.
\end{equation}
Note that $\big|\langle P_{ac}u,\, \chi_{<1}(H)e^{-it\tilde{H}}P_{ac}u\rangle\big|=\big|\langle P_{ac}u,\, \chi_{<1}(H)e^{-it(\tilde{H}-m)}P_{ac}u\rangle\big|$, and then we use the same approach as in Theorem \ref{-1/2decay} to show:
\begin{equation*}
  \big|\langle P_{ac}u,\, \chi_{<1}(H)e^{-it(\tilde{H}-m)}P_{ac}u\rangle\big|=O(|t|^{-1/2}),\,\,t\rightarrow\infty.
\end{equation*}

Now, we need only to show that
\begin{equation}\label{tilde1}
\big|\langle P_{ac}u,\, \chi_{<1}(H)e^{-it(\tilde{H}-m)}P_{ac}u\rangle\big|\in L^{2}_{t}(\mathbb{R}),
\end{equation}
\begin{equation}\label{tilde2}
\big|~t~\langle P_{ac}u,\, \chi_{<1}(H)(\tilde{H}-m)e^{-it(\tilde{H}-m)}P_{ac}u\rangle\big|\in L^{2}_{t}(\mathbb{R}).
\end{equation}

In fact, we have
\begin{equation*}
  \begin{split}
       &\sup_{0\leq|z|<\infty}\big\|\langle x\rangle^{-\sigma}\chi_{<1}(H)(\tilde{H}-m-z)^{-1}P_{ac}\langle x\rangle^{-\sigma}\big\|_{L^{2}\rightarrow L^{2}}\\
  \leq &\sup_{0\leq|z|<1}\big\|\langle x\rangle^{-\sigma}\chi_{<1}(H)(\tilde{H}-m-z)^{-1}P_{ac}\langle x\rangle^{-\sigma}\big\|_{L^{2}\rightarrow L^{2}}\\
       &+\sup_{1\leq|z|<\infty}\big\|\langle x\rangle^{-\sigma}\chi_{<1}(H)(\tilde{H}-m-z)^{-1}P_{ac}\langle x\rangle^{-\sigma}\big\|_{L^{2}\rightarrow L^{2}}\\
    =  &\sup_{0\leq|z|<1}\big\|\langle x\rangle^{-\sigma}\chi_{<1}(H)(\tilde{H}+m+z)(H+m^2-(m+z)^2)^{-1}P_{ac}\langle x\rangle^{-\sigma}\big\|_{L^{2}\rightarrow L^{2}}\\
       &+\sup_{1\leq|z|<\infty}\big\|\langle x\rangle^{-\sigma}\chi_{<1}(H)(\tilde{H}-m-z)^{-1}P_{ac}\langle x\rangle^{-\sigma}\big\|_{L^{2}\rightarrow L^{2}}<\infty.\\
  \end{split}
\end{equation*}
So that by the Corollary of \cite[P.146]{RS2}, we have the local decay for the low energy part:
\begin{equation}\label{localdecaytilde}
  \int_{\mathbb{R}}\big\|\langle x\rangle^{-\sigma}\chi_{<1}(H)e^{-it(\tilde{H}-m)}P_{ac}\big\|^{2}_{L^{2}\rightarrow L^{2}}dt<\infty.
\end{equation}
Thus \eqref{tilde1} follows by \eqref{localdecaytilde}.

Observe that $[\tilde{H}, \tilde{A}]=\tilde{H}-m^2\tilde{H}^{-1}$ since we have proved $\tilde{H}\in C^{1}(\tilde{A})$. For \eqref{tilde2}, we have
\begin{equation*}
 \begin{split}
   &\big|~t~\langle P_{ac}u,\, \chi_{<1}(H)(\tilde{H}-m)e^{-it(\tilde{H}-m)}P_{ac}u\rangle\big|\\
  =&\big|\langle P_{ac}u,\, \chi_{<1}(H)(\tilde{H}-m)[\tilde{H}-m, \tilde{A}]^{-1}~t~[\tilde{H}-m, \tilde{A}]e^{-it(\tilde{H}-m)}P_{ac}u\rangle\big|\\
  =&\big|\langle P_{ac}u,\, \chi_{<1}(H)\tilde{H}(\tilde{H}+m)^{-1}[e^{-it(\tilde{H}-m)}, \tilde{A}]P_{ac}u\rangle\big|.\\
 \end{split}
\end{equation*}
Following the same argument as what we did for $-it\psi^{\prime}_{P_{ac}u}(t)$ in Theorem \ref{-1/2decay}, and the estimate \eqref{localdecaytilde} again yields that the \eqref{tilde2} holds.

\end{proof}

\section{Appendix}
\setcounter{equation}{0}
\noindent{\bf \large{A1}: Proof of Theorem \ref{resolventexpansion3}}
\begin{lemma}\label{inverse formula}
(\cite[Corollary 2.2]{JN2, JN}) Let $F\subset\mathbb{C}$ have zero as an accumulation point. Let $T(z), z\in F$ be a family of bounded operators of the form
\begin{equation*}
  T(z)=T_{0}+zT_{1}(z)
\end{equation*}
with $T_{1}(z)$ uniformly bounded as $z\rightarrow0$. Suppose $0$ is an isolated point of the spectrum of $T_{0}$, and let $S$ be the corresponding Riesz projection. If $T_{0}S=0$, then for sufficient small $z\in F$ the operator $\widetilde T(z):S\mathcal{H}\rightarrow S\mathcal{H}$ defined by
\begin{equation}\label{equ:61}
  \widetilde{T}(z)=\frac{1}{z}(S-S(T(z)+S)^{-1}S)=\sum_{j=0}^{\infty}(-1)^{j}z^{j}S[T_{1}(z)(T_{0}+S)^{-1}]^{j+1}S
\end{equation}
is uniformly bounded as $z\rightarrow0$. The operator $T(z)$ has a bounded inverse in $\mathcal{H}$ if and only if $\widetilde{T}(z)$ has a bounded inverse in $S\mathcal{H}$, and in this case
\begin{equation}\label{equ:62}
  T(z)^{-1}=(T(z)+S)^{-1}+\frac{1}{z}(T(z)+S)^{-1}S\widetilde{T}(z)^{-1}S(T(z)+S)^{-1}.
\end{equation}
\end{lemma}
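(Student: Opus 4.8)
The plan is to follow the Grushin/Feshbach reduction of Jensen and Nenciu, organizing everything around the auxiliary operator $T(z)+S$. First I would record the two algebraic facts that drive the whole argument. Since $S$ is the Riesz projection attached to the isolated spectral point $0$ of $T_0$, it is idempotent ($S^2=S$) and commutes with $T_0$; combined with the hypothesis $T_0S=0$ this forces $T_0+S$ to be invertible on $\mathcal{H}$, because on $\mathrm{Ran}\,S$ it reduces to the identity while on $\mathrm{Ran}(I-S)$ it agrees with the restriction of $T_0$, whose spectrum is bounded away from $0$. As $(T_0+S)^{-1}$ commutes with $S$ and is the identity on $\mathrm{Ran}\,S$, I obtain the crucial normalizations $S(T_0+S)^{-1}=(T_0+S)^{-1}S=S$, which I will use repeatedly. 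Throughout I may rely only on $S^2=S$ and these commutation/normalization relations, since $S$ is a Riesz (not orthogonal) projection.

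Next I would establish invertibility of $T(z)+S$ and the series for $\widetilde T(z)$. Writing $T(z)+S=(T_0+S)\bigl(I+z(T_0+S)^{-1}T_1(z)\bigr)$ and using that $zT_1(z)\to0$, a Neumann series shows $B(z):=(T(z)+S)^{-1}$ exists and is uniformly bounded for $|z|$ small. Expanding that Neumann series, compressing by $S$ on both sides, and applying $S(T_0+S)^{-1}=(T_0+S)^{-1}S=S$ together with $S^2=S$ to collapse all interior factors, I recover precisely the sum in \eqref{equ:61}; since the $j$-th term carries a factor $z^j$, the division by $z$ in $\widetilde T(z)=\frac1z(S-SB(z)S)$ is harmless and $\widetilde T(z)$ is uniformly bounded as $z\to0$.

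For the equivalence and the inversion formula I would argue in two directions. Assuming $\widetilde T(z)$ is invertible on $S\mathcal{H}$, I set $R(z):=B(z)+\frac1z B(z)S\widetilde T(z)^{-1}SB(z)$ and verify $T(z)R(z)=R(z)T(z)=I$ directly, using $T(z)B(z)=I-SB(z)$, $B(z)T(z)=I-B(z)S$, and the identity $SB(z)S=S-z\widetilde T(z)$ read off from the definition of $\widetilde T(z)$; the two $z^{-1}$-singular contributions cancel, yielding \eqref{equ:62}. For the converse, assuming $T(z)^{-1}$ exists, I compress the two second-resolvent identities $B(z)=T(z)^{-1}-B(z)ST(z)^{-1}$ and $B(z)=T(z)^{-1}-T(z)^{-1}SB(z)$ by $S$; setting $X:=SB(z)S$ and $Y:=ST(z)^{-1}S$ on $S\mathcal{H}$ and using $S^2=S$, these reduce to $X=Y-XY$ and $X=Y-YX$, from which $z(S+Y)=z\bigl(S+ST(z)^{-1}S\bigr)$ is seen to be a two-sided inverse of $\widetilde T(z)=\frac1z(S-X)$, so $\widetilde T(z)$ is invertible.

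The hard part will be the bookkeeping rather than any deep analytic input: I must collapse the compressed Neumann series to the clean form \eqref{equ:61}, and confirm that the singular $z^{-1}$ terms in $T(z)R(z)$ cancel, all while using only $S^2=S$ and the commutation relations with $(T_0+S)^{-1}$, since no self-adjointness of $S$ is available. Once the normalizations $S(T_0+S)^{-1}=(T_0+S)^{-1}S=S$ are in hand, the remaining steps are purely algebraic manipulations of bounded operators.
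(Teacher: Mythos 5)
Your proposal is correct: the normalizations $S(T_0+S)^{-1}=(T_0+S)^{-1}S=S$ (from $T_0S=ST_0=0$ and $S^2=S$), the factorization $T(z)+S=(T_0+S)\bigl(I+z(T_0+S)^{-1}T_1(z)\bigr)$ with its compressed Neumann series, the direct verification of \eqref{equ:62} via $S-S(T(z)+S)^{-1}S=z\widetilde T(z)$, and the converse via the compressed identities $X=Y-XY=Y-YX$ giving $\widetilde T(z)^{-1}=z\bigl(S+ST(z)^{-1}S\bigr)$ all check out. The paper itself gives no proof of this lemma, quoting it as Corollary 2.2 of Jensen--Nenciu, and your argument is essentially that reference's Schur-complement/Feshbach proof, so there is nothing to compare beyond noting that you have correctly reconstructed it.
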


\begin{lemma}
 Let $\langle x\rangle^{\kappa} V\in L^{2}(\mathbb{R}^3)$ with some $\kappa$ ($\kappa>9$) sufficient large and let $p$ be the largest integer satisfying  $\kappa>2p+5$, then $M(\mu)-\frac{(1+i)\alpha}{8\pi}P\mu^{-1}-U$ is a uniformly bounded operator valued function in
\begin{equation*}
  E=\{\mu~|~\mathrm{Re}\, \mu>0, |\mu|\leq1\}
\end{equation*}
and has the following asymptotic expansion for small $\mu\in E$
\begin{equation}\label{equ:59}
  M(\mu)=\frac{(1+i)\alpha}{8\pi}P\mu^{-1}+\sum_{j=0}^{p-1}M_{j}\mu^{j}+\mu^{p}\mathfrak{R}_{0}(\mu,|x-y|)
\end{equation}
where $P=\alpha^{-1}\langle v, \cdot\rangle,\,\, \alpha=\|v\|^{2}$
and $M_{0}-U$, $M_{j},j=1,2,3,\cdots,p-1$ are integral operators given by the kernels
\begin{equation*}
  (M_{0}-U)(|x-y|)=-\frac{1}{8\pi}v(x)|x-y|v(y),
\end{equation*}
\begin{equation*}
  M_{j}(|x-y|)=\frac{(-1)^{j}(1-i^{j})}{8\pi(j+2)!}v(x)|x-y|^{j+1}v(y),
\end{equation*}
\begin{equation*}
  \mathfrak{R}_{0}(\mu,|x-y|)=v(x)\frac{1}{8\pi(p+2)!}\frac{1}{\mu^{2}|x-y|}\int_{0}^{\mu}(e^{it|x-y|}-e^{-t|x-y|})^{(p+3)}(\mu-t)^{p+2}dt v(y),
\end{equation*}
and $\mathfrak{R}_{0}(\mu)$ is uniformly bounded in norm. Further, the operators $M_{0}-U$, $M_{j}$ are compact.
\end{lemma}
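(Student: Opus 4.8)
The plan is to start from the explicit three-dimensional free-resolvent kernel recorded above. Writing $r=|x-y|$ and using that $z=\mu^{4}$ with $\mu$ in the first quadrant gives $z^{1/4}=\mu$ and $z^{1/2}=\mu^{2}$, the operator $vR(H_{0};\mu^{4})v$ has kernel
\begin{equation*}
  v(x)\,\frac{1}{8\pi}\,\frac{e^{i\mu r}-e^{-\mu r}}{\mu^{2}r}\,v(y),
\end{equation*}
so that $M(\mu)=U+vR(H_{0};\mu^{4})v$ is entirely governed by the scalar function $f(\mu):=e^{i\mu r}-e^{-\mu r}$. First I would Taylor expand $f$ about $\mu=0$ with the integral form of the remainder; since $f(0)=0$ and $f^{(k)}(0)=\big(i^{k}-(-1)^{k}\big)r^{k}$, dividing the resulting expansion by $8\pi\mu^{2}r$ reorganizes the kernel of $M(\mu)-U$ into a finite sum of powers of $\mu$ plus an integral remainder.

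Carrying out this division term by term identifies each coefficient. The $k=1$ term produces the $r$-independent kernel $\frac{1+i}{8\pi}\mu^{-1}v(x)v(y)$; as the operator with kernel $\alpha^{-1}v(x)v(y)$ is exactly the rank-one projection $P=\alpha^{-1}\langle v,\cdot\rangle$ with $\alpha=\|v\|^{2}$, this term is $\frac{(1+i)\alpha}{8\pi}P\mu^{-1}$. The $k=2$ term gives the $\mu$-independent kernel $-\frac{1}{8\pi}v(x)r\,v(y)$, which together with $U$ is $M_{0}$, while $k=j+2$ for $1\le j\le p-1$ produces the kernels $M_{j}(r)$ of the statement; the leftover integral, after dividing by $\mu^{2}r$, is the announced remainder $\mathfrak R_{0}(\mu,r)$. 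This yields $M(\mu)-\frac{(1+i)\alpha}{8\pi}P\mu^{-1}-U=(M_{0}-U)+\sum_{j=1}^{p-1}M_{j}\mu^{j}+\mu^{p}\mathfrak R_{0}(\mu)$, after which only mapping properties remain.

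The boundedness and compactness I would obtain from Hilbert--Schmidt bounds. For $M_{0}-U$ and $M_{j}$ the squared Hilbert--Schmidt norm is $\int\!\!\int|v(x)|^{2}r^{2(j+1)}|v(y)|^{2}\,dx\,dy$, and using $r^{2(j+1)}\lesssim\langle x\rangle^{2(j+1)}\langle y\rangle^{2(j+1)}$ this is dominated by $\big(\int\langle x\rangle^{2(j+1)}|V(x)|\,dx\big)^{2}$. Splitting $\langle x\rangle^{2(j+1)}|V|=\langle x\rangle^{2(j+1)-\kappa}\cdot\langle x\rangle^{\kappa}|V|$ and applying Cauchy--Schwarz reduces finiteness to $\langle\cdot\rangle^{\kappa}V\in L^{2}$ together with $\langle\cdot\rangle^{2(j+1)-\kappa}\in L^{2}(\mathbb{R}^{3})$, and the latter holds for all $j\le p-1$ precisely because $\kappa>2p+5$; hence every $M_{j}$ is Hilbert--Schmidt, so bounded and compact. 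For the remainder the key point is that on the segment $t\in[0,\mu]$ with $\mu$ in the first quadrant one has $|e^{itr}|\le1$ and $|e^{-tr}|\le1$, so the high derivatives of $f$ entering $\mathfrak R_{0}$ are bounded by pure powers of $r$ uniformly in $t$; the substitution $t=s\mu$ then bounds the remainder kernel by $C\,v(x)\,r^{p+1}\,v(y)$ with $C$ independent of $\mu\in E$, and the same Hilbert--Schmidt estimate (again covered by $\kappa>2p+5$) yields uniform boundedness of $\mathfrak R_{0}(\mu)$ on $E$. Together with $|\mu|\le1$ this proves that $M(\mu)-\frac{(1+i)\alpha}{8\pi}P\mu^{-1}-U$ is uniformly bounded on $E$.

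The main obstacle is precisely this last step: controlling the Taylor remainder uniformly over all of $E$ requires using simultaneously the complex directionality of $\mu$ (to bound the exponentials, and thus the high derivatives of $f$, by pure powers of $r$ along the integration path) and the polynomial decay of $V$ (to convert these $r$-powers into a finite Hilbert--Schmidt norm). Tracking the exact surviving power of $r$ and matching it against the threshold $\kappa>2p+5$ is the delicate bookkeeping; by contrast, the identification of the singular rank-one term and the algebra of the coefficients $i^{k}-(-1)^{k}$ are then routine.
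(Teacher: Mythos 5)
Your proposal is correct and takes essentially the same route as the paper: the paper's own proof just substitutes its free-resolvent expansion \eqref{sqlap odd} into $M(\mu)=U+vR(H_{0};\mu^{4})v$ --- which in $d=3$ is precisely the Taylor expansion of the explicit kernel $\frac{e^{i\mu r}-e^{-\mu r}}{8\pi\mu^{2}r}$ that you use --- reads off the rank-one singular term $\frac{(1+i)\alpha}{8\pi}P\mu^{-1}$ and the coefficients $M_{j}$ by reindexing, and verifies boundedness and compactness via Hilbert--Schmidt norms, exactly as you do. Your explicit control of the integral remainder (using $|e^{itr}|,|e^{-tr}|\le 1$ along the segment $[0,\mu]$, which indeed requires $\mu$ in the first quadrant as the paper's branch choice dictates, and matching the surviving power of $r$ against $\kappa>2p+5$) is carried out in more detail than in the paper, which asserts the uniform bound on $\mathfrak{R}_{0}$ without proof.
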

\begin{proof}
By \eqref{equ:60} and \eqref{sqlap odd}, we have
\begin{equation*}
 \begin{split}
  M(\mu)&=U+v[\sum_{j=0}^{p+1}\frac{i^{j}-(-1)^{j}}{8\pi j!}\mu^{j-2}|x-y|^{j-1}+\mu^{p}\mathfrak{R}(\mu,|x-y|)]v\\
        &=U+v[\frac{1+i}{8\pi}\mu^{-1}+\sum_{j=2}^{p+1}\frac{i^{j}-(-1)^{j}}{8\pi j!}\mu^{j-2}|x-y|^{j-1}]v+v\mu^{p}\mathfrak{R}(\mu,|x-y|)v\\
        &=U+v[\frac{1+i}{8\pi}\mu^{-1}+\sum_{j=0}^{p-1}\frac{(-1)^{j}(1-i^{j})}{8\pi (j+2)!}\mu^{j}|x-y|^{j+1}]v+v\mu^{p}\mathfrak{R}(\mu,|x-y|)v
 \end{split}
\end{equation*}
thus we get \eqref{equ:59}. Using the Hilbert-Schmidt norm, it's trivial to check the operators $M_{0}-U, M_{j}$ are compact.
\end{proof}

\begin{proof}[\bf Proof of Theorem \ref{resolventexpansion3}]
Writing
\begin{equation}\label{equ:64}
  M(\mu)=\frac{(1+i)\alpha}{8\pi\mu}(P+\mu\widetilde{M}(\mu))
\end{equation}
where
\begin{equation}\label{equ:66}
 \begin{split}
  \widetilde{M}(\mu)&=\frac{8\pi}{(1+i)\alpha}\left(\sum_{j=0}^{p-1}M_{j}\mu^{j}+\mu^{p}\mathfrak{R}_{0}(\mu,|x-y|)\right)\\
                    &=\frac{8\pi}{(1+i)\alpha}(M_{0}+\mu M_{1}+\mu^{2}M_{2}(\mu)).
 \end{split}
\end{equation}
Applying Lemma \ref{inverse formula} to $P+\mu\widetilde{M}(\mu)$, for sufficiently small $\mu$ we have
\begin{equation}\label{equ:65}
  M(\mu)^{-1}=\frac{8\pi\mu}{(1+i)\alpha}\left[(1+\mu\widetilde{M}(\mu))^{-1}+\mu^{-1}(1+\mu\widetilde{M}(\mu))^{-1}Qm(\mu)^{-1}Q(1+\mu\widetilde{M}(\mu))^{-1}\right],
\end{equation}
where $Q=1-P$ and
\begin{equation}\label{equ:67}
  \begin{split}
   m(\mu)&=\sum_{j=0}^{\infty}\mu^{j}(-1)^{j}Q\left[\frac{8\pi}{(1+i)\alpha}(M_{0}+\mu M_{1}+\mu^{2}M_{2}(\mu))\right]^{j+1}Q \\
         &=\frac{8\pi}{(1+i)\alpha}QM_{0}Q-\frac{8\pi}{(1+i)\alpha}\mu Q(\frac{8\pi}{(1+i)\alpha}M_{0}^{2}-M_{1})Q+\mu^{2}m_{2}(\mu)\\
         &=\frac{8\pi}{(1+i)\alpha}[m_{0}+\mu m_{1}(\mu)].
  \end{split}
\end{equation}
Notice that $M_0=U-\frac{1}{8\pi}v(x)|x-y|v(y)$ which is the symmetric form of integral kernel of $I+(-\Delta)^{-2}V$. So that $QM_{0}Q$ is invertible in $L^{2}(\mathbb{R}^{3})$ under the assumption that zero is a regular point of $H$. Hence we get the asymptotic expansion by expanding each terms of \eqref{equ:65} in powers of $\mu$.
\end{proof}

Finally, we remark that, the invertibility of $QM_{0}Q$ is related to the spectral properties of $H$ at zero. If $QM_{0}Q$ is not invertible, denote $S:QL^{2}(\mathbb{R}^{3})\rightarrow QL^{2}(\mathbb{R}^{3})$ be the orthogonal projection on $\ker QM_{0}Q$, since $QM_{0}Q$ is self-adjoint, we have $\dim S<\infty$. Then $QM_{0}Q+S$ is invertible, and by applying lemma \ref{inverse formula} to $m_{0}+\mu m_{1}(\mu)$ we get the inverse of $m(\mu)^{-1}$. In order to get the expansion of $m(\mu)^{-1}$, by the inverse formula \eqref{equ:62} we need to get the expansion of $\widetilde{M}_{1}(\mu)$ which can be calculated from  the inverse of $m(\mu)^{-1}$. This is an iterative process but it will stops in finite steps. Since we assume that $V(x)=O(|x|^{-\beta})$ as $|x|\rightarrow\infty$, there exists a $\mu_0$ such that for $\iota\in(0,\mu_0)$ we have $\iota^4\in \rho(H)$. Since $H$ is selfadjoint, so $\limsup_{\iota\downarrow0}\|\iota^4(H-\iota^4)^{-1}\|\leq1$ and then from \eqref{equ:58} we have
\begin{equation}\label{finiteiteration}
  \limsup_{\iota\downarrow0}\|\iota^4M(\iota)^{-1}\|\leq\infty.
\end{equation}
Since each iteration adds to the singularity of $M(\mu)^{-1}$, after a few steps the leading term must be invertible and the process stops, due to \eqref{finiteiteration}. For Schr\"odinger operator $-\Delta+V$, Jensen and Nenciu have proved that the dimension of the null space of $QM_{0}Q$ is one in 1-dimension. The difficulty to analyze the structure of $QM_0Q$ for $H$ is caused by the dimension $d\geq3$. Even if we consider such problem for $H$ in 1-dimension, it is also difficult since $H$ is a higher order operator.
\vskip0.5cm

\noindent{\bf A2: Proof of Lemma \ref{Fredholm}}
\begin{proof}

{\bf Fredholm theorem} We need to prove that there exists an operator inverse $(1+R(H_0; z)V)^{-1}$, which is continuous in $L^{2}(\mathbb{R}^{d})$.

Step 1: We claim that the equation $(H-z)\psi=0$ for $\psi\in L^{2}(\mathbb{R}^{d})$ admits only trivial solution $\psi=0$. First, $(H-z)\psi=0$ implies $H\psi=z\psi\in L^{2}(\mathbb{R}^{d})$, hence $\psi\in\mathcal{H}^{4}(\mathbb{R}^{d})$. Therefore,
$$((H-z)\psi, \psi)=(H\psi,\psi)-z(\psi,\psi).$$
If $z\in\mathbb{C}\setminus\mathbb{R}$, then for $\psi\neq0$,
$$\mathrm{Im}((H-z)\psi, \psi)=-\mathrm{Im}z(\psi, \psi)\neq 0.$$
Since the scalar product $(H\psi,\psi)$ is real. Hence $(H-z)\psi\neq0$ for $\psi\neq0$.
It remains to consider $\mathrm{Re}z<V_{0}$ ( recall that $V_0=\min\{V(x), x\in\mathbb{R}^{d}\}$ ), then $\mathrm{Re}((H-z)\psi, \psi)\geq(V_{0}-z)(\psi,\psi)\neq0$ for $\psi\neq0$. Hence $(H-z)\psi\neq0$ for $\psi\neq0$.

Step 2: We prove that the equation $[1+R(H_0; z)V]\psi=0$ for $\psi\in L^{2}(\mathbb{R}^{d})$ admits only zero solution. Indeed, the identity $[1+R(H_0; z)V]\psi=0$ implies $(H-z)\psi=0$ by the Born decomposition formula. Hence $\psi=0$ as in step 1. Now the operator $[1+R(H_0; z)V]$ is invertible in $L^{2}(\mathbb{R}^{d})$ by Fredholm Theorem ( see e.g. \cite{KY, RS1} ).
\end{proof}

\bigskip

\noindent{\bf Acknowledgements:} X. Yao is partially supported by NSFC grants No.11371158 and 11771165. Part of this work was done while A. Soffer was a visiting professor at CCNU (Central China Normal University). A. Soffer is partially supported by NSFC grant No.11671163 and NSF grant DMS01600749. And this work was partially supported by a grant from the Simons Foundation (No.395767 to Avraham Soffer).

\end{document}